\documentclass[11pt,reqno]{amsart}
\usepackage{amsmath}
\usepackage{amsfonts}
\usepackage{amsmath}
\usepackage{amssymb}
\usepackage{amsthm}
\usepackage{array}
\usepackage{calc}
\usepackage{caption}
\usepackage{color}
\usepackage{dashbox}
\usepackage{enumitem}
\usepackage{extarrows}
\usepackage[hang,flushmargin,symbol*]{footmisc}
\usepackage[margin=1in]{geometry}
\usepackage{graphicx}
\usepackage[colorlinks=true, citecolor=black, urlcolor=black, linkcolor=black]{hyperref}
\usepackage[capitalise]{cleveref}
\usepackage{mathdots}
\usepackage{mathrsfs}
\usepackage{mathtools}
\usepackage{setspace}
\usepackage{stmaryrd}
\usepackage{tikz-cd}
\usetikzlibrary{arrows,shapes,positioning,calc}
\usetikzlibrary{decorations.markings}
\usetikzlibrary{patterns}
\usepackage[normalem]{ulem}
\usepackage{url}
\usepackage{verbatim}
\usepackage[colorinlistoftodos,loadshadowlibrary]{todonotes} %\usepackage[disable]{todonotes}

\newtheorem{theorem}{Theorem}[section]

\newtheorem{lemma}[theorem]{Lemma}

\newtheorem{corollary}[theorem]{Corollary}
\newtheorem{proposition}[theorem]{Proposition}

\theoremstyle{definition}
\newtheorem{definition}[theorem]{Definition}
\newtheorem{example}[theorem]{Example}
\newtheorem{remark}[theorem]{Remark}

\newtheorem{situation}[theorem]{Situation}

\newtheorem{warning}[theorem]{Warning}
\newtheorem{condition}[theorem]{Condition}

\newcommand{\scr}[1]{\ensuremath{\mathscr{#1}}}

\renewcommand{\int}{{\rm int}}

\newcommand{\msout}[1]{\text{\sout{\ensuremath{#1}}}}
\newcommand{\ZZ}{\mathbb{Z}}
\newcommand{\CC}{\mathbb{C}}
\newcommand{\NN}{{\mathbb{N}}}
\newcommand{\QQ}{{\mathbb{Q}}}
\newcommand{\OO}{\mathcal{O}}

\newcommand{\RR}{\mathbb{R}}
\newcommand{\Aff}{{\mathbb{A}}}
\newcommand{\PP}{\mathbb{P}}
\newcommand{\GG}{\mathbb{G}}
\newcommand{\Gtrop}{\GG_{\rm trop}}
\newcommand{\Glog}{\GG_{\rm log}}

\newcommand{\Nl}[1]{{N_{#1}^{\ell}}}

\newcommand{\Spec}{{\rm{Spec}\:}}

\newcommand{\Hom}{{\rm Hom}}

\newcommand{\Exal}{\text{Exal}}
\newcommand{\Ext}{\text{Ext}}

\newcommand{\ccx}[1]{{\mathbb{L}_{#1}}}

\newcommand{\lccx}[1]{{\mathbb{L}^\ell_{#1}}}

\newcommand{\gp}[1]{#1^{\rm gp}}
\newcommand{\vir}[1]{ {[{#1}]^{vir}} }
\newcommand{\lvir}[1]{ {[{#1}]^{\ell vir}} }

\newcommand{\Cl}[1]{{C_{#1}^\ell}}

\newcommand{\Tl}[1]{{T^{\rm log}_{#1}}}
\newcommand{\lkah}[1]{\Omega^{\rm log}_{#1}}

\newcommand{\Log}{{\mathcal{L}og}}

\newcommand{\lpb}{{\arrow[dr, phantom, very near start, "\ulcorner \ell"]}}
\newcommand{\lpbstrict}{{\arrow[dr, phantom, very near start, "\ulcorner \msout{\ell}"]}}
\newcommand{\pb}{{\arrow[dr, phantom, very near start, "\ulcorner"]}}
\newcommand{\num}[1]{\langle #1 \rangle}

\def\overnorm#1{\overline{#1}\vphantom{#1}}
\renewcommand{\bar}[1]{\ensuremath{\overnorm{#1}}}
\newcommand{\Ms}[1][{g, n}]{\overline{{\cal M}}_{#1}}
\newcommand{\Mp}[1][{g, n}]{{\frak{M}_{#1}}}
\newcommand{\Ml}[1][{g, n}]{\overline{M}^{\rm log}_{#1}}

\newcommand{\dr}[1][{P, \vec a}]{\overline{\rm DR}^{\rm log}_{#1}}
\newcommand{\drp}[1][{P, \vec a}]{{\rm DR}^{\rm log, ps}_{#1}}
\newcommand{\dd}[1][{P, \vec a}]{\overline{\rm D}_{#1}}
\newcommand{\ddp}[1][{P, \vec a}]{{\rm D}^{\rm ps}_{#1}}

\newcommand{\DR}[1][{g, n, \vec a}]{\overline{\rm tDR}^{\rm log}_{#1}}
\newcommand{\pDR}[1][{g, n, \vec a}]{\overline{\rm DR}^{\rm log}_{#1}}
\newcommand{\D}[1][{g, n, \vec a}]{\overline{\rm D}_{#1}}
\newcommand{\Dirr}[1][{g, n, \vec a}]{\overline{\rm DR}^{\rm irr}_{#1}}

\newcommand{\DRp}[1][{g, n, \vec a}]{{\rm tDR}^{\rm log, ps}_{#1}}
\newcommand{\pDRp}[1][{g, n, \vec a}]{{\rm DR}^{\rm log, ps}_{#1}}
\newcommand{\Dp}[1][{g, n, \vec a}]{{\rm D}^{\rm ps}_{#1}}

\newcommand{\DRA}[1][{g, n, A}]{\DR[#1]}
\newcommand{\pDRA}[1][{g, n, A}]{\pDR[#1]}

\newcommand{\Pic}{\underline{\rm Pic}}
\newcommand{\pic}{{\rm Pic}}
\newcommand{\qpic}{{\rm Pic}^{qs}}
\newcommand{\LogPic}{\underline{\rm LogPic}}
\newcommand{\logpic}{{\rm LogPic}}
\newcommand{\LP}[1][{g, n}]{\underline{\rm LP}_{#1}}
\newcommand{\lp}[1][{g, n}]{{\rm LP}_{#1}}
\newcommand{\LPp}[1][{g, n}]{\underline{\rm LP}^{\rm ps}_{#1}}
\newcommand{\lpp}[1][{g, n}]{{\rm LP}^{\rm ps}_{#1}}

\newcommand{\GL}{{\rm GL}}

\newcommand{\aj}{{\rm aj}}

\newcommand{\CHl}{{\rm logCH}_*}

\newcommand{\Kl}{{\operatorname{log}\!K}_\circ}
\newcommand{\cKl}{{\rm logK}^\circ}

\def\Sym#1#2{[\operatorname{Sym}^{#1}#2]}
\def\Symc#1#2{\operatorname{Sym}^{#1}#2}

\DeclareMathOperator*{\colim}{colim}
\newcommand{\et}{{\text{\'et}}}

\newcommand{\bra}[1]{\left[{#1}\right]}
\newcommand{\pra}[1]{\left({#1}\right)}

\newcommand{\AF}[1][{}]{\Theta_{{#1}}}
\newcommand{\af}[1]{\AF[#1]}
\newcommand{\cal}[1]{\ensuremath{\mathcal{#1}}}
\newcommand{\pt}{{\Spec \ZZ}}

\newcommand{\rk}{{\rm rk}}

\renewcommand{\int}[1]{{\rm int}\,#1}

\newcommand{\Alt}{{\rm Alt}}

\newcommand{\Mons}{(Mon)}
\newcommand{\longsimeq}{\overset{\sim}{\longrightarrow}}

\renewcommand{\vert}[1]{#1^{\rm vert}}

\renewcommand{\tilde}[1]{\widetilde{#1}}
\renewcommand{\hat}[1]{\widehat{#1}}

\newcommand{\Mbar}{{\bar M}}

\usepackage[foot]{amsaddr}

\usepackage{xparse}

\NewDocumentCommand{\rt}{m o}{%
  \IfNoValueTF{#2}
    {#1^{1/r}}  % if no optional value, default to r
    {#1^{1/#2}} % else use provided value
}

\usepackage{longtable}
\usepackage{array}

\title{On the $K$-theoretic logarithmic double ramification class}

\author{Kamyar Amini}
\author{You-Cheng Chou}
\author{Leo Herr}
\author{David Holmes}
\author{Irit Huq-Kuruvilla}
\author{Yuan-Pin Lee}
\date{\today}

\begin{document}

\maketitle

\begin{abstract}
The logarithmic double ramification cycle is the virtual fundamental class of the locus where a line bundle on a family of curves is fiberwise trivial.
We construct a K-theoretic logarithmic double ramification class and prove a product formula and a \(\mathrm{GL}_r(\mathbb Z)\)-invariance property.
We also give an explicit formula for this class in terms of a Grothendieck polynomial via a novel $K$-theoretic Thom--Porteous formula for vector bundles on algebraic stacks.
\end{abstract}

\setcounter{tocdepth}{1}
\tableofcontents

\section{Introduction}

We write $\Ms^{\rm irr} \subseteq \Ms$ for the open locus of irreducible curves. Given an $n$-tuple of integers $\vec a = (a_1, \dots, a_n) \in \ZZ^n$ which sum to zero $\sum_i a_i = 0$, define
\[
 \Dirr := \left\{ (C, p_i) \in \Mbar^{irr}_{g,n} \mid \OO_C(\sum_i a_i p_i) \cong \OO_C  \right\} \subseteq \Mbar^{irr}_{g,n}
\]
as the locus where the line bundle $\OO_C(\sum a_i p_i)$ is fiberwise trivial\footnote{Our results apply equally well to the locus where any other fixed line bundle on $C$ is fiberwise trivial, such as twists of the dualizing sheaf $\omega^k_{C}(\sum a_i p_i)$.}.     
The problem of extending this construction to stable curves and deriving a formula for the resulting class was originally proposed by Y.~Eliashberg in 2001 within the framework of symplectic field theory. This was resolved in 2017 by Janda, Pandharipande, Pixton, and Zvonkine \cite{DRcycleonmoduliofcurves}, who proved that DR classes are tautological classes on the moduli space of stable curves. 
Holmes, Pixton and Schmitt \cite{drmultbchow} constructed a refinement called the \emph{log DR class} which satisfies a formula proven in \cite{LogDRCycle} via fixed point localization techniques; a more direct proof was given in \cite{chiodoholmesDRroots}.

\subsection{The double ramification class}

We outline two constructions of these virtual classes and their logarithmic lifts, which we will lift to $K$-theory. As the name suggests, the line bundle $\OO_C(\sum_i a_i p_i)$ corresponds to a ramified cover $f: C \to \PP^1$. The preimages of infinity and zero are determined by the signs of the integers $a_i$:
\[f^* \infty = \sum_{i, a_i < 0} -a_i p_i \qquad 
f^* 0 = \sum_{i, a_i > 0} a_i p_i.\] 
We can view $\Dirr$ as a space of stable (rubber) maps to $\PP^1$, and a virtual class can be defined via the standard obstruction theory on rubber stable maps to $\PP^1$ \cite{junli1,RdC}. 

Alternatively, consider the Picard scheme of degree zero line bundles on the universal curve $\cal U^{irr}/\Ms^{irr}$:
\[
   \pi: \operatorname{Pic}_{g,n}^{0, irr} \coloneqq \operatorname{Pic}^{0}(\cal U^{irr}/\Ms^{irr}) \to \Mbar^{irr}_{g,n}. 
\]
This family of semi-abelian varieties of relative dimension $g$ comes with two natural sections; a ``zero section'' 
\[
e: \Mbar^{irr}_{g,n} \to \operatorname{Pic}_{g,n}^{0, irr}, \qquad e(C, p_i) = (C, \OO_C)
\]
defined by the trivial line bundle, and the ``Abel--Jacobi'' section determined by the vector $\vec a \in \ZZ^n$
\[
 \aj_{\vec a} : \Mbar^{irr}_{g,n} \to \operatorname{Pic}_{g,n}^{0, irr}, \qquad \aj_{\vec a} (C, p_i) = (C, \OO_C (\sum_i a_i p_i)).
\]
Then $\Dirr = e^{-1} (\aj_{\vec a}) = \aj^{-1}_{\vec a}(e)$ is the intersection of these two sections. These two sections do not intersect transversely, but a virtual class can be defined via Gysin pullback 
\[
    [\Dirr]^{vir} = e^!(\aj_{\vec a}) = \aj^!_{\vec a}(e).
\]
These two constructions define the same virtual fundamental class in $CH_*(\Dirr)$. 

\begin{definition}\label{def:K_DR}
    The DR class $[\Dirr]^{vir}$ in $K$-theory is the $K$-theoretic Gysin pullback along $\aj_{\vec a}$ of the structure sheaf $e_* \OO_{\Mbar^{irr}_{g,n}}$ of the zero section $e$:
    \[
        \aj_{\vec a}^! [e_* \OO_{\Mbar^{irr}_{g,n}}] \qquad \in K_\circ(\Dirr).
    \]
\end{definition}

This $K$-theoretic DR class may be ``different'' from that in Chow, in that they need not be identified under Grothendieck--Riemann--Roch isomorphisms. This is a refinement of the Chow version, as described in \S \ref{ss:whykthy?}.

\subsubsection{Extension to $\Ms$ using logarithmic schemes}
The spaces $\Ms^{\rm irr}$ are noncompact, which prevents their intersection theory from being well-behaved. Extending these objects to $\Ms$ is necessary for most applications in symplectic topology and algebraic geometry. This extension can be achieved by incorporating logarithmic structures via two different approaches. In the first approach, we view $\pDR$ as a space of log stable maps, akin to maps to ``rubber $\PP^1$,'' equipped with its natural log perfect obstruction theory. See Remark~\ref{rmk:comparisonwithrubber} for a detailed comparison. In the second approach, log Picard stacks are utilized to define a log perfect obstruction theory on a compactification $\pDR$. Both of these approaches are well-established in Chow, but we lift them to $K$-theory in a manner analogous to Definition \ref{def:K_DR}. The first main result of this work establishes the equivalence of these two perspectives.

\begin{theorem}[=Corollary \ref{cor:samelogPOT}]
The two log perfect obstruction theories are equivalent. In particular, they define the same virtual fundamental class $\lvir{\pDR}$ in cohomology, Chow, and $K$-theory.
\end{theorem}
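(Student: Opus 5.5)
We will compare the two log perfect obstruction theories as maps in the derived category of $\pDR$ over a common log base. Both should be relative to the log stack of log curves $\Ml$ (or to the Artin stack $\Mp$ with its divisorial log structure). Once they are shown to be isomorphic as objects over the log cotangent complex $\lccx{\pDR/\Ml}$, the equality of virtual classes in Chow, cohomology and $K$-theory is formal. All three classes are produced by the same intrinsic-normal-cone construction, so it suffices to produce the isomorphism.

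\emph{Step 1: the rubber-map side.} A point of $\pDR$ is a log curve $C/S$ together with a log map $f: C \to \Glog$ (equivalently, a rubber map to $\PP^1$) with contact orders $\vec a$. The standard log obstruction theory is
\[
E_1 = (R\pi_* f^*\Tl{\Glog})^\vee = (R\pi_*\OO_C)^\vee,
\]
since $\Tl{\Glog}$ is trivial of rank one. Modding out by the rubber $\GG_m$-scaling kills the $H^0$ term. What remains is $R^1\pi_*\OO_C[-1]$, dualized, as the relative obstruction theory over $\Ml$.

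\emph{Step 2: the log Picard side.} Here $\pDR$ is the fiber product of the Abel--Jacobi section $\aj_{\vec a}:\Ml \to \LogPic$ (after subdivision) with the zero section $e$. The induced obstruction theory is $\aj_{\vec a}^*$ of the normal complex of $e$, that is, $e^*\Omega_{\LogPic/\Ml}[1]$. The tangent space of $\LogPic$ along the zero section is $R^1\pi_*\OO_C$, because log line bundles are torsors under $\Glog$ and the deformations of a log line bundle are $H^1(C,\OO_C)$. So the second theory is also $(R^1\pi_*\OO_C[-1])^\vee$.

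\emph{Step 3: identify the two maps.} We must check that the two maps to $\lccx{\pDR/\Ml}$ agree, not merely that their sources are abstractly isomorphic. For this we will use the tautological identification: a log map $C\to\Glog$ up to scaling is the same thing as a trivialization of the log line bundle $\OO_C(\sum a_i p_i)$. This identifies $\pDR$ with the moduli of log maps, as in Remark~\ref{rmk:comparisonwithrubber}. The two obstruction maps are both built from the Atiyah/deformation class of the $\Glog$-torsor. We will check their compatibility through the functorial description of obstruction theories in terms of square-zero extensions: given a log square-zero extension $S\subset S'$, both theories produce the same obstruction class in $H^1(C,\OO_C)\otimes I$, namely the class obstructing extension of the trivialization.

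\emph{Main obstacle.} I expect the difficulty to be this last compatibility. $\LogPic$ is not algebraic, only log algebraic, so the Picard-side theory is defined on a log modification or through the log cotangent complex. We therefore have to check that the deformation-theoretic description survives the subdivision. We also have to check that the $\GG_m$-rubber quotient on the map side corresponds exactly to passing from $\Glog$-torsors to their classes. I would first verify the compatibility on a strict étale chart where $\LogPic$ is represented by an algebraic space over a subdivision of $\Ml$. Then I would glue, using that isomorphisms of obstruction theories satisfying the square-zero-extension characterization are unique.
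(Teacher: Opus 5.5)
\textbf{Verdict: correct in essence, with the same key step as the paper but a heavier setup and one slip in Step 2.}

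\textbf{Comparison with the paper.} Your Step 3 is the paper's entire proof. The difference is the setup, and the paper's is lighter.

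The paper never passes to a base of curves. Both theories are taken relative to $B\times B\Glog$. The second is pulled back along $B\times B\Glog\to\LPp$, $(P,Q)\mapsto P-Q|_{\vert C}$, from the log cotangent complex of $\aj_{\vec a}\colon\Mp\to\LPp$ (Remark~\ref{rmk:AJBmapGlogtorsordifference}). That complex is perfect in $[-1,0]$ by log smoothness of $\Mp$ and $\LPp$.

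Now fix a strict square-zero extension $S\subseteq S'$ with ideal $J$. Lemma~\ref{lem:Glogtorsorverticalvsstandard} and the sequence $1\to 1+J|_C\to\gp M_{\vert{C'}}\to\gp M_{\vert C}\to 1$ reduce both lifting problems (\S\ref{sss:logstabmapslogPOT}, \S\ref{sss:AJkernellogPOT}) to trivializing the same $J|_C$-torsor of extensions of $\varphi$ in $I'$. So both obstructions are the same class in $R^1\pi_*J|_C$, and setting $J=\OO_S$ yields $\mathbb E^\vee$.

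This framing removes two of your steps:
\begin{itemize}
\item Keeping $B\Glog$ in the base absorbs your rubber quotient. The scaling is the $\Glog$-torsor $Q$, which is a $\Glog$-scaling, not merely a $\GG_m$-scaling.
\item The comparison is made on functorial lifting problems, which make sense for the log stack $\LPp$ as it stands, so no charts or gluing are needed.
\end{itemize}
The second point matters. $\LPp$ and $\lpp$ have no strict \'etale algebraic charts, even over subdivisions of the base; they have only log smooth covers, or log modifications such as $\cal J$. Your fallback in the last paragraph would therefore have to be reformulated.

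What each route buys: yours would give an explicit Behrend--Fantechi comparison relative to $\Ms$. The paper's is shorter and works for arbitrary $(B,P)$.

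\textbf{The slip in Step 2.} Suppose $\LogPic$ means the log Picard \emph{stack} $\LP$. Then $\Ms\times_{\aj_{\vec a},\LP,e}\Ms$ is not $\pDR$ but the $\Glog$-torsor over it of trivializations of $Q$. Also, the tangent complex of $\LP$ along $e$ is $R\pi_*\OO_C[1]$, not $R^1\pi_*\OO_C$.

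Your formulas are right for the log Picard \emph{sheaf} $\lp$; the Proposition in \S\ref{ss:triviallinebundle} realizes $\drp[\OO_C,\vec a]$ as a fiber product over $\lpp$. But the resulting $\Ms$-relative theory is not literally the one of \S\ref{sss:AJkernellogPOT}. You would still owe its comparison with the $B\times B\Glog$-relative theories, which is exactly the bookkeeping the paper's choice of base avoids.

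Finally, no subdivision is needed to define $\aj_{\vec a}$ into $\LP$ or $\lp$; subdivisions enter only with $\cal J$.
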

We refer to this $K$-theoretic class as the ``logarithmic virtual structure sheaf'' or the ``logarithmic virtual fundamental class in $K$-theory'' and write $\OO^{\rm vir}_{\pDR} = \lvir{\pDR}$.

In Chow, logarithmic DR classes push forward to the usual (non-logarithmic) double-ramification class $[{\rm DR}_{g, n, \vec a}]^{vir}$ on $\Ms$, as defined in  \cite{DRcycleonmoduliofcurves,earlyholmesDRpaper,wisemarcuslogabeljacobi}. Analogously, we may define the (non-logarithmic) $K$-theoretic DR class as the pushforward of the logarithmic $K$-theoretic DR class.

\subsection{Product formula}

Two line bundles $\mathcal L$ and $\mathcal M$ are simultaneously trivial if and only if $\mathcal L$ and $\mathcal L \otimes\mathcal M$ are simultaneously trivial, so
\begin{equation}
    \Dirr[g, n, \vec a] \cap \Dirr[g, n, \vec b] = \Dirr[g, n, \vec a + \vec b]\cap \Dirr[g, n, \vec b].
\end{equation}
With a little more work, one shows that 
\begin{equation}\label{eq:DDRirr}
    \vir{\Dirr[g, n, \vec a]} \cdot \vir{\Dirr[g, n, \vec b]} = \vir{\Dirr[g, n, \vec a + \vec b]}\cdot \vir{\Dirr[g, n, \vec b]}
\end{equation}
in $K^\circ(\Ms^{\rm irr})$. 

The analogous equality of classes $\vir{\pDR[g,n,-]}$ makes sense in $K^\circ(\Ms)$, but it is \emph{false} (the analogous phenomenon in Chow was observed in \cite{drmultbchow}). 
However, it holds for the logarithmic virtual classes living on sufficiently fine blowups of $\Ms$ (again mirroring the situation in Chow \cite{drmultbchow}). To this end, define the \emph{colimit log $K$-theory ring}
\[
    \cKl(X) \coloneqq \colim_{\tilde X \to X} K^\circ(\tilde X)
\]
as a colimit over the locally free $K$-theories of log alterations $\tilde X \to X$ made precise in \S \ref{ss:logvfccolimitlogK}. 

The log DR classes naturally live on $\Dirr$, but we also consider them on $\Ms$ via an implicit pushforward:

\begin{theorem}
    The log DR classes lie in the \emph{colimit log $K$-theory ring} $\cKl(\Ms)$ of $\Ms$. 
\end{theorem}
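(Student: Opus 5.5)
The plan is to produce, for each log DR class, a log alteration $\tilde X \to \Ms$ and a class in the locally free $K$-theory $K^\circ(\tilde X)$ whose image in the colimit $\cKl(\Ms)$ is independent of choices. The virtual structure sheaf $\OO^{\rm vir}_{\pDR}$ is a priori only a class in $K_\circ(\pDR)$, i.e.\ in $G$-theory of a proper stack mapping to $\Ms$. We must therefore promote it to an operational or locally free class.

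\emph{Realizing $\pDR$ as an intersection.} I will use the second (log Picard) construction, which Corollary~\ref{cor:samelogPOT} says gives the same virtual class. $\pDR$ is the pullback of the unit section $e$ of the logarithmic Picard stack $\LogPic$ along the log Abel--Jacobi section $\aj_{\vec a}$. $\LogPic$ is not algebraic, but it is log smooth and has a "log étale-local" algebraic model. After a sufficiently fine log blowup $\tilde X \to \Ms$ (a subdivision of the tropical moduli of curves resolving the piecewise-linear function given by $\vec a$), the section $\aj_{\vec a}$ lifts to an honest algebraic model $\mathcal P \to \tilde X$ of the Picard stack. This model is smooth over $\tilde X$, and the unit section $e:\tilde X\to \mathcal P$ is a regular embedding of codimension $g$. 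Over $\tilde X$ the DR locus becomes $\tilde X\times_{\aj,\mathcal P,e}\tilde X$, which is proper over $\pDR$ via a log modification.

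\emph{Turning the class into a bivariant class.} Since $e$ is a regular embedding, $e_*\OO_{\tilde X}$ has a finite locally free resolution on $\mathcal P$, namely the Koszul complex of the conormal bundle. So $[e_*\OO]$ lies in $K^\circ(\mathcal P)$ with supports, and $\aj_{\vec a}^*[e_*\OO]\in K^\circ(\tilde X)$ is a locally free class. Its image in $K_\circ$ is the Gysin pullback $\aj_{\vec a}^![e_*\OO]$, and this agrees with $\OO^{\rm vir}_{\pDR}$ pushed forward to $\tilde X$, by the $K$-theoretic version of the Behrend--Fantechi comparison already used in Definition~\ref{def:K_DR}. To land in locally free $K$-theory I would take $\tilde X$ smooth; if the log blowup is not smooth, I would further pass to a log alteration by toroidal resolution, which stays inside the colimit system.

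\emph{Independence of choices.} Two admissible blowups $\tilde X,\tilde X'$ are dominated by a common refinement $\tilde X''$. The virtual class is compatible with log-étale pullback, by the log virtual pullback invariance under log modifications. Hence the pulled-back classes agree in $K^\circ(\tilde X'')$ and define a single element of $\cKl(\Ms)$.

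I expect the main obstacle to be the first step. Concretely, one must show that a single log blowup makes $\aj_{\vec a}$ factor through an algebraic Picard stack with a regularly embedded unit section, and that this is compatible with the log obstruction theory. I would first try the subdivision from Holmes's construction of the universal $\sigma$-extending model (as in \cite{drmultbchow}) and check regularity of $e$ there.
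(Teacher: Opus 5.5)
Your proposal has a genuine gap: the decisive step is asserted, not proved. That step is your ``independence of choices'', where you invoke a ``log virtual pullback invariance under log modifications''. There is no such general pullback of virtual classes that you can cite. The virtual classes live on the DR models, which are singular, and the only operation relating two models is the refined Gysin map $\varphi^!$ along the induced map of Artin fans. These maps are the transition maps of $\cKl$, and they exist only because models in $\mathcal C$ admit strict maps to $\af{}^N$ (Proposition~\ref{prop:AFfunctorialifYfree}). The identity $\varphi^!\lvir{X_2}=\lvir{X_1}$ is precisely the second half of Theorem~\ref{thm:lvfcsinbothKtheories}, i.e.\ the content of the statement, so as written you assume what is to be proved.

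Meanwhile, the step you single out as the difficulty is not one. Models in $\mathcal C$ are log smooth with free log structure, hence smooth, so $K_\circ=K^\circ$ on them and every pushed-forward virtual class is already locally free; this is how Lemma~\ref{lem:pushforwardcolimitlogKring} concludes. The lift of $\aj_{\vec a}$ to an algebraic model is supplied by a compactified Jacobian, $\tilde\Ms=\Ms\times^\ell_{\lp}{\mathcal J}$ as in \eqref{eqn:DRhavetoblowup}.

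Your setup can close the gap, but you must argue it rather than cite it.
\begin{enumerate}
\item Take every model to be a refinement $X''\in\mathcal C$ of one $\tilde X$ over which $\aj_{\vec a}$ lifts.
\item The DR model $Q''$ over $X''$ is the strict base change of $\tilde X\times_{\aj,\mathcal P,e}\tilde X\to\tilde X$, since $e$ is strict. Being strict over a log \'etale model of $\Ms$, its log normal cone is the ordinary normal cone $C_{Q''/X''}\subseteq N_e|_{Q''}=\mathbb E^\vee|_{Q''}$. Hence $\lvir{Q''}=e^![\OO_{X''}]$ on every model, not only on $\tilde X$.
\item Compatibility with the transition maps then follows from $\varphi^!e^!=e^!\varphi^!$ together with $\varphi^![\OO_{X''_2}]=[\OO_{X''_1}]$, which holds by log smoothness.
\item You also need pushforward compatibility (Theorem~\ref{thm:kvfcsinlogkthy}) to identify the resulting colimit element with $p_*\lvir{\pDR}\in\Kl(\Ms)$. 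Your argument omits this.
\end{enumerate}
This is the paper's own computation, with the regular embedding $e$ in place of the strict maps $f_i:X_i\to Y_i$ to models of $B\times B\Glog$.

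Two of your auxiliary claims are false, though you do not need them.
\begin{enumerate}
\item The Koszul complex of the conormal bundle lives on $\tilde X$ and gives no global resolution of $e_*\OO$ on $\mathcal P$. That would require $e$ to be the zero scheme of a regular section of a rank-$g$ bundle. Producing a finite locally free resolution of the zero section on a stack is exactly what Section~\ref{s:thomporteous} and Theorem~\ref{thm:rthrootGrothendieckdegenlocusDR} achieve via Eagon--Northcott.
\item A compactified Jacobian is not smooth over the base along the boundary. Only its open $\pic^{[0]}$ is, and that is all regularity of $e$ needs.
\end{enumerate}
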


Most log virtual fundamental classes live in the larger ambient group of limit log $K$-theory \cite[\S 3]{kthylogprodfmla}. We need to work in the subring of colimit log $K$-theory so products like \eqref{eq:DDRirr} are well defined. We then prove the analogue of \eqref{eq:DDRirr} in $\cKl(\Ms)$, extending from two to arbitrarily many cycles. 

\begin{theorem}[{=Theorem~\ref{t:product}}] \label{t:2} 
    Let $A = [\vec a_i]$ be an $n \times r$ integer matrix where the entries of each column $\vec a_i \in \ZZ^n$ sum to zero. Let $M \in \GL_r(\ZZ)$ be an invertible integer matrix, and denote the columns of $AM = [\vec b_i]$ as $\vec b_i$.

    The products of the corresponding double ramification classes agree in colimit log $K$-theory:
    \[
    \lvir{\pDR[g, n, \vec a_1]} \cdot \cdots \cdot \lvir{\pDR[g, n, \vec a_r]} 
    = 
    \lvir{\pDR[g, n, \vec b_1]} \cdot \cdots \cdot \lvir{\pDR[g, n, \vec b_r]} \qquad \in \cKl(\Ms).
    \]
\end{theorem}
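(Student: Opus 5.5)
The plan is to realize both sides of Theorem~\ref{t:2} as a single $K$-theoretic Gysin pullback along a multi-component Abel--Jacobi map, and then to observe that changing basis by $M \in \GL_r(\ZZ)$ is an automorphism of the target that preserves the zero section.

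\textbf{Step 1: rank-$r$ log Picard.} Consider the $r$-fold fibre product $\LogPic^r := \LogPic \times_{\Ms} \cdots \times_{\Ms} \LogPic$ of the (degree zero) log Picard stack of the universal curve. The matrix $A$ determines an Abel--Jacobi map
\[
\aj_A = (\aj_{\vec a_1}, \dots, \aj_{\vec a_r}) \colon \Ms \dashrightarrow \LogPic^r .
\]
This map is defined only after a log blowup. So I would pick a log modification $\tilde X \to \Ms$ on which every $\aj_{\vec a_i}$ is defined, and which also makes each component $\pDR[g,n,\vec a_i]$ and $\pDR[g,n,\vec b_i]$ representable by a strict closed substack. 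Such a common refinement exists because finitely many subdivisions of the tropical moduli space have a common refinement. Write $e_r$ for the zero section of $\LogPic^r$.

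\textbf{Step 2: the product is a single Gysin pullback.} I then claim that in $K^\circ(\tilde X)$,
\[
\prod_i \lvir{\pDR[g,n,\vec a_i]} = \aj_A^! [e_{r*}\OO],
\]
where $\aj_A^!$ is the Gysin map for the log perfect obstruction theory relative to $\LogPic^r$. The argument has two ingredients.
\begin{itemize}
\item On $\tilde X$, where the components are strict, the product in $\cKl$ equals the refined intersection of the pullbacks $\aj_{\vec a_i}^![e_*\OO]$. This uses the compatibility of Gysin maps with products in colimit log $K$-theory.
\item The zero section of a fibre product is the fibre product of the zero sections. Since the obstruction theory of $e$ is given by the Lie algebra $R^1\pi_*\OO$, the obstruction theory of $e_r$ is the direct sum of $r$ copies. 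The $K$-theoretic excess/functoriality formula (Gysin pullbacks commute, and are compatible with exterior products of perfect obstruction theories) then identifies iterated pullbacks with the pullback along the product.
\end{itemize}

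\textbf{Step 3: change of basis.} The matrix $M$ acts on $\LogPic^r$ by $(L_1,\dots,L_r)\mapsto (\bigotimes_j L_j^{m_{j1}},\dots)$. This action is an automorphism $\phi_M$, with inverse $\phi_{M^{-1}}$. It fixes $e_r$, and it satisfies $\phi_M\circ \aj_A = \aj_{AM}$, because $\aj$ is a group homomorphism in $\vec a$ on the level of log line bundles. Its derivative on tangent complexes is $M\otimes \mathrm{id}$ acting on $(R^1\pi_*\OO)^{\oplus r}$, which is an isomorphism. Hence $\phi_M$ carries the obstruction theory of $e_r$ to itself, and Gysin pullback is invariant:
\[
\aj_{AM}^! [e_{r*}\OO] = \aj_A^!\,\phi_M^![e_{r*}\OO] = \aj_A^! [e_{r*}\OO].
\]
Applying Step 2 to $AM$, and passing to the colimit $\cKl(\Ms)$, gives the theorem.

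\textbf{Main obstacle.} The delicate point is Step 2. The log Picard stack is not algebraic, only log-algebraic, and Gysin pullback along $e$ must be interpreted through a log perfect obstruction theory. Showing that the product of $r$ classes defined on possibly different log modifications agrees with the single rank-$r$ pullback requires the pullback to be invariant under further log blowups. This is exactly why one works in the colimit ring rather than in $K^\circ(\Ms)$, where the statement is false.

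I would first reduce to the quasi-separated, algebraic open substack of $\LogPic^r$ (bounded monodromy) through which $\aj_A$ factors. There I would use the $K$-theoretic birational invariance of virtual structure sheaves under log modifications (the log-étale pushforward $p_*\OO^{\rm vir}_{\tilde X}=\OO^{\rm vir}_X$, together with $R p_*\OO = \OO$) to move all the classes to a common $\tilde X$.
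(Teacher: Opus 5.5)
Your proof is correct and is essentially the paper's. The paper also shows that both products equal the class of the simultaneous-triviality locus $\dr[\vec P, A]$, by realizing it as the f.s.\ pullback of $\prod_i \dr[P_i, \vec a_i]$ along the diagonal with matching obstruction theories and applying the log Gysin map $\Delta^\dagger$ (your Step~2, which you run on a common fine model instead). It gets $\GL_r(\ZZ)$-invariance from the isomorphism $\dr[\vec P, A] \simeq \dr[\vec P.M, AM]$ induced by $M$ on $B_r \times B\Glog^r$, which identifies the obstruction theories (your $\phi_M$ on $\LogPic^r$).

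One correction: $\aj_A$ is already defined on all of $\Ms$ as a map to the log Picard stack (Definition~\ref{def:logAJmap}); log blowups are needed to make the DR loci strict, not to define $\aj_A$. Also, $\LogPic^r$ is only a logarithmic stack, so the paper defines the Gysin maps via log perfect obstruction theories rather than by restricting to an algebraic open substack as in your last paragraph.
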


This is the natural lift to $K$-theory of \cite[Theorem 5.3]{holmesschwarzGLrinvarianceprodfmla}. It may also be seen as a version of the \emph{log product formula} of \cite{herrthesis,kthylogprodfmla,logprodfmla}. The equality is proven by equating both sides with a third object, the log virtual fundamental class of a locus $\pDRA$ where $r$-tuples of line bundles are simultaneously trivial.

\subsection{Degeneracy loci on algebraic stacks}

An explicit formula for the logarithmic DR cycle was stated in \cite{LogDRCycle} and proven there by virtual localization techniques; a more direct proof via the Riemann--Roch and Thom--Porteous formulas was given in \cite{chiodoholmesDRroots}. We will follow the latter route, for which we need a Thom--Porteous formula for the
$K$-theoretic degeneracy loci on algebraic stacks.

We fix a map of vector bundles $E \to F$ on an algebraic stack $Y$. Write $r_1 = \rk E, r_2 = \rk F$. We specialize to the case $r_1 \leq r_2$ and describe the $(r_1-1)$-degeneracy locus $X \subseteq Y$ where $E \to F$ does not have maximal rank, but we expect a similar approach to give formulas for all degeneracy loci. 

\begin{definition}\label{def:stackygrothpoly}
    Let $\tilde G$ be the operator on the $K$-theory of an algebraic stack $Y$ defined by sending a difference $E - F$ of vector bundles to:
    \begin{equation}\label{eqn:stackygrothpoly}
    \tilde G(E-F) = [\OO_Y] - [\det E] \cdot \bra{\dfrac{\lambda_t(F^\vee)}{\lambda_{t^{-1}}(E)}}_{{\rm coeff} \, t^{r_1}},
    \end{equation}
    where $r_1 = \rk E$ and $\lambda_t(E) = \sum_{i = 0}^\infty [\wedge^i E] t^i$. 
\end{definition}

We check this formula only depends on the class of $[E]-[F]$ in $K$-theory in Lemma \ref{lem:tildeGwelldefKtheory}.

\begin{theorem}[=Corollary \ref{cor:thomporteousKthytildeGdegenlocusformula}]\label{introthm:thomporteous}
    Let $E \to F$ be a morphism of vector bundles on an algebraic stack $Y$. If the degeneracy locus $e : X \subseteq Y$ is a regular immersion of expected codimension $r_2 - r_1 + 1$, then 
    \[
    e_* [\OO_X] = \tilde G(E-F) \qquad \in K^\circ(Y).
    \]
\end{theorem}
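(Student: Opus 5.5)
Since both $e_*[\OO_X]$ and $\tilde G(E-F)$ are compatible with flat (indeed arbitrary) pullback, we reduce to a universal situation. Let $H = \mathcal{H}om(E,F) \to Y$ be the total space of the Hom bundle. The map $E \to F$ is a section $s : Y \to H$. Let $Z \subseteq H$ be the universal degeneracy locus, where the tautological map $p^*E \to p^*F$ fails to be injective. Then $X = s^{-1}(Z)$.

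The first step is to show that $Z \subseteq H$ is Cohen--Macaulay of codimension $r_2-r_1+1$, flat over $Y$. This holds because, smooth locally on $Y$ (trivializing $E,F$), $Z$ is the product of $Y$ with the generic determinantal variety of $r_2 \times r_1$ matrices of non-maximal rank. By hypothesis $X$ has the expected codimension and is a regular immersion. Since $Z$ is Cohen--Macaulay of the expected codimension, the higher Tor sheaves $\mathcal{T}or_i^{\OO_H}(\OO_Z, s_*\OO_Y)$ vanish for $i>0$. It follows that $s^*[\OO_Z] = [\OO_X]$ in $K^\circ(Y)$, with $s^*$ the Gysin pullback along the regular section $s$. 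This gives $e_*[\OO_X] = s^*[\OO_Z]$, and the problem reduces to computing $[\OO_Z] \in K^\circ(H) \cong K^\circ(Y)$ via homotopy invariance.

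For the universal computation I would use the standard resolution. Let $q : \PP(E) \to Y$ be the projectivization, with tautological line $\OO(-1) \subseteq q^*E$. The locus $\tilde Z \subseteq H_{\PP(E)}$ where $\OO(-1) \to q^*E \to q^*F$ vanishes is the zero locus of a regular section of the rank $r_2$ bundle $\OO(1)\otimes F$. Its Koszul resolution therefore gives
\[ [\OO_{\tilde Z}] = \lambda_{-1}\bigl((F\otimes\OO(1))^\vee\bigr) = \sum_i (-1)^i [\wedge^i F^\vee]\,[\OO(-i)]. \]
The map $\tilde Z \to Z$ is a resolution of a variety with rational singularities (determinantal varieties have them, Kempf), so $\pi_*\OO_{\tilde Z} = \OO_Z$ with no higher direct images. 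Hence $[\OO_Z] = q_*\lambda_{-1}(F^\vee\otimes\OO(-1))$.

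It remains to compute $q_*[\OO(-i)]$ on $\PP(E)$ for $0 \le i \le r_2$. For $0 \le i \le r_1-1$ we get $q_*\OO(-i) = \OO$ when $i=0$ and $0$ otherwise. For $i \ge r_1$, Serre duality with $\omega_q = \OO(-r_1)\otimes q^*\det E$ gives $R^{r_1-1}q_*\OO(-i) = (\det E)\otimes \operatorname{Sym}^{i-r_1}E$ up to dualization conventions. Assembling these terms yields $[\OO_Y]$ minus $[\det E]$ times a sum of $\wedge^i F^\vee \cdot \operatorname{Sym}^{i-r_1}$. This should match the coefficient of $t^{r_1}$ in $\lambda_t(F^\vee)/\lambda_{t^{-1}}(E)$, after using $1/\lambda_{-u}(E) = \sum \operatorname{Sym}^k(E) u^k$ and tracking signs and duals.

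The main obstacle is this generating-function bookkeeping: getting $E$ versus $E^\vee$, the signs, and the $t^{-1}$ convention to match Definition \ref{def:stackygrothpoly} exactly. I would check it first in the case $r_1 = 1$, where $X$ is the zero locus of a section of $F\otimes E^\vee$ and the answer must equal the Koszul class. A secondary issue is that all steps are on stacks. But each ingredient (Koszul complexes, projective bundle pushforward, rational singularities, Tor-independence) is smooth-local on $Y$, so it descends. Lemma \ref{lem:tildeGwelldefKtheory} ensures the final expression is well defined.
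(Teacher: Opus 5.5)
Your proposal is correct, but it computes the universal class by a different route than the paper.

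\textbf{How the routes differ.} The paper never desingularizes. It quotes the $\GL$-equivariant Eagon--Northcott/Buchsbaum--Rim-type minimal free resolution of the universal degeneracy locus (Theorem~\ref{thm:jerzyweymanresolution}) and reads off its alternating sum. It then uses acyclicity of determinantal complexes (perfection transfer) to see that the pulled-back complex resolves $\OO_X$ on $Y$. You instead push the Koszul class of $\tilde Z$ down along the collapsing $\tilde Z \to Z$ and the projective bundle $\PP(E)$.

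\textbf{Your bookkeeping closes up.} With $\PP(E)$ the bundle of lines, $\omega_q = \OO(-r_1)\otimes q^*\det E^\vee$, hence $R^{r_1-1}q_*\OO(-i)\cong \det E\otimes D_{i-r_1}E$. Writing $i=r_1+j-1$, the sign $(-1)^{i}(-1)^{r_1-1}=(-1)^j$ reproduces term by term $[\OO]+\sum_{j\geq 1}(-1)^j[\wedge^{r_1+j-1}F^\vee][\det E][D_{j-1}E]$. This is exactly the alternating sum of the complex the paper cites, which is unsurprising since your computation is essentially how Weyman's geometric method produces it.

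\textbf{Kempf's theorem is not needed.} You would otherwise have to justify rational singularities over $\ZZ$, since the paper works in mixed characteristic. Your own computation shows that $Rq_*$ of the Koszul complex has cohomology only in degrees $\leq 0$: the $i=0$ term sits in degree $0$, and the terms with $i\geq r_1$ sit in degree $r_1-1-i\leq -1$. Since $R\pi_*\OO_{\tilde Z}$ lives in degrees $\geq 0$, the higher direct images vanish for degree reasons. Moreover $\pi_*\OO_{\tilde Z}$ is a cyclic quotient of $\OO_H$. It equals $\OO_Z$, which can be checked in the universal case, where $\tilde Z$ is integral and the maximal-minor locus is reduced.

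\textbf{Two smaller points.} First, your Tor-vanishing step (``$Z$ is Cohen--Macaulay plus a dimension count'') uses that $Y$ is Cohen--Macaulay. That is fine under the smoothness hypothesis of Corollary~\ref{cor:thomporteousKthytildeGdegenlocusformula}. If only the grade of $I_{X/Y}$ is controlled, you need perfection transfer for the perfect module $\OO_Z$, as the paper uses. Second, drop the appeal to $K^\circ(H)\cong K^\circ(Y)$. It can fail for singular $Y$, and you do not need it: you exhibit $[\OO_Z]$ as an explicit pullback from $Y$, and $s^*p^*=\mathrm{id}$.

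\textbf{What each route buys.} The paper's route gives an honest finite locally free resolution of $\OO_X$ on the stack $Y$. So the identity holds directly in the Grothendieck group of vector bundles, with no discussion of perfect complexes on stacks. Your route derives the formula from first principles (Koszul complex plus projective-bundle pushforward) and adapts to other degeneracy loci via Grassmannian bundles. The cost is that it a priori lands in $K_0$ of perfect complexes.
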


\begin{remark}
    Versions of Theorem \ref{introthm:thomporteous} for the $K$-theory of schemes 
    have been developed by Buch \cite{andersbuchthomporteous}, Anderson \cite{AndersonKtheoreticdegeneracyloci}, Hudson--Ikeda--Matsumura--Naruse \cite{Hudson_Ikeda_Matsumura_Naruse}, and Tamvakis—Wilson \cite{Tamvakis_Wilson_Giambelliformulas}.
    These expressions are formulated as infinite sums that truncate on schemes but diverge on stacks, as elements of virtual rank zero need not be nilpotent.  

    We instead use an explicit finite, locally free resolution of the structure sheaf valid for stacks and in mixed characteristics due to Eagon--Northcott, Buchsbaum--Rim, Eisenbud, and Weyman. See \cite[\S A.2.6.1]{commutativealgebraEisenbud1995} for historical context on the resolution. Section \ref{s:thomporteous} uses this resolution to prove Theorem \ref{introthm:thomporteous}, as foreshadowed by \cite[Remark 1.2]{AndersonKtheoreticdegeneracyloci}.
\end{remark}

\subsection{Formulas for logarithmic double ramification cycles}
We use the above results to compute the logarithmic $K$-theoretic double ramification cycle. Fix a compactified Jacobian $\mathcal{J} \to \Ms$ as in Definition~\ref{d:2.20}. It is a proper algebraic stack which contains the stack of multidegree-zero line bundles $\pic^{[0]}_{C/S}$ on a family of curves $C/S$ as a dense open. Here we assume that $g>0$, otherwise $\cal J \to \Ms$ is an isomorphism and $\lvir{\pDR}$ is just its fundamental class. 

\begin{theorem}[{=Theorem \ref{thm:rthrootGrothendieckdegenlocusDR} + Proposition \ref{prop:drclassgrothpoly}}]\label{introthm:degeneracylocuszerolocus}   
    Write $\pi': \mathscr{C}' \to \mathcal{J}$ is the universal curve (quasistable model, cf.\ Definition~\ref{def:rootjacquasistable}) and $\mathscr{F} \to \mathscr{C}'$ is the universal line bundle. The structure sheaf of the zero section $e : \Ms \to {\cal J}$ is the value of the operator $\tilde G$ on the $K$-theoretic class of the complex $[R\pi'_* {\scr F}] = [R^0 \pi'_*{\scr F}] - [R^1 \pi'_* {\scr F}]$ 
    \[
        [e] = e_*[\OO_{\Ms}] = \tilde G([R\pi'_* {\scr F}]) \qquad \in K_\circ({\cal J}).
    \]
    As a result, the $K$-theoretic log DR class can be expressed as a class of degeneracy locus
    \begin{align*}
        \lvir{\pDR} &= \aj_{\vec a}^! \tilde G(R\pi'_*{\scr F})         
                \qquad \in K_\circ(\tilde \Ms) 
    \end{align*}
    on the log alteration $\Ms \times_{\aj_{\vec a}, \lp}^\ell \cal J \to \Ms$. 
\end{theorem}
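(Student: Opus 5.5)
The plan is to exhibit the zero section $e(\Ms)\subseteq\mathcal J$ as the degeneracy locus of a two-term complex of vector bundles representing $R\pi'_*\scr F$, and then apply Theorem~\ref{introthm:thomporteous}.

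\textbf{Step 1: resolving $R\pi'_*\scr F$.} After twisting by a sufficiently relatively ample divisor $D$ on $\mathscr C'$, pulled back from markings or a relative polarization (possibly after passing to an étale cover, which $K$-theory of stacks tolerates), the standard construction gives
\[
R\pi'_*\scr F\simeq [E\to F],\qquad E=\pi'_*\scr F(D),\qquad F=\pi'_*\bigl(\scr F(D)|_D\bigr).
\]
Here $E$ and $F$ are vector bundles of ranks $r_1=\deg D+1-g$ and $r_2=\deg D$. The kernel of $E\to F$ computes $R^0\pi'_*\scr F$ after base change, and the cokernel computes $R^1$. So $[R\pi'_*\scr F]=[E]-[F]$ in $K^\circ(\mathcal J)$, and $r_2-r_1+1=g$. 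Since $\tilde G$ depends only on the $K$-class by Lemma~\ref{lem:tildeGwelldefKtheory}, the choice of $D$ does not matter.

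\textbf{Step 2: identifying the degeneracy locus.} On a fiber, a multidegree-zero line bundle on a quasistable curve $C'$ has a nonzero section iff it is trivial. For the compactified Jacobian of Definition~\ref{d:2.20}, one checks that the quasistable sheaves with $h^0\neq 0$ are exactly those that are trivial on the stable model with exceptional components contracted. So the locus where $E\to F$ fails to be injective is set-theoretically $e(\Ms)$.

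To get the scheme structure, I would compute the Fitting ideal of the cokernel ($\det$ of the maximal minors) infinitesimally at a point of $e(\Ms)$. There the deformation map is the tangent of $\mathcal J$ in the fiber direction, namely $H^1(C,\OO)$, of dimension $g$. The degeneracy locus should therefore be cut out by $g$ equations transverse to the smooth section $e$. Since $\mathcal J\to\Ms$ is smooth along $e$ of relative dimension $g$, this makes $e$ a regular immersion of the expected codimension $g$, and the degeneracy-locus scheme equals $e(\Ms)$.

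\textbf{Step 3: concluding.} Theorem~\ref{introthm:thomporteous} now gives $e_*[\OO_{\Ms}]=\tilde G(E-F)=\tilde G([R\pi'_*\scr F])$. Pulling back along $\aj_{\vec a}$ on the log alteration $\Ms\times^\ell_{\aj_{\vec a},\lp}\mathcal J$, and using the definition of $\lvir{\pDR}$ as $\aj_{\vec a}^![e]$ (the $K$-theoretic analogue of Definition~\ref{def:K_DR}, equivalent by Corollary~\ref{cor:samelogPOT}), gives the displayed formula. Gysin pullback commutes with $\tilde G$ because $\tilde G$ is a polynomial in $\lambda$-operations of bundles and these pull back. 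For the same reason, $\aj_{\vec a}^!\tilde G(R\pi'_*\scr F)$ can be computed as $\tilde G$ of the pulled-back complex, which is $R\pi_*$ of $\OO(\sum a_ip_i)$ twisted appropriately on the quasistable model.

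\textbf{Main obstacle.} I expect Step 2 to be the hardest: the scheme-theoretic equality of the Fitting degeneracy locus with $e$, including along the boundary of $\mathcal J$ where sheaves on exceptional components are involved. There I would first verify that the first-order deformation map $H^0$-obstruction $\to H^1(C',\OO)$ is an isomorphism, for example via the Koszul/Eagon--Northcott local description, and reduce to the smooth-curve case by a local-on-$\Ms$ argument.
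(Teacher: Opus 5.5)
This is essentially the paper's argument: resolve $R\pi'_*\scr F$ by $\pi'_*\scr F(D)\to\pi'_*\scr F_D(D)$ with $r_2-r_1+1=g$, identify the zero section with the pullback of the universal degeneracy locus $W$, apply Corollary~\ref{cor:thomporteousKthytildeGdegenlocusformula}, and pull back along $\aj_{\vec a}$ using $\lvir{\D}=\aj_{\vec a}^!\,e_*[\OO_{\Ms}]$. The one difference is Step 2: the paper cites \cite[Lemma 4.4]{chiodoholmesDRroots} for the scheme-theoretic identification, whereas your argument proves it, since the fiberwise derivative $H^1(C,\OO)\to\Hom(H^0(C,\OO),H^1(C,\OO))$ is cup product with $1$ and hence an isomorphism; your boundary worry is moot, because a trivial admissible $\scr F$ forces $C'=C$, so $e(\Ms)$ lies in the open $\pic^{[0]}$ and Definition~\ref{d:2.20} already gives the set-theoretic statement.

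One caution: drop the parenthetical about \'etale covers, because a $K$-theoretic identity checked on an \'etale cover does not descend to the stack, so, like the paper, you need a globally defined $D$ (or a global two-term resolution) on ${\scr C}'$.
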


In Theorem \ref{thm:rthrootGrothendieckdegenlocusDR} we generalize Theorem \ref{introthm:degeneracylocuszerolocus} to triviality loci of $r$th roots of line bundles. This gives a precursor to Pixton's formula in log $K$-theory spelled out in \S \ref{sss:kthypixtonkamyarlastmorsel}
which is parallel to the one in Chow \cite[Main Theorem 2]{chiodoholmesDRroots}. The $r$-independence is a feature of log geometry.

\subsection{Novelties and challenges}

The technical core of this work builds upon the strategies in \cite{drmultbchow}, \cite{holmesschwarzGLrinvarianceprodfmla} and \cite{chiodoholmesDRroots} within the setting of Chow or ordinary cohomology. 
Adapting them to the present context leads to several technical challenges and novelties. We highlight a few below.

\begin{enumerate}
\item While various definitions of the double ramification space $\pDR$, its perfect obstruction theories, and its virtual class $[\pDR]^{\rm vir}$ exist across the literature, their equivalences are not immediately apparent. This is addressed in this work.
\item Working with refined classes in \emph{colimit} log $K$-theory is not only possible but essential for the \emph{log product formula} in $K$-theory. This stems from the fact that there is no ring structure on \emph{limit} log $K$-theory due to the absence of well-defined product map $\Kl(X) \times \Kl(Y) \dashrightarrow \Kl(X \times Y)$.
\item The Artin fan is not functorial in general \cite[5.4.1]{skeletonsfansabramchenmarcusulrischwise}. However, the construction of both colimit log $K$-theory and colimit Chow theory relies on specific, bounded functoriality properties of Artin fans. We provide the proof for these required properties in Appendix~\ref{Appendix A}.
\item Existing $K$-theoretic Thom–Porteous formulas in the literature do not admit a straightforward generalization to algebraic stacks. Versions of Theorem~\ref{introthm:thomporteous} for the $K$-theory of schemes have been developed by Buch \cite{andersbuchthomporteous}, Anderson \cite{AndersonKtheoreticdegeneracyloci}, Hudson–Ikeda–Matsumura–Naruse \cite{Hudson_Ikeda_Matsumura_Naruse}, and Tamvakis–Wilson \cite{Tamvakis_Wilson_Giambelliformulas}. 

These expressions are formulated as infinite sums which naturally truncate on schemes. However, they could diverge in the $K$-theory of algebraic stacks because elements of virtual rank zero are not necessarily nilpotent.
To overcome this issue, we instead employ an explicit, finite, locally free resolution of the structure sheaf that remains valid for stacks and in mixed characteristics. This resolution is rooted in the classical work of Eagon--Northcott \cite{eagonnorthcottoriginal}, Buchsbaum–-Rim, Eisenbud, Bruns--Vetter \cite{brunsvetteroriginal}, and Weyman (see \cite[\S A.2.6.1]{commutativealgebraEisenbud1995} for historical context). 
Anderson \cite[Remark 1.2]{AndersonKtheoreticdegeneracyloci} foreshadowed the utility of such a resolution, which we use to prove Theorem \ref{introthm:thomporteous} in Section \ref{s:thomporteous}. 
\end{enumerate}

One remaining open technical challenge is the \emph{consistent} utilization of the logarithmic Picard functor $\logpic$ \cite{logpic}, as many of its foundational properties remain undeveloped. While a fully realized theory of $\logpic$ would offer a more uniform approach to the subject, the current lack of a robust Brill--Noether theory on $\logpic$ prevents its full implementation here. Consequently, in \S\ref{s:5} we follow existing literature by resorting to birational models of \logpic, i.e., compactified Jacobians.

\subsection{Why (quantum) $K$-theory?}\label{ss:whykthy?}

Quantum $K$-theory counts geometric curves in a target variety just like quantum cohomology, but in a refined way \cite{giventalquantumKtheory,quantumkthyypleethesis} :

\begin{itemize}
    \item Refined Enumerative Geometry: Standard Gromov-Witten theory (quantum cohomology) counts how many algebraic curves of a certain degree pass through given constraints. Quantum $K$-theory doesn't just count them; it computes the Euler characteristics of coherent sheaves on the moduli spaces of these curves.

    \item Representation Theory: Quantum $K$-theory groups often carry actions of large algebraic structures, like quantum groups or affine algebras. Mathematicians use it to solve hard problems in representation theory by translating them into geometric questions about flag manifolds and other quiver varieties. Pushforward along torsors and gerbes in $K$-theory also contains representation-theoretic information, so the virtual structure sheaves often do as well.  
\end{itemize}

Quantum $K$-theory also has its own version of mirror symmetry coming from physics. For theoretical physicists, quantum $K$-theory naturally arises in string theory and gauge theory when studying the quantum mechanics of universes with extra dimensions. 
Its influence on theoretical physics is largely its relation to 3-dimensional topological field theory. See the pioneering works of N.~Nekrasov, H.~Jockers, P.~Mayr etc.\ (\cite{jockerspeterquantumkphysics}, \cite{jockerspeterquantumkphysics2}, and references therein.) 
\begin{itemize}
    \item Wilson Loops and Gauge Theories: In $3D$ supersymmetric gauge theories, physicists study line operators called Wilson loops (see e.g. \cite{mihalceawilsonlinestuff},\cite{wilsonloopjockersmayr}). The algebra of these Wilson loops, when wrapped around certain spaces, is captured by the quantum $K$-theory of the Higgs branch of the theory.

    \item String Theory and $D$-Branes: In string theory, $D$-branes are physical objects where open strings can end. It was observed by Witten \cite{wittendbranes} that the ``charges'' of these $D$-branes are not measured by standard cohomology, but by $K$-theory. Quantum $K$-theory helps physicists calculate the quantum corrections (from worldsheet instantons) to these $D$-brane states.

    \item Quantum Integrable Systems: Quantum $K$-theory rings are isomorphic to the state spaces of certain quantum integrable systems (like the $XXZ$ spin chain model, discussed in e.g. \cite{xxzreference}). It allows physicists to use geometry to find exact solutions for complex, interacting quantum systems.
\end{itemize}

As with other virtual fundamental classes, the $K$-theoretic DR classes are in general
different from %that 
those in Chow. Even though there are Grothendieck--Riemann--Roch isomorphisms $K_\circ(X)_\QQ \simeq CH_*(X)_\QQ$ for a smooth, projective scheme $X$ 
\cite{Fultonintersectiontheory}, 
the (virtual) structure sheaves $[\OO_S] \in K_\circ(X)_\QQ$ of subschemes $S \subseteq X$ do not in general map to (virtual) fundamental classes in Chow $[S] \in CH_*(X)$.
These problems worsen for nice algebraic stacks $X$, where Grothendieck--Riemann--Roch \cite{KawasakiRR,toenGRR,edidingrahamequivariantGRR,Joshua2003} identifies $K_\circ(X)_\CC$ with the Chow groups of the inertia stack $IX$ instead of $CH_*(X)_\CC$.

\subsection{Background and Conventions}
\subsubsection*{Background}
This work assumes only a basic understanding of $K$-theory, such as the introductory material found in the opening pages of Fulton--Lang's book \textit{Riemann--Roch Algebra} \cite{RiemannRochAlgebra}. On the other hand, we utilize more sophisticated tools from logarithmic geometry. Familiarity with log schemes at the level of \cite{katooriginal, ogusloggeom} is assumed throughout the exposition. %We outline some of the basics in Appendix \ref{Appendix B}. 
To assist the reader, we have summarized the necessary definitions, terminology, and fundamental results in Appendix~\ref{Appendix B}.

\subsubsection*{Conventions}

We work exclusively with \emph{f.s.} (log) schemes and stacks locally of finite type over $\Spec \ZZ$. The stack $\Log$ therefore parameterizes \emph{f.s.} log structures (denoted $\cal T or$ in \cite{logstacks}). When we consider $r$th roots of line bundles in Section \S \ref{s:5}, we work over $\Spec \ZZ[\frac{1}{r}]$. 
Any log scheme $S$ has a short exact sequence of sheaves on its small strict-\'etale site: %\david{which etale site? Small strict?}
\[
1 \to \OO^*_S \to \gp M_S \to \gp{\bar M}_S \to 0.
\]
Write $\Glog$ and $\Gtrop$ for the sheaves on the big strict-\'etale site of log schemes: 
\[
\Glog: S \mapsto \Gamma(S, \gp M_S),
\]
\[
\Gtrop: S \mapsto \Gamma(S, \gp{\bar M}_S).
\]

Sections $X \to \Gtrop$ are called ``piecewise linear functions.'' They can be interpreted as piecewise linear functions on the tropicalization of $X$ via Lemma \ref{lem:charmonPLfns}. If $X/S$ is a family of curves with marked point $p : S \to X$ at which $\gp{\bar M}_{X/S, p} = \ZZ$, the restriction of a piecewise linear function $\alpha$ on $X$ to $\gp{\bar M}_{X/S, p}$ is called the \emph{slope} of $\alpha$ at $p$. 

We will often consider vectors $\vec a \in \ZZ^n$ of integers which sum to zero $\sum a_i = 0$, as well as $n \times r$-matrices $A = (a_{ij})$ whose columns each sum to zero $\sum_i a_{ik} = 0$. Write $\ZZ^n_0 = \ker(+ : \ZZ^n \to \ZZ)$ for the set of vectors in $\ZZ^n$ whose coordinates sum to zero. 
Write $\underline G = \underline G_S$ for the constant sheaf with value $G$ on %a scheme 
$S$. If $G$ is a sheaf of groups acting on a sheaf $H$ in a topos $X$ and $P$ is a $G$-torsor on $X$, write 
% $G \to H$ is a morphism of sheaves of groups on %a scheme
% $S$ and $P$ is a $G$-torsor, write 
\[
    P \times^G H \coloneqq \pra{P \times H}/G
\]
quotient by the antidiagonal action of $G$. For example, $G \to H$ could be a morphism of sheaves of groups. 

We write $K_\circ(X)$ for the Grothendieck group of coherent sheaves (a.k.a.\ ``$G$-theory'') on an algebraic stack $X$ and $K^\circ(X)$ for that of locally free sheaves.

\subsection{Glossary of notation}
\,

\vspace{10pt}

\setlength{\LTpre}{0pt}
\setlength{\LTpost}{0pt}
\begin{longtable}{>{\raggedright\arraybackslash}p{0.22\textwidth}
                  >{\raggedright\arraybackslash}p{0.58\textwidth}
                  >{\raggedright\arraybackslash}p{0.18\textwidth}}

\textbf{Notation} & \textbf{Meaning} & \textbf{Where introduced} \\
\hline
\endfirsthead

\textbf{Notation} & \textbf{Meaning} & \textbf{Where introduced} \\
\hline
\endhead

\hline
\endfoot

\multicolumn{3}{l}{\emph{General conventions}}\\
\hline

$g,n$ &
Genus and number of marked points &
Throughout \\

$\vec a=(a_1,\dots,a_n)$ &
Integer vector with $\sum_i a_i=0$ (contact orders) &
Introduction; \S\ref{s:dr} \\

$\ZZ^n_0$ &
Kernel of $\ZZ^n \xrightarrow{+} \ZZ$, i.e.\ vectors summing to zero &
Conventions \\

$A=(a_{ij})$ &
$n\times r$ integer matrix with each column summing to zero &
\S\ref{higherranksection} \\

\multicolumn{3}{l}{\emph{Moduli of curves and log geometry}}\\
\hline

$\Ms$ &
Moduli stack of stable curves &
Throughout \\

$\Mp$ &
Moduli stack of prestable curves &
\S\ref{s:dr} \\

$C/S$ &
A (log) prestable curve over a log scheme $S$ &
Throughout \\

$p_i$ &
Marked sections of a curve &
Throughout \\

$M_X$ &
Log structure on a scheme or stack $X$ &
Appendix~\ref{Appendix B} \\

$\bar M_X$ &
Characteristic monoid $M_X/\OO_X^\times$ &
Appendix~\ref{Appendix B} \\

$\gp P, \gp M_X, \gp{\bar M}_X$ &
Associated group of a monoid or a log structure &
Conventions \\

$\vert C$ &
Vertical log curve (log structure only at nodes) &
\S\ref{s:dr} \\

$\af{X}$ &
Artin fan of a log stack $X$ &
Appendix~\ref{Appendix B} \\

$\Glog$ &
Sheaf $S\mapsto \Gamma(S,\gp M_S)$ &
Conventions \\

$\Gtrop$ &
Sheaf $S\mapsto \Gamma(S,\gp{\bar M}_S)$ &
Conventions \\

\multicolumn{3}{l}{\emph{Double ramification loci}}\\
\hline

$\pDR \subseteq \pDRp$ &
Pre/stable log double ramification space &
Definition in \S\ref{s:dr} \\

$\DR \subseteq \DRp$ &
Log blowup of the log DR space by a log algebraic stack &
Definition~\ref{def:tDR} \\

$\D \subseteq \Dp$ &
Fiber product defining the pre/stable log DR locus via Abel--Jacobi map &
Definition~\ref{def:DAJkernel} \\

$\DRA,\pDRA$ &
Higher-rank DR spaces associated to a matrix $A$ &
\S\ref{higherranksection} \\

\multicolumn{3}{l}{\emph{Abel--Jacobi maps and Picard stacks}}\\
\hline

$\aj_{\vec a}$ &
Log Abel--Jacobi map with contact order $\vec a$ &
Definition~\ref{def:logAJmap} \\

$e=\aj_{\vec 0}$ &
Zero section of the log Picard stack &
Definition~\ref{def:logAJmap} \\

$\LP$ &
Log Picard stack of the universal vertical curve &
\S\ref{def:LP} \\

$\lp$ &
Log Picard sheaf (coarse version of $\LP$) &
\S\ref{def:LP} \\

$\cal J$ &
Compactified Jacobian (log alteration of $\lp$) &
\S\ref{rmk:compactjacobiansexist} \\

$\scr F$ &
Universal admissible line bundle on $\cal J$ &
\S\ref{s:5} \\

$\pi' : {\cal C}' \to {\cal J}$ &
Universal (quasistable) curve over $\cal J$ &
\S\ref{s:5} \\

\multicolumn{3}{l}{\emph{$K$-theory, obstruction theories, and virtual classes}}\\
\hline

$K_\circ(X)$ &
Grothendieck group of coherent sheaves on $X$ &
Background \\

$\Kl(X)$ &
Log $K$-theory: \emph{limit} over log alterations &
Definition~\ref{def:colimitlimitlogKtheory} \\

$\cKl(X)$ &
Log $K$-theory: \emph{colimit} over log alterations &
Definition~\ref{def:colimitlimitlogKtheory} \\

$\CHl(X)$ &
Log Chow groups: limit over log alterations &
\S\ref{ss:logvfcdoubleramificationdef} \\

$\mathbb E$, $\mathbb E^\vee$ &
Hodge bundle and its dual &
\S\ref{ss:DRlogpothodgebundle} \\

$\lvir{X}$ &
Log virtual structure sheaf / virtual class &
\S\ref{ss:logvfcdoubleramificationdef} \\

$e^{\mathrm{log},!}$ &
Log Gysin pullback along the zero section &
\S\ref{s:5} \\

$\Delta^{\mathrm{log},!}$ &
Log Gysin pullback along the diagonal &
Theorem~\ref{t:product} \\

\multicolumn{3}{l}{\emph{Grothendieck polynomials and degeneracy loci}}\\
\hline

$\tilde G(-)$ &
Operator on $K$-theory &
Definition \ref{def:stackygrothpoly} \\

$W \subseteq \bar {\rm Mat}_{r_2 \times r_1}$ &
Universal degeneracy locus in a matrix stack &
Remark~\ref{rmk:BrillNoetherdegenlocus} \\

\multicolumn{3}{l}{\emph{Root stacks and twisted curves}}\\
\hline

$\Ms(r)$ &
Moduli of twisted curves (log root stack of order $r$) &
Definition~\ref{def:twistednodalcurvesrthroot} \\

$\cal J^{1/r}$ &
Stack of $r$th roots of line bundles on $\cal J$ &
Definition~\ref{def:rootjacquasistable} \\

$\scr F^{1/r}$ &
Universal $r$th root line bundle &
Definition~\ref{def:rootjacquasistable} \\

$E^{1/r} \subseteq {\cal J}^{1/r}$ &
Zero locus where $\scr F^{1/r}$ is trivial &
Definition~\ref{def:rootjacquasistable} \\

\end{longtable}

\subsection*{Acknowledgments}

Many of the ideas in the present paper are based on conversations with Jonathan Wise. 
L.H.\ is also grateful to Milo Bechtloff-Weising, Peter Haine, Thibault Poiret, and Pim Spelier for explanations and inspiration. 
We are grateful to Anders Buch, Leonardo Mihalcea, Daniel Orr, and Mark Shimozono for discussions about Grothendieck polynomials. These results were presented at IMJ-PRG and in ICTS-TIFR. We thank A.~Chiodo, P.~Georgieva, C.~Ravi and B.~Sreedhar for inviting us to deliver a talk on this topic.

AI (ChatGPT and Claude) made us aware of the Eagon--Northcott complex, created the glossary, checked for typos, and substantially helped with editing. The proofs of Lemmas \ref{lem:presheafmonoscheme}, \ref{lem:factorthroughpresheafmonoscheme} were refined in collaboration with AI upon noticing a mistake in an earlier version.

\section{The double ramification locus and the Abel-Jacobi map} \label{s:dr}

The \emph{logarithmic double ramification (DR) class/cycle} has multiple definitions in the literature. We establish our preferred formulation before comparing both the resulting moduli spaces and their virtual classes.

\subsection{Log line bundle}

\begin{definition}\label{def:loglinebundle}
    A \emph{log line bundle} $P$ on a log scheme (or log algebraic stack) $X$ is a strict-\'etale $\gp M_X$-torsor, also known as a $\Glog$-torsor. We say that $P$ has \emph{bounded monodromy} if there is a log blowup $\tilde X \to X$ such that the pullback $P|_{\tilde X}$ has trivial associated $\Gtrop$-torsor. Equivalently, this means that $P|_{\tilde{X}}$ is induced from a (classical) line bundle $L$ on $\tilde X$ as in the following diagram
    \[
    \begin{tikzcd}
                &H^1(X, \gp{M}_X) \ar[d]       \\
        H^1(\tilde X, \OO^*_{\tilde X}) \ar[r]       &H^1(\tilde X, \gp M_{\tilde X}) \ar[r]        &H^1(\tilde X, \gp{\bar M}_{\tilde X})       \\
        L \ar[r, mapsto]      &P|_{\tilde X} \ar[r, mapsto]      &0
    \end{tikzcd}    
    \]
\end{definition}

If $L/C$ is a line bundle, we implicitly take its associated log line bundle $L \times^{\OO^*_C} \gp M_{\vert C}$ to arrive at Situation \ref{sit:logsmoothbase}. We adopt the terminology of log curves from \cite{fkatomodulilogcurves}.  We call a log curve $C/B$ \emph{standard} if the stalk of the characteristic monoid $\bar M_C$ at the marked points $p_i$ is rank one over that of the base 
\[
    \bar M_{C, p_i} = \bar M_B \oplus \underline \NN. 
\]
There is another canonical log structure $M_{\vert C}$ on $C$ which agrees with $M_B|_C$ away from the nodes, and which satisfies 
\[
    M_C \simeq M_{\vert C} \oplus_{\OO^*_C} \bigoplus M_{p_i}; 
\]
we call $\vert C = (C, M_{\vert C})$ a \emph{vertical} log curve, or the \emph{verticalization} of $C$. See Appendix \ref{ss:logcurves} for more discussion.

\begin{situation}\label{sit:logsmoothbase}
    Let $B$ be a log smooth log algebraic stack with a prestable family of standard log curves $C/B$ of genus $g$ with $n$ marked points $p_i$ and fix a log line bundle $P$ on $\vert C$. 
\end{situation}

\begin{definition}\label{def:dr}
    Fix $P/C/B$ as in Situation \ref{sit:logsmoothbase}. The \emph{(prestable log) double ramification locus} is the stack $\drp[P]$ over $B \times B\Glog$ whose fiber over an object $Q\colon S \to B \times B\Glog$ is the set of isomorphisms of $\Glog$-torsors $Q|_{C_S} \longsimeq P|_{C_S}$: 
    \begin{equation}\label{eqn:drpoints}
        \left\{
    \begin{tikzcd}
                &\drp[P] \ar[d]       \\
        S \ar[r, "Q", swap] \ar[ur, dashed]       &B \times B\Glog
    \end{tikzcd}
    \right\}    
    \coloneqq 
    \{\text{Isomorphisms } Q|_{C_S} \longsimeq P|_{C_S}\}.
    \end{equation}
    
    Write $\dr[P] \subseteq \drp[P]$ for the open substack on which $C_S/S$ is stable.
\end{definition}

\begin{remark}
    Let $\vec a \in \ZZ^n$ be a vector of integers summing to either zero $\sum a_i = 0$ or some number of the form $\sum a_i = k(2-2g)$. 
    The main example of Situation \ref{sit:logsmoothbase} is the universal standard log curve $C$ on $B = \Ms$ with $P$ the log line bundle associated to 
    \[
        \OO_C(\sum a_i p_i) \quad \text{or}\quad  \omega^k_{C/\Ms} (\sum a_i p_i), 
    \] 
    respectively. The condition that $\OO_C(\sum a_ip_i)$ is trivial as a log line bundle defines a log proper monomorphism $\dr[{\OO_C(\sum a_ip_i)}] \to B$. This subfunctor generically consists of curves $C$ admitting a meromorphic function with poles at $\{p_i\ |\  a_i<0\}$ with multiplicity $-a_i$ and zeros at $\{ p_i \ | \ a_i>0 \}$ with multiplicity $a_i$. Its virtual class is dubbed ``Double Ramification cycle'' due to the prescribed ramification at $0$ and $\infty$. 
    We specialize to this case in \S \ref{ss:triviallinebundle}.

    The condition that $\omega^k(\sum a_i p_i)$ is trivial for $k > 0$ defines the subfunctor of \emph{$k$-canonical divisors} and is discussed in
    \cite{farkaspandharipande-twisteddivisors}. Its virtual fundamental class is known as the ``twisted Double Ramification cycle.''

\end{remark}

\subsubsection{Isomorphisms of $\Glog$-torsors on $C$ vs.\ $\vert C$}

There is a canonical identification 
\[\gp M_{C}/\gp M_{\vert C} = \bigoplus p_{i*} \underline \ZZ,\]
fitting into a short exact sequence 
\begin{equation}\label{eqn:sesmoncurve}
    1 \to \gp M_{\vert C} \to \gp M_C \to \bigoplus p_{i*} \underline \ZZ \to 0.
\end{equation}
This yields a canonical splitting of the characteristic monoid
\begin{equation}\label{eqn:splittingcharmoncurve}
\bar M_C = \bar M_{\vert C} \oplus \bigoplus p_{i*} \underline \NN, 
\end{equation}
but the same global splitting need not exist for $\gp M_C$.

\begin{example}
    Let $L$ be a line bundle on $S$ and let $C = \PP_S(L \oplus \OO_S)$ be the projective completion. There is a pullback square 
    \[
    \begin{tikzcd}
        C \ar[r] \ar[d] \lpbstrict       &\bra{\PP^1/\GG_m} \ar[d]      \\
        S \ar[r]       &B\GG_m.
    \end{tikzcd}    
    \]
    The verticalization $\vert C$ is strict over $S$.
    Let $p \subseteq C$ be the marked point given by the zero section of the line bundle; its underlying scheme is isomorphic to $S$ and $\bar M_p = \bar M_S \oplus \underline \NN$. 

    Let $\alpha \in \Gamma(p, \gp{\bar M}_p) = \Gamma(S, \gp{\bar M}_S) \oplus \ZZ$ be the section $(0, 1)$ generating the factor of $\ZZ$. One may check that $\OO_S(\alpha) \simeq L$. So $M_p \simeq M_S \oplus \underline \NN$ if and only if the line bundle $L/S$ is trivial.
\end{example}

\begin{lemma}\label{lem:Glogtorsorverticalvsstandard}
Let $\vec a \in \ZZ^n$ be a vector and $C/S$ a prestable log curve. 
\begin{enumerate}
    \item Regard $\vec a$ as a constant section of 
    \[\Gamma(C, \bigoplus p_{i*} \underline \ZZ) = \Gamma(S, \underline \ZZ^n).\]
    Its image in $H^1(\vert C, \gp M_{\vert C})$ under the boundary map coming from \eqref{eqn:sesmoncurve} is the $\gp M_{\vert C}$-torsor associated to the line bundle $\OO_C(\sum a_i p_i)$. 

    \item\label{it:CvsCvert} Given a pair of log line bundles $P, P'$ on $\vert C$, an isomorphism $P|_C \longsimeq P'|_C$ of their restrictions along $C \to \vert C$ is equivalent to 
    \begin{itemize}
        \item A section $\vec a \in \Gamma(S, \underline \ZZ^n)$, and
        \item An isomorphism of $P'$ with the twist of $P$ by the $\gp M_{\vert C}$-torsor associated to $\OO_C(\sum a_i p_i)$:
    \[
        \OO_C(\sum a_i p_i) \times^{\OO_C^*} P \longsimeq P'.
    \]
    \end{itemize}

    \item If the isomorphic $\Glog$-torsors $P, P'$ are pulled back from the base $S$ in the previous bullet point, the coordinates of $\vec a$ must sum to zero $\sum a_i = 0$. 
\end{enumerate}
\end{lemma}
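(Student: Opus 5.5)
The plan is to work entirely with the short exact sequence \eqref{eqn:sesmoncurve}, $1 \to \gp M_{\vert C} \to \gp M_C \to \bigoplus p_{i*}\underline\ZZ \to 0$, and its long exact cohomology sequence on the strict-\'etale site of $C$. Pushforward and pullback along the identity map of underlying schemes $C \to \vert C$ lets us view all three sheaves on the same site.

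\emph{Part (1).} The connecting map $\delta: \Gamma(C,\bigoplus p_{i*}\underline\ZZ) \to H^1(\gp M_{\vert C})$ sends $\vec a$ to the torsor of local lifts of $\vec a$ to $\gp M_C$. Near $p_i$ there is a local coordinate: a chart $M_{p_i}\ni t_i$ with $\alpha(t_i)$ a local equation of $p_i$. A local lift of $a_i e_i$ is $t_i^{a_i}$, and any two lifts differ by a section of $\gp M_{\vert C}$. Away from $p_i$ the lift is $1$. On overlaps, $t_i^{a_i}$ is an honest invertible function, so the transition data are $t_i^{a_i}\in\OO^*$. This is exactly the cocycle of $\OO_C(\sum a_ip_i)$ (up to the sign convention, fixed by matching with the example above). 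Since the cocycle lies in $\OO^*\subseteq \gp M_{\vert C}$, the torsor is induced from that line bundle.

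\emph{Part (2).} Restricting along $C\to\vert C$ means forming $\gp M_C\times^{\gp M_{\vert C}} P$. An isomorphism $\phi$ of the $\gp M_C$-torsors $P|_C\simeq P'|_C$ yields, after pushing out along $\gp M_C\to \bigoplus p_{i*}\ZZ$, an automorphism of the trivial torsor, i.e. a section $\vec a\in\Gamma(S,\ZZ^n)$. Because $p_i\cong S$ and the fibers are connected, this section is locally constant on $S$.

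Conversely, $\mathrm{Hom}_{\gp M_C}(P|_C,P'|_C)$ is a torsor under $\mathrm{Isom}$ data. Fixing $\vec a$, the $\gp M_C$-isomorphisms lying over $\vec a$ are identified with $\gp M_{\vert C}$-isomorphisms $T_{\vec a}\times P \to P'$, where $T_{\vec a}$ is the torsor of lifts of $\vec a$. One checks this directly: $\phi$ sends $x\in P$ to an element of $P'|_C$ over the class $\vec a$, and this element is $\lambda\cdot x'$ with $\lambda$ a lift of $\vec a$. The identification is natural and sheaf-theoretic. By part (1), $T_{\vec a}$ is the torsor associated to $\OO_C(\sum a_ip_i)$. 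The main bookkeeping obstacle is this step: making the correspondence between the two descriptions bijective and canonical, including the sign convention.

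\emph{Part (3).} If $P,P'$ are pulled back from $S$, then $P'\otimes P^{-1}$ is pulled back from $S$. Part (2) then says the log line bundle of $\OO_C(\sum a_ip_i)$ is pulled back from $S$. Pass to the $\Gtrop$-level: the associated $\gp{\bar M}_{\vert C}$-torsor of a line bundle is trivial, so in fact we get that $\OO_C(\sum a_ip_i)$ becomes isomorphic to a pullback from $S$ after modifying by elements of $\Gamma(\gp{\bar M}_{\vert C})$, i.e. by twisting by piecewise linear functions.

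To conclude, take degrees on a geometric fiber. Line bundles pulled back from $S$ have total degree $0$. Twisting by a piecewise linear function $\OO(\alpha)$ has total degree $0$ as well, since the slopes along each edge cancel and there are no legs in the vertical curve. Therefore $\sum a_i=\deg\OO_C(\sum a_ip_i)=0$. Verifying the degree-zero claim for $\OO(\alpha)$ when $\alpha$ is a PL function on $\vert C$ is a standard computation, and I would cite it from the appendix on log curves.
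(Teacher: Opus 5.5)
Your proposal is correct and is essentially the paper's argument made explicit. Parts (1) and (2) come from the long exact sequence of \eqref{eqn:sesmoncurve}; the paper first reduces (2) to a trivial torsor by replacing $P'$ with $P'-P$, which your direct identification of the isomorphisms lying over $\vec a$ makes unnecessary. Part (3) is the balancing condition, which you phrase as the vanishing of the total degree of $\OO(\alpha)$ for a piecewise linear function $\alpha$ on $\vert C$.

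One thing to tighten in (3): the description of $\OO_C(\sum a_i p_i)$ as a pullback from $S$ twisted by some $\OO(\alpha)$ holds only after strict-\'etale localization on $S$, where the $\Glog$-torsor from $S$, and in particular its $\Gtrop$-torsor, becomes trivial. Since you compute degrees on geometric fibers anyway, this costs nothing.
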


\begin{proof}
    For the first point, use the long exact sequence associated to \eqref{eqn:sesmoncurve}. For the third point, we apply the balancing condition.

    Replace $P'$ by $P'-P$ and $P$ by $\gp M_{\vert C}$ to reduce the claim \eqref{it:CvsCvert} to the case where $P'$ is trivial. The proof follows from the long exact sequence associated to \eqref{eqn:sesmoncurve}. 
\end{proof}

A point \eqref{eqn:drpoints} is an isomorphism $Q|_{C_S} \longsimeq P|_{C_S}$ of $\Glog$-torsors on the standard log curve $C_S/S$. By Lemma \ref{lem:Glogtorsorverticalvsstandard}, this isomorphism is equivalent to a vector $\vec a \in \ZZ^n_0$ and an isomorphism 
\[
    \OO_C(\sum a_i p_i) \times^{\OO_C^*} Q|_{\vert C_S} \longsimeq P|_{\vert C_S}. 
\]
The choice of vector $\vec a \in \ZZ^n_0$ is locally constant, defining a discrete invariant called the contact order and yielding a decomposition 
\[
    \drp[P] = \bigsqcup_{\vec a \in \ZZ^n_0} \drp.
\]

\subsubsection{The double ramification locus is not a closed substack}\label{ss:drnotclosed}

We conclude with an example showing the proper log monomorphism $\dr \to B$ need not be a closed or open substack, or even a monomorphism on underlying stacks. The example and Figures \ref{fig:blowupstrictdr} and \ref{fig:tropicalAFblowupdr} are identical to those of \cite[\S 5.1]{kthylogprodfmla}, but with a slightly different setup.

Let $D, E = \PP^1$ and let $C_0 = D \cup E$ be the curve obtained by joining these rational curves along two nodes. Let $p \in D$ and $q \in E$ be marked points disjoint from the nodes. Let $s$ be a log geometric point over which $C_0$ is a log curve and write $\ell_1, \ell_2 \in \bar M_s$ for the smoothing parameters of the nodes. For example, $s$ could have the basic log structure $\bar M_s = \NN^2$ generated by free smoothing parameters $\ell_i = e_i$ for the nodes of $C_0$. 

\begin{lemma}\label{lem:DRcomparableedgelengths}
    If $\OO_{C_0}(2p - 2q) \simeq \OO_{C_0}(\alpha)$ is the line bundle associated to a piecewise linear function $\alpha \in \Gamma(C_0, \gp{{\bar M}}_{C_0})$, then the smoothing parameters $\ell_1 = \ell_2$ coincide. 
\end{lemma}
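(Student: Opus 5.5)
We would argue tropically: a piecewise linear function on $C_0$ is determined by its restriction to the tropicalization, and the line bundle $\OO_{C_0}(\alpha)$ restricted to each component has degree given by the outgoing slopes of $\alpha$ at the nodes on that component (the standard formula, cf.\ Lemma \ref{lem:charmonPLfns} and the appendix on log curves). The tropical curve of $C_0$ has two vertices $v_D, v_E$ joined by two edges $e_1, e_2$ of lengths $\ell_1, \ell_2 \in \bar M_s$, together with legs for $p$ at $v_D$ and $q$ at $v_E$. Since $\alpha$ lives in $\gp{\bar M}_{C_0}$, which by the splitting \eqref{eqn:splittingcharmoncurve} decomposes into a vertical part and the marking parts, we may also track slopes along the legs; the twist $\OO_{C_0}(2p-2q)$ corresponds to slopes $2$ and $-2$ on the legs, so (using Lemma \ref{lem:Glogtorsorverticalvsstandard}) the isomorphism with $\OO_{C_0}(\alpha)$ forces the vertical function $\alpha$ to have the prescribed degrees on the components.

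The key steps, in order: first, write $\alpha$ on the tropical curve as values $\alpha(v_D), \alpha(v_E) \in \gp{\bar M}_s$ and integer slopes $m_1, m_2$ along $e_1, e_2$ (oriented from $v_D$ to $v_E$). Continuity/linearity along each edge gives
\[
\alpha(v_E) - \alpha(v_D) = m_1 \ell_1 = m_2 \ell_2 \qquad \in \gp{\bar M}_s.
\]
Second, compute degrees: $\deg \OO_{C_0}(\alpha)|_D = -(m_1+m_2)$ (up to the sign convention), and this must equal $\deg \OO_{C_0}(2p-2q)|_D = 2$, so $m_1 + m_2 = \mp 2$; similarly on $E$. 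Third, exclude the degenerate possibilities: we need $m_1, m_2$ both nonzero and of equal sign, otherwise $m_1\ell_1 = m_2\ell_2$ with $\ell_i$ nonzero elements of the sharp monoid $\bar M_s$ is impossible (a nonzero element of $\bar M_s$ and the negative of one cannot coincide, and $m_i\ell_i=0$ forces $m_i=0$ since $\gp{\bar M}_s$ is torsion free). With $m_1+m_2 = \pm 2$ and both of the same sign and nonzero, the only option is $m_1 = m_2 = \pm 1$, whence $\ell_1 = \ell_2$.

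The main obstacle is the degree step: ruling out $(m_1,m_2)=(2,0)$ or $(3,-1)$ etc.\ purely from degrees is impossible, so one must genuinely use the beyond-degree information. I expect that the degree constraint together with the sharpness argument above suffices, but one should double check whether an actual isomorphism of line bundles (not just degrees) is needed, e.g.\ that $\OO_{C_0}(2p-2q)$ is not trivial on components in a way that allows $m_i=0$; if needed, the case $m_1=m_2=\pm1$ is the only one compatible with both degrees and the equality $m_1\ell_1=m_2\ell_2$, and we would verify the remaining line bundle condition on $D$ and $E$ (degree $\pm 2$ bundles on $\PP^1$ are determined by degree) so no further obstruction arises.
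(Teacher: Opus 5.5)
Your argument is correct and follows the paper's proof: you record $\alpha$ by its values on $D$, $E$ and integer slopes $m_1, m_2$ on the two edges, use $m_1\ell_1 = m_2\ell_2$, get $m_1+m_2=\pm 2$ from the degrees, and use sharpness of $\bar M_s$ (with torsion-freeness of $\gp{\bar M}_s$) to force $m_1, m_2$ to be nonzero of the same sign, so $m_1=m_2=\pm 1$ and $\ell_1=\ell_2$. The hesitation in your last paragraph is unnecessary: that sign argument already rules out $(2,0)$, $(3,-1)$, etc., and since the lemma only asserts a necessary condition, nothing beyond the degrees needs to be checked.
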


\begin{proof}
    The piecewise linear function $\alpha$ corresponds to a pair of values $\alpha_D, \alpha_E \in \gp{{\bar M}}_s$ such that 
    \[
        \alpha_D - \alpha_E = m_1 \ell_1 = m_2 \ell_2
    \]
    for some integers $m_i \in \ZZ$ \cite[Remark 1.2]{loggw}. These integers must have the same sign.  
    
    The line bundle associated to $\alpha$ on $C_0$ has degrees: 
    \[
        \deg_D \OO_{C_0}(\alpha) = m_1 + m_2, \qquad \deg_E \OO_{C_0}(\alpha) = -m_1 - m_2,
    \]
    up to conventions about incoming/outgoing edges. 
    For it to be isomorphic to $\OO_{C_0}(2p-2q)$ we must have $m_1 + m_2 = 2$. As they have the same sign, we must have $m_1 = m_2 = 1$ and then $\ell_1 = \ell_2$. 
\end{proof}

\begin{figure}
    \centering
\begin{tikzpicture}[scale=1.2]
  \coordinate (A) at (-.8,0);
  \coordinate (B) at (3.2,0);
  \coordinate (C) at (4.2,2.6);
  \coordinate (D) at (.2,2.6);

  \draw[thick] (A) -- (B) -- (C) -- (D) -- cycle;

  \draw[thick]
    (2.8,2.6)
    .. controls (2.3,2.2) and (2.2,1.8) .. (2.6,1.55)
    .. controls (3.35,1.1) and (3.35,2.05) .. (2.6,1.55)
    .. controls (1.9,1.15) and (2.0,0.45) .. (2.6,0);

    \filldraw[blue] (2.6, 1.55) circle (.08);
    \node[left] at (2.5, 1.55){$s$};

    \node[left] at ($(A)!.5!(D) + (-.3, .1)$) {$B$};

    \draw ($(A)!0.25!(D)$) -- ($(B)!0.25!(C)$);
    
    \begin{scope}[shift = {(0, 5)}]
        
    \coordinate (A) at (-.8,0);
    \coordinate (B) at (3.2,0);
    \coordinate (C) at (4.2,2.6);
    \coordinate (D) at (.2,2.6);
    
    \draw[thick] (A) -- (B) -- (C) -- (D) -- cycle;

    \draw[thick]
    (2.8,2.6)
    .. controls (2,2.2) and (2.7,1.8) .. (2.6,1.55)
    .. controls (2.7,1.15) and (1.5,0.45) .. (2.6,0);

    \draw[thick, blue]
    (3.05, 2.1)
    .. controls (2.1, 1.6) .. (2.6, .8);
    
    \draw ($(A)!0.25!(D)$) -- ($(B)!0.25!(C)$);
  
    \node[left] at ($(A)!.5!(D) + (-.3, .1)$) {$\tilde B$};
    \draw[->] ($(A)!.5!(B) + (0, -.5)$) to ($(A)!.5!(B) + (0, -2)$);
    \end{scope}
\end{tikzpicture}
    \caption{The blowup $\tilde B \to B = \Ms[1, 2]$ through which $\dr[\OO_C(2p-2q)]$ factors as a strict closed immersion.}
    \label{fig:blowupstrictdr}
\end{figure}

\begin{figure}
\centering
\begin{tikzpicture}[
  scale=1.05,
  >=Latex,
  cone/.style={fill=blue!10, draw=none},
  axis/.style={->, thick},
  purplearrow/.style={->, thick, violet},
  smallpic/.style={thick}
]

\begin{scope}[shift={(4.2,3.75)}]
  \draw[smallpic] (0,0) circle (0.65);
  \draw[smallpic] (0,0.08) circle (0.15);
  \draw[smallpic] (-0.28,-0.3) -- (0.28,-0.3);
  \draw[-] (0,-0.3) -- (0,-.05);

  \node at (-0.35,0.25) {$e_1$};
  \node at (0.15,-0.15) {$t$};
\end{scope}

\begin{scope}[shift={(6.55,3.75)}]
  \draw[smallpic] (0,0) circle (0.65);
  \draw[smallpic] (-0.6,0) -- (-0.35,0);
  \draw[smallpic] (0.35,0) -- (0.6,0);
  \draw[smallpic] (-0.35,0)
    .. controls (-0.15,0.25) and (0.15,0.25) .. (0.35,0)
    .. controls (0.15,-0.15) and (-0.15,-0.15) .. (-0.35,0);

  \node at (0,0.32) {$e_2$};
  \node at (0,-0.28) {$e_1$};
\end{scope}

\begin{scope}[shift={(5.4,1.8)}]
  \fill[cone] (-1.25,0) rectangle (1.25,1.25);

  \draw[axis] (0,0) -- (0,1.35) node[above] {$e_1$};
  \draw[axis] (0,0) -- (1.35,0) node[right] {$e_2$};
  \draw[axis] (0,0) -- (-1.35,0) node[left] {$t$};
  \draw[axis] (0,0) -- (0,-0.35);

  \coordinate (A) at (.7, .7);
  \coordinate (E) at (.3, -.3);
  \draw[<->, dashed] ($(A) + (E)$) -- ($(A) - (E)$);
\end{scope}

\begin{scope}[shift={(.5,1.8)}]
  \fill[cone] (-1.25,0) rectangle (1.25,1.25);

  \draw[axis] (0,0) -- (0,1.35) node[above] {$e_1$};
  \draw[axis] (0,0) -- (1.35,0) node[right] {$e_2$};
  \draw[axis] (0,0) -- (-1.35,0) node[left] {$t$};
  \draw[axis] (0,0) -- (0,-0.35);

  \coordinate (A) at (.7, .7);
  \coordinate (E) at (.3, -.3);
  \draw[<->, dashed] ($(A) + (E)$) -- ($(A) - (E)$);
  \draw[purplearrow] (0,0) -- (1.25,1.25);
\end{scope}

\end{tikzpicture}

\caption{The Artin fans associated to the log blowup $\tilde B \to B$. }
\label{fig:tropicalAFblowupdr}
\end{figure}

We are ready for our example.

\begin{example}
    Suppose $g = 1$, $n = 2$, and $a_1 = - a_2 = 2$. Take $B = \Ms[1, 2]$ with universal curve $C$ and marked points $p, q$ and let $P$ be the log line bundle associated to $\OO_C(2p - 2q)$ on the universal stable curve. Then $\dr = \dr[{\OO_C(2p - 2q)}]$ is the subfunctor of curves $C$ on which $\OO_C(2p - 2q)$ is fiberwise trivial as a log line bundle, meaning that it is associated to a piecewise linear function. 

    The Artin fan of $B$ consists of two cones joined along a ray, depicted in Figure \ref{fig:tropicalAFblowupdr}. One is $\af{}^2$ and the other is the stack quotient $[{\rm Sym}^2 \af{}] = \bra{\af{}^2/\ZZ/2}$. Define a subdivision of $[{\rm Sym}^2 \af{}]$ by descending the diagonal subdivision of $\af{}^2$ at $(1, 1)$ along the $\ZZ/2$ quotient. Let $\tilde B \to B$ be the pullback of this subdivision depicted in Figure \ref{fig:blowupstrictdr}, which is the subfunctor of $B$ on which the unordered pair of lengths parameterized by $[{\rm Sym}^2 \af{}]$ are comparable. 

    We claim $\dr[{\OO_C(2p - 2q)}] \to B$ factors as a strict closed immersion into $\tilde B$. Note that $\tilde B \to B$ is an isomorphism away from a single closed stacky point corresponding to the union of two rational curves along two nodes $C_0 = D \cup E$ considered above. As before, let $s \to B$ be a log geometric point with image this closed stacky point. 

    It suffices to show that if $s \to B$ factors through $\dr[{\OO_C(2p - 2q)}]$, it factors through $\tilde B$. But Lemma \ref{lem:DRcomparableedgelengths} shows the edge lengths are comparable, so $s$ indeed factors through the blowup $\tilde B$. The map $\dr[{\OO_C(2p - 2q)}] \to \tilde B$ is a strict closed immersion because the log structure has a single generator which is pulled back from the exceptional divisor of $\tilde B \to B$. % Some details omitted. 
\end{example}

There is always a log alteration $\tilde B \to B$ such that the f.s.\ pullback of $\dr$ is a strict closed substack \cite[Proposition 3.7]{kthylogprodfmla}.

\subsection{The kernel of the Abel-Jacobi map}

Recall the log Picard group of \cite{logpic}: 
\begin{definition}
    Let $C/S$ be a prestable, \emph{vertical} log curve $C = \vert C$. The \emph{log Picard stack} $\LogPic_{C/S}$ consists of $\Glog$-torsors on $C$ with bounded monodromy. The \emph{log Picard sheaf} $\logpic_{C/S}$ is the sheaf of equivalence classes of $\Glog$-torsors $P$ on $C$ with bounded monodromy, where two torsors  are equivalent $P_1 \sim P_2$ if, locally in $S$, there is a $\Glog$-torsor $Q$ on $S$ and an isomorphism 
    \[
        Q|_C \otimes P_1 \simeq P_2.
    \]
\end{definition}

\begin{remark}\label{rmk:logpicgerbeband}
    The natural map $\LogPic_{C/S} \to \logpic_{C/S}$ is a gerbe banded by $\Glog$. There is a short exact sequence \cite[Theorem D]{logpic} 
    \begin{equation}\label{eqn:logpicses}
        1 \to \pic^{[0]}_{C/S} \to \logpic_{C/S} \to {\rm TroPic}_{C/S} \to 0,
    \end{equation}
    where 
    \begin{itemize}
        \item $\pic^{[0]}_{C/S}$ is the ``multidegree zero'' part of the Picard sheaf where each component of a geometric fiber has degree zero, a semiabelian scheme, and 
        \item ${\rm TroPic}_{C/S}$ is the tropical Picard functor, a tropical abelian variety\footnote{Think of a real torus $\RR^n/\ZZ^n$ or a family of such degenerating over a cone complex.} which contains obstructions to a given log line bundle coming from a genuine line bundle on $C/S$. 
    \end{itemize}
    There are stacky variants of the above which we underline similarly without mention. 
\end{remark}

We are interested in the universal log Picard group, that of the universal log curve $\cal U \to \Ms$. This log curve is not vertical; write $\vert{\cal U} \to \Ms$ for the verticalization of $\cal U$ without the log structure at the marked points. 

\begin{definition}\label{def:LP}
    Let $\LP \coloneqq \LogPic_{\vert{\cal U}/\Ms}$ and $\lp \coloneqq \logpic_{{\vert {\cal U}}/\Ms}$ be the log Picard stack and sheaf of the universal \emph{vertical} curve $\vert{\cal U} \to \Ms$. Write $\LPp, \lpp$ for the analogous constructions for the universal prestable vertical curve $\vert{{({\cal U}^{\rm ps})}}/\Mp$.
\end{definition}

\begin{example}\label{ex:tropicgenusg=0}
    The tropical Jacobian for genus $g = 0$ is the trivial family of abelian varieties ${\rm TroPic}_{\cal U/\Ms[0, n]} = \Ms[0, n]$ by definition \cite[Definition 3.6.1]{logpic}. This is because the tropicalization $\scr X$ of a genus zero curve is a tree and so $H_1(\scr X) = 0$.
    Thus 
    \[
        \lp[0, n] = \pic^{[0]}_{\cal U/\Ms[0, n]} = \Ms[0, n].
    \]
\end{example}

\begin{example}\label{ex:tropicalabelianvarietytropic}
    Take $g = n = 1$ and work over $\ZZ[\frac{1}{2}, \frac{1}{3}]$ so that $\Ms[1, 1] = \PP(4, 6)$
    is the compactification of the $j$-line parameterizing elliptic curves. Then $\lp[1, 1]$ is birational to the universal curve $\Ms[1, 2] \to \Ms[1, 1]$ and isomorphic away from the boundary corresponding to the nodal curve. 

    The tropical Jacobian of the nodal curve $N/S$ (depicted in Figure \ref{fig:tropicalabelianvariety}) comes from the bilinear pairing 
    \[
        \ZZ \times \ZZ \to \gp \NN = \ZZ; \qquad (x, y) \mapsto xy.
    \]
    Choose an element $p \in P$ as a map $\NN \to P$ or a map of affine monoschemes $\Spec P \to \Spec \NN$ \cite{ogusloggeom} and write $\bar P = P/\num{p}$ for the initial sharp monoid under $P$ in which $p$ maps to zero. The tropical Jacobian takes the value 
    \[
        {\rm TroPic}_{N/S}(\Spec P) \coloneqq \dfrac{\ker\pra{\gp P \to \gp {\bar P}}}{\ZZ \cdot p}.
    \]
    On the element $1 \in \RR_{\geq 0}$ for example, we have the quotient 
    \[
        {\rm TroPic}_{N/S}(\Spec {\RR_{\geq 0}}) \coloneqq \dfrac{\RR}{\ZZ} = S^1.
    \]
    This tropical Jacobian is a family of real circles over a monoscheme in general, and is extended to a functor on log schemes in the usual way \cite{tropicalcurvesmodulicones}. The circle can be thought of as a blow-down of the tropicalization of the nodal cubic curve. 
\end{example}

\begin{figure}
    \centering
    \begin{tikzpicture}[scale = .8]
        \node[left] at (-.5, 2){$C'_0$};
        \draw (0, 0) .. controls (1.3, 1.5) .. (1, 3);
        \draw (1, 0) .. controls (-.3, 1.5) .. (0, 3);
        \draw (-.5, 2) .. controls (.5, 2.5) .. (1.5, 2);
        \begin{scope}[shift = {(6, 0)}]
            \coordinate (A) at (1, 2.5);
            \coordinate (B) at (0, .5);
            \coordinate (C) at (2, .5);
            \draw[bend left] (A) to (C);
            \draw[bend right] (A) to (B);
            \draw[bend right] (B) to (C);
            \filldraw (A) circle (.08);
            \filldraw (B) circle (.08);
            \filldraw (C) circle (.08);
        \end{scope}
        \begin{scope}[shift = {(6, -5)}]
            \draw (1, 1.5) circle (1.3);
        \end{scope}
        \node at (.5, -3.5){${{\rm TroPic}_{N/S}}$};

        \draw[->] (.5, -.5) to (.5, -3);
        \draw[->] (7, -.5) to (7, -1.9);
        \draw[->] (2, 1.75) to (5, 1.75);
        \draw[->] (2, -3.5) to (5, -3.5);
    \end{tikzpicture}
    \caption{The tropical abelian variety from Example \ref{ex:tropicalabelianvarietytropic} is the ``blow down'' in log geometry of a nodal cubic or the curve $C'_0$ from Figure \ref{fig:divisorlimit}. }
    \label{fig:tropicalabelianvariety}
\end{figure}

For a line bundle $L$ written as $L = \OO(D)$ or $L = \OO(\alpha)$, we write $\gp M(D)$ or $\gp M(\alpha)$ for the associated $\Glog$-torsor. We can now compactify the Abel-Jacobi map. 

\begin{definition}\label{def:logAJmap}
    Let $\vec a \in \ZZ^n_0$ be a vector of integers which sums to zero $\sum a_i = 0$. Define a map $\aj_{\vec a} : \Ms \to \LP$ by endowing a log curve $C$ with the $\Glog$-torsor
    \begin{equation}\label{eqn:AJloglinebundle}
        \gp M_{\vert C}(\sum a_i p_i) \coloneqq \OO^*_C(\sum a_i p_i) \times^{\OO^*_C} \gp M_{\vert C} \qquad \in H^1(\vert C, \gp M_{\vert C})
    \end{equation}
    associated to the line bundle $\OO_C(\sum a_i p_i)$ on its verticalization $\vert C$. Write $e \coloneqq \aj_{\vec 0} : \Ms \to \LP$ for the trivial section, the map sending a curve $C$ to the trivial log line bundle on $C$. 
\end{definition}

The map $e : \Ms \to \lp$ is strict and factors through the dense open $\pic^{[0]}_{\cal U/\Ms}$.

\begin{remark}[Comparison with usual Abel-Jacobi map]
Recall that the (usual) Abel-Jacobi map is a map 
\[
    \aj_{\vec a}: {\cal M}_{g,n} \rightarrow \Pic, \quad \aj_{\vec a}(C, p_i) = \OO_C(\sum a_ip_i),
\]
where $\Pic$ is the ordinary Picard stack of the universal smooth curve. It does not extend to a multidegree-zero line bundle on $\Ms$. 

Definition~\ref{def:logAJmap} gives a compactified map to $\LP$. We have a diagram
    \[
    \begin{tikzcd} 
        {\cal M}_{g,n} \ar[r] \ar[d] \lpbstrict & \Pic^{[0]} \ar[r] \ar[d] \lpbstrict          &{\cal M}_{g,n} \ar[d] \\
        \Ms \ar[r, "{\aj_{\vec a}}", swap]         &\LP \ar[r]        &\Ms.
    \end{tikzcd}
    \]
This can be seen by the short exact sequence \eqref{eqn:logpicses} and the vanishing of $\underline{\rm TroPic}$ on the locus ${\cal M}_{g,n} \subseteq \Ms$ where the log structure vanishes. 
\end{remark}

\begin{definition}\label{def:DAJkernel}
    Consider $P/C/B$ as in Situation \ref{sit:logsmoothbase}. Define $\dd \subseteq \ddp$ as the f.s.\ pullback 
    \begin{equation}\label{eqn:AJkernel}
    \begin{tikzcd}
        \dd \ar[r] \ar[d] \lpbstrict         &\ddp \ar[r] \ar[d] \lpb        &B \ar[d]         \\
        \Ms \ar[r]         &\Mp \ar[r, "{\aj_{\vec a}}", swap]        &\lpp.
    \end{tikzcd}
    \end{equation}
    In the case $\vec a = \vec 0$, the map $e : \Mp \to \lpp$ factors through ${\rm Pic}^{[0], {\rm ps}}$.
\end{definition}

A map $S \to \ddp$ entails a log line bundle $Q$ on $S$ together with an isomorphism 
\begin{equation}\label{eqn:pbdrloglinebundles}
    \OO_{C_S}(\sum a_i p_i) \times^{\OO_C^*} Q|_{\vert C_S} \longsimeq P|_{\vert C_S}. 
\end{equation}

\begin{remark}
    We can twist $P$ instead of $Q$ in \eqref{eqn:pbdrloglinebundles} to obtain an isomorphism 
    \[
        \ddp[P, \vec a] \longsimeq \ddp[P(-\sum a_i p_i), \vec 0]
    \]
    and reduce to the case of $\vec a = \vec 0$. 
\end{remark}

\begin{remark}\label{rmk:AJBmapGlogtorsordifference}
    The maps $\aj_{\vec a}, B \to \lpp$ in \eqref{eqn:AJkernel} both factor through the $\Glog$-gerbe $\LPp \to \lpp$. We can expand \eqref{eqn:AJkernel} to an f.s.\ cartesian diagram 
    \[
    \begin{tikzcd}
        \ddp \ar[r] \ar[dd] \lpb       &B \times B\Glog \ar[r] \ar[d] \lpb        &B \ar[d]      \\
                &\LPp \times B\Glog \ar[r] \ar[d] \lpb         &\LPp \ar[d]       \\
        \Ms \ar[r]         &\LPp \ar[r]       &\lpp.
    \end{tikzcd}
    \]
    On $S$-points of $\ddp$, the map $\ddp \to \LPp \times B\Glog \to B\Glog$ parameterizes the $\Glog$-torsor $Q$ which satisfies
    \[
        Q|_{\vert C_S}(\sum a_i p_i) \longsimeq P|_{\vert C_S}. 
    \]
    That is, $Q$ on $S$ pulls back to the $\Glog$-torsor of isomorphisms $\gp M_{\vert C_S}(\sum a_i p_i) \longsimeq P|_{\vert C_S}$ along $\vert C_S \to S$. The map $B \times B\Glog \to \LPp \times B\Glog \to \LPp$ sends $(P/C/B, Q)$ to the $\Glog$-torsor of isomorphisms 
    \[
        \underline {\rm Isom}(Q|_{\vert C_S}, P|_{\vert C_S}),
    \]
    which is the difference $P|_{\vert C_S} - Q|_{\vert C_S}$ in the group law on $\Glog$-torsors on $\vert C_S$. 
\end{remark}

\begin{remark}\label{rmk:drpbbaseB}
    One can also factor the vertical map $B \to \lpp$ through $\logpic_{C/B}$ to get a diagram 
    \[
    \begin{tikzcd}
        \Dp[P, \vec a] \ar[r] \ar[d] \lpb         &B \ar[d, "e"]      \\
        B \ar[r] \ar[d] \lpb       &\logpic_{C/B} \ar[r] \ar[d] \lpb      &B \ar[d]      \\
        \Mp \ar[r, "{\aj_{\vec a}}"]         &\lpp \ar[r]        &\Mp.
    \end{tikzcd}
    \]
    The lower left square can be seen to be Cartesian as it is commutative and the outer bottom rectangle is Cartesian. So all squares in the diagram are f.s.\ pullbacks and the upper left corner may be taken as the definition of $\Dp[P, \vec a]$. 
\end{remark}

\begin{corollary}\label{cor:pDRisD}
    There is an isomorphism 
    \[
        \Phi : \drp \longsimeq \ddp
    \]
    over $B \times B\Glog$ which restricts to an isomorphism $\dr \longsimeq \dd$. Each of $\dd, \ddp, \dr, \drp$ is representable by a log algebraic stack. 
\end{corollary}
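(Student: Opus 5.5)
The plan is to compare the functors of points of the two stacks over $B \times B\Glog$. Both are already defined as stacks over this base: $\drp$ by Definition \ref{def:dr}, and $\ddp$ by the f.s.\ cartesian diagram of Remark \ref{rmk:AJBmapGlogtorsordifference}. Since $\ddp$ is an f.s.\ fiber product over $B \times B\Glog$, a lift of $Q\colon S \to B\times B\Glog$ to $\ddp$ amounts to two things. The first is a log curve point $S \to \Mp$. The second is an identification of the two images in $\LPp$, namely $\aj_{\vec a}(C_S)$ and $P|_{\vert C_S} - Q|_{\vert C_S}$. By definition of $\LPp$ as a stack of $\Glog$-torsors on $\vert C$, this identification is an isomorphism of $\Glog$-torsors on $\vert C_S$:
\[
\gp M_{\vert C_S}(\textstyle\sum a_i p_i) \longsimeq \underline{\rm Isom}(Q|_{\vert C_S}, P|_{\vert C_S}).
\]
This is equivalently the isomorphism \eqref{eqn:pbdrloglinebundles}. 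The point $S\to\Mp$ is forced to be the classifying map of $C_S$, because the map $B\to \lpp\to\Mp$ is the one classifying $C/B$.

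On the other side, an $S$-point of $\drp[P]$ over $Q$ is an isomorphism $Q|_{C_S}\longsimeq P|_{C_S}$ on the standard curve. Lemma \ref{lem:Glogtorsorverticalvsstandard}\eqref{it:CvsCvert} identifies it with a pair: a locally constant $\vec a\in\Gamma(S,\underline\ZZ^n)$, which sums to zero by part (3) since $Q$ is pulled back from $S$, together with an isomorphism \eqref{eqn:pbdrloglinebundles} on $\vert C_S$. Restricting to the component of contact order $\vec a$ gives a bijection on $S$-points with $\ddp$. I then check that this bijection is compatible with pullback along $S'\to S$ and with isomorphisms. Both sides are built from restriction of torsors and from the functorial boundary map of \eqref{eqn:sesmoncurve}, so the bijections assemble into an equivalence of fibered categories $\Phi$ over $B\times B\Glog$. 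The restriction statement is immediate: both $\dr$ and $\dd$ are the open loci where $C_S$ is stable. For $\dd$ this holds because $\Ms\subseteq \Mp$ is a strict open.

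For representability, I would work with $\ddp$ using Remark \ref{rmk:drpbbaseB}. It is the f.s.\ pullback of the section $e\colon B\to \logpic_{C/B}$ along a map $B\to\logpic_{C/B}$, or equivalently the f.s.\ fiber product in \eqref{eqn:AJkernel}. Then $\drp$ is representable as well, since $\Phi$ is an isomorphism.

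I expect the main obstacle to be showing that this f.s.\ fiber product is a log algebraic stack, because $\lpp$ and $\logpic$ are not algebraic. I would try the following argument first. The diagonal of $\logpic_{C/B}$ over $B$ is representable by log algebraic spaces; it is in fact a log monomorphism, locally of finite presentation, and becomes a strict closed immersion after a log modification. This is the input from \cite{logpic}, together with \cite[Proposition 3.7]{kthylogprodfmla} as already quoted in the paper. A pullback of the diagonal to the log algebraic stack $B\times_{B} B$ is then representable by log algebraic spaces over $B$, hence log algebraic. The same holds for the open substacks $\dd,\dr$.
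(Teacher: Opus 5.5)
Your proposal is correct and follows essentially the same route as the paper. Lemma \ref{lem:Glogtorsorverticalvsstandard} translates points of $\ddp$, which are isomorphisms \eqref{eqn:pbdrloglinebundles} on $\vert C$, into points of $\drp$; stability is the same open condition on both sides; and representability comes from the fiber product \eqref{eqn:AJkernel} together with representability of the diagonal of the log Picard sheaf \cite[Theorem 4.12.1]{logpic}. Two small blemishes: ``$B\times_B B$'' should be the pullback of the diagonal of $\logpic_{C/B}$ along the pair of maps $B \to \logpic_{C/B}\times_B\logpic_{C/B}$, and your extra claims about that diagonal (monomorphism, strict closed immersion after a log modification) are not needed, since only its representability is used.
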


\begin{proof}
    The stability conditions on the two spaces are the same, so they can be ignored. Write $Q$ for the $\Glog$-torsor corresponding to the map to $B \times B\Glog$. A point of $\ddp$ entails an isomorphism \eqref{eqn:pbdrloglinebundles} on $\vert C$, which Lemma \ref{lem:Glogtorsorverticalvsstandard} equates with an isomorphism \eqref{eqn:drpoints} on $C$. Representability results from the fiber product \eqref{eqn:AJkernel} and representability of the diagonal of $\lpp$ by \cite[Theorem 4.12.1]{logpic}.

\end{proof}

\subsubsection{Compactified Jacobians}

Neither the log Picard stack nor the log Picard sheaf are representable by schemes, algebraic spaces, or algebraic stacks, even those equipped with log structures \cite{logpic}. Instead of a smooth cover by a scheme, they admit \emph{log smooth} covers by a log scheme. Such objects are called ``logarithmic spaces'' or ``logarithmic stacks'' in the literature in parallel with the terms ``algebraic space'' and ``algebraic stack.'' 

We will need specific log alterations ${\cal J} \to \lp$ of the log Picard sheaf called ``compactified Jacobians'' \cite{LogDRCycle}.

\begin{definition}\label{def:quasistablemodeladmissible}
    A \emph{quasi-stable model} for a log curve $C/S $ is a morphism of log curves $f:C' \to C$ such that the geometric fibers of $f$ are points or smooth rational curves, called exceptional curves, meeting the remainder of the curve at 2 points. If $C' \to C$ is a quasi-stable model and $\mathcal{F}$ is a line bundle on $C'$, then $\mathcal{F}$ is \emph{admissible} if it has degree 1 on every exceptional curve.
\end{definition}

\begin{definition}
    Let $\qpic$ be the moduli problem on log schemes $S$ parameterizing:
    \begin{itemize}
        \item A log curve $\pi: C \to S$;
        \item A quasi-stable model $f: C' \to C $ for $\pi$;
        \item An admissible line bundle $\mathcal{F}$ of \emph{total} degree 0 on $C'$.\footnote{Total degree zero means the degrees on each component of a geometric fiber sum to zero. Multidegree zero means furthermore that the degree on each component is zero.} 
    \end{itemize}
\end{definition}

\begin{remark}
    The moduli problem $\qpic$ contains the functor $\pic^{[0]}$ of multidegree zero line bundles of Remark \ref{rmk:logpicgerbeband} as a log \'etale monomorphism. On this locus, the quasistable model $C' \to C$ is the identity and there are no exceptional curves. 
\end{remark}

\begin{definition} \label{d:2.20}
     A \emph{compactified Jacobian} over $\Ms$ is an open substack $\mathcal{J} \subset \qpic$ such that
    \begin{itemize}
        \item $\mathcal{J}$ is proper over $\Ms$,
        \item $\mathcal{J}$ contains the subfunctor $\pic^{[0]}$ of line bundles of multidegree zero of Remark \ref{rmk:logpicgerbeband} as a strict open substack,
        \item If $(C' \to C, \mathcal{F})$ correspond to a geometric point $S \to \cal J$,
        either $\mathcal{F}$ is trivial or $\Gamma(C',\mathcal{F}) = 0$.
    \end{itemize}
\end{definition}

\begin{remark}\label{rmk:compactjacobiansexist}
    A stability condition $\theta$ is a systematic choice of how to allow line bundles to degenerate in families of curves. We extend a stability condition on $C$ to a quasistable model $C' \to C$ by assigning zero to each exceptional curve. If a stability condition $\theta$ is ``small'' and ``nondegenerate,'' the moduli problem of $\theta$-stable line bundles is a compactified Jacobian \cite{Holmes_2018}. In the space of stability conditions, nondegenerate ones are open and dense \cite[Section 5]{KassPaganistability} and small ones are nonempty and open, 
    so there exists a small, nondegenerate stability condition and hence a compactified Jacobian. 

\end{remark}

\begin{figure}
    \centering
    \begin{tikzpicture}
        \draw[dashed] (-.1, 2.15) .. controls (1.75, 2) .. (3.5, 2.4);
        \node[left] at (-.1, 2.15){$D_s$};
        \node[right] at (3.5, 2.4){$D_0$};
        \filldraw[blue] (-.1, 2.15) circle (.08);
        \filldraw[blue] (.89, 2.08) circle (.08);
        \filldraw[blue] (1.8, 2.08) circle (.08);
        \filldraw[blue] (3.5, 2.4) circle (.08);
        \node[left] at (-1, 3){$C$};
        \draw (0, 0) .. controls (.3, 1) and (-.3, 2) .. (0, 3);
        \begin{scope}[shift = {(1, 0)}]
        \draw (0, 0) .. controls (.5, 1) and (-.5, 2) .. (0, 3);
        \end{scope}
        \begin{scope}[shift = {(2, 0)}]
        \draw (0, 0) .. controls (.75, 1) and (-.75, 2) .. (0, 3);
        \end{scope}
        \begin{scope}[shift = {(3, 0)}]
        \node[right] at (1, 3){$C_0$};
        \draw (0, 0) .. controls (1.3, 1.5) .. (0, 3);
        \draw (1, 0) .. controls (-.3, 1.5) .. (1, 3);
        \end{scope}
        \draw[->] (2, -.5) to (2, -1.5);
        \draw (0, -2) to (3.5, -2);
        \filldraw (3.5, -2) circle (.08);
        \node[right] at (3.5, -2){$0$};
        \node[left] at (-1, -2){$S$};

        \draw[->] (7, 1.5) to (5, 1.5);
        
        \begin{scope}[shift = {(8, 0)}]
        \node[right] at (1, 3){$C'_0$};
        \draw (0, 0) .. controls (1.3, 1.5) .. (1, 3);
        \draw (1, 0) .. controls (-.3, 1.5) .. (0, 3);
        \draw (-.5, 2) .. controls (.5, 2.5) .. (1.5, 2);
        \filldraw[blue] (.24, 2.32) circle (.08);
        \node[above] at (.24, 2.32){$D'_0$};
        \end{scope}
    \end{tikzpicture}
    \caption{A family of divisors $D_s$ on a curve $C/S$. These smooth points $D_s \in C_s$ correspond to a line bundle $L_s$ on $C_s$. Taking $D_0 \in C_0$ to be the limit of the points $D_s$, we see it is not a smooth point and does not correspond to a line bundle on $C_0$. Blow up the point $D_0 \in C$ to obtain a quasistable model $C' \coloneqq Bl_{D_0} C \to C$ which admits a limiting divisor $D'_0 \in C'_0$ of the family $D_s$. }
    \label{fig:divisorlimit}
\end{figure}

\subsubsection{Generalization to higher rank}\label{higherranksection}

We generalize $\dr$, and $\dd$ to higher rank. 

\begin{situation}\label{sit:logsmoothbasehigherrank}
    Let $B_r$ be a log smooth log algebraic stack with a prestable standard log curve $C/B_r$ and consider an $r$-tuple of $\Glog$-torsors $\vec P \coloneqq (P_i)_i$ on $\vert C$. 
\end{situation}

\begin{definition}
    Let $B_r$ be as in Situation \ref{sit:logsmoothbasehigherrank}. Write $\drp[\vec P]$ for the moduli problem on log schemes $S$ over $B_r \times B\Glog^r$ defined by $r$-tuples of isomorphisms
    \[
    \left\{
    \begin{tikzcd}
                &\drp[\vec P] \ar[d]       \\
        S \ar[r] \ar[ur, dashed]       &B_r \times B\Glog^r
    \end{tikzcd}
    \right\}
    \coloneqq 
    \{r\text{ isomorphisms } Q_i|_{C_S} \longsimeq P_i|_{C_S}\}.
    \]
    Let $\dr[\vec P] \subseteq \drp[\vec P]$ be the open substack on which $C$ is a stable curve.

    Let $A = (a_{ij}) \in {\rm Mat}_{n \times r}(\ZZ)$ be an $n \times r$ matrix of integers such that the columns sum to zero $\sum_i a_{ik} = 0$. Define the higher-rank Abel-Jacobi map 
    \[
        \aj_A : \Ms \to \LP^{\times_{\Ms} r}
    \]
    to the fiber product of $r$ copies of $\LP$ over $\Ms$ by endowing the verticalization $\vert C$ of a log curve $C$ with $r$ $\Glog$-torsors 
    \[
        \left(\gp M(\sum a_{i1} p_i), \cdots, \gp M(\sum a_{ir} p_i)\right) \qquad \in H^1(\vert C, \gp M_{\vert C}).
    \]
    Let $ \dd[\vec P, A] \subseteq \ddp[\vec P, A]$ be the f.s.\ pullback 
    \[
    \begin{tikzcd}
        \dd[\vec P, A] \ar[r] \ar[d] \lpbstrict     &\ddp[\vec P, A] \ar[r] \ar[d] \lpb       &B_r \ar[d]      \\
        \Ms \ar[r]         &\Mp \ar[r, "{\aj_A}"]        &{\lpp{}^{\times_{\Ms} r}}.
    \end{tikzcd}
    \]

    An $S$-point of $\dd[\vec P, A]$ is a log stable curve $C/S$ with genus $g$ and $n$ marked points such that each log line bundle $P_k (-\sum a_{ik} p_i)$ is pulled back from the base $S$. 
    
\end{definition}

One can prove Corollary \ref{cor:pDRisD} in higher rank as well, which we leave to the reader. 

\begin{proposition}
    There is an isomorphism $\drp[\vec P, A] \longsimeq \ddp[\vec P, A]$ over $B_r \times B\Glog^r$ which identifies the open stable substacks $\dr[\vec P, A] \longsimeq \dd[\vec P, A]$. 
\end{proposition}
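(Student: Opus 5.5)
The plan is to run the rank-one argument of Corollary \ref{cor:pDRisD} componentwise, after checking that every construction in the definition of $\drp[\vec P, A]$ and $\ddp[\vec P, A]$ factors as an $r$-fold fiber product over $B_r$.

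\emph{Step 1: the prestable DR side.} By definition, an $S$-point of $\drp[\vec P]$ over $B_r \times B\Glog^r$ is an $r$-tuple of independent isomorphisms $Q_k|_{C_S} \longsimeq P_k|_{C_S}$. Hence
\[
\drp[\vec P] \simeq \drp[P_1] \times_{B_r} \cdots \times_{B_r} \drp[P_r],
\]
where each factor lives over $B_r \times B\Glog$. Apply Lemma \ref{lem:Glogtorsorverticalvsstandard} to each isomorphism. Each one becomes a locally constant contact order $\vec a_k \in \ZZ^n_0$ together with an isomorphism
\[
\OO_{C}(\sum_i a_{ik} p_i) \times^{\OO^*_C} Q_k|_{\vert C_S} \longsimeq P_k|_{\vert C_S}.
\]
The columns $\vec a_k$ assemble into a locally constant matrix $A$ whose columns sum to zero. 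This gives a decomposition $\drp[\vec P] = \bigsqcup_A \drp[\vec P, A]$.

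\emph{Step 2: the kernel side.} I will show $\ddp[\vec P, A]$ is likewise the $r$-fold fiber product over $B_r$ of the rank-one spaces $\ddp[P_k, \vec a_k]$. The map $\aj_A$ is the product $(\aj_{\vec a_1}, \dots, \aj_{\vec a_r})$ into $\lpp{}^{\times_{\Mp} r}$, and $B_r \to \lpp{}^{\times r}$ is the product of the maps classifying the $P_k$. An f.s.\ fiber product of two product maps into a fiber product is the fiber product of the componentwise f.s.\ fiber products, since limits commute with limits in f.s.\ log stacks.

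The only subtlety is that the rank-one diagrams are taken over $\Mp$ while the base here is $B_r$. I will handle this with Remark \ref{rmk:drpbbaseB}, which rewrites each rank-one space as an f.s.\ pullback $B \times_{e, \logpic_{C/B}} B$. Then $\ddp[\vec P, A]$ is the pullback of the diagonal zero section along a map to $\logpic_{C/B_r}^{\times r}$, and this splits as the product of the rank-one pullbacks over $B_r$.

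Similarly, Remark \ref{rmk:AJBmapGlogtorsordifference} lifts everything to the $\Glog$-gerbes. This identifies the $S$-points of $\ddp[\vec P, A]$ with tuples of $\Glog$-torsors $Q_k$ on $S$ together with isomorphisms \eqref{eqn:pbdrloglinebundles} for each $k$, as a space over $B_r \times B\Glog^r$.

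\emph{Step 3: compare and restrict.} The two descriptions of $S$-points now agree factor by factor via Lemma \ref{lem:Glogtorsorverticalvsstandard}. This gives $\Phi_A = \Phi_{\vec a_1} \times \cdots \times \Phi_{\vec a_r}$, which is functorial in $S$ and compatible with the maps to $B_r \times B\Glog^r$. Both stable loci are defined by the same open condition that $C_S/S$ is stable, so the isomorphism restricts to $\dr[\vec P, A] \longsimeq \dd[\vec P, A]$.

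Representability follows because each space is a fiber product of representable log algebraic stacks over $B_r$, using Corollary \ref{cor:pDRisD}.

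I expect the main obstacle to be Step 2. One must check that f.s.\ (log) fiber products commute with the $r$-fold products, that the characteristic monoids are saturated compatibly, and that the moduli interpretation of the f.s.\ pullback still equals the naive functor of tuples of isomorphisms. I will argue this functor-of-points on f.s.\ log schemes $S$, where f.s.\ fiber products are honest fiber products of functors. This reduces the claim to the universal property, avoiding any direct computation with monoids.
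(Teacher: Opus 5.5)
Your proposal is correct and follows essentially the paper's route. The paper leaves the higher-rank case to the reader as the direct analogue of Corollary~\ref{cor:pDRisD}: identify $S$-points of $\ddp[\vec P, A]$ (tuples of isomorphisms \eqref{eqn:pbdrloglinebundles} on $\vert C$) with $S$-points of $\drp[\vec P, A]$ (tuples of isomorphisms on $C$) by applying Lemma~\ref{lem:Glogtorsorverticalvsstandard} to each of the $r$ isomorphisms, and note that the two stability conditions coincide. Your Step~2, which splits both spaces as $r$-fold fiber products over $B_r$, is a valid but optional repackaging of this same componentwise argument.
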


\subsection{Specialization to the case $P = \OO_C$}\label{ss:triviallinebundle}

Let $\vec a \in \ZZ^n_0$ be a vector of integers summing to zero $\sum a_i = 0$. In this section, we take $B = \Ms$ and equip its universal curve $C/B$ with the trivial log line bundle. We compare with another version of the double ramification space more common in the literature. 

\begin{remark}\label{rmk:canonicaltrivpsiPLsumsections}
    Let $\psi_{\vec a} \in \Gamma(C, \gp {\bar M}_C)$ be the piecewise linear function with slope $a_i$ at the marked point $p_i$ and value $0$ on each edge. Then $\OO_C(\psi_{\vec a}) = \OO_C(\sum a_i p_i)$, so we have a canonical trivialization 
    \[\pra{\gp M_{\vert C}(\sum a_i p_i)}|_C \longsimeq \gp M_C\]
    on $C$. The log line bundle $\gp M_{\vert C}(\sum a_i p_i)$ need not be trivial on $\vert C$. 
\end{remark}

\begin{lemma}
    The stack $\drp[\OO_C, \vec a]$ with $P = \gp M_{\vert C}$ represents squares 
    \begin{equation}\label{eqn:pDRpoints}
    \begin{tikzcd}
        C \ar[r] \ar[d]       &\pt \ar[d]        \\
        S \ar[r, "Q", swap]       &B\Glog
    \end{tikzcd}
    \end{equation}
    with contact order $\vec a$. That is, $\drp[\OO_C, \vec a]$ parameterizes $\Glog$-torsors $Q$ on $S$ together with trivializations of the pullback $Q|_C$ to $C$ with contact order $\vec a$.  
\end{lemma}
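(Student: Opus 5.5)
This is essentially an unwinding of Definition \ref{def:dr} with $P = \gp M_{\vert C}$; the one non-formal input is the splitting by contact order from Lemma \ref{lem:Glogtorsorverticalvsstandard}.

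\textbf{Step 1: unwind the definition.} By Definition \ref{def:dr}, an $S$-point of $\drp[\gp M_{\vert C}]$ lying over $Q : S \to \Ms \times B\Glog$ is an isomorphism of $\Glog$-torsors
\[
Q|_{C_S} \longsimeq \gp M_{\vert C}|_{C_S}.
\]
Here the restriction of $\gp M_{\vert C}$ along $C_S \to \vert C_S$ is the trivial $\Glog$-torsor $\gp M_{C_S}$. Such an isomorphism is therefore exactly a trivialization of the pullback $Q|_{C_S}$.

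\textbf{Step 2: translate into squares.} A trivialization of $Q|_{C_S}$ is the same as a $2$-morphism filling the square \eqref{eqn:pDRpoints}. This holds because $B\Glog$ classifies $\Glog$-torsors and $\pt \to B\Glog$ is the trivial torsor. Morphisms between such data match on both sides, so this is an equivalence of fibered categories over log schemes.

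\textbf{Step 3: identify the contact order.} We must check that the summand $\drp[\OO_C, \vec a]$ of the decomposition $\drp[P] = \bigsqcup_{\vec a} \drp$ consists exactly of the trivializations whose contact order is $\vec a$.
\begin{enumerate}
\item Since $Q$ comes from $S$, it is in particular pulled back to $\vert C_S$, so Lemma \ref{lem:Glogtorsorverticalvsstandard}(\ref{it:CvsCvert}) applies.
\item That lemma converts the trivialization on $C_S$ into a locally constant vector $\vec a \in \Gamma(S, \underline\ZZ^n)$ together with an isomorphism $\OO_C(\sum a_i p_i)\times^{\OO_C^*} Q|_{\vert C_S} \simeq \gp M_{\vert C_S}$.
\item Part (3) of the same lemma gives $\sum a_i = 0$.
\item The vector $\vec a$ is computed by taking the image of the trivializing section under $\gp M_C \to \bigoplus p_{i*}\underline\ZZ$ from \eqref{eqn:sesmoncurve}. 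This image is precisely the slope at $p_i$ of the associated piecewise linear function, which is the usual notion of contact order.
\end{enumerate}
Restricting to the component with contact order $\vec a$ gives the statement.

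\textbf{Main obstacle.} The only subtle point is Step 3, namely matching conventions. One must check that the ``contact order'' in the square agrees with the $\vec a$ extracted via the boundary map of \eqref{eqn:sesmoncurve}, including the sign. I would verify this locally at a marked point, where $\bar M_{C,p_i} = \bar M_S \oplus \NN$ and the trivializing section has the form $u\, t^{a_i}$ with $t$ the local coordinate. Compatibility with Remark \ref{rmk:canonicaltrivpsiPLsumsections}, where $\psi_{\vec a}$ has slope $a_i$, then fixes the sign.
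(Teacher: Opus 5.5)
Your proposal is correct and is essentially the paper's argument: both unwind Definition \ref{def:dr} and use Lemma \ref{lem:Glogtorsorverticalvsstandard} to identify the data with a trivialization of $Q|_{C_S}$ of contact order $\vec a$. The only difference is the starting point. The paper begins from the twisted isomorphism $\OO_C(\sum a_i p_i)\times^{\OO_C^*} Q|_{\vert C_S} \longsimeq \gp M_{\vert C_S}$ and passes to $C_S$ via the canonical trivialization of Remark \ref{rmk:canonicaltrivpsiPLsumsections}. You instead begin directly on $C_S$, where $\gp M_{\vert C}|_{C_S} = \gp M_{C_S}$ is already trivial, and use the lemma only to pick out the contact-order component.
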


\begin{proof}
    By definition, $\drp[\OO_C, \vec a]$ parameterizes $\Glog$-torsors $Q$ on $S$ with isomorphisms 
    \[
        \OO_C(\sum a_i p_i) \times^{\OO_C^*} Q|_{\vert C_S} \longsimeq \gp M_{\vert C_S}.
    \]
    Remark \ref{rmk:canonicaltrivpsiPLsumsections} provides a canonical trivialization 
    \[\gp M_C(\sum a_i p_i) \longsimeq \gp M_C,\]
    and Lemma \ref{lem:Glogtorsorverticalvsstandard} equates the above isomorphism with a trivialization fitting in the square \eqref{eqn:pDRpoints} with contact order $\vec a$. 
\end{proof}

\begin{proposition}
    The space $\drp[\OO_C, \vec a]$ is the (f.s.\ and ordinary) pullback of the Abel-Jacobi section $\aj_{\vec a}$ along the trivial section 
    \[
    \begin{tikzcd}
        \drp[\OO_C, \vec a] \ar[r] \ar[d] \lpbstrict         &\Mp \ar[d, "e"]        \\
        \Mp \ar[r, "{\aj_{\vec a}}", swap]         &\lpp.
    \end{tikzcd}
    \]
\end{proposition}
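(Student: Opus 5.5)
We deduce the statement from Corollary \ref{cor:pDRisD} and Remark \ref{rmk:drpbbaseB}, specialized to $B = \Mp$ and $P = \gp M_{\vert C}$. By Corollary \ref{cor:pDRisD}, $\drp[\OO_C, \vec a] \simeq \ddp[\gp M_{\vert C}, \vec a]$. By Definition \ref{def:DAJkernel}, the latter is the f.s.\ pullback of $\aj_{\vec a} : \Mp \to \lpp$ along the classifying map $B \to \lpp$ of $P$. With $P$ trivial, this classifying map is the trivial section $e$. The square in the statement is therefore f.s.\ cartesian, and we only have to check that it is also an ordinary (strict) fiber product.

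\textbf{Step 1.} Identify $B \to \lpp$ with $e$ when $P = \gp M_{\vert C}$. This is immediate from Definition \ref{def:logAJmap}, since $e = \aj_{\vec 0}$ sends a curve to its trivial log line bundle.

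\textbf{Step 2.} Compare the f.s.\ and ordinary fiber products. The remark after Definition \ref{def:logAJmap} says $e$ is strict: it factors through the strict open $\pic^{[0]}$, and the log structure on $\lpp$ is pulled back from the base. A fiber product in which one leg is strict can be computed on underlying stacks, with the log structure pulled back from the other factor. In particular no integralization or saturation is needed, so the f.s.\ and ordinary fiber products coincide. Concretely, a map $S \to \Mp \times_{\lpp} \Mp$ is a log curve $C/S$ together with an identification in $\lpp$ of the class of $\gp M_{\vert C}(\sum a_i p_i)$ with the trivial class. By the definition of $\logpic$, this is the same as a $\Glog$-torsor $Q$ on $S$ together with an isomorphism $Q|_{\vert C} \simeq \gp M_{\vert C}(\sum a_i p_i)$, that is, a point of $\ddp$.

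\textbf{Main obstacle.} The equivalence relation in $\logpic$ is only local on $S$, so the isomorphism and the torsor $Q$ exist only locally a priori. To globalize, we use that $\LPp \to \lpp$ is a $\Glog$-gerbe (Remark \ref{rmk:logpicgerbeband}), following the factorization in Remark \ref{rmk:AJBmapGlogtorsordifference}. The local choices of $Q$ then glue into the $B\Glog$-coordinate of $\drp$, and the target of $\drp$ in the statement is the product over $\Mp$ including this $B\Glog$ factor. I expect this bookkeeping with the gerbe, together with checking that strictness of $e$ really forces the two fiber products to agree on log structures, to be the delicate point.
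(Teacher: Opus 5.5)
Your derivation of the f.s.\ cartesian square is correct and is the argument the paper intends. With $B=\Mp$ and $P=\gp M_{\vert C}$, the classifying map $B\to\lpp$ is $e$, so Corollary~\ref{cor:pDRisD} together with Definition~\ref{def:DAJkernel} (or Remark~\ref{rmk:drpbbaseB}) gives the square. The gap is Step~2, and neither of its premises holds. $\pic^{[0],{\rm ps}}$ is the kernel of $\lpp\to{\rm TroPic}$, i.e.\ the preimage of the zero section of the tropical Jacobian. Its inclusion into $\lpp$ is therefore a log \'etale monomorphism, not a strict open, and $\lpp$ does not carry a log structure pulled back from $\Mp$. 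So $e$ is strict only as a map into $\pic^{[0],{\rm ps}}$ (or into a compactified Jacobian). It is not strict in the base-change-stable sense that your ``one leg strict'' argument needs.

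The conclusion you draw is in fact false. If the square were a strict pullback, its left arrow $\drp[\OO_C,\vec a]\to\Mp$ would be strict, since it is the base change of $e$ along $\aj_{\vec a}$. It is also an f.s.\ monomorphism, so it would be a monomorphism of underlying stacks. The example of \S\ref{ss:drnotclosed} contradicts this: it is your situation with $\vec a=\pm(2,-2)$, after the twisting remark following Definition~\ref{def:DAJkernel}. There the DR locus over $\Ms[1,2]$ becomes a strict closed substack only of a nontrivial log blowup $\tilde B\to B$, and the paper notes it is not even a monomorphism on underlying stacks. Your ``concretely'' paragraph does not help, because it computes $S$-points only for f.s.\ $S$. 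That is the f.s.\ fiber product again and says nothing about underlying stacks.

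The word ``ordinary'' can only be defended in the tautological sense of the $2$-fiber product of stacks over f.s.\ log schemes, and then Step~2 is superfluous. A genuinely strict cartesian square appears only one level up, as in \eqref{eqn:DRhavetoblowup}. There $\lp$ is replaced by a compactified Jacobian ${\cal J}$, in which $e$ is a strict closed immersion, and $\Ms$ by the log alteration $\Ms\times^\ell_{\lp}{\cal J}$.

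Separately, in your ``main obstacle'' there is no $B\Glog$ factor in the target of the square. What makes local pairs $(Q,\varphi)$ glue, and makes $\drp[\OO_C,\vec a]\to\Mp$ a monomorphism, is $\pi_*\gp M_{\vert C}=\gp M_S$. This is the fact behind the $\Glog$-banding in Remark~\ref{rmk:logpicgerbeband}, and it is already built into Corollary~\ref{cor:pDRisD}.
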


There are analogous results in higher rank.

\subsubsection{Line bundles coming from piecewise linear functions}

We highlight a variant $\DR$ of the log double ramification locus more common in the literature which admits a log alteration 
\[
    \DR \to \dr[\OO_C, \vec a].
\]

Let $C/S$ be a log curve and consider commutative squares 
\begin{equation}\label{eqn:DRpoints}
    \begin{tikzcd}
    C \ar[r, "\alpha"] \ar[d]       &\Gtrop \ar[d]         \\
    S \ar[r, "L", swap]       &B\GG_m.
    \end{tikzcd}
\end{equation}
Such a square entails:
\begin{enumerate}
    \item\label{it:DRcurve} a log prestable curve $C \to S$ with $n$ marked points $p_i$,
    \item a ``piecewise linear function'' $\alpha : C \to \Gtrop$ %with slope $a_i$ at the marked point $p_i \in C$,
    \item a line bundle $L$ on $S$, and 
    \item\label{it:DRisomlinebundle} an isomorphism $\varphi : \OO_C(\alpha) \simeq L|_C$.
\end{enumerate}

We need to impose an extra condition on the above data.

\begin{condition}\label{cond:DRtotalorderzero}
    Consider the above data over a geometric point $S$. The piecewise linear function $\alpha$ is determined by its set of values $\{\alpha(C_i)\} \in \Gamma(\gp{\bar M}_S)$ on the irreducible components $C_i \subseteq C$ and the vector $\vec a$ of its slopes at the points $p_i$. 
    
    Suppose that 
    \begin{enumerate}
        \item\label{it:DRtotalorder} the subset $\{\alpha(C_i)\} \subseteq \Gamma(\gp{\bar M}_S)$ of values on irreducible components $C_i \subseteq C$ is totally ordered in the natural order on $\Gamma(\gp{\bar M}_S)$, and 
        \item\label{it:DRminzero} the minimal value $\alpha(C_i)$ obtained is $0 \in \Gamma(\gp{\bar M}_S)$. 
    \end{enumerate}
\end{condition}

\begin{definition}\label{def:tDR}
    Let $\DRp[g, n]$ be the moduli problem on log schemes $S$ consisting of points \eqref{eqn:DRpoints} subject to Condition \ref{cond:DRtotalorderzero}. Let $\DR[g, n] \subseteq \DRp[g, n]$ be the open substack in which the curve $C/S$ is stable and 
    \[
        \DR \subseteq \DR[g, n], \qquad \DRp \subseteq \DRp[g, n]
    \]
    for the connected components on which the slope of $\alpha : C \to \Gtrop$ at the marked point $p_i$ is $a_i$. 
\end{definition}

There is a natural morphism
\begin{equation}\label{eqn:pDRDRlogaltn}
    \DR \to \dr[\OO_C, \vec a]
\end{equation}
given by appending the pullback square 
\[
\begin{tikzcd}
    \Gtrop \ar[r] \ar[d] \lpb      &\pt \ar[d]        \\
    B\GG_m \ar[r]      &B\Glog
\end{tikzcd}
\]
to a point \eqref{eqn:DRpoints} to get a point \eqref{eqn:pDRpoints}. 

\begin{proposition}\label{prop:DRlogaltnpDR}
    The natural morphism \eqref{eqn:pDRDRlogaltn} is a log blowup.
\end{proposition}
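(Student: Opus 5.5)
The plan is to compare the two moduli problems point by point and isolate the extra data in a point of $\DR$. By the lemma identifying $\drp[\OO_C, \vec a]$, a point of $\dr[\OO_C,\vec a]$ over $S$ is a $\Glog$-torsor $Q$ on $S$ together with a trivialization of $Q|_C$ with contact order $\vec a$. Pushing this along $\Glog \to \Gtrop$ gives a $\Gtrop$-torsor $\bar Q$ on $S$ with a trivialization of $\bar Q|_C$. So, locally on $S$ after trivializing $\bar Q$, we get a piecewise linear function $\alpha$ on $C$ with slopes $a_i$, well defined only up to adding a constant section of $\gp{\bar M}_S$.

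Conversely, a point of $\DR$ has two extra features. First, $\bar Q$ is trivialized, so that $Q$ comes from a line bundle $L$ via the pullback square $\Gtrop = B\GG_m \times_{B\Glog} \pt$. Second, this trivialization is normalized by Condition \ref{cond:DRtotalorderzero}: the minimal value of $\alpha$ is $0$, and the values are totally ordered. The key step is therefore to show that the fiber of \eqref{eqn:pDRDRlogaltn} over $S$ is exactly the set of normalized choices of constant. Given the total-ordering condition, the constant is forced to be $-\min_i \alpha(C_i)$. Once this constant is fixed, $L$ and $\varphi$ are determined uniquely by the pullback square. Hence \eqref{eqn:pDRDRlogaltn} is a monomorphism, and its image is the subfunctor where the differences $\alpha(C_i)-\alpha(C_j)$, which are well defined sections of $\gp{\bar M}_S$ independent of the constant, are pairwise comparable in the order on $\gp{\bar M}_S$.

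Next I would recognize this subfunctor as a log blowup. Comparability of finitely many sections $\beta_{ij} = \alpha(C_i)-\alpha(C_j) \in \Gamma(\gp{\bar M}_S)$ is the pullback of a subdivision of the tropicalization. Concretely, it is the fs pullback along the map $S \to \Gtrop^N$ given by the $\beta_{ij}$ of the subdivision of $\Gtrop^N$ (or of its Artin-fan truncation, $\af{}$-style) cut out by the hyperplanes $\beta_{ij}=0$ and $\beta_{ij}=\beta_{kl}$. Equivalently, it represents the condition that the log ideals generated by $\{\beta_{ij},0\}$ become principal. By the standard fact that "these sections become comparable" is represented by a log blowup \cite{kthylogprodfmla}, as in the $\ell_1 = \ell_2$ example in \S\ref{ss:drnotclosed}, the morphism is a log blowup. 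Because the combinatorial type varies, I would do this étale-locally on $\dr[\OO_C,\vec a]$ where the dual graph is fixed, and then check that the local subdivisions glue. They do, since they are defined intrinsically by ordering the component values.

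The main obstacle is this last gluing and intrinsic-definition step, together with checking that the resulting monomorphism really is the fs pullback of a subdivision and not merely a log étale monomorphism. My first attempt would be to work on the Artin fan of $\dr[\OO_C,\vec a]$. There the values $\alpha(C_i)$, modulo a common constant, are linear functions on each cone, and the total-order condition defines the common refinement by the braid-arrangement hyperplanes. This refinement is compatible with face maps and with automorphisms permuting components, so it descends to a subdivision of the Artin fan, and its pullback is a log blowup.
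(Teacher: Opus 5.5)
Your proposal is correct and follows essentially the paper's route. The paper also shows that a lift of a point of $\dr[\OO_C,\vec a]$ to $\DR$ is a trivialization of $\bar Q$, pinned down by Condition~\ref{cond:DRtotalorderzero} as the minimum section, so that $\DR$ is the subfunctor where the component values are pointwise totally ordered. It then checks atomic-locally that this subfunctor is a log blowup, which the paper phrases as an iterated log blowup at subsets of the values.

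Three small corrections. Only the hyperplanes $\beta_{ij}=0$ belong in the subdivision: adding $\beta_{ij}=\beta_{kl}$ over-refines it and cuts out a strictly smaller subfunctor (take values $0, e_1, e_1+e_2$). The gluing you flag as the main obstacle is unnecessary, since log blowups are defined strict-\'etale locally on the target. For existence of the lift, you should note, as the paper does, that the fiberwise minima are compatible under cospecialization and hence define a section of $\bar Q$ over $S$.
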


We need a lemma. 

\begin{lemma}
    Let $D' \to \drp[\OO_C, \vec a]$ be the subfunctor consisting of points \eqref{eqn:pDRpoints} such that, for each strict geometric point $\bar s \to S$ and any trivialization $Q|_{\bar s} \simeq \Glog$, the values of the piecewise linear function 
    \begin{equation}\label{eqn:localinducedgeometricfiberplfunction}
        C_{\bar s} \to \Glog \to \Gtrop
    \end{equation}
    on the components of $C_{\bar s}$ are totally ordered as in \eqref{it:DRtotalorder}. Then $D' \to \drp[\OO_C, \vec a]$ is a log blowup. 
\end{lemma}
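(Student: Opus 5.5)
We will show that the total-order condition is pulled back from a subdivision of the tropicalization (Artin fan) of $\drp[\OO_C, \vec a]$, and hence defines a log blowup.

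\textbf{Step 1: the condition is well defined.} A point \eqref{eqn:pDRpoints} over a strict geometric point $\bar s$ determines, after choosing a trivialization $Q|_{\bar s} \simeq \Glog$, a piecewise linear function \eqref{eqn:localinducedgeometricfiberplfunction}. Changing the trivialization multiplies by a section of $\gp M_{\bar s}$. Its image in $\gp{\bar M}_{\bar s}$ shifts all component values by the same constant. Total order of the set $\{\alpha(C_i)\}$ is invariant under such a common shift, so the condition depends only on the point. It is also compatible with generization along the strict-\'etale site: cospecialization maps $\gp{\bar M}_{\bar s} \to \gp{\bar M}_{\bar t}$ are order-preserving on the relevant monoids, and they carry component values to component values since components merge under smoothing. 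Hence $D'$ is a well-defined subfunctor.

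\textbf{Step 2: tropical description.} Work strict-\'etale locally on $\drp[\OO_C, \vec a]$ near $\bar s$, with an atomic chart $\bar M_{\bar s} = P$ and dual cone $\sigma = \Hom(P, \RR_{\geq 0})$. For each pair of components $C_i, C_j$ of $C_{\bar s}$, the difference $\alpha(C_i) - \alpha(C_j)$ is a well-defined element $m_{ij} \in \gp P$, independent of the trivialization. The condition ``$\alpha(C_i)$ and $\alpha(C_j)$ are comparable'' says that, on a log scheme $T \to S$, the image of $m_{ij}$ in $\gp{\bar M}_T$ lies in $\bar M_T \cup -\bar M_T$. This is exactly the subfunctor cut out by the subdivision of $\sigma$ along the hyperplane $m_{ij}^\perp$. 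Taking all pairs, $D'$ is locally the pullback of the common refinement of the hyperplane arrangement $\{m_{ij}^\perp\}$ on $\sigma$. This refinement is a finite subdivision into rational polyhedral cones, so $D'$ is locally a log modification. It is a log blowup because the subdivision is induced by the hyperplane arrangement, i.e. by blowing up the ideals $(m_{ij}^+, m_{ij}^-)$ generated by the positive and negative parts of $m_{ij}$. This is the standard fact that subfunctors where a section of $\gp{\bar M}$ lies in $\bar M \cup -\bar M$ are representable by log blowups.

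\textbf{Step 3: gluing.} We expect this to be the main obstacle. The local subdivisions must be compatible under the face maps of the tropicalization, so that they descend to a subdivision of the Artin fan $\af{\drp[\OO_C, \vec a]}$, or at least so that the local log blowups glue to a global one. Restricting to a face of $\sigma$ corresponds to generizing $\bar s$, which contracts edges of the dual graph. The elements $m_{ij}$ at the generization are images of the $m_{ij}$ for components merging into the generic components. So the restricted arrangement contains the arrangement at the generization, and it possibly contains extra hyperplanes $m_{ij}$ with $i, j$ merging. Those extra $m_{ij}$ vanish on that face, so they impose nothing. The local blowups therefore agree on overlaps by their functorial characterization, since $D'$ is defined as a subfunctor. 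Because log blowups are monomorphisms determined by their functor of points, they glue.

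Finally, one must ensure that the glued subdivision is a genuine global log blowup and not merely a log modification locally. For this we would apply the Artin-fan functoriality results of Appendix~\ref{Appendix A}, or instead take the product of the finitely many ideals, using quasi-compactness over a fixed $(g,n)$.
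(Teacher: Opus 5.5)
Your proposal is correct and follows essentially the paper's route: the paper also reduces to an atomic neighborhood of a point in the closed stratum and identifies $D'$ there with the iterated log blowup at subsets of the component values, which is the same subdivision as your common refinement of the hyperplanes $m_{ij}^\perp$, cut out by blowing up the ideals $(m_{ij}^+, m_{ij}^-)$. Your Step~3 concern about globalizing is unnecessary, because Definition~\ref{d:la} makes being a log blowup a strict-\'etale local property on the target, so the local identification already finishes the proof.
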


\begin{proof}
    Consider a point \eqref{eqn:pDRpoints} with $S$ a log scheme. The condition is independent of the choice of trivialization $Q|_{\bar s} \simeq \Glog$ as well as choice of geometric point with the same image in $S$. 
    
    We claim the condition is also open. It is constructible, so it suffices to argue this locus is closed under generization. If $\bar s \rightsquigarrow \bar t$ is a specialization of geometric points in $S$, the cospecialization map 
    \[
        \Gamma({\bar t}, \gp{\bar M}_{S, {\bar t}}) \longrightarrow 
        \Gamma({\bar s}, \gp{\bar M}_{S, {\bar s}})
    \]
    sends the set of values on components of $C_{\bar t}$ to the set of values on components of $C_{\bar s}$ surjectively. So the total ordering condition is open in $S$. 

    We can assume $S$ is atomic with geometric point $\bar s \to S$ in the unique closed stratum of $S$. Then $D' \to \drp[\OO_C, \vec a]$ pulls back to the iterative log blowup of $S$ at all subsets of the values of the piecewise linear function \eqref{eqn:localinducedgeometricfiberplfunction} on the components of $C_{\bar s}$. So $D' \to \drp[\OO_C, \vec a]$ is a log blowup. 
\end{proof}

\begin{proof}[{Proof of Proposition \ref{prop:DRlogaltnpDR}}]
    We prove the log blowup $D' \to \drp[\OO_C, \vec a]$ is isomorphic to $\DR$. Let \eqref{eqn:pDRpoints} be a point of $\drp[\OO_C, \vec a]$ that lies in $D'$. Write $\bar Q$ for the $\Gtrop$-torsor associated to the $\Glog$-torsor $Q$ on $S$. Because the values of $C \to \bar Q|_C$ on the components of the fibers of $C$ are totally ordered, the min and max sections are fiberwise well defined and compatible under specialization. These give sections of $\bar Q$ defined on $S$, which trivialize $\bar Q$. We choose the min section. 

    We therefore have dashed factorizations 
    \[
    \begin{tikzcd}
        C \ar[rr, bend left] \ar[r, dashed] \ar[d]      &\Gtrop \ar[r] \ar[d] \lpb         &\pt \ar[d]        \\
        S \ar[dr, bend right, dashed] \ar[r, dashed]       &B\GG_m \ar[r] \ar[d] \lpb         &B\Glog \ar[d]         \\
            &\pt \ar[r]        &B\Gtrop.
    \end{tikzcd}
    \]
    Because we chose the min section to trivialize $\bar Q$, the minimal value on a component of a fiber of $C$ is always zero as in Condition \ref{cond:DRtotalorderzero}. So the point $S \to D'$ canonically factors through $\DRp \to \drp[\OO_C, \vec a]$. As $D'$ and $\DRp$ are subfunctors of $\drp[\OO_C, \vec a]$ which factor through each other, they are isomorphic. 
\end{proof}

\begin{remark}\label{rmk:comparisonwithrubber}
    One can replace \eqref{eqn:DRpoints} by commutative squares 
    \[
    \begin{tikzcd}
        C \ar[r] \ar[d]       &\bra{\PP^1/\GG_m} \ar[d]        \\
        S \ar[r]       &B\GG_m
    \end{tikzcd}    
    \]
    satisfying the same Condition \ref{cond:DRtotalorderzero} to get another space $D$. As the morphism $\bra{\PP^1/\GG_m} \to \Gtrop$ is a log alteration, so is the map $D \to \DR$ by the same argument as in the absolute case \cite{birationalinvarianceabramovichwise}. Such commutative squares are what is typically meant by ``rubber stable maps to $\PP^1$,'' omitting the typical accordions/expansions of the target. See \cite{RdC} for comparison with accordions. 
\end{remark}

\section{The double ramification class in \texorpdfstring{$K$}{K}-theory and the product formula} \label{s:3}

\subsection{The log perfect obstruction theories}

A family of prestable maps is stable on a strict open subscheme of the base, so we ignore stability while discussing log perfect obstruction theories. We continue to use the language of Situation \ref{sit:logsmoothbase}. 

We use (a logarithmic version of) the language of obstruction groupoids \cite[\S 5]{relorbgwobstructiongroupoids}, \cite{obstthies}. Fix a strict squarezero nilimmersion $S \subseteq S'$ of affine\footnote{This is for convenience.} log schemes defined by a quasicoherent ideal $J$ 
\begin{equation}\label{eqn:squarezeroextn}
0 \to J \to \OO_{S'} \to \OO_S \to 0.
\end{equation}
One obtains a ``usual'' log perfect obstruction theory in the sense of Behrend-Fantechi \cite{intrinsic} or \cite{herrthesis} by plugging in $J = \OO_S$. We will unpack this %at the end of the proof 
for the reader's convenience.

\subsubsection{Usual log perfect obstruction theory on log stable maps}

Consider log stable maps $\Ml(X)$ to a log smooth target $X$ and let $\pi : \cal C \to \Ml(X)$ be the universal curve. Recall the usual log perfect obstruction theory \cite{intrinsic, loggw, herrthesis} given by 
\[
    \frak E \coloneqq h^1/h^0\pra{R\pi_*\pra{\lkah{X}|_{\cal C} \otimes \omega_\pi}}.
\]
This represents the stacky Weil restriction of the log tangent space $\Tl{X}|_{\cal C}$ of $X$ along $\pi$. 

Consider a square 
\[
\begin{tikzcd}
    S \ar[r] \ar[d]       &S' \ar[d] \ar[dl, dashed]         \\
    \Ml(X) \ar[r]      &\Mp
\end{tikzcd}
\]
with corresponding curves $C \subseteq C'$ over $S \subseteq S'$. The obstruction to lifting given by the above log perfect obstruction theory is given by the lifting problem 
\begin{equation}\label{eqn:usualliftingproblem}
    \begin{tikzcd}
    C \ar[r, "f"] \ar[d]      &X      \\
    C', \ar[ur, dashed]
\end{tikzcd}
\end{equation}
viewed as a squarezero algebra extension 
\[\Exal(f^{-1}\OO_X, J) = \Ext^1(\lkah{X}, J).\]
If $J = \OO_S$, the group of algebra extensions $\Exal(f^{-1} \OO_X, \OO_C)$ is the group of torsors for $\Tl{X}|_C$.

\subsubsection{Log stable maps}\label{sss:logstabmapslogPOT}

Now specialize to the case of interest:
\[
\begin{tikzcd}
    S \ar[r] \ar[d]       &S' \ar[d] \ar[dl, dashed]        \\
    \drp \ar[r, "Q", swap]       &B \times B\Glog.
\end{tikzcd}
\]
One can generalize the previous section to a log smooth morphism $X \to Y$ \cite{RdC}, but we do not need the general case. 

The solid arrows give 
\begin{itemize}
    \item A prestable curve $C' \to S'$ with restriction $\pi : C \coloneqq C' \times_{S'}^{\msout{\ell}} S \to S$,
    \item A $\Glog$-torsor $Q'$ on $S'$ with restriction $Q \coloneqq Q'|_S$, and 
    \item An isomorphism $\varphi : Q|_C \longsimeq P|_{C}$ with contact order $\vec a$%
    \footnote{The discrete data $\vec a$ are locally constant, so they may also be ignored for the sake of computing log perfect obstruction theories.}. 
\end{itemize}
The dashed arrow entails an isomorphism $\varphi' : Q'|_{C'} \longsimeq P|_{C'}$ which extends $\varphi$. 
Lemma \ref{lem:Glogtorsorverticalvsstandard} \eqref{it:CvsCvert} equates these data with an isomorphism 
\[
    \varphi : Q|_{\vert C}(\sum a_i p_i) \longsimeq P|_{\vert C}
\]
which must be lifted to an analogous isomorphism $\varphi'$ on $\vert{{C'}}$. 

We equip $\drp \to B \times B\Glog$ with the standard log perfect obstruction theory of log stable maps. The exact sequence \cite[Proposition IV.2.1.2]{ogusloggeom}
\[
1 \to 1 + J|_C \to \gp M_{\vert{C'}} \to \gp M_{\vert C} \to 1
\]
and the isomorphism $\varphi$ show the set of isomorphisms $\varphi'$ form a $J|_C$-torsor on $C$.%
\footnote{The multiplicative group $1 + J|_C$ is isomorphic to the additive group $J|_C$ because $C \subseteq C'$ is a squarezero extension.}
Choosing an isomorphism $\varphi'$ restricting to $\varphi$ is the same as trivializing this $J|_C$-torsor. This is precisely the obstruction in the usual log perfect obstruction theory \eqref{eqn:usualliftingproblem}, which lies in 
\[
    \Ext^1(\lkah{\pt/B\Glog}|_C, J|_C) = \Ext^1(\OO_C, J|_C) = R^1\pi_* J|_C.
\]

The obstruction groupoid is the sheaf $R^1 \pi_* J|_C$ with isomorphisms given by equalities and the obstruction class is the set of sections of the $\Glog$-torsor on $C'$
\begin{equation}\label{eqn:I'isomsheafQtwistedisomP}
    I' \coloneqq {\rm Isom}(Q'|_{\vert C}(\sum a_i p_i), P'|_{\vert C}) \longsimeq 
    P|_{\vert C}(-\sum a_i p_i) \otimes -Q|_{\vert C} 
\end{equation}
which restrict to the chosen section $\varphi \in I'|_C$.

\subsubsection{The kernel of the Abel-Jacobi section}\label{sss:AJkernellogPOT}

We give the map $\ddp \to B \times B \Glog$ the log perfect obstruction theory coming from the f.s.\ Cartesian diagram
\[
\begin{tikzcd}
    \ddp \ar[r] \ar[d] \lpb      &B \times B \Glog \ar[d] \ar[r] \lpb       &B \ar[d]        \\
    \Mp \ar[r, "\aj_{\vec a}", swap]         &\LPp \ar[r]      &\lpp.
\end{tikzcd}
\]
Use log smoothness of $\Mp$ and $\LPp$ to deduce that the log cotangent complex of $\aj_{\vec a}$ is perfect in degrees $[-1, 0]$. We could reduce to the case of the trivial section $\vec a = \vec 0, \aj_{\vec 0} = e$ by replacing $P$ by $P(-\sum a_i p_i)$. Recall from Remark \ref{rmk:AJBmapGlogtorsordifference} that the map $B \times B\Glog \to \LPp$ sends $(P/C/B, Q)$ to the difference $P - Q|_{\vert C} = \underline{\rm Isom}(Q|_{\vert C}, P)$ in the group law on $\Glog$-torsors on $\vert C$.

A solid diagram 
\[
\begin{tikzcd}
    S \ar[r] \ar[d]       &S' \ar[d] \ar[dl, dashed]         \\
    \Mp \ar[r, "\aj_{\vec a}", swap]         &\LPp
\end{tikzcd}
\]
entails 
\begin{itemize}
    \item A prestable curve $C' \to S'$ with restriction $\pi : C \coloneqq C' \times_{S'}^{\msout{\ell}} S \to S$,
    \item A $\Glog$-torsor $N'$ on $\vert {C'}$, and
    \item An isomorphism of the restriction $N \coloneqq N'|_{\vert C}$ with the log line bundle with slopes $\vec a$: $\varphi : N \longsimeq \gp M_{\vert C}(\sum a_i p_i)$. 
\end{itemize}

A dashed lift in the above square would mean an isomorphism $\varphi' : N' \longsimeq \gp M_{\vert {C'}}(\sum a_i p_i)$ which restricts to $\varphi$. The isomorphisms $\varphi, \varphi'$ may be thought of as trivializations of the $\Glog$-torsors $N(-\sum a_i p_i), N'(-\sum a_i p_i)$ on $\vert C, \vert {C'}$.

As in the previous section, the trivialization $\varphi$ gives a $J|_C$-torsor which induces $N(-\sum a_i p_i)$. A trivialization $\varphi'$ restricting to $\varphi$ is the same as a trivialization of this $J|_C$-torsor. So the obstruction groupoid is $R^1\pi_* J|_C$ again and the obstruction is the class of $N'(-\sum a_i p_i)$ as a $J|_C$-torsor. 

When we restrict this log perfect obstruction groupoid to nilimmersions coming from 
\[
\begin{tikzcd}
    S \ar[r] \ar[d]       &S' \ar[d] \ar[dl, dashed]         \\
    \ddp \ar[r]        &B \times B\Glog,
\end{tikzcd}
\]
the $\Glog$-torsor $N'$ is $P - Q|_{\vert C} = \underline{\rm Isom}(Q|_{\vert C}, P)$. An isomorphism of $N'$ with $\gp M_{\vert C}(\sum a_i p_i)$ is the same as an isomorphism 
\[
    Q|_{\vert C}(\sum a_i p_i) \longsimeq P,
\]
and the obstruction to the dashed lift is again the $J|_C$-torsor of sections of $I'$ from \eqref{eqn:I'isomsheafQtwistedisomP} that restrict to $\varphi$.

\subsubsection{Log perfect obstruction theories}\label{ss:DRlogpothodgebundle}

To get a log perfect obstruction theory on $\drp \simeq \ddp$ in the usual sense \cite{intrinsic}, \cite{herrthesis}, plug in $J = \OO_S$ into the above (log) obstruction groupoids. The result is 
\[
    \mathbb E^\vee : S \mapsto \Gamma(S, R^1\pi_* \OO_C),
\]
which is represented by the dual of the Hodge bundle $\mathbb E$ by Serre duality. 
Here, $\pi$ is the universal curve over $\drp \simeq \ddp$.

\begin{corollary}\label{cor:samelogPOT}
    The two log perfect obstruction theories on $\drp$ introduced in \ref{sss:logstabmapslogPOT} and \ref{sss:AJkernellogPOT} agree. 
\end{corollary}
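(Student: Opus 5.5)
The plan is to compare the two theories at the level of obstruction groupoids, working functorially in the square-zero extension \eqref{eqn:squarezeroextn}. Via the isomorphism $\Phi : \drp \longsimeq \ddp$ of Corollary \ref{cor:pDRisD}, both theories are relative to the same morphism $\drp \to B \times B\Glog$. A log perfect obstruction theory in the sense of \cite{intrinsic, herrthesis} is determined, up to canonical isomorphism, by two pieces of data. The first is its obstruction groupoid as a functor of $J$, i.e.\ the stacky vector bundle $h^1/h^0$ of the dual complex. The second is the obstruction assignment on lifting problems, i.e.\ the map to the intrinsic log normal cone. The proof therefore has two steps: identify the groupoids, then identify the obstruction classes under that identification.

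\textbf{Step 1: the groupoids.} In \S\ref{sss:logstabmapslogPOT}, the usual theory of log stable maps to $\pt \to B\Glog$ gives the groupoid
\[\Ext^1(\lkah{\pt/B\Glog}|_C, J|_C) = R^1\pi_* J|_C.\]
In \S\ref{sss:AJkernellogPOT}, the pullback theory along $\aj_{\vec a}$ gives the relative log tangent data of $\Mp \to \LPp$. Deformations of a $\Glog$-torsor on $\vert C$ relative to the curve are governed by $1 + J|_C \simeq J|_C$, via the exact sequence
\[1 \to 1+J|_C \to \gp M_{\vert C'} \to \gp M_{\vert C} \to 1.\]
So this groupoid is also $R^1\pi_* J|_C$, with $H^0$-automorphisms trivial relative to $B\Glog$. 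I will check that both identifications come from this single exact sequence. The natural comparison map is then the identity of $R^1\pi_*J|_C$, naturally in $J$. Setting $J = \OO_S$ recovers $\mathbb E^\vee$ on both sides.

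\textbf{Step 2: the obstruction classes.} For a given lifting problem, both obstructions are the class of the same $J|_C$-torsor, namely the sheaf of sections of $I'$ from \eqref{eqn:I'isomsheafQtwistedisomP} restricting to $\varphi$. On the stable-maps side this is immediate. On the Abel--Jacobi side, the recipe is as follows. Remark \ref{rmk:AJBmapGlogtorsordifference} takes $N' = P - Q'|_{\vert C'}$. Lemma \ref{lem:Glogtorsorverticalvsstandard}\eqref{it:CvsCvert} then converts trivializations of $N'(-\sum a_i p_i)$ into isomorphisms $Q'|_{\vert C'}(\sum a_ip_i)\simeq P$. One must check that these conversions are compatible with the $J|_C$-actions, rather than twisting them by a sign or an automorphism. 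Since all the identifications are canonical and compatible with the group law on $\Glog$-torsors, the two obstruction maps agree. The groupoids and obstruction assignments therefore coincide, and the two theories are isomorphic as maps to the intrinsic log normal cone.

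\textbf{Main obstacle.} The hardest point is the passage from obstruction groupoids to genuine morphisms $\mathbb E \to \lccx{}$ in the derived category. Agreement of obstruction groupoids and classes only gives agreement on $h^1/h^0$ and on cones. I would appeal to the equivalence between obstruction theories and their obstruction-groupoid descriptions (\cite[\S 5]{relorbgwobstructiongroupoids}, \cite{obstthies}), adapted to the log setting via Olsson's $\Log$ \cite{herrthesis}. This requires the comparison to be natural in $S \subseteq S'$ and in $J$, which Steps 1 and 2 provide. For the pullback theory, I also need the standard fact that the f.s.\ pullback of the relative log cotangent complex of $\aj_{\vec a}$ computes the obstruction groupoid described in \S\ref{sss:AJkernellogPOT}.
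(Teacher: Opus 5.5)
Your proposal is correct and follows the paper's own argument, which is the discussion in \S\ref{sss:logstabmapslogPOT} and \S\ref{sss:AJkernellogPOT}. There, both obstruction groupoids are identified with $R^1\pi_*J|_C$ via the sequence $1 \to 1+J|_C \to \gp M_{\vert{{C'}}} \to \gp M_{\vert C} \to 1$, and, using Remark \ref{rmk:AJBmapGlogtorsordifference} to write $N'$ as a difference of torsors, both obstructions become the $J|_C$-torsor of sections of $I'$ restricting to $\varphi$. The paper does not treat your ``main obstacle'' separately, since it regards a log perfect obstruction theory as the cone embedding $\Cl{X/Y} \subseteq E$, and the comparison of groupoids and obstruction classes, natural in $J$, already determines that embedding.
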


\begin{remark}
By Remark~\ref{rmk:comparisonwithrubber}, the log obstruction theory in \S~\ref{sss:logstabmapslogPOT} is equivalent to that of log stable maps to $[\PP^1 /\GG_m ]$.
\end{remark}

\begin{remark}
    Consider the log perfect obstruction theories on $\drp \simeq \ddp$ relative to $\Mp$ given by the pullback 
    \[
    \begin{tikzcd}
        \ddp \ar[r] \ar[d] \lpb         &B \ar[d]        \\
        \Mp \ar[r]         &\lpp 
    \end{tikzcd}
    \]
    over the sheaf $\lpp$ instead of the stack $\LPp$. This log perfect obstruction theory has an extra factor of $B \pra{\Aff^1}^\circ$ which is unnecessary. This $\pra{\Aff^1}^\circ$-gerbe is the natural obstruction theory for deformations along the gerbe
    \[
    \begin{tikzcd}
        S \ar[r] \ar[d]       &S' \ar[d] \ar[dl, dashed]         \\
        \LP \ar[r]         &\lp,
    \end{tikzcd}
    \]
    as $\pra{\Aff^1}^\circ$ is the Lie algebra of $\Glog$. 
\end{remark}

\section{The log double ramification class in colimit log \texorpdfstring{$K$}{K}-theory} \label{s:4}

As in \S \ref{ss:drnotclosed}, the log double ramification locus $\dr[g, n, \vec a] \to \Ms$ is a proper monomorphism and not necessarily a closed substack. There is a log blowup ${\tilde M} \to \Ms$ on which the f.s.\ pullback of $\dr[g, n, \vec a]$ is a closed substack, for example the minimal log blowup defined in \cite{earlyholmesDRpaper}.
Thus the DR class is only defined on a sufficiently fine log alteration of $\Ms$. Working on log blowups instead of $\Ms$ is also necessary to get the double-double ramification formula \cite{drmultbchow}. 

We equip each log alteration $\tilde D \to \dr[g, n, \vec a]$ with a log virtual fundamental class 
\begin{equation}\label{eqn:logvfcdrclassintro}
    \lvir{\tilde D} \qquad \in K_\circ(\tilde D).
\end{equation}
The same construction works in Chow groups as well. Writing $\Alt{(\dr[g, n, \vec a])}$ for the poset of log alterations, we show the classes \eqref{eqn:logvfcdrclassintro} are compatible under pushforward, yielding a class in the limit of the $K$-theories of all models 
\[\Kl{}(\dr[g, n, \vec a]) \coloneqq \lim_{\tilde D \in \Alt{\dr}} K_\circ(\tilde D).\]

After restricting to a cofinal subposet $\cal C_{\dr[g, n, \vec a]} \subseteq \Alt{(\dr[g, n, \vec a])}$, we can also define Gysin pullback maps $\varphi^!$ between the models $\varphi : \tilde D_1 \to \tilde D_2$ after L.\ Barrott \cite{barrottlogchow}. We show the log virtual fundamental classes \eqref{eqn:logvfcdrclassintro} are also compatible under Gysin pullback 
\[
    \varphi^! \lvir{\tilde D_2} = \lvir{\tilde D_1} \qquad \in K_\circ(\tilde D_1).
\]
So this system of classes lies in the much smaller group
\[
    \colim_{\tilde D \in \cal C_{\dr[g, n, \vec a]}} K_\circ(\tilde D)
\]
made precise in Definition~\ref{def:colimitlimitlogKtheory}.

Once we push forward along the map $\dr[g, n, \vec a] \to \Ms$, we get a system of classes in the colimit over certain log alterations of $\Ms$
\begin{equation}\label{eqn:colimitlogKMs}
    \colim_{\tilde M \in \cal C_{\Ms}} K_\circ(\tilde M) = \colim_{\tilde M \in \cal C_{\Ms}} K^\circ(\tilde M) \eqcolon \cKl(\Ms),
\end{equation}
which is a \emph{ring}, the colimit log $K$-theory ring of $\Ms$. This ring structure depends on the log smoothness of $\Ms$. 

We can now define products of log double ramification classes in the colimit log $K$-theory ring of $\Ms$, and we prove a ``log product formula'' for them.

\subsection{Defining the log virtual fundamental classes on each model}\label{ss:logvfccolimitlogK}

Fix $f : X \to Y$ an eventually strict, quasicompact morphism between quasicompact logarithmic stacks. We state our results in general for future use, but we will take $f : X \to Y$ to be the morphism $\dr \to \Ms$. Suppose $X$ admits a log alteration by a DM stack and $Y$ is log smooth. 
Equip $f$ with a log perfect obstruction theory, i.e., a closed embedding of $\Aff^1$-stacks
\[
    \Cl{X/Y} \subseteq E
\]
in the sense of Behrend-Fantechi \cite{intrinsic}.

For each log alteration $\tilde X \in \Alt{X}$, we have an induced log perfect obstruction theory \cite[Remark 2.14]{herrthesis}
\[
    \Cl{\tilde X/Y} \subseteq \Cl{X/Y}|_{\tilde X} \subseteq E|_{\tilde X}. 
\]
The structure sheaf $\OO_{\Cl{\tilde X/Y}}$ defines a class in $K_\circ(E|_{\tilde X})$, and we have an isomorphism 
\[
    \pi^* : K_\circ(\tilde X) \longsimeq K_\circ(E|_{\tilde X})
\]
by \cite[Remark 1.6]{kthylogprodfmla}. Define the log virtual fundamental class of each $\tilde X \in \Alt{X}$ as 
\begin{equation}\label{eqn:lvfcsoneachmodel}
    \lvir{\tilde X} \coloneqq {\pi^*}^{-1}[\OO_{\Cl{\tilde X/Y}}] \qquad \in K_\circ(\tilde X).
\end{equation}

\begin{remark}\label{rmk:Chowassumptions}
    The same can be done in Chow groups, provided we assume some log alteration of $X$ admits a stratification by quotient stacks \cite{kreschthesis}. The stack $\Log Y$ is smooth because we assumed $Y$ was log smooth, so $\Cl{\tilde X/Y}$ has equidimensional connected components and the fundamental class 
    \[
        [\Cl{\tilde X/Y}] \subseteq CH(E|_{\tilde X})
    \]
    is well-defined. A cofinal subposet of log alterations $\tilde X \to X$ admit stratifications by quotient stacks. We then have an analogous isomorphism 
    \[
        \pi^* : CH(\tilde X) \longsimeq CH(E|_{\tilde X})
    \]
    and define log virtual fundamental classes in each $CH(\tilde X)$ the same way as in $K$-theory. 
\end{remark}

\begin{remark}\label{rmk:cofinalsubposetfreecones}
Consider the subposet $\cal C \subseteq \Alt{Y}$ given by log alterations $\tilde Y \to Y$ such that
\begin{itemize}
    \item Writing $\tilde X \coloneqq X \times_Y^\ell \tilde Y$, both $\tilde Y$ and $\tilde X$ are representable by log algebraic stacks, 
    \item 
    The map $\tilde f : \tilde X \to \tilde Y$ is strict, and
    \item
    There is a strict map $\tilde Y \to \af{}^n$ for some $n \in \NN$. 
\end{itemize}
This subposet is cofinal in both $\Alt{Y}$ and $\Alt{X}$ by \cite[Theorem 4.6.2]{wisebounded} (See Corollary \ref{cor:wiseblowupforstacks} in the appendix), the fact that $f$ is eventually strict, and the fact that any log alteration $\tilde X \to X$ or $\tilde Y \to Y$ can be refined by the pullback of a log alteration of $\af{Y}$ \cite[Lemma 3.5]{kthylogprodfmla}. 
\end{remark}

Take $X = Y$ in Remark \ref{rmk:cofinalsubposetfreecones} and consider the cofinal subposet $\cal C \subseteq \Alt{X}$. For $\tilde X_1 \to \tilde X_2 \in \cal C$, there is an induced map $\varphi : \af{X_1} \to \af{X_2}$ by Proposition \ref{prop:AFfunctorialifYfree} which is l.c.i.\ because it is a DM-type map of smooth Artin stacks. There results a Gysin pullback map 
\begin{equation}\label{eqn:KthyGysinpullbackcolimitlogK}
    \varphi^! : K^\circ(X_2) \to K^\circ(X_1)
\end{equation}
between their vector-bundle $K$-theories. 

\begin{definition}\label{def:colimitlimitlogKtheory}
    The \emph{colimit log $K$-theory} of a logarithmic stack $X$ is the colimit under the Gysin pullback maps $\varphi^!$ of the log alterations $\tilde X \in \cal C$:
    \[
        \cKl(X) \coloneqq \colim_{\tilde X \in \cal C} K^\circ(\tilde X).
    \]
    The \emph{limit log $K$-theory} \cite[Definition~3.1]{kthylogprodfmla} is the limit under pushforwards of the coherent-sheaf $K$-theories of all log alterations $\tilde X \in \Alt(X)$:
    \[
        \Kl(X) \coloneqq \lim_{\tilde X \in \Alt(X)} K_\circ(\tilde X).
    \]
\end{definition}

In the definition of $\cKl(X), \Kl(X)$, we may freely replace the index of the co/limit by a cofinal subposet. 

\begin{theorem}\label{thm:lvfcsinbothKtheories}
    In the above setting, the log virtual fundamental classes \eqref{eqn:lvfcsoneachmodel} are compatible under both pushforward and Gysin pullback. Scilicet, let $p : X_1 \to X_2 \in \Alt{X}$ be log alterations of $X$ and define their corresponding log virtual fundamental classes as in \eqref{eqn:lvfcsoneachmodel}. Then 
    \[
        p_* \lvir{X_1} = \lvir{X_2} \qquad \in K_\circ(X_2).
    \]
    
    If both $X_i \in \cal C$ lie in the cofinal subposet of Remark \ref{rmk:cofinalsubposetfreecones}, write $\varphi : \af{X_1} \to \af{X_2}$ for the map between their Artin fans provided by Proposition \ref{prop:AFfunctorialifYfree}. Then 
    \[
        \varphi^! \lvir{X_2} = \lvir{X_1} \qquad \in K_\circ(X_1).
    \]
    
    Both statements also hold in Chow under the assumptions of Remark \ref{rmk:Chowassumptions}. 
\end{theorem}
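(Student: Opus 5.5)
Both statements reduce to a comparison of log intrinsic normal cones inside the fixed bundle stack $E$, following \cite{kthylogprodfmla}. Write $\Cl{X_i/Y}$ for the log normal cone of $X_i \to Y$. For a log alteration $X_i \to X$ it is computed as the normal cone of the strict map $X_i \to \Log Y \times_{\af{Y}} \af{X_i}$, or equivalently of $X_i \to \Log Y$. The inclusion $\Cl{X_i/Y} \subseteq E|_{X_i}$ is the restriction of $\Cl{X/Y} \subseteq E$.

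\textbf{Pushforward.} Let $p : X_1 \to X_2$ be a log alteration. Form the f.s.\ pullback along the log alteration $\tilde{\Log Y}_1 \to \tilde{\Log Y}_2$ induced from the Artin fans. The first step is to identify $\Cl{X_1/Y}$ with the pullback of $\Cl{X_2/Y}$ along this map, up to a proper birational modification of cones. Concretely, $\Cl{X_1/Y} \to \Cl{X_2/Y}\times_{X_2} X_1$ is a log modification of cone stacks. The second step is to invoke the birational-invariance input of \cite{kthylogprodfmla}: for a log modification (proper, birational, log \'etale) $h$ between log smooth targets one has $Rh_*\OO = \OO$. This is where log smoothness of $Y$, hence of $\Log Y$, enters. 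Pushing forward along the proper map $E|_{X_1} \to E|_{X_2}$ then gives $[\OO_{\Cl{X_2/Y}}]$. The third step uses that flat pullback $\pi^*$ commutes with proper pushforward (flat base change along the bundle stack $E$), so that $p_*\lvir{X_1} = \lvir{X_2}$.

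\textbf{Gysin pullback.} Now take $X_i \in \cal C$. Since $\tilde f$ is strict, $X_i \to \af{X_i}$ is strict, and $\Cl{X_i/Y}$ may be computed relative to $\af{X_i}$ via the Cartesian square $X_1 = X_2 \times_{\af{X_2}} \af{X_1}$. This square holds strictly on the cofinal poset by Proposition \ref{prop:AFfunctorialifYfree} together with the strictness of $X_i \to \af{X_i}$. The claim then becomes the standard functoriality statement that a refined l.c.i.\ pullback $\varphi^!$ along a Cartesian square preserves virtual structure sheaves defined from a compatible obstruction theory; this is the $K$-theoretic analogue of Kim--Kresch--Pantev functoriality. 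I would carry this out by deformation to the normal cone. Deform $\Cl{X_2/Y}$ along $\varphi$ to obtain the double cone $C_{X_1/X_2}\times \Cl{X_2/Y}$. Because $\varphi$ is l.c.i.\ with normal bundle pulled back from $\af{}$, the cone $C_{X_1/\Cl{X_2}}$ is rationally equivalent, in $K$-theory via the deformation space over $\PP^1$ and specialization, to $\Cl{X_1/Y}$ sitting inside $E|_{X_1}\oplus N_\varphi$. Then $\varphi^!$ applied to $\OO_{\Cl{X_2/Y}}$ gives $[\OO_{\Cl{X_1/Y}}]$.

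The Chow statements follow verbatim, replacing $Rh_*\OO=\OO$ by birationality of fundamental classes and using the quotient-stack stratification for specialization. I expect the main obstacle to be the identification of log normal cones under Gysin pullback, namely ensuring that $\Cl{X_1/Y}$ is indeed the specialization of $\varphi^*\Cl{X_2/Y}$. The point is that the Artin fan map $\varphi$ is not flat, only l.c.i., so one must check there is no excess contribution. I would first try to reduce to the universal case $\Log Y \times_{\af{}^n}$-local charts, where both cones are pulled back from explicit toric computations.
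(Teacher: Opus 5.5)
\textbf{Pushforward.} Your Step~1 is false as stated. $\Cl{X_1/Y}$ is a closed substack of $\Cl{X_2/Y}\times_{X_2}X_1$, typically of smaller dimension, and not a log modification of it. For example, take $Y=\Aff^2$ with its toric log structure, $X_2$ the origin with the strict log structure, and $X_1\cong\PP^1$ its log blowup at the origin. Then $\Cl{X_2/Y}=\Aff^2$, while $\Cl{X_1/Y}=\mathrm{Tot}\,\OO_{\PP^1}(-1)$ sits in $\Aff^2\times\PP^1$ as the tautological subbundle. In this example the composite $\Cl{X_1/Y}\to\Cl{X_2/Y}$ is the blowup $\mathrm{Bl}_0\Aff^2\to\Aff^2$ of the cone itself. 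In general, however, these cones are singular, possibly nonreduced, and not log smooth. So ``$Rh_*\OO=\OO$ for log modifications of log smooth targets'' does not apply to them, and log smoothness of $\Log Y$ does not transfer to the cones. What actually makes the $K$-theoretic pushforward work is a base-change argument. It needs $X_1\to X_2$ to be pulled back from a log alteration of Artin fans in both the f.s.\ and the underlying category, which is the hypothesis of \cite[Proposition~2.5]{kthylogprodfmla}. Arranging this for an arbitrary $p\in\Alt{X}$ is the real content of the paper's proof (Theorem~\ref{thm:kvfcsinlogkthy}):
refine by an alteration pulled back from $\af{X}$;
use Corollary~\ref{cor:wiseblowupforstacks} to get a strict \'etale map $\af{X}\to\af{}^N$;
use Proposition~\ref{prop:AFfunctorialifYfree} to get $\af{\tilde X}\to\af{X}$;
finish with a two-out-of-three argument.
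Your sketch presupposes an alteration of $\Log Y$ ``induced from the Artin fans'' that induces $p$. Such an alteration need not exist, because Artin fans are not functorial.

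\textbf{Gysin pullback.} Here you never use the base $Y$ or its log smoothness, and that is exactly what removes the excess contribution you flag as the main obstacle. Since $f_i:X_i\to Y_i$ is strict, $\Cl{X_i/Y}=C_{X_i/Y_i}$. Hence $\lvir{X_i}={\pi^*}^{-1}f_i^![Y_i]$ is a virtual pullback of the fundamental class of $Y_i$, and the relevant Cartesian square is the strict one $X_1=X_2\times_{Y_2}Y_1$. Your square $X_1=X_2\times_{\af{X_2}}\af{X_1}$ does not follow from Proposition~\ref{prop:AFfunctorialifYfree}: the Artin fan of an alteration is in general only strict \'etale over the pulled-back subdivision (cf.\ the proof of Corollary~\ref{cor:wiseblowupforstacks}). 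The paper then argues in three steps.
First, $\af{X_i}\to\af{Y_i}$ is representable and strict \'etale, hence \'etale, so $\varphi^!=\psi^!$ for $\psi:\af{Y_1}\to\af{Y_2}$.
Second, virtual pullback commutes with Gysin pullback: $f_1^!\psi^!=\psi^!f_2^!$.
Third, $\psi^![Y_2]=[Y_1]$, because $Y_2\to\af{Y_2}$ is strict and log smooth, hence smooth, so there is no excess term.
The Kim--Kresch--Pantev-type functoriality you call standard is only available after this rewriting as a virtual pullback of a fundamental class whose Gysin pullback is again a fundamental class. Your double-deformation sketch is also not well posed as written: $X_1\to X_2$ is a log alteration, not an embedding, so $C_{X_1/X_2}$ has no meaning.
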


\begin{proof}
    We prove the pushforward statement in Theorem \ref{thm:kvfcsinlogkthy} in the appendix. 

    Let $Y_1 \to Y_2 \in \cal C \subseteq \Alt{Y}$ be a morphism in the cofinal subposet of Remark \ref{rmk:cofinalsubposetfreecones}. Write $X_i = X \times_Y^\ell Y_i$ and $f_i : X_i \to Y_i$ for the maps between them. Form the diagram
    \[
    \begin{tikzcd}
                &\af{X_1} \ar[dr]   \\
        X_1 \ar[r, "f_1"] \ar[d] \lpbstrict \ar[ur]         &Y_1 \ar[d] \ar[r]        &\af{Y_1} \ar[d, "\psi"]       \\
        X_2 \ar[r, "f_2", swap] \ar[dr]         &Y_2 \ar[r]        &\af{Y_2},     \\
                &\af{X_2} \ar[ur]
    \end{tikzcd}
    \]
    with strict Cartesian inner square $X_1 = X_2 \times_{Y_2}^{\msout{\ell}} Y_1$. 
    
    We have morphisms between the Artin fans by Proposition \ref{prop:AFfunctorialifYfree}. Name the maps
    \[\varphi : \af{X_1} \to \af{X_2}, \qquad \psi : \af{Y_1} \to \af{Y_2}\]
    and endow each with its perfect obstruction theory as a representable l.c.i.\ map between smooth Artin stacks.

    We claim 
    \begin{itemize}
        \item 
        The cotangent complexes and hence Gysin pullbacks of the maps on Artin fans between the log alterations are the same 
        \begin{equation}\label{eqn:AFpbsame}
            \ccx{\af{X_1}/\af{X_2}} = \ccx{\af{Y_1}/\af{Y_2}}|_{\af{X_1}}, \qquad \varphi^! = \psi^!,
        \end{equation}
        where $\varphi^!, \psi^!$ are identified where both defined. 
        \item 
        The Gysin pullbacks commute
        \begin{equation}\label{eqn:Gysinpbsquare}
            f_1^! \psi^! = \psi^! f_2^!.
        \end{equation}
    \end{itemize}
    The second claim is standard \cite{Fultonintersectiontheory} and the first claim results from the fact that the morphisms $\af{X_i} \to \af{Y_i}$ are representable and strict \'etale, 
    hence \'etale. 

    Identify the log normal cones 
    \[\Cl{X_i/Y} = C_{X_i/Y_i}\]
    by strictness of the $f_i$. Then 
    \begin{align*}
        \varphi^!\lvir{X_2} &=\varphi^!{\pi^*}^{-1}[C_{X_2/Y_2}]        \\
                &={\pi^*}^{-1}\psi^! f_2^! [Y_2]  &&\eqref{eqn:AFpbsame}      \\
                &={\pi^*}^{-1}f_1^! \psi^! [Y_2]   &&\eqref{eqn:Gysinpbsquare}     \\
                &={\pi^*}^{-1}f_1^! [Y_1]   &&\text{because $Y$ log smooth}      \\
                &={\pi^*}^{-1}[C_{X_1/Y_1}]         \\
                &=\lvir{X_1}.
    \end{align*}    
    We have written $\pi$ abusively for the appropriate pullbacks of $\pi : E \to X$. 
\end{proof}

\begin{lemma}\label{lem:pushforwardcolimitlogKring}
    Use notation as above. Let $s : X \to X'$ be a proper, eventually strict map to a log smooth log algebraic stack $X'$. Then the pushforward of the system of log virtual classes of log alterations of $X$ lies in the colimit log $K$-theory ring of $X'$:
    \[
        s_* \pra{\lvir{X_i}}_{X_i \in \cal C} = \pra{s_{i*}\lvir{X_i}}_{X_i \in \cal C} \qquad \in \cKl(X') \subseteq \Kl(X').
    \]
\end{lemma}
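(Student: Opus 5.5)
We need to show two things: the pushed-forward system is compatible under Gysin pullback along the cofinal subposet of log alterations of $X'$, and it agrees with the pushforward of the limit system. The plan is to reindex the system on log alterations of $X'$ and reduce both compatibilities to Theorem \ref{thm:lvfcsinbothKtheories}.

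\textbf{Reindexing.} Take the cofinal subposet $\cal C \subseteq \Alt{X'}$ of Remark \ref{rmk:cofinalsubposetfreecones}, applied to $s : X \to X'$ with $Y = X'$. For $X'_i \in \cal C$ put $X_i \coloneqq X \times^\ell_{X'} X'_i$. By that remark, the $X_i$ form a cofinal family in $\Alt{X}$, and each $s_i : X_i \to X'_i$ is strict. Since $s$ is proper, so is $s_i$: strict f.s.\ base change of a proper map along a log alteration stays proper.

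Define $s_{i*}\lvir{X_i} \in K_\circ(X'_i)$. Because $X'$ is log smooth, each $X'_i$ in $\cal C$ is log smooth over the smooth Artin fan. I will check that $X'_i$ is regular, so that $K_\circ(X'_i) = K^\circ(X'_i)$ as in \eqref{eqn:colimitlogKMs}. This should follow because $X'_i \to \af{X'_i}$ is log smooth and strict, hence smooth, and $\af{X'_i}$ is smooth for free cones. Hence these classes define elements of the groups appearing in $\cKl(X')$.

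\textbf{Pushforward compatibility.} For $p' : X'_1 \to X'_2$ in $\cal C$, with induced $p : X_1 \to X_2$, we have
\[p'_* s_{1*}\lvir{X_1} = s_{2*}p_*\lvir{X_2} \text{ replaced appropriately} = s_{2*}\lvir{X_2}\]
by functoriality of pushforward together with the first half of Theorem \ref{thm:lvfcsinbothKtheories}. So the pushed-forward system is an element of $\Kl(X')$.

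\textbf{Gysin compatibility (main step).} Let $\psi : \af{X'_1} \to \af{X'_2}$ be the induced map of Artin fans, and $\varphi$ the corresponding map for the $X_i$. The key claim is
\[
\psi^! s_{2*}\lvir{X_2} = s_{1*}\varphi^!\lvir{X_2} = s_{1*}\lvir{X_1}.
\]
The second equality is the Gysin half of Theorem \ref{thm:lvfcsinbothKtheories}. For the first, I would use that the squares
\[X_1 = X_2\times^{\msout{\ell}}_{X'_2}X'_1\]
are strict Cartesian. They are pulled back along $X'_2 \to \af{X'_2}$ from the l.c.i.\ map $\psi$, and by \eqref{eqn:AFpbsame} the map $\varphi$ has the same obstruction theory as $\psi$. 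The identity is then the $K$-theoretic commutation of refined Gysin pullback with proper pushforward, in the form standard in Fulton's framework and its $K$-theory analogue for l.c.i.\ maps of stacks.

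I expect the main obstacle to be justifying that commutation for Artin stacks rather than schemes. Concretely, one needs the refined pullback $\psi^!$ along a representable l.c.i.\ map of smooth Artin stacks, defined via deformation to the normal cone, to commute with proper pushforward in $K_\circ$. I would first reduce to a smooth atlas of $\af{X'_2}$, since $\psi$ is representable, and then invoke the scheme case. Once this holds, the system $(s_{i*}\lvir{X_i})$ is compatible under the transition maps $\psi^!$ defining $\cKl(X')$.

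It therefore defines an element of $\cKl(X')$. This element maps to the pushed-forward system in $\Kl(X')$ under the natural map $\cKl(X') \to \Kl(X')$, using cofinality to pass between indexing posets.
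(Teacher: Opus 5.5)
Your overall route matches the paper's. You use three ingredients: Gysin-compatibility of the classes $\lvir{X_i}$ from Theorem \ref{thm:lvfcsinbothKtheories}, commutation of the Gysin maps with the proper pushforwards $s_{i*}$, and smoothness of the models of $X'$ to get $K_\circ(X'_i)=K^\circ(X'_i)$. But there is a gap where you assert that $\varphi^!\lvir{X_2}=\lvir{X_1}$ ``is the Gysin half of Theorem \ref{thm:lvfcsinbothKtheories}.''

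You reindexed by the cofinal subposet of Remark \ref{rmk:cofinalsubposetfreecones} for $s\colon X\to X'$. So your models are $X_i=X\times^\ell_{X'}X'_i$, with $s_i$ strict and transition maps induced by $X'_1\to X'_2$. The Gysin half of the theorem is only proved for the subposet attached to $f\colon X\to Y$. There the models are $X_i=X\times^\ell_Y Y_i$ with $f_i\colon X_i\to Y_i$ strict, and $X_1\to X_2$ is induced by a map $Y_1\to Y_2$ of models of $Y$. Its proof uses this strictness to identify $\Cl{X_i/Y}$ with the ordinary normal cone $C_{X_i/Y_i}$. It then commutes $f_i^!$ with the Gysin map of the Artin fans of the $Y_i$ and uses log smoothness of $Y$ to pull back $[Y_2]$ to $[Y_1]$.

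Nothing in your argument makes your $f_i$ strict, or makes your transition maps come from models of $Y$, so the theorem does not apply as you use it. Conversely, your commutation $\psi^!s_{2*}=s_{1*}\varphi^!$ needs the models and transition maps to come from $X'$ with $s_i$ strict. Two different indexing systems are in play. Cofinality of each separately in $\Alt{X}$ does not produce transition maps that are simultaneously of both kinds, which is what your chain of equalities needs.

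The missing step is the one the paper's proof inserts explicitly. Use that both $f$ and $s$ are eventually strict, and refine to a cofinal subposet of log alterations of $X$ pulled back from subdivisions of both $Y$ and $X'$, arguing as in the appendix. Along every transition map in that subposet, both $f_i$ and $s_i$ are strict. There your two equalities hold at once, and the rest of your argument goes through.

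The alternative would be to reprove the Gysin half of Theorem \ref{thm:lvfcsinbothKtheories} directly for models pulled back from $X'$. In that setting $\Cl{X_i/Y}$ is no longer the normal cone of a strict map, so this is real extra work rather than a citation.
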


\begin{proof}
    Theorem \ref{thm:lvfcsinbothKtheories} showed this system of classes is compatible under the Gysin maps $\varphi^!$ associated to the maps between Artin fans $\varphi : \af{X_1} \to \af{X_2}$. The eventually strict hypothesis means we can refine $\cal C$ by a cofinal subposet which is pulled back from subdivisions of both $X'$ and $Y$, arguing as in the appendix. Then the pushforwards $s_i : X_i \to X'_i$ between models are compatible with Gysin pullbacks, so the pushforward 
    \[\pra{s_{i*}\lvir{X_i}}_{X_i \in \cal C}\] 
    is compatible with Gysin pullbacks along the Artin fans of the models of $X'$. But these models are log smooth and assumed to have free log structure, so they are smooth and $K_\circ(X'_i) = K^\circ(X'_i)$. This colimit is then precisely the colimit log $K$-theory ring of $X'$.   
\end{proof}

\subsection{Specializing to the log double ramification class in log $K$-theory} \label{ss:logvfcdoubleramificationdef}

\begin{situation}\label{sit:3}
    Suppose $B$ is proper and log smooth and that the morphism $B \to \Mp$ parameterizing the family $C/B$ of curves is eventually strict. 
\end{situation}

For example, $B$ could have the minimal log structure associated to the family of curves $C/B$. We can make the same requirement for a tuple $\vec P/C/B_r$ of $\Glog$-torsors as in Situation \ref{sit:logsmoothbasehigherrank}.

Let $Q \to \dr \in \Alt(\dr)$ be a log alteration by a log algebraic stack $Q$. Recall from \S \ref{ss:DRlogpothodgebundle} that the dual of the Hodge bundle $\mathbb E^\vee$ serves as a log perfect obstruction theory for the map 
\[
\dr \to B \times B \Glog.
\]

\begin{remark}
If $\frak E$ is a log perfect obstruction theory on $Y$ relative to some base and $X \to Y$ is a log alteration, the restriction $\frak E|_X$ provides a natural log perfect obstruction theory for $X$ as in \cite[Remark 2.14, \S 3]{herrthesis}. By \cite[Remark 1.6]{kthylogprodfmla}, the $K$-theory groups are isomorphic under pullback: 
\begin{equation}\label{eqn:adeelkhanvbkthyisom}
    K_\circ(Q) \longsimeq K_\circ(\frak E|_Q).
\end{equation}
Check that a cofinal system of such log alterations $Q$ admits stratifications by global quotient stacks, so we can assume the same isomorphism holds for the Chow groups of a vector bundle stack \cite[Proposition 4.3.2]{kreschthesis}. 
\end{remark}

\begin{definition}
    In the above setting, define the log virtual fundamental class of $Q$ in $K$-theory (resp.\ Chow groups) to be the image of $[\OO_{\Cl{Q/B \times B \Glog}}]$ (resp.\ $[\Cl{Q/B \times B \Glog}]$) under the inverse  
    \[
        K_\circ(\mathbb E^\vee|_Q) \longsimeq K_\circ(Q) \qquad (\text{resp.\ } CH_*(\mathbb E^\vee|_Q) \longsimeq CH_*(Q))
    \]
    of the isomorphism \eqref{eqn:adeelkhanvbkthyisom}. 
\end{definition}

\begin{remark}
    The class $\lvir{Q} \in K_\circ(Q)$ just defined coincides with the virtual structure sheaf of \cite{quantumkthyypleethesis} by \cite[\S 2.2]{Kthyvirtpbs}. 
\end{remark}

\begin{theorem}
    For each log alteration $Q \in \Alt{\dr}$, equip $Q \to \dr \to B \times B\Glog$ with the pullback \cite[Remark 2.14, \S 3]{herrthesis} of the log perfect obstruction theory $\mathbb E^\vee$ from \S \ref{ss:DRlogpothodgebundle} for 
    \[
        \dr \to B \times B\Glog
    \]
    and write $\lvir{Q} \in K_\circ(Q)$ for the resulting log virtual fundamental class/structure sheaf. 

    The system of classes $\pra{\lvir{Q}}_{Q \in \Alt{\dr}}$ is compatible under pushforward, defining the $K$-\emph{theoretic double ramification} class in limit log $K$-theory 
    \[
        \lvir{\dr} \coloneqq \pra{\lvir{Q}}_{Q \in \Alt{\dr}} \qquad \in \Kl(\dr).
    \]

    The pushforward of this class along the proper map $p : \dr \to \Ms$ lies in the smaller \emph{colimit} log $K$-theory ring 
    \[
        p_* \lvir{\dr} \in \cKl(\Ms) \subseteq \Kl(\Ms). 
    \]
\end{theorem}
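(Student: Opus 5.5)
The theorem is a specialization of the general machinery of \S\ref{ss:logvfccolimitlogK}, so the plan is to check that its hypotheses hold for $f : \dr \to B \times B\Glog$ and then invoke Theorem~\ref{thm:lvfcsinbothKtheories} and Lemma~\ref{lem:pushforwardcolimitlogKring}.

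First we verify the standing assumptions. The target of the obstruction theory is $Y = B \times B\Glog$, which is log smooth because $B$ is log smooth (Situation~\ref{sit:logsmoothbase}) and $B\Glog \to \pt$ is log smooth. The map $\dr \to B$ is a proper log monomorphism, hence quasicompact. By \cite[Proposition 3.7]{kthylogprodfmla}, after a log alteration $\tilde B \to B$ its f.s.\ pullback becomes a strict closed substack; this is exactly eventual strictness. Since $B$ is DM (or admits a DM log alteration), so does $\dr$. By Corollary~\ref{cor:samelogPOT} and \S\ref{ss:DRlogpothodgebundle}, the Hodge bundle dual $\mathbb E^\vee$ gives a log perfect obstruction theory $\Cl{\dr/Y} \subseteq \mathbb E^\vee$, and it pulls back to every $Q \in \Alt{\dr}$ by \cite[Remark 2.14]{herrthesis}. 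Consequently the classes $\lvir{Q}$ in the statement agree with those defined in \eqref{eqn:lvfcsoneachmodel}.

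Second, compatibility under pushforward is the first assertion of Theorem~\ref{thm:lvfcsinbothKtheories}. That assertion rests on Theorem~\ref{thm:kvfcsinlogkthy}, whose key input is that a log alteration $Q_1 \to Q_2$ induces a proper birational map of log normal cones $\Cl{Q_1/Y} \to \Cl{Q_2/Y}$ with $p_*\OO = \OO$, because log blowups of log smooth stacks have rational-singularity behaviour. Taking the family of classes over $\Alt{\dr}$ then gives an element of $\Kl(\dr)$.

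Third, for the pushforward we apply Lemma~\ref{lem:pushforwardcolimitlogKring} with $s = p : \dr \to \Ms$; this composite is proper by Situation~\ref{sit:3}. The step I expect to be the real obstacle is that the obstruction theory is relative to $B \times B\Glog$, not to $\Ms$, while the lemma needs $s$ to be eventually strict to a log smooth target together with a cofinal system of models that are simultaneously strict over subdivisions of $Y$ and of $\Ms$ and have free log structure. I would handle this in three moves. First, use eventual strictness of $B \to \Mp$ and of $\dr \to B$. Second, note that $B\Glog$ contributes no tropical data, since its Artin fan is a point. Third, apply Corollary~\ref{cor:wiseblowupforstacks} to choose common refinements pulled back from $\af{}^n$, as in Remark~\ref{rmk:cofinalsubposetfreecones}. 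On such models the Gysin compatibility $\varphi^!\lvir{X_2} = \lvir{X_1}$ from Theorem~\ref{thm:lvfcsinbothKtheories} commutes with the proper pushforwards $s_{i*}$, because the relevant squares are strict fiber squares. Finally, smoothness of the free-log-structure models of $\Ms$ gives $K_\circ = K^\circ$, so the system lands in $\cKl(\Ms)$.
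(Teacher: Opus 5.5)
Your proposal matches the paper's proof, which also derives the theorem directly from Theorem~\ref{thm:lvfcsinbothKtheories} and Lemma~\ref{lem:pushforwardcolimitlogKring}, taking $f\colon \dr \to B \times B\Glog$ with obstruction theory $E = \mathbb E^\vee$ and $s = p\colon \dr \to \Ms$ as the proper, eventually strict map. Your extra hypothesis checks (eventual strictness, common refinements with free log structure, and $K_\circ = K^\circ$ on the smooth models of $\Ms$) only spell out what those two results already contain, so nothing essential is missing.
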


The last point is important because it lets us take products of the double ramification classes after pushing forward to $\Ms$, as $\cKl(\Ms)$ is a ring. 

\begin{proof}
    The theorem results from Theorem \ref{thm:lvfcsinbothKtheories} and Lemma \ref{lem:pushforwardcolimitlogKring}, where 
    \begin{itemize}
        \item The map $f : X \to Y$ is $\dr \to B \times B\Glog$ equipped with the log perfect obstruction theory $E = \mathbb E^\vee$, and 
        \item The proper, eventually strict map $s : X \to X'$ is $\dr \to \Ms$. 
    \end{itemize}
\end{proof}

\begin{remark}\label{rmk:logGysinmapkthyintnprod}
    We recall the log Gysin map/intersection product in $K$-theory constructed in \cite{kthylogprodfmla}. Let $f : X \to Y$ be an eventually strict, quasicompact morphism of quasicompact log algebraic stacks, i.e., one which fits into a commutative square 
    \[
    \begin{tikzcd}
        \tilde X \ar[r] \ar[d]        &\tilde Y \ar[d]       \\
        X \ar[r]       &Y
    \end{tikzcd}
    \]
    with $\tilde f : \tilde X \to \tilde Y$ strict and vertical maps log alterations. Equip $f$ with a log perfect obstruction theory $\Cl{f} \subseteq E$. 

    A class $\alpha = (\alpha_i) \in \Kl(Y)$ is a system of classes $\alpha_i \in K(\tilde Y_i)$ on log alterations $\tilde Y_i \to Y$ of $Y$ which are compatible under pushforward. We define a map 
    \[
        f^\dagger : \Kl(Y) \to \Kl(X)
    \]
    by sending $\alpha = (\alpha_i)$ to the system $(\tilde f^!_i \alpha_i) \in \Kl(X)$, where $\tilde f_i : \tilde X_i \to \tilde Y_i$ is the cofinal system of log alterations of $X$ which are 
    \begin{itemize}
        \item pulled back from log alterations of $Y$, and 
        \item lead to strict maps $\tilde f_i : \tilde X_i \to \tilde Y_i$. 
    \end{itemize}
    This system $(\tilde f_i^! \alpha_i)$ is automatically compatible with pushforward between the log alterations $\tilde X_i \to \tilde X_j$ by compatibility with the usual Gysin map $\tilde f_i^!$ with pushforward \cite[Theorem 6.2]{Fultonintersectiontheory} and its generalization to representable proper maps between algebraic stacks \cite[Appendix B]{BSS-I}. 
\end{remark}

\begin{remark}\label{rmk:GysinpbpreservescolimKthy}
    Let $\cal C \subseteq \Alt{Y}$ be a cofinal subposet consisting of log alterations $\tilde Y \to Y$ which admit a strict map to some $\af{}^n$. If $Y_1 \to Y_2 \in \cal C$, there is a corresponding map $\psi : \af{Y_1} \to \af{Y_2}$ on Artin fans by Proposition \ref{prop:AFfunctorialifYfree}. Suppose $\alpha = (\alpha_i) \in \Kl(Y)$ has the property that 
    \begin{equation}\label{eqn:Gysinpbstability}
        \psi^! \alpha_2 = \alpha_1 \qquad \in K_\circ(Y_1)
    \end{equation}
    for each such map $Y_1 \to Y_2 \in \cal C$. Then $f^\dagger \alpha \in \Kl(X)$ also has that property for a sufficient cofinal subposet of $\Alt{X}$. If $Y$ is log smooth and $\cal C$ consists of models $Y_i \to Y$ which have locally free log structures, then \eqref{eqn:Gysinpbstability} is exactly the condition that cuts out the subring 
    \[
        \cKl(Y) \subseteq \Kl(Y).
    \]
\end{remark}

\begin{proposition}\label{prop:logintnproductcolimitkthy}
    Let $X$ be a log smooth, quasicompact logarithmic stack. Its colimit log $K$-theory $\cKl(X)$ is a ring with the ``intersection product'' defined by log Gysin pullback along the diagonal of $X$. 
\end{proposition}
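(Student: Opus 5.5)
To define the product, take two classes $\alpha,\beta\in\cKl(X)$. By cofinality of the subposet $\cal C$ of Remark \ref{rmk:cofinalsubposetfreecones}, both can be represented on a common model: choose $\tilde X\in\cal C$ refining the models on which representatives were chosen, and Gysin-pull the representatives back along the maps of Artin fans from Proposition \ref{prop:AFfunctorialifYfree}. So I will assume $\alpha,\beta\in K^\circ(\tilde X)$. Because $X$ is log smooth and $\tilde X$ admits a strict map to some $\af{}^n$, the stack $\tilde X$ is smooth. Hence $K_\circ(\tilde X)=K^\circ(\tilde X)$ is a ring under tensor product, and this product agrees with Gysin pullback along the (l.c.i.) diagonal of the external product. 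On $\tilde X$ I would take $\alpha\cdot\beta\coloneqq\alpha\otimes\beta$.

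Next I would identify this product with log Gysin pullback along the diagonal of $X$. The log diagonal $X\to X\times X$ is eventually strict, and the f.s.\ product $\tilde X\times^\ell\tilde X$ contains a cofinal system of models pulled back from subdivisions. On a model $\tilde Z\to\tilde X\times^\ell\tilde X$ in which the diagonal becomes strict, the pullback of the diagonal is a log alteration $\tilde X'$ of $X$. Applying Remark \ref{rmk:logGysinmapkthyintnprod}, the class $\Delta^{\log,!}(\alpha\boxtimes\beta)$ is then computed as $\tilde\Delta^!(\alpha\boxtimes\beta)$ on $\tilde X'$, which equals $\alpha|_{\tilde X'}\otimes\beta|_{\tilde X'}$, with restrictions given by the Artin-fan Gysin pullbacks. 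Remark \ref{rmk:GysinpbpreservescolimKthy} shows that the output again satisfies the Gysin-compatibility condition that defines $\cKl(X)$.

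It remains to check that the product is well defined, i.e.\ independent of the chosen model. The key fact I need is that for a morphism $\varphi\colon\tilde X_1\to\tilde X_2$ in $\cal C$, the transition map $\varphi^!\colon K^\circ(\tilde X_2)\to K^\circ(\tilde X_1)$ is a ring homomorphism. I would argue as follows. The map $\varphi^!$ is refined Gysin pullback along the l.c.i.\ map of Artin fans $\af{X_1}\to\af{X_2}$, pulled back along the strict map $\tilde X_2\to\af{X_2}$. Both models are smooth, so $\tilde X_1\to\tilde X_2\times_{\af{X_2}}\af{X_1}$ is the strict pullback and the excess bundle is trivial. Since the maps $\tilde X_i\to\af{X_i}$ are strict and both sides are smooth, the Tor-dimension count shows that $\varphi^!$ equals ordinary derived pullback $L\varphi^*$ on perfect complexes. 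Derived pullback is multiplicative. Once this holds, the ring axioms (associativity, commutativity, and unit $[\OO]$) are inherited from each $K^\circ(\tilde X)$ and pass to the filtered colimit.

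\textbf{Main obstacle.} I expect the hardest step to be the identification $\varphi^!=L\varphi^*$, in particular showing that the fiber square of Artin fans has no excess intersection, that is, that $\tilde X_1=\tilde X_2\times^{\msout{\ell}}_{\af{X_2}}\af{X_1}$ is Tor-independent. If Tor-independence fails in general, I would fall back on the excess intersection formula. In that case I would show the excess bundle is trivial using the equality $\dim\tilde X_1=\dim\tilde X_2$, which holds because log alterations are birational.
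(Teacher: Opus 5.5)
Your proof is correct, but it is organized differently from the paper's.

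\textbf{The paper's route.} The paper never works model by model. It defines the product as the composite
\[
\cKl(X)\times\cKl(X)\to\cKl(X\times X)\to\Kl(X\times X)\xrightarrow{\Delta^\dagger}\Kl(X),
\]
that is, external product followed by the log Gysin map of Remark~\ref{rmk:logGysinmapkthyintnprod} for the log l.c.i.\ diagonal. It then cites Remark~\ref{rmk:GysinpbpreservescolimKthy} to see that the output again satisfies the compatibility condition \eqref{eqn:Gysinpbstability} that cuts out $\cKl(X)$ inside $\Kl(X)$. This route never needs the transition maps $\varphi^!$ to be multiplicative, but it leaves associativity, commutativity and the unit implicit.

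\textbf{Your route.} You exhibit $\cKl(X)$ as a filtered colimit of the rings $K^\circ(\tilde X)$, $\tilde X\in\cal C$, along ring homomorphisms, so the ring axioms come for free. Only afterwards do you match the modelwise tensor product with $\Delta^\dagger$ of the external product. This makes the product explicit, at the price of proving that each $\varphi^!$ is a ring map.

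\textbf{Your main obstacle.} That price is lower than you fear, but you should give the precise reason rather than a ``Tor-dimension count.'' Smoothness of $\tilde X_1$ and $\tilde X_2$ by itself does not give Tor-independence. What does is this: $\tilde X_2$ is log smooth and $\tilde X_2\to\af{X_2}$ is strict, so $\tilde X_2\to\af{X_2}$ is smooth, because $\tilde X_2\to\Log$ is smooth and $\af{X_2}\to\Log$ is \'etale.
\begin{itemize}
\item Hence every vector bundle on $\tilde X_2$ is flat over $\af{X_2}$.
\item So the higher Tor sheaves in the refined Gysin pullback along $\af{X_1}\to\af{X_2}$ vanish.
\item Therefore $\varphi^!$ on $K^\circ(\tilde X_2)$ is plain pullback along $\tilde X_1=\tilde X_2\times_{\af{X_2}}\af{X_1}\to\tilde X_2$, which is visibly multiplicative.
\end{itemize}
No excess-intersection fallback is needed.
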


\begin{proof}
    The diagonal $X \to X \times X$ is log l.c.i., in that its log cotangent complex $\lccx{X/X \times X}$ is perfect in degrees $[-1, 0]$. This log perfect obstruction theory engenders a log Gysin pullback 
    \[
        \Delta^\dagger : \Kl(X \times X) \to \Kl(X).
    \]
    The intersection product on $\cKl(X)$ is defined by 
    \[
        \cKl(X) \times \cKl(X) \to \cKl(X \times X) \to \Kl(X \times X) \overset{\Delta^\dagger}{\longrightarrow} \Kl(X),
    \]
    remarking that the composite factors through the subring $\cKl(X) \subseteq \Kl(X)$ by Remark \ref{rmk:GysinpbpreservescolimKthy}, as $\Delta^\dagger$ preserves the property \eqref{eqn:Gysinpbstability}. 
\end{proof}

\begin{warning}
    The log intersection product on $\cKl(X)$ is defined using the log Gysin map 
    \[
        \Delta^\dagger : \Kl(X \times X) \to \Kl(X).
    \]
    However, there is no analogous intersection product defined on \emph{limit} log $K$-theory $\Kl(X)$. This is because there is no suitable map 
    \[
        \Kl(X) \times \Kl(X) \dashrightarrow \Kl(X \times X). 
    \]
\end{warning}

As in classic intersection theory \cite{Fultonintersectiontheory}, the most common example of this is the diagonal of a suitably smooth object. This is still not quite enough to get an ``intersection product,'' however. 

\begin{example}\label{ex:logdiagintnprod}
    If $X, Y$ are quasicompact, log smooth log algebraic stacks, the diagonal 
    \[
        \Delta : X \to X \times X
    \]
    is eventually strict and admits a canonical log perfect obstruction theory $\Cl{\Delta} \subseteq \Nl{\Delta}$. By Remark \ref{rmk:logGysinmapkthyintnprod}, we have a log Gysin map 
    \[
        \Kl(X \times X) \to \Kl(X).
    \]

    Unlike in ordinary Chow and $K$-theory, we do not get a log intersection product on $\Kl(X)$ from this map. There is no map 
    \[
        \Kl(X) \times \Kl(Y) \dashrightarrow \Kl(X \times Y),
    \]
    as there are far more log alterations of $X \times Y$ than those pulled back from either factor. They each project onto the smaller limit of ``rectangular subdivisions'' which are intersections of those pulled back from either factor: 
    \[
        \Kl(X) \times \Kl(Y) \to \lim_{\text{rectangular } B \to X \times Y} K_\circ(B) \leftarrow \Kl(X \times Y). 
    \]
    
\end{example}

\begin{example}
    Let $X = \Aff^1$ and let $K'(Z)$ be the limit over only log blowups of $Z$ instead of $\Kl$ for simplicity. Then $K'(\Aff^1) = K_\circ(\Aff^1) = \ZZ$ is the usual $K$-theory of $\Aff^1$, as $\Aff^1$ is valuative and admits no nontrivial log blowups. 

    On the other hand, $K'(X \times X) = K'(\Aff^2)$ is a limit over the $K$-theories of all log blowups of $\Aff^2$. These correspond to (projective) subdivisions of the first quadrant. For convenience, we range over the cofinal system of smooth log blowups $B \to \Aff^2$. Then \cite[Theorem 1.6]{AndersonPayneOperationalKTheory} or \cite[\S 2.4]{BrionVergneEquivariantRiemannRoch} compute the \emph{equivariant} $K$-theory ring of such a blowup $B \to \Aff^2$ as 
    \begin{align*}
        K_\circ^T(B) &= \{\text{piecewise exponential functions on the fan }\NN^2 \text{ of }\Aff^2\}       \\
                &= \{(f_\sigma) \in \prod_{\sigma \in \Sigma_B} R(T_\sigma) \, | \, f_\sigma|_\tau = f_\tau \text{ whenever } \tau \preceq \sigma\}.
    \end{align*}
    To get the non-equivariant version $K_\circ(B)$, you quotient by the ideal generated by differences of characters. 

    In this case, there is a product map which factors through the inclusion of the subring given by the colimit 
    \[K'(\Aff^1) \times K'(\Aff^1) = K^\circ(\Aff^1) \times K^\circ(\Aff^1) \to \colim K^\circ(\Aff^2) \subseteq K'(\Aff^2).\]
\end{example}

\begin{example}
    The group $K'(\Aff^2)$ for $X = \Aff^2$ is not a ring. It's defined as a limit under the pushforwards between models of $\Aff^2$, but these pushforward morphisms are almost never ring homomorphisms. For example, consider 
    \[\pi : Q \coloneqq Bl_{\vec 0} \Aff^2 \longrightarrow \Aff^2\] 
    and let $D_1, D_2 \subseteq Q$ be the strict transforms of the $x-$ and $y-$axes $V(y), V(x) \subseteq \Aff^2$. Then 
    \[
        [D_1] \cdot [D_2] = 0, \qquad \pi_*([D_1]) \cdot \pi_*([D_2]) = [V(y)] \cdot [V(x)] = [\vec 0],
    \]
    so $\pi_*([D_1] \cdot [D_2]) \neq \pi_*([D_1]) \cdot \pi_*([D_2])$. 
\end{example}

\subsection{Product formula} \label{ss:pf}

Let $M = (m_{ij}) \in \GL_r(\ZZ)$ be an invertible matrix with integer entries and consider an $n \times r$ integer matrix $A$ whose columns sum to zero as in \S \ref{higherranksection}. Obtain an isomorphism 
\[M : B\Glog^r \longsimeq B\Glog^r\]
by sending a tuple of log line bundles $\vec Q = (Q_k)_r$ on a log scheme $T$ to the tuple of products 
\[
    \vec Q.M \coloneqq \pra{
    \bigotimes_k Q_k^{m_{k1}}, \bigotimes_k Q_k^{m_{k2}}, \cdots, \bigotimes_k Q_k^{m_{kr}}
    } \qquad \in B\Glog^r(T).
\]

Fix an $r$-tuple of $\Glog$-torsors $\vec P = (P_i)_r$ on a \emph{stable} curve $C/B_r$ as in Situations \ref{sit:logsmoothbasehigherrank} and \ref{sit:3}. 
The above yields a commutative square 
\begin{equation}\label{eqn:GLractiondr}
\begin{tikzcd}
    \dr[\vec P, A] \ar[r, "\sim"] \ar[d]        &\dr[\vec P.M, AM] \ar[d]       \\
    B_r \times B\Glog^r \ar[r, "M", swap]         &B_r \times B\Glog^r
\end{tikzcd}    
\end{equation}
with horizontal arrows isomorphisms, which we make explicit. Note the columns of $AM$ also sum to zero. 

An $S$-point of the left upper corner $\dr$ parameterizes a tuple of isomorphisms 
\begin{equation}\label{eqn:isomsdrforGLinvariance}
    \OO_C(\sum a_{ik} p_i) \times^{\OO_C^*} Q_k|_{\vert C_S} \longsimeq P_k|_{\vert C_S}
\end{equation}
for $1 \leq k \leq r$. If $(m_{1t}, m_{2t}, \cdots, m_{rt})$ is the $t$th column of $M$, the $t$th trivialization on the right upper corner $\dr[\vec P.M, AM]$ is the isomorphism
\[
    \OO_C(\sum m_{it} \vec a_i) \times^{\OO^*_C} \bigotimes Q_i^{m_{it}}|_{\vert C_S} \longsimeq \bigotimes P_i^{m_{it}} 
\]
obtained by tensoring together $m_{it}$-multiples of the isomorphisms \eqref{eqn:isomsdrforGLinvariance}. Check that the obstruction theories of the vertical maps in \eqref{eqn:GLractiondr} are identified under the isomorphisms, so 
\begin{equation}\label{eqn:GLractiondrlvir}
    M_*\lvir{\dr} = \lvir{\dr[\vec P.M, AM]}
\end{equation}
as in \cite[Theorem 5.3]{holmesschwarzGLrinvarianceprodfmla}. 

One could instead replace $P_k$ by $P_k(-\sum a_i p_i)$ to assume the matrix $A = [0]$ is zero, but the above makes it clear why the resulting contact orders are $AM$. 

\begin{example}
    Fix $C/B_r$ and assume $S = \Spec \CC$ with trivial log structure and the torsors $P_i \simeq \gp M_{\vert C_S}$ are trivial for convenience. Suppose 
    \[
    A = \begin{bmatrix}
        -1 &3 \\
        1 &-3
    \end{bmatrix}, \qquad 
    M = \begin{bmatrix}
        -5 &2\\
        -3 &1
    \end{bmatrix},
    \]
    so $AM = \begin{bmatrix}
        -4&1 \\
        4 &-1
    \end{bmatrix}$. 

    Let $C$ be a smooth curve over $\CC$ with two marked points $p, q$. Its vertical log structure is also trivial. The $S$-point of $\Ms[g, 2]$ lies in $\dr$ if and only if the line bundles associated to 
    \[
        \OO_C(-p + q), \qquad \OO_C(3p - 3q)
    \]
    have trivializing sections $s_1, s_2$. The isomorphism \eqref{eqn:GLractiondr} sends these to the line bundles with sections
    \[
        (\OO_C(-4p + 4q), s_1^{-5} \otimes s_2^{-3}), \qquad (\OO_C(p-q), s_1^2 \otimes s_2). 
    \]
\end{example}

We establish a combined log product formula and $\GL_r(\ZZ)$-invariance statement \cite[Theorem 5.3]{holmesschwarzGLrinvarianceprodfmla} for the log DR class. 

Let $A = [\vec a_i]$ be an $n \times r$ integer matrix such that each column $\vec a_i$ has entries summing to zero. Let $M \in \GL_r(\ZZ)$ be an invertible integer matrix and write the columns of $AM = [\vec b_i]$ as $\vec b_i$. Fix $\vec P/C/B_r$ satisfying Situation \ref{sit:logsmoothbasehigherrank} and suppose $B_r$ is proper and log smooth and $B_r \to \Ms$ is eventually strict as in Situation \ref{sit:3}. 
Write $N_t = \bigotimes P_i^{m_{it}}$.

\begin{theorem} \label{t:product}
    In the above situation, the products of DR classes on $B_r$ agree: 
    \[
    \lvir{\dr[P_1, \vec a_1]} \cdot \cdots \cdot \lvir{\dr[P_r, \vec a_r]} =
    \lvir{\dr[N_1, \vec b_1]} \cdot \cdots \cdot \lvir{\dr[N_r, \vec b_r]} \qquad \in \cKl(B_r).
    \]
    Both sides are equal to $\lvir{\dr[\vec P, A]} \in \cKl(B_r)$.
\end{theorem}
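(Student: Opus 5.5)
The plan is to prove the two equalities separately. First, the left side equals $\lvir{\dr[\vec P, A]}$ (a log product formula). Second, the analogous product for $(\vec N, AM)$ equals $\lvir{\dr[\vec P.M, AM]}$. Then \eqref{eqn:GLractiondrlvir} identifies $\lvir{\dr[\vec P, A]}$ with $\lvir{\dr[\vec P.M, AM]}$ after pushforward to $B_r$. Since the square \eqref{eqn:GLractiondr} lies over the identity of $B_r$, the isomorphism $M$ commutes with the projections to $B_r$. So the pushforwards to $\cKl(B_r)$ agree, and it suffices to prove the product formula for a single tuple, say $(\vec P, A)$.

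For the product formula, I would realize $\dr[\vec P, A]$ as an f.s.\ fiber product. Set $X_k = \dr[P_k, \vec a_k]$, with structure maps $X_k \to B_r$. The defining moduli problem is an $r$-tuple of independent isomorphisms, so
\[
\dr[\vec P, A] \simeq X_1 \times^\ell_{B_r} \cdots \times^\ell_{B_r} X_r \simeq (X_1 \times \cdots \times X_r) \times^\ell_{B_r^r} B_r,
\]
with $B_r \to B_r^r$ the diagonal. Next I would compare obstruction theories. By \S\ref{ss:DRlogpothodgebundle}, each $X_k \to B_r \times B\Glog$ has obstruction theory $\mathbb E^\vee$, computed as $R^1\pi_*J|_C$ from the $J|_C$-torsor of lifts of the $k$th isomorphism. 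The same computation for the tuple gives $(\mathbb E^\vee)^{\oplus r}$ for $\dr[\vec P, A] \to B_r \times B\Glog^r$. This is exactly the pullback of $\boxplus_k \mathbb E^\vee$ along the product, so the obstruction theories are compatible with the Cartesian diagram above.

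Then I would invoke functoriality of virtual pullback in $K$-theory on each model. Choose models in the cofinal subposet $\cal C$ of Remark \ref{rmk:cofinalsubposetfreecones}: take $\tilde B_r$ such that all $\tilde X_k \to \tilde B_r$ are strict closed, and use a log alteration of $B_r^r$ refining $\tilde B_r^r$ through which $\Delta$ becomes strict. On such a model, $\lvir{\tilde D} = \Delta^! (\lvir{\tilde X_1} \boxtimes \cdots \boxtimes \lvir{\tilde X_r})$. This follows from the standard functoriality and commutativity of $K$-theoretic virtual pullbacks (Kim--Kresch--Pantev, Qu). The point is that $\Cl{\tilde D/\tilde B_r}$ is the refined intersection of the product cone with the diagonal, exactly as in the log product formula of \cite{kthylogprodfmla}. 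Pushing forward to $\tilde B_r$ and comparing with the definition of the product in Proposition \ref{prop:logintnproductcolimitkthy} identifies the left side with $\lvir{\dr[\vec P, A]}$. Here one checks that the $\Delta^\dagger$ of Remark \ref{rmk:logGysinmapkthyintnprod}, applied to the external product of colimit classes, is computed on these rectangular models. This uses Lemma \ref{lem:pushforwardcolimitlogKring} and the Gysin-stability of Theorem \ref{thm:lvfcsinbothKtheories} to pass between models.

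I expect the main obstacle to be this last compatibility. The $\Delta^\dagger$ is defined on arbitrary log alterations of $B_r^r$, not only rectangular ones, and the external product of colimit classes lands in $\cKl(B_r^r)$ only after pulling back from rectangular models. I would first show that Gysin-stable systems on rectangular models of $B_r^r$ extend uniquely to all models in $\cal C$ via $\psi^!$. Then the refined intersection with the diagonal can be computed on any sufficiently fine rectangular model. This requires a proper-pushforward/Gysin compatibility of the form $\psi^!\Delta^! = \Delta^!\psi^!$, as in \eqref{eqn:Gysinpbsquare}.
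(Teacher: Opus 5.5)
Your proposal is correct and follows essentially the same route as the paper. Both arguments reduce via \eqref{eqn:GLractiondrlvir} to the single product formula, realize $\dr[\vec P, A]$ as the f.s.\ pullback of $\prod_k \dr[P_k, \vec a_k]$ along $\Delta_{B_r}$ with matching obstruction theories $(\mathbb E^\vee)^{\oplus r}$, and identify $\Delta_{B_r}^\dagger\bigl(\boxtimes_k \lvir{\dr[P_k, \vec a_k]}\bigr) = \lvir{\dr[\vec P, A]}$ with the product via Proposition \ref{prop:logintnproductcolimitkthy}. The paper handles your model-by-model virtual-pullback functoriality and the rectangular-versus-diagonal-strict compatibility you flag (which your $\psi^!\Delta^! = \Delta^!\psi^!$ step does resolve) by citing \cite[Remark 1.8]{kthylogprodfmla} and that proposition, so your version is a more explicit unpacking of the same argument.
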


We omit the pushforward of the classes from $\dr[\vec P, A]$ to $B_r$ in the notation of the theorem, but all classes are colimit log $K$-theory classes on $B_r$. 

\begin{proof}
    It suffices to show 
    \[
    \lvir{\dr[\vec P, A]} = \lvir{\dr[P_1, \vec a_1]} \cdot \cdots \cdot \lvir{\dr[P_r, \vec a_r]}
    \]
    in $\cKl(B_r)$ because of the isomorphism \eqref{eqn:GLractiondrlvir}. 

    By Remarks \ref{rmk:AJBmapGlogtorsordifference}, \ref{rmk:drpbbaseB} and their analogue in higher rank, we have a pullback square
    \[
    \begin{tikzcd}
        \dr[\vec P, A] \ar[r] \ar[d]      &\prod \dr[P_i, \vec a_i] \ar[d]      \\
        B_r \ar[r, "\Delta_B"]       &(B_r)^r.
    \end{tikzcd}
    \]
    The log perfect obstruction theories of the vertical arrows are the same. By \cite[Remark 1.8]{kthylogprodfmla}, it results that 
    \[
        \Delta_B^\dagger \pra{\boxtimes_i \lvir{\dr[P_i, \vec a_i]}} = \lvir{\dr[\vec P, A]}.
    \]
    By Proposition \ref{prop:logintnproductcolimitkthy}, the pullback $\Delta_B^\dagger \lvir{\prod \dr[P_i, \vec a_i]}$ defines the product of the classes $\dr[P_i, \vec a_i]$ on $B_r$.

\end{proof}

We do not need the more complicated diagram \cite[Equation (1)]{kthylogprodfmla} adapted from \cite{KontsevichManin1994GromovWitten, BehrendManin1996StacksStableMaps, prodfmla, herrthesis} because stability in $\dr \subseteq \drp$ is equivalent to stability of the underlying curve.

\begin{example}
    Take $B_r = \Ms$, $C/B_r$ to be the universal curve, and $P_i = \gp M_C$ the trivial log line bundles. Then Theorem \ref{t:product} states that 
    \[
        \prod \lvir{\dr[g, n, \vec a_i]} = \prod \lvir{\dr[g, n, \vec b_i]}
    \]
    on $\cKl(\Ms)$ if $\vec b_i$ are the columns of the matrix $AM$ with $A = [\vec a_1 \cdots \vec a_r]$. This universal case was done in \cite[Theorem 5.3]{holmesschwarzGLrinvarianceprodfmla}. 
\end{example}

\section{Thom--Porteous formula in \texorpdfstring{$K$}{K}-theory for stacks}\label{s:thomporteous}

Recognize $\Aff^{m \times n} = {\rm Mat}_{n \times m}$ as the universal base of a morphism $A \to B$ of trivial vector bundles of ranks $m, n$. Quotienting by $\GL_m \times \GL_n$ we get the universal base of a morphism of not-necessarily-trivial vector bundles. Suppose $m \geq n$ and write $i : V \subseteq \Aff^{m \times n}$ for the closed subscheme on which the map $A \to B$ does not have full rank; that is $\rk(A \to B) < n$. 

There is an exact sequence $C_\bullet$ resolving the structure sheaf $\OO_V$ with terms: 
\begin{equation}\label{eqn:buchsbaumrimjerzyweymanresolution}
    C_i = \wedge^{n+i-1} A \otimes \wedge^n (B^\vee) \otimes D_{i-1}(B^\vee).
\end{equation}
The formula in loc.\ cit.\ has an extra factor of $-\otimes_\ZZ \ZZ(-n-i+1)$ shifting the degree, but we do not need the grading.% and the $\GL_m \times \GL_n$ action preserves degree. 
Versions of this formula are due to Buchsbaum--Rim, Eagon--Northcott, Eisenbud, Weyman; see \cite[\S A.2.6.1]{commutativealgebraEisenbud1995} for historical context.

\begin{theorem}[{\cite[Proposition 6.1.7]{jerzyweymansyzygiesvectorbundles}}]\label{thm:jerzyweymanresolution}
    The complex $C_\bullet$ is a $\GL_m \times \GL_n$-equivariant minimal free resolution of $\OO_V$.  
\end{theorem}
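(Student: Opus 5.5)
Since the statement is cited from Weyman's book, the plan is to recall why the Buchsbaum--Rim/Eagon--Northcott type complex is a resolution; we do not derive it from earlier results in the paper. Throughout, $A$ has rank $m \geq n = \rk B$ and the map $\phi : A \to B$ is the universal one over $\Aff^{m\times n}$. The first step is to build the complex explicitly. Set $C_0 = \OO$ and let $C_1 = \wedge^n A \otimes \wedge^n B^\vee \to C_0$ send a wedge of $n$ vectors to the corresponding maximal minor, which is the map $\wedge^n\phi$ composed with the trace pairing. For $i \geq 2$, the differential $C_i \to C_{i-1}$ is the Koszul-type contraction: comultiply $\wedge^{n+i-1}A \to \wedge^{n+i-2}A \otimes A$, apply $\phi : A \to B$, and contract the resulting factor of $B$ into the divided power $D_{i-1}(B^\vee) \to D_{i-2}(B^\vee)\otimes B^\vee$. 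All of these maps are defined functorially in $A$ and $B$, so they are $\GL_m \times \GL_n$-equivariant and make sense over $\ZZ$. That $d^2=0$ then follows from coassociativity of the exterior and divided power coalgebras together with the anticommutativity of exterior algebra.

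The second step is to show the complex is exact in positive degrees and has $H_0 = \OO_V$. The cokernel of $C_1 \to C_0$ is $\OO/I_n(\phi)$, and $V$ is by definition cut out by the maximal minors. Exactness we would prove with the Buchsbaum--Eisenbud acyclicity criterion: the ranks of the differentials must be additive, and the ideal of maximal minors of the $i$-th differential must have depth at least $i$. The ideal $I_n(\phi)$ of maximal minors of the generic matrix has the generic expected grade $m-n+1$, which is a classical result of Eagon--Northcott over any ring, $\ZZ$ included. The complex has length $m-n+1$, since $C_i = 0$ once $n+i-1 > m$. Away from $V$ the map $\phi$ is surjective, and there the complex is split exact, being locally a Koszul complex on a unit. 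So the Fitting ideals of every differential contain a power of $I_n(\phi)$, their grade is at least $m-n+1$, and this already exceeds the length of the complex, which is what the criterion needs.

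Third, the resolution is minimal at the origin. Every differential is linear in the matrix entries, so after tensoring with the residue field at $0$ all differentials vanish.

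We expect the main obstacle to be the rank and depth verification over $\ZZ$, together with the divided-power bookkeeping in mixed characteristic. The rank additivity amounts to the binomial identity $\sum_i (-1)^i \binom{m}{n+i-1}\binom{n+i-2}{i-1} = 0$ (with the convention that the $i=0$ term is $1$), and it must be checked with care. The depth input should be reduced to the classical Eagon--Northcott grade theorem, and we would verify exactness fiberwise over $\Spec\ZZ$ using flatness of the terms. In practice we would simply cite Weyman's Proposition 6.1.7, as the paper does.
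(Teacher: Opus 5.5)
The paper gives no proof of this statement; it quotes Weyman, who obtains the complex from his geometric technique. There one pushes forward a Koszul complex from a desingularization of $V$ over the projective space of hyperplanes in $B$, and the terms, including the grading shift $n+i-1$, come from $H^{n-1}$ of line bundles there. Your route is the classical commutative-algebra one, as in Eisenbud's Theorem A2.10: write down the Eagon--Northcott differentials and verify acyclicity with the Buchsbaum--Eisenbud criterion. This is a sound alternative and arguably better suited to the paper, for two reasons:
\begin{enumerate}
\item it works verbatim over $\ZZ[x_{ij}]$;
\item it proves exactness for \emph{any} map over a Noetherian base whose maximal minors have grade $m-n+1$, which is precisely the acyclicity the paper invokes afterwards to pull the resolution back to $Y$.
\end{enumerate}
The geometric route, in exchange, explains where the terms come from and extends to the other degeneracy loci mentioned in Remark~\ref{Remark5.10}.

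However, the step that carries the weight in your argument, split exactness on the full-rank locus $U$ (the complement of $V$), is justified incorrectly. ``Locally a Koszul complex on a unit'' is literally true when $n=1$ (then $C_i\cong\wedge^iA$ with contraction by $\phi$) or when $m=n$, but not in general. For $m=3$, $n=2$ the complex is the Hilbert--Burch complex with ranks $2,3,1$.

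Here is a correct argument. On $U$, write locally $A\cong B\oplus K$ with $\phi$ the projection and $\rk K=m-n$. Since $\phi$ kills $K$, the untwisted complex $\bigoplus_{j\ge 0}\wedge^{n+j}A\otimes D_j(B^\vee)$ decomposes as $\bigoplus_{b=0}^{m-n}\wedge^bK\otimes S_b$. Here $S_b=\bigoplus_a\wedge^aB\otimes D_{a+b-n}(B^\vee)$ is a strand of $\wedge B\otimes D(B^\vee)$, with differential given by contraction along $\mathrm{id}_B$. Up to duality, this is the Koszul complex of a basis of $B$ in $\operatorname{Sym}B$, so its only homology is the single term $\wedge^nB\otimes D_0(B^\vee)$. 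Over $\ZZ$, this is exactly where divided rather than symmetric powers are needed. Hence $S_b$ is split exact for $b\ge 1$, while $S_0=\det B$ sits in $C_1$ and $d_1$ maps it isomorphically onto $C_0$. With this in hand, generic exactness gives the rank condition, and $I_n(\phi)\subseteq\sqrt{I_{r_i}(d_i)}$ gives the grade condition, as you intend.

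There are also some smaller slips:
\begin{enumerate}
\item The binomial identity is only the Euler characteristic. It does not by itself give additivity of ranks at each spot; generic exactness does.
\item The ideals in the criterion are the $r_i\times r_i$ minors for the expected ranks $r_i$, not maximal minors. For $m=4$, $n=2$, the differential $d_2$ has rank $5$, not $6$.
\item $d_1$ is homogeneous of degree $n$, not linear. Minimality still holds, because all entries vanish at the origin.
\item $d_1d_2=0$ comes from $\wedge^{n+1}B=0$, not from coassociativity.
\item The fiberwise check over $\Spec\ZZ$ is unnecessary, since the criterion applies directly over $\ZZ[x_{ij}]$.
\end{enumerate}
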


For a vector bundle $A$ on an algebraic stack $X$ define the total divided power series $D_t$ and total exterior power series $\lambda_t$ as: 
\[
    D_t(A) \coloneqq \sum_{i = 0}^\infty [D_i A] t^i, \quad \lambda_t(A) \coloneqq \sum_{i=0}^{\infty}   [\wedge^i A] t^i.
\]
Here, $D_i A$ is the submodule of the tensor power $A^{\otimes i}$ consisting of all tensors that are invariant under the permutation action of the symmetric group $\mathfrak{S}_i$.

There is always a natural map from the divided power to the symmetric power:
\[
\phi: D_k A \hookrightarrow A^{\otimes k} \twoheadrightarrow \operatorname{Sym}^k A.
\]
In characteristic zero, one can construct an inverse map by averaging tensors, which gives $D_i A \cong {\rm Sym^k}A$.
In characteristic $p$, $\phi$ can have nontrivial kernel and cokernel. In the $K$-group, they are the same:

\begin{lemma}[{\cite[Appendix~A.2.6]{commutativealgebraEisenbud1995}}] \label{lem:equiv_sym_divide}
 Let $A$ be a vector bundle on an algebraic stack $X$. The classes of the divided powers $D_k A$ and the symmetric powers $\Symc k A$ coincide in $K$-theory:
 \[
\Sym k A = [D_k A].
 \]
 As a result, $D_t(A)$ and $\lambda_{-t}(A)$ are inverses:
 \begin{equation}\label{eqn:dividedpowergenfunction}
 D_t(A) \cdot \lambda_{-t}(A) =1.
 \end{equation}
\end{lemma}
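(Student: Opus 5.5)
The plan is to prove both statements by reducing to the universal case and comparing filtrations whose graded pieces are direct sums of line bundles.

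\emph{Step 1: reduce to a universal stack.} A vector bundle $A$ of rank $m$ on $X$ corresponds to a map $X \to B\GL_m$, and both $[D_k A]$ and $\Sym k A$ are pulled back from the universal bundle. Pullback $K^\circ(B\GL_m) \to K^\circ(X)$ is a ring map, so it suffices to prove $[D_k \mathcal A] = \Sym k {\mathcal A}$ in $K^\circ(B\GL_m) = R(\GL_m)$. Here $\mathcal A$ is the standard representation, and we work over $\ZZ$, or over any base ring.

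\emph{Step 2: work in the representation ring.} The representation ring $R(\GL_m)$ injects into $R(T)$ for the diagonal torus $T$ via characters. Over a field this is standard; over $\ZZ$ one should first reduce to a field, or argue with weights. Because of this injection, it is enough to check that the $T$-weight multiplicities of $D_k \mathcal A$ and $\Symc k \mathcal A$ agree.

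A more robust route avoids character theory. Filter $A$ by a complete flag after passing to the flag bundle $\pi: Fl(A) \to X$. This is harmless because $\pi^*$ is injective on $K$-theory by the projective bundle formula, since $\pi_*\OO = \OO$ and $R^{>0}\pi_*\OO = 0$.

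\emph{Step 3: compare the filtrations.} Both $D_k$ and $\Symc k$ carry natural filtrations whose associated graded pieces, when $A$ has a filtration with line bundle quotients $L_1, \dots, L_m$, are
\[
\bigoplus_{|\alpha| = k} L_1^{\alpha_1} \otimes \cdots \otimes L_m^{\alpha_m}.
\]
For divided powers this uses $D_k(A' \oplus L) \cong \bigoplus_j D_{k-j}A' \otimes D_j L$ and $D_j L = L^{\otimes j}$. An extension $0 \to A' \to A \to L \to 0$ gives a filtration of $D_k A$ with these quotients, by the same argument as for $\Symc k$. The graded pieces are therefore the same sum of line bundles, and the classes agree. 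I expect verifying that $D_k$ of an extension admits this filtration (exactness) to be the main technical point. The plan is to check it locally, where the extension splits, using that $D_k$ of a free module is free with the monomial divided-power basis.

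\emph{Step 4: deduce the inversion identity.} With $\Sym k A = [D_k A]$ established, the identity \eqref{eqn:dividedpowergenfunction} becomes $\sum_k \Sym k A\, t^k \cdot \lambda_{-t}(A) = 1$. This is the $K$-theory class of the Koszul complex
\[
0 \to \wedge^k A \to \wedge^{k-1}A \otimes \Symc 1 A \to \cdots \to \Symc k A \to 0,
\]
which is exact for $k \geq 1$ over any base. One can also verify it via the splitting principle above, using the line bundle case $(1 - Lt)^{-1}(1 - Lt) = 1$ together with multiplicativity of both series in short exact sequences.
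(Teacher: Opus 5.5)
Your proposal is correct, but you reach $[D_kA]=[\operatorname{Sym}^kA]$ by a different route than the paper.

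\textbf{The paper's route.} It never passes to a flag bundle for the first claim. It takes the two natural exact Koszul complexes $0\to\wedge^kA\to\wedge^{k-1}A\otimes\operatorname{Sym}^1A\to\cdots\to\operatorname{Sym}^kA\to0$ and $0\to D_kA\to D_{k-1}A\otimes\wedge^1A\to\cdots\to\wedge^kA\to0$. From these it reads off the identical recursions $[\operatorname{Sym}^kA]=\sum_{i\geq1}(-1)^{i-1}[\wedge^iA][\operatorname{Sym}^{k-i}A]$ and $[D_kA]=\sum_{i\geq1}(-1)^{i-1}[\wedge^iA][D_{k-i}A]$, and inducts on $k$. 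Only for the generating-function identity does it use the splitting principle, via the line bundle case.

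\textbf{Your route.} You do roughly the reverse. You use the splitting principle and flag filtrations for the first claim, so both sides become the complete homogeneous polynomial $h_k$ in the $K$-theoretic Chern roots. You then use the first Koszul complex for the second claim.

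\textbf{Comparison.} The paper's argument for the first claim is shorter and intrinsically stacky. Exactness of natural complexes of vector bundles is a local statement, so no injectivity of a pullback is needed. Conversely, the paper's second complex already says $D_t(A)\lambda_{-t}(A)=1$ coefficientwise. So your Koszul argument for the second claim is the more direct one, and the paper's appeal to the splitting principle there is superfluous.

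Your route gives an explicit Chern-root formula, at the cost of two checks.
\begin{enumerate}
\item The filtration on $D_k$ of an extension. You get this for free by dualizing the standard filtration of $\operatorname{Sym}^k$ of the dual sequence, using $D_kA\cong(\operatorname{Sym}^kA^\vee)^\vee$. The same isomorphism shows $D_k$ commutes with arbitrary base change, which your Step 1 implicitly uses.
\item Injectivity of $\pi^*$ along the flag bundle of a stack. This is the one point where stackiness needs care, and the paper's first argument avoids it.
\end{enumerate}
Your Step 2 (restricting to the torus over $\mathbb{Z}$) is unnecessary once you take the flag-bundle route.
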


\begin{proof}

Recall the canonical Koszul complex resolving the symmetric power (dubbed $K(\varphi')_k$ on \cite[pg. 591]{commutativealgebraEisenbud1995}):
\[
 0 \to \wedge^k A \to \wedge^{k-1} A \otimes \operatorname{Sym}^1 A \to \dots \to \wedge^1 A \otimes \operatorname{Sym}^{k-1} A \to \operatorname{Sym}^k A \to 0.
\]
Dually, we have the canonical Koszul complex resolving the exterior power in terms of divided powers:
\[
 0 \to D_k A \to D_{k-1} A \otimes \wedge^1 A \to \dots \to D_1 A \otimes \wedge^{k-1} A \to \wedge^k A \to 0.
\]
Taking alternating sums of both complexes, we get equalities in $K$-theory:
\[
 [\operatorname{Sym}^k A] = \sum_{i=1}^k (-1)^{i-1} [\wedge^i A] [\operatorname{Sym}^{k-i} A] \qquad
[D_k A] = \sum_{i=1}^k (-1)^{i-1} [\wedge^i A] [D_{k-i} A].
 \]
By induction, $[\operatorname{Sym}^{k-i} A] = [D_{k-i} A]$ for all $1 \le i < k$. The base case of $k = 1$ is immediate, and the illustrative case $k = 2$ is addressed in \cite{goodwillieMOanswer}. The right-hand sides of both recursive formulas are identical, which gives $[\operatorname{Sym}^k A] = [D_k A]$.

For the formula \eqref{eqn:dividedpowergenfunction}, it suffices to prove it for a line bundle. In that case,
\[
\frac{1}{\lambda_{-t}(A)} = \frac{1}{1-tA} = \sum_{i=0}^{\infty}t^iA^i = D_tA,
\]
since $A^i = \operatorname{Sym}^iA$ for a line bundle. Now, by the splitting principle, the result follows.
\end{proof}

\begin{corollary}
The class of $[\OO_V]$ in the $\GL_m \times \GL_n$-equivariant $K$-theory of ${\rm Mat}_{n \times m}$ is
\begin{equation}\label{eqn:firstappearanceGoperator}
    \begin{aligned}
    i_*[\OO_V] &= [\OO_{{\rm Mat}_{n \times m}}] + \sum_{i = 1}^\infty (-1)^i[\wedge^{n+i-1} A] [\wedge^n(B^\vee)] [D_{i-1}(B^\vee)] 
    \\
    & = [\OO_{{\rm Mat}_{n \times m}}] - [\det(B^\vee)] \left[ \lambda_t(A)D_{-t^{-1}}(B^\vee) \right]_{t^n} 
    \\
    & = [\OO_{{\rm Mat}_{n \times m}}] - [\det(B^\vee)] \left[ \frac{\lambda_t(A)}{\lambda_{t^{-1}}(B^\vee)} \right]_{t^n}
\end{aligned}
\end{equation}
Here, $\left[ - \right]_{t^n}$ means to take the coefficient of $t^n$ in the expression.
\end{corollary}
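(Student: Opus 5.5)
The plan is to take the alternating sum of the terms of the resolution $C_\bullet$ of Theorem \ref{thm:jerzyweymanresolution}, and then repackage that sum as the coefficient of a generating series.

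\textbf{Step 1: from the resolution to an alternating sum.} Since $C_\bullet$ is a $\GL_m \times \GL_n$-equivariant finite locally free resolution of $\OO_V$, it descends to a resolution on the quotient stack $[{\rm Mat}_{n\times m}/\GL_m\times\GL_n]$. There $A, B$ are the tautological bundles. Hence
\[
i_*[\OO_V] = \sum_{i\ge 0}(-1)^i[C_i].
\]
Finiteness holds because $\wedge^{n+i-1}A = 0$ once $n+i-1 > m$, so the sum is finite. The $i = 0$ term should be read as $C_0 = \OO$, the free module of rank one mapping onto $\OO_V$. With the convention $D_{-1} = 0$, formula \eqref{eqn:buchsbaumrimjerzyweymanresolution} only governs $i \geq 1$; I will check this indexing against Weyman's statement. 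Substituting the $C_i$ gives the first line of \eqref{eqn:firstappearanceGoperator}.

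\textbf{Step 2: the generating-series identity.} Write $[\wedge^n B^\vee] = [\det B^\vee]$ and factor it out. The sum to evaluate is
\[
\sum_{i\ge1}(-1)^i[\wedge^{n+i-1}A][D_{i-1}B^\vee].
\]
Put $j = i-1$ and $k = n+j$. The summand becomes $-(-1)^{j}[\wedge^{k}A][D_j B^\vee]$ with $k - j = n$. In the product
\[
\lambda_t(A)\,D_{-t^{-1}}(B^\vee) = \sum_{k,j}[\wedge^k A][D_j B^\vee](-1)^j t^{k-j},
\]
the $t^n$ coefficient is exactly $\sum_j (-1)^j[\wedge^{n+j}A][D_jB^\vee]$. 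This sum is finite, since $k \le m$ bounds $j$. So the total sum equals minus this coefficient, which is the second line of \eqref{eqn:firstappearanceGoperator}.

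\textbf{Step 3: eliminate the divided powers.} Lemma \ref{lem:equiv_sym_divide} gives $D_s(B^\vee)\lambda_{-s}(B^\vee) = 1$. Setting $s = -t^{-1}$ yields $D_{-t^{-1}}(B^\vee) = \lambda_{t^{-1}}(B^\vee)^{-1}$, an identity of formal series in $t^{-1}$ with $K$-theory coefficients. Substituting this gives the third line.

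The main obstacle is the formal manipulation of mixed series in $t$ and $t^{-1}$. I would justify it by noting the following: $\lambda_t(A)$ is a polynomial in $t$ of degree $m$, and $\lambda_{t^{-1}}(B^\vee)$ is a polynomial in $t^{-1}$ with constant term $1$, hence invertible in $K[[t^{-1}]]$. Therefore the quotient lies in $K((t^{-1}))$ with only finitely many positive powers of $t$, and extracting the coefficient of $t^n$ is well defined and agrees with the double sum. The remaining check is the index conventions of Weyman's resolution, namely that $C_0 = \OO$ and that the terms start at $C_1 = \wedge^n A\otimes\wedge^n B^\vee$. That is where the sign and the unit term $[\OO]$ come from.
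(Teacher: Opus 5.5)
Your proposal is correct and follows essentially the paper's proof. You take the alternating sum of the resolution of Theorem~\ref{thm:jerzyweymanresolution} with $C_0=\OO$, factor out $\det B^\vee$, identify the remaining sum as minus the $t^n$-coefficient of $\lambda_t(A)D_{-t^{-1}}(B^\vee)$, and then apply Lemma~\ref{lem:equiv_sym_divide} with $s=-t^{-1}$ to get the last line; your remarks on the indexing of $C_0$, on finiteness, and on working in $K((t^{-1}))$ spell out points the paper leaves implicit.
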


\begin{proof}
Lemma~\ref{lem:equiv_sym_divide} gives the last equality.
Expand their products and take $t^n$-coefficient:
\[
\begin{split}
\left[ \lambda_t(A) D_{-t^{-1}}(B^{\vee}) \right]_{t^n}
&= \sum_{i=0}^{\infty} (-1)^i [\wedge^{n+i} A] [D_i B^{\vee}] = - \sum_{i=1}^{\infty} (-1)^i [\wedge^{n+i-1} A][D_{i-1} B^{\vee}] .
\end{split}
\]
This is exactly the resolution \eqref{eqn:buchsbaumrimjerzyweymanresolution} of $[\mathcal{O}_V]$ given by Theorem \ref{thm:jerzyweymanresolution}, twisted by $\wedge^n(B^{\vee}) = \det (B^{\vee})$.
\end{proof}

In our setting, we have a map $\varphi : E \to F$ of vector bundles of ranks $r_1, r_2$ with $r_1 \leq r_2$. We are interested in the locus $j : W \subseteq \bar{\rm Mat}_{r_2 \times r_1}$ where the rank is not maximal, i.e., when it drops below $r_1$. This is the opposite setting of the above results, but they are related by taking duals. 

There is an isomorphism 
\begin{equation} \label{eqn:dualvbisom}
    {\rm Mat}_{r_2 \times r_1} \longsimeq {\rm Mat}_{r_1 \times r_2}
\end{equation}
which sends a map $E \to F$ to the dual map of vector bundles $F^\vee \to E^\vee$ and identifies $W$ with the $V$ discussed above after plugging in 
\[
    A = F^\vee, \quad B = E^\vee, \quad m = r_2, \quad n = r_1.
\]

The attentive reader will recognize the right hand side of \eqref{eqn:firstappearanceGoperator} as the operator $\tilde G$ of \ref{def:stackygrothpoly} applied to $B^\vee \to A^\vee$. We show this operator is well-defined in $K$-theory. 

\begin{lemma}\label{lem:tildeGwelldefKtheory}
    Formula $\tilde G$ from Definition \ref{def:stackygrothpoly} only depends on the class of $E - F$ in $K$-theory. 
\end{lemma}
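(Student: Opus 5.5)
The plan is to reduce to a single stabilization move and then verify that move by an explicit power-series identity. The only ingredients are exterior powers, determinants and the perfect pairing on exterior powers, so no splitting principle is needed on the stack $Y$.

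\emph{Step 1: the expression depends only on $[E]$ and $[F]$.} The operations $\lambda_t$ and $\det$ are multiplicative on short exact sequences of vector bundles, and ranks are locally constant. Hence $\tilde G(E-F)$ depends only on the classes $[E],[F]\in K^\circ(Y)$. Here the quotient $\lambda_t(F^\vee)/\lambda_{t^{-1}}(E)$ is read, as in the proof of \eqref{eqn:firstappearanceGoperator}, as the product
\[
\lambda_t(F^\vee)\cdot D_{-t^{-1}}(E),
\]
using Lemma \ref{lem:equiv_sym_divide}. This product lives in $K^\circ(Y)[t]\otimes K^\circ(Y)[[t^{-1}]]\subseteq K^\circ(Y)((t^{-1}))$. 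Since $\lambda_t(F^\vee)$ is a polynomial, every coefficient is a finite sum, so extracting the coefficient of $t^{r_1}$ makes sense. Moreover $\lambda_{t^{-1}}(E)$ has constant term $1$, so it is a unit in $K^\circ(Y)[[t^{-1}]]$ with unique inverse $D_{-t^{-1}}(E)$. This uniqueness is what allows us to manipulate these inverses formally below.

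\emph{Step 2: reduce to stabilization.} Suppose $[E]-[F]=[E']-[F']$. Then $[E\oplus F']=[E'\oplus F]$ and $[F\oplus F']=[F'\oplus F]$, so by Step 1 the pairs $(E\oplus F',F\oplus F')$ and $(E'\oplus F,F'\oplus F)$ give the same value of $\tilde G$. It therefore suffices to prove that $\tilde G$ is unchanged under $(E,F)\mapsto(E\oplus G,F\oplus G)$ for a vector bundle $G$ of rank $s$. We may work on connected components, so that $s$ and $r_1$ are constant. Note that $r_1$ changes to $r_1+s$ under this move, and this shift is exactly what has to be compensated.

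\emph{Step 3: the key identity.} The perfect pairing $\wedge^iG\otimes\wedge^{s-i}G\to\det G$ gives $\wedge^iG\cong\det G\otimes\wedge^{s-i}G^\vee$. Summing over $i$ yields
\[
\lambda_{t^{-1}}(G)=t^{-s}[\det G]\,\lambda_t(G^\vee).
\]
Since $\lambda_{t^{-1}}(G)$ is a unit in $K^\circ(Y)[[t^{-1}]]$, this can be rearranged to
\[
\lambda_t(G^\vee)\cdot\lambda_{t^{-1}}(G)^{-1}=t^{s}[\det G]^{-1}.
\]
Justifying this rearrangement inside $K^\circ(Y)((t^{-1}))$ is the only delicate point, and it follows from Step 1: both sides lie in the ring, and the inverse is unique. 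Using multiplicativity of $\lambda$ we then get
\[
\frac{\lambda_t(F^\vee\oplus G^\vee)}{\lambda_{t^{-1}}(E\oplus G)}=t^s[\det G]^{-1}\,\frac{\lambda_t(F^\vee)}{\lambda_{t^{-1}}(E)}.
\]
Taking the coefficient of $t^{r_1+s}$ on the left equals $[\det G]^{-1}$ times the coefficient of $t^{r_1}$ of $\lambda_t(F^\vee)/\lambda_{t^{-1}}(E)$. Finally, $\det(E\oplus G)=\det E\otimes\det G$, so the prefactor $[\det(E\oplus G)]$ cancels the $[\det G]^{-1}$. This gives $\tilde G\big((E\oplus G)-(F\oplus G)\big)=\tilde G(E-F)$.
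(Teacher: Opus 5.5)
Your argument is correct and follows the same strategy as the paper. Both reduce to invariance under adding a common summand $G$ of rank $s$ to $E$ and $F$, and then use the identity $\lambda_t(G^\vee)/\lambda_{t^{-1}}(G)=[\det G]^{-1}t^{s}$ to absorb both the shift $r_1\mapsto r_1+s$ and the extra factor of $\det G$. Your differences are refinements rather than a new route:
\begin{itemize}
\item You prove the identity from the perfect pairing $\wedge^iG\otimes\wedge^{s-i}G\to\det G$, where the paper uses the splitting principle on the stack.
\item Your Steps 1--2, using multiplicativity of $\lambda_t$ and $\det$ on short exact sequences, give a cleaner reduction than the paper's claim that $[E]=[F]$ forces $E\oplus H\cong F\oplus H$. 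That claim can fail for $K^\circ$ defined via non-split short exact sequences, e.g.\ $\OO(1)\oplus\OO(-1)$ versus $\OO^{\oplus 2}$ on $\PP^1$.
\end{itemize}
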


\begin{proof}
If $H$ is another vector bundle on $Y$, we claim $\tilde G(E + H - (F + H)) = \tilde G(E-F)$. This suffices for the statement because, if $[E] = [F]$ in $K^\circ$-theory, then $E \oplus H = F \oplus H$ for some $H$. 

Note that
\[
    \frac{\lambda_t(H^{\vee})}{\lambda_{t^{-1}}(H)} = \det(H)^{-1} t^{r_H},
\]
which follows by assuming the splitting of $H = \oplus_{i=1}^{r_H} L_i$:
\[
    \frac{\lambda_t(H^{\vee})}{\lambda_{t^{-1}}(H)} = \frac{ \prod_{i=1}^{r_H} (1+ t  [L_i^{-1}] ) }{\prod_{i=1}^{r_H} (1+t^{-1} [L_i]) } = \prod_{i=1}^{r_H} (t [L_i^{-1}]) = \det(H)^{-1} t^{r_H}.
\]
Now compute 
\[
\begin{split}
    \tilde G((E+H) - (F+H)) &= [\OO] - [\det (E+H)] \cdot \bra{\dfrac{\lambda_t((F+H)^\vee)}{\lambda_{t^{-1}}(E+H)}}_{{\rm coeff} \, t^{r_1+r_H}}
    \\
    & = [\OO] - [\det(E+H)] \bra{\dfrac{\lambda_t(F^\vee)}{\lambda_{t^{-1}}(E)}  \det (H)^{-1} t^{r_H} }_{{\rm coeff} \, t^{r_1+r_H}}
    \\
    & = [\OO] - [\det(E)] \bra{\dfrac{\lambda_t(F^\vee)}{\lambda_{t^{-1}}(E)}}_{{\rm coeff} \, t^{r_1}} = \tilde G(E-F). 
\end{split}
\]
\end{proof}

\begin{corollary}\label{cor:thomporteousKthytildeGdegenlocusformula}
    The class $j_*[\OO_W]$ in the $K$-theory of $\bar{\rm Mat}_{r_2 \times r_1}$ is given by the alternating sum 
    \begin{align*}
        j_*[\OO_W] &= [\OO_{\bar {\rm Mat}_{r_2 \times r_1}}] + \sum_{i = 1}^\infty (-1)^i 
    [\wedge^{r_1+i-1} F^\vee] [\wedge^{r_1}(E)] [D_{i-1}(E)]     \\
        &=[\OO_{\bar {\rm Mat}_{r_2 \times r_1}}] - [\det E] \cdot \Big[\frac{\lambda_t(F^\vee)}{\lambda_{t^{-1}}(E)}\Big]_{{\rm coeff} \, t^{r_1}} \\
        &=\tilde G(E - F).
    \end{align*}

    Consider a pullback square 
    \begin{equation*}%\label{eqn:pbdeterminantallocus}
    \begin{tikzcd}
        X \ar[r] \ar[d, "e", swap] \pb       &W \ar[d, "j"]      \\
        Y \ar[r]       &\bar{\rm Mat}_{r_2 \times r_1}
    \end{tikzcd}
    \end{equation*}
    where $e : X \subseteq Y$ is a regular immersion of algebraic stacks. Suppose $Y$ is smooth and $X/Y$ has the same codimension $r_2-r_1 + 1$ as $W \subseteq \bar{\rm Mat}_{r_2 \times r_1}$. Here, $Y \to \bar{\rm Mat}_{r_2 \times r_1}$ parameterizes a map of vector bundles $E \to F$ of ranks $r_1, r_2$ with $r_1 \leq r_2$ on $Y$. 
    
    The class of $[\OO_X]$ in the $K$-theory of $Y$ admits a similar formula: 
    \begin{align*}
        e_*[\OO_X] &= [\OO_Y] + \sum_{i = 1}^\infty (-1)^i 
    [\wedge^{r_1+i-1} F^\vee] [\wedge^{r_1}(E)] [D_{i-1}(E)]      \\
        &=[\OO_Y] - [\det E] \cdot \bra{\dfrac{\lambda_t(F^\vee)}{\lambda_{t^{-1}}(E)}}_{{\rm coeff} \, t^{r_1}} \\
        &=\tilde G(E - F).
    \end{align*}
\end{corollary}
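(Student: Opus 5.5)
The plan has two parts: first compute $j_*[\OO_W]$ on the universal stack, then transport that formula to $Y$ by pulling it back along the classifying map.

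\textbf{Universal computation.} Write $\bar{\rm Mat}_{r_2 \times r_1} = [{\rm Mat}_{r_2\times r_1}/\GL_{r_1}\times\GL_{r_2}]$. The $K$-theory of this quotient stack is the $\GL_{r_1}\times\GL_{r_2}$-equivariant $K$-theory of ${\rm Mat}_{r_2\times r_1}$. Apply the dualizing isomorphism \eqref{eqn:dualvbisom}, which sends $E\to F$ to $F^\vee \to E^\vee$. It identifies $W$ with $V$ and sets $A=F^\vee$, $B=E^\vee$, $m=r_2$, $n=r_1$. Substituting into \eqref{eqn:firstappearanceGoperator} gives the first two displayed lines directly, using $\det(B^\vee)=\det E$ and $D_{i-1}(B^\vee)=D_{i-1}(E)$. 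Lemma \ref{lem:equiv_sym_divide} converts $D_{-t^{-1}}(E)$ into $1/\lambda_{t^{-1}}(E)$. Comparing with Definition \ref{def:stackygrothpoly} then gives $\tilde G(E-F)$. The only point to watch is that the sum is finite: $\wedge^{r_1+i-1}F^\vee=0$ once $i>r_2-r_1+1$, so no convergence issue arises on stacks.

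\textbf{Pullback to $Y$.} Let $f:Y\to\bar{\rm Mat}_{r_2\times r_1}$ be the classifying map. Pull back the finite locally free resolution $C_\bullet$ of Theorem \ref{thm:jerzyweymanresolution} along $f$; since it is equivariant, it descends to the quotient stack. This yields a complex $f^*C_\bullet$ of locally free sheaves on $Y$, with terms $\wedge^{r_1+i-1}F^\vee\otimes\det E\otimes D_{i-1}(E)$, whose $H_0$ is $\OO_X$. The key claim is that $f^*C_\bullet$ is still exact, i.e.\ $\mathrm{Tor}_i^{\bar{\rm Mat}}(\OO_W,\OO_Y)=0$ for $i>0$. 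Once this holds, $e_*[\OO_X]=[f^*C_\bullet]$, which equals $f^*\tilde G(E-F)=\tilde G(f^*E-f^*F)$ because $\lambda$-operations and $\det$ commute with pullback.

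\textbf{Main obstacle: Tor-independence.} This is where the hypotheses on $Y$ and $X$ enter. The claim is local in the smooth topology, so pass to an atlas of $Y$ and a local ring $R$ of $Y$. There $W$ is Cohen--Macaulay of codimension $c=r_2-r_1+1$; this is the Eagon--Northcott fact, and it also follows from $C_\bullet$ having length exactly $c$. The ideal of $X$ in $R$ is generated by the maximal minors of the pulled-back matrix, and by hypothesis it has the expected height $c$. The Eagon--Northcott/Buchsbaum--Rim acyclicity criterion (the generic perfection theorem, or Buchsbaum--Eisenbud ``what makes a complex exact'' with depth conditions supplied by $R$ being regular, hence Cohen--Macaulay) then shows that the specialization of a generically perfect resolution remains acyclic when the specialized ideal attains the generic grade. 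Here grade equals height because $R$ is Cohen--Macaulay; the regular-immersion hypothesis on $e$ gives the same conclusion directly. Hence $f^*C_\bullet$ is exact, which finishes the proof. I expect the only real care to lie in stating this depth-sensitivity criterion correctly for stacks in mixed characteristic; it reduces to the affine local statement, because exactness is fppf/smooth local.
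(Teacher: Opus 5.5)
Your proposal is correct and takes the same route as the paper. You identify $W$ with $V$ via the dualizing isomorphism \eqref{eqn:dualvbisom} and substitute into \eqref{eqn:firstappearanceGoperator}, then pull the Eagon--Northcott/Weyman resolution back to $Y$ and invoke acyclicity of determinantal complexes (the paper cites Eisenbud A2.10), which your Tor-independence and grade discussion spells out in more detail than the paper's one-line proof.
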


\begin{proof}
    The formula for $j_*[\OO_W]$ comes from identifying the loci $W$ and $V$ under the isomorphism of equation \eqref{eqn:dualvbisom}. The second statement results from the acyclicity of determinantal complexes \cite[A2.10]{commutativealgebraEisenbud1995}. 
    As the pullback of the resolution of $j_*\OO_{\bar W}$ on $\bar {\rm Mat}_{r_2 \times r_1}$ gives a resolution for $e_*\OO_X$ on $Y$, we get the same formula upon taking alternating sums. 
\end{proof}

We state this corollary in the setting where $X \subseteq Y$ is a regular immersion of the correct codimension, but it suffices that the grade of the ideal sheaf $I_{X/Y}$ is $r_2 - r_1 + 1$. 

\begin{example}
Consider the case where $E,F$ are both line bundles, i.e. $r_1=r_2=1$. Corollary~\ref{cor:thomporteousKthytildeGdegenlocusformula} gives the following formula for the degeneracy locus:

\[
    [\OO_Y]-[E]\bra{\frac{(1+tF^{-1})}{(1+t^{-1}E)}}_{{\rm coeff \,}t^1}.
\]
Applying the geometric series expansion to the denominator yields:
\[
    [\OO_Y]-[E]\Big[(1+tF^{-1})\sum_{i=0}^\infty (-1)^it^{-i}E^i\Big]_{t^1}=1-[E][F^{\vee}].
\]
This is equivalent to the ``standard'' method for computing the degeneracy locus, which we recall. Observe that a map $E\to F$ is equivalent to a map $\OO_Y \to F \otimes E^\vee$, which is a section of $F\otimes E^\vee$. The degeneracy locus is the vanishing locus of this section, so its structure sheaf is given by the $K$-theoretic Euler class of $F\otimes E^\vee$, which is $1 - [E \otimes F^\vee]$.
\end{example}

\begin{example}
Let $\Aff^n$ be the universal space with a map from rank-one to rank-$n$ trivial vector bundles, with the map 
\[
    \OO \to \OO^{\oplus n}; \qquad 1 \mapsto (1, \cdots, 1)^T.
\]
The resolution gives $[\OO_{\vec 0}] = 0$, and we have 
\[
    1 - \bra{\dfrac{(1+t)^n}{(1+t^{-1})}}_{\text{coeff } t} = 1-1 = 0.
\]
\end{example}

\begin{remark} \label{Remark5.10}
    In \cite{andersbuchthomporteous}, Anders Buch provides a formula for the class $[\OO_X]$ in terms of the (stable double) Grothendieck polynomial of $[E] - [F]$ in the case where $X, Y$ are \emph{schemes}. Write $g = 1-r_1+r_2$.
    The (stable double) Grothendieck polynomial \cite[Theorem~6.11]{Grothendieckpoly} for a single column with $r$ rows is given by:
    \[
    G_{1^g}(E-F) = \sum_{i=g}^{\infty}(-1)^{i-g}\binom{i-1}{i-g}S_{1^i}(E-F),
    \]
    where
    \[
    S_{1^i}(E-F) = \sum_{k=0}^i(-1)^{i-k}e_k(1-E_1,\dots, 1-E_{r_1}) \ h_{i-k}(1-F_1^{\vee},\dots,1-F_{r_2}^{\vee})
    \]
    is the Schur polynomial for a column of length $i$, where $E_i$ and $F_i$ are $K$-theoretic Chern roots of $E$ and $F$ respectively, $h_m$ is the complete homogeneous symmetric function of degree $m$, and $e_k$ is the $k$-th symmetric function. 
    
    When the space is a scheme, elements of rank zero such as $1-F_i^\vee$ in the above formula are nilpotent. This can fail for stacks, so 
    the expression remains a formal power series.

    Buch provides formulas for the other degeneracy loci using the other double stable Grothendieck polynomials evaluated at $[E] - [F]$. These formulas similarly fail for stacks. Using the family of Eagon--Northcott complexes and their generalizations in \cite{jerzyweymansyzygiesvectorbundles}, one can find resolutions for general universal degeneracy loci. These resolutions pull back to describe the degeneracy locus of an arbitrary map of vector bundles $E \to F$ on an algebraic stack $Y$, provided the locus has a long-enough regular sequence \cite[Appendix~A.2]{commutativealgebraEisenbud1995}. For schemes $Y$, this gives an alternative to the double stable Grothendieck polynomials. 
\end{remark}

For completeness, we verify that evaluating this formal power series formally recovers our finite operator $\tilde G$.
\begin{proposition}\label{prop:equivalence_r1}
    For a maximal rank drop $r=1$, the formal series yielded by Buch's Grothendieck polynomial and Anderson's determinantal formula \cite{AndersonKtheoreticdegeneracyloci} is formally equivalent to the finite operator $\tilde G(E-F)$ from Definition \ref{def:stackygrothpoly}.
\end{proposition}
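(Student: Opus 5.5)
The plan is to compare generating functions. Write $x_j = 1 - [E_j]$ and $y_k = 1 - [F_k^\vee]$ for the rank-zero classes built from the $K$-theoretic Chern roots (splitting principle), and treat everything in the completed ring $\ZZ[[x_j, y_k]]$. The goal is to show that Buch's series $G_{1^g}(E-F)$, with $g = r_2 - r_1 + 1$, equals $\tilde G(E-F)$ there. The equality is then only ``formal,'' since on stacks the left side need not converge.

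First I would put Buch's series in closed form. The supersymmetric Schur functions $S_{1^i}(E-F)$ are the coefficients of a single generating function:
\[
\sum_{i\ge 0} S_{1^i}(E-F)\, u^i = \frac{\prod_k (1 - y_k u)^{-1}}{\prod_j (1 - x_j u)^{-1}},
\]
with signs fixed to match the convention $\sum_k (-1)^{i-k} e_k h_{i-k}$ of Remark \ref{Remark5.10}. The coefficients $(-1)^{i-g}\binom{i-1}{i-g}$ come from expanding $(1+v)^{-g}$ with a shift. So $G_{1^g} = \sum_{i \ge g} (-1)^{i-g}\binom{i-1}{i-g} S_{1^i}$ can be written as a residue-type extraction, for example the coefficient of $u^{g}$ in the generating function after the substitution $u \mapsto u/(1+u)$, up to a correction term. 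I would pin down the exact substitution by checking the case $r_1 = r_2 = 1$. There $G_1$ should equal $1 - [E][F^\vee]$, matching the example after Corollary \ref{cor:thomporteousKthytildeGdegenlocusformula}.

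Second, I would rewrite $\tilde G(E-F)$ in the same variables. Using Lemma \ref{lem:equiv_sym_divide}, $1/\lambda_{t^{-1}}(E) = D_{-t^{-1}}(E)$. This gives
\[
[\det E]\,\frac{\lambda_t(F^\vee)}{\lambda_{t^{-1}}(E)} = \prod_j [E_j]\cdot\frac{\prod_k(1 + t(1-y_k))}{\prod_j (1 + t^{-1}(1 - x_j))}.
\]
Substituting $t = -1 + s$ (or a similar Möbius change centred at $t=-1$, where the factors $1+t(1-y)$ become divisible by $y$) should turn the coefficient extraction at $t^{r_1}$ into a residue of a rational function in $s$. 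That residue should match the closed form from the first step. Lemma \ref{lem:tildeGwelldefKtheory} lets me add a common bundle $H$ to both $E$ and $F$, so I may also assume $r_1 = r_2 - g + 1$ with any convenient padding. Anderson's determinantal formula, for $r=1$ a single entry, can then be identified with Buch's series by the same generating-function manipulation, or by citing its known agreement with Buch's series.

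The main obstacle is the second step. Coefficient extraction in $t$ is a Laurent expansion at $t=\infty$ in $t^{-1}$, whereas Buch's series is a power series in the $x_j, y_k$. The change of variables therefore has to be justified as an identity of residues: the sum of residues of a rational differential over all poles vanishes, and the poles at $t = -1/(1-x_j)$ contribute exactly the terms that are nilpotent in the scheme case. I would first verify the whole identity for $r_1 = 1$, $r_2$ arbitrary by direct expansion, then use the residue argument in general. If that fails, the fallback is induction on $r_1$ via a Pieri-type recursion satisfied by both sides.
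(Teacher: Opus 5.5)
Your outline (a generating function for the column classes, absorbing the binomial weights by a M\"obius change of variable, then the residue theorem) is the same as the paper's, which runs it on Anderson's series rather than Buch's. However, the proposal leaves the crux at the level of ``should'', and the concrete guesses you make there do not work.

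First, taking the coefficient of $u^g$ after any substitution $u\mapsto u/(1+u)$ only involves $S_{1^i}$ with $i\le g$: you get $\sum_{i\le g}(-1)^{g-i}\binom{g-1}{g-i}S_{1^i}$. Buch's series is the tail $i\ge g$, and no correction term repairs this. The right resummation uses that $(-1)^m\binom{g-1+m}{m}$ is the coefficient of $s^{-g-m}$ in $(1+s)^{-g}$ expanded in powers of $s^{-1}$. So the series is the constant term of $(1+s)^{-g}\Psi(s)$, with $\Psi(s)=\sum_i S_{1^i}s^i$ expanded as a power series, i.e.\ a Laurent expansion in an annulus just outside $s=-1$.

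Second, the substitution linking this to $\tilde G$ is $t=-s/(1+s)$, not $t=-1+s$. It is forced by $t=0\leftrightarrow s=0$ and $t=\infty\leftrightarrow s=-1$, and it gives the identity of differentials
\[
[\det E]\,\frac{\lambda_t(F^\vee)}{\lambda_{t^{-1}}(E)}\,\frac{dt}{t^{r_1+1}}
=\frac{\prod_k\bigl(1+s(1-[F_k^\vee])\bigr)}{(1+s)^{g}\prod_j\bigl(1+s(1-[E_j^\vee])\bigr)}\,\frac{ds}{s}.
\]
In particular the roots of $E$ must enter as $1-[E_j^\vee]$ and in the denominator. With your $x_j=1-[E_j]$ in the numerator, no choice of signs in $\prod_j(1\pm x_ju)/\prod_k(1\pm y_ku)$ recovers $1-[E][F^\vee]$ when $r_1=r_2=1$.

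Given these two facts, the residue theorem finishes as in the paper. The constant term in the annulus is the sum of the residues of the right-hand differential at $s=0$ and at $s=-1$. The residue at $s=0$ is $1$, the $[\OO_Y]$ term. The residue at $s=-1$ is the residue at $t=\infty$, namely $-[\det E]\bigl[\lambda_t(F^\vee)/\lambda_{t^{-1}}(E)\bigr]_{t^{r_1}}$.

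Your residue bookkeeping also needs correcting. The poles coming from $\lambda_{t^{-1}}(E)$ sit at $t=-[E_j]$, not at $t=-1/(1-x_j)$. They do not contribute ``terms that are nilpotent in the scheme case'': nothing is discarded on either side, and both sides lie entirely in the augmentation ideal. In fact, the residue of the left-hand differential at $t=0$ is exactly $1$, so $\tilde G(E-F)$ is minus the sum of its residues at $t=-[E_j]$. These points correspond to the poles $s=-1/(1-[E_j^\vee])$ of $\Psi$, which lie outside the annulus, and Buch's series is minus the sum of the residues there as well. So your instinct about those poles does yield a proof, even a slightly more direct one than the paper's, but only once the resummation and the substitution $t=-s/(1+s)$ are supplied.
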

\begin{proof}
Recall the $K$-theoretic Chern class \cite{RiemannRochAlgebra}:
\[
\begin{split}
    \gamma_s (E) &:= \lambda_{s/(1-s)}(E) = \sum_{i\geq 0}\gamma^i(E) s^i;
    \\
    c_k^K(E) & := \gamma^k(E - \rk E).
\end{split}
\]
We start from Anderson's formal series. 
\[
\begin{split}
    &\sum_{m \geq 0} \binom{r_2-r_1+m}{m} (-1)^m c^K_{r_2-r_1+1+m}(F-E) 
    \\
    &= \sum_{m \geq 0} \binom{r_2-r_1+m}{m} (-1)^m \left[ (1+s)^{r_2-r_1} \frac{\lambda_t(F^\vee)}{\lambda_t(E^\vee)} \right]_{\text{Coeff } s^{r_2-r_1+1+m}} \quad \left( t := \frac{-s}{1+s} \right) 
    \\
    &= \left[ \frac{1}{1+s} \frac{\lambda_t(F^\vee)}{\lambda_t(E^\vee)} \right]_{\text{Coeff } s^0, |s|>1} 
    \\
    &= \left[ \frac{1}{1+s} \frac{\lambda_t(F^\vee)}{\lambda_t(E^\vee)} \right]_{\text{Coeff } s^0, |s|<1} + \operatorname{Res}_{s=-1} \left( \frac{1}{1+s} \frac{\lambda_t(F^\vee)}{\lambda_t(E^\vee)} \frac{ds}{s} \right) \quad \text{(Residue theorem on }\mathbb{P}^1\text{)}
    \\
    &= 1 + \operatorname{Res}_{s=-1} \left( \frac{1}{1+s} \frac{\lambda_t(F^\vee)}{[\det E^\vee] t^{r_1} \lambda_{t^{-1}}(E)} \frac{ds}{s} \right) \quad \Big(\text{Evaluate } s=0 \text{; Apply }\lambda_t(E^\vee) = t^{r_1} \lambda_{t^{-1}}(E)\Big)
    \\
    &= 1 + \operatorname{Res}_{t=\infty} \left( [\det E] \frac{\lambda_t(F^\vee)}{t^{r_1+1} \lambda_{t^{-1}}(E)} dt \right) 
    \\
    &= 1 - [\det E] \left[ \frac{\lambda_t(F^\vee)}{\lambda_{t^{-1}}(E)} \right]_{\text{Coeff } t^{r_1}}.
\end{split}
\]
This precisely matches the formula for $\tilde G(E-F)$ in Definition \ref{def:stackygrothpoly}.
\end{proof}

\section{The double ramification class as a degeneracy locus} \label{s:5}

The goal of this section is to prove Theorem \ref{introthm:degeneracylocuszerolocus}. We prove a stronger statement allowing $r$th roots for some $r \in \NN$. We work over $\ZZ[\frac{1}{r}]$ in this section. 

Fix a compactified Jacobian $\cal J$ as in Definition~\ref{d:2.20}; it is a log alteration $\cal J \to \logpic$ which is representable by a log algebraic stack and contains as an open substack the space $\pic^{[0]}$ of multidegree-zero line bundles. We work with $\cal J$ because its points parameterize bona fide line bundles on quasistable models of a curve, and we cannot yet make sense of sections, vanishing theorems, and Brill-Noether theory for log line bundles on log curves.

There is a universal admissible bundle $\scr F$ on $\cal J$. The map assigning the trivial log line bundle uniquely factors through a closed immersion into the compactified Jacobian 
\[e : \Ms \to {\cal J} \to \lp.\]
We will present this closed substack as the degeneracy locus of a map of vector bundles for $g \neq 0$% 
, pulled back from a universal case:

\begin{remark}\label{rmk:BrillNoetherdegenlocus}

As in equation \eqref{eqn:dualvbisom}, there is a universal stack 
\[\bar{\rm Mat}_{r_2 \times r_1} \coloneqq \bra{\Aff^{r_1 \times r_2}/\GL_{r_1} \times \GL_{r_2}}\]
endowed with a map of vector bundles $\varphi : E \to F$ of ranks $r_1, r_2$. We suppose $r_1 \leq r_2$ and are interested in the locus $j : W \subseteq \bar{\rm Mat}_{r_2 \times r_1}$ on which the map $\varphi$ does \emph{not} have maximal rank. i.e., the rank of $\varphi$ is at most $r_1 - 1$.

\end{remark}

We work in the universal case but generalize to a general base $B$ in Proposition \ref{prop:arbitrarybaseB}. The proofs are the same, and root stacks in the universal case require more careful analysis.

\subsubsection{Adding $r$th roots}

We generalize to spaces ${\cal J}^{1/r}$ over ${\cal J}$ which parameterize $r$th roots of line bundles. For this, we need stacky curves.

\begin{definition}\label{def:twistednodalcurvesrthroot}
    Write $N_{xy = t} = \Spec \CC[x, y]/(xy - t)$ for the standard node. A \emph{(balanced) twisted nodal curve} $\hat C/S$ \cite{Twistedcurves} of order $r \in \NN$ is a proper DM type map $\hat C \to S$ with coarse moduli space $C/S$ a nodal curve such that
    \begin{itemize}
        \item The map $\hat C \to C$ is an isomorphism over the smooth locus over $S$, and 
        \item After pulling back $\hat C \to C$ along a strict-\'etale neighborhood $C_{loc} \to C$ of a node, there is a pullback square 
        \[
        \begin{tikzcd}
            \hat C_{loc} \ar[r] \ar[d]      &\bra{N_{uv = s}/\mu_r} \ar[d, "{x = u^r, y = v^r}"]         \\
            C_{loc} \ar[r]       &N_{xy = t}
        \end{tikzcd}    
        \]
        where the horizontal maps are \'etale and the coordinates $s, t$ are pulled back from the base $S$. 
    \end{itemize} 
    We do not allow stack structure at marked points and all our nodes have the same $\mu_r$ stack structure, unlike \cite{Twistedcurves}. If the coarse moduli space $C/S$ is a log curve, equip $\hat C$ with the log structure given by $u, v$ at the nodes and the pullback of that of $C$ everywhere else. 
    
    Say a twisted nodal curve is \emph{stable} if its coarse moduli space $C/S$ is. Write $\Ms(r)$ for the moduli space of stable twisted nodal curves. 
\end{definition}

\begin{proposition}\label{prop:twistedcurvesrthlogrootstack}
    The moduli space $\Ms(r)$ is the log root stack (Definition \ref{d:la}) of $\Ms$ to degree $r$. At a geometric point $\bar x \to \Ms$ corresponding to a curve with $k$ nodes, the fiber of the map taking coarse moduli spaces $b'' : \Ms(r) \to \Ms$ is a $\mu_r^k$-gerbe. 
\end{proposition}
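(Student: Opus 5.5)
The plan is to compare moduli problems directly and then reduce the gerbe statement to a local computation at a single node. The log root stack of $\Ms$ to degree $r$ (Definition \ref{d:la}) parametrizes log maps $S \to \Ms$ together with an extension of the pulled-back characteristic monoid $\bar M_S \to \bar M'$, where $\bar M'$ is generated by $r$th roots of the images of $\bar M_S$. Equivalently, for each node smoothing parameter $\ell_e$ it records a lift to an element $\ell_e^{1/r}$ of a log structure $M'$, together with $r$th roots of the corresponding sections. Since the log structure of $\Ms$ is locally free, generated by the $\ell_e$ (boundary divisors), this is a root stack along a simple normal crossings divisor, \'etale locally isomorphic to $[\Aff^k/\mu_r^k]$ via $s_e \mapsto s_e^r = t_e$.

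First, I construct a map $\Ms(r) \to$ (log root stack). Given a twisted curve $\hat C/S$, the local charts $[N_{uv=s}/\mu_r] \to N_{xy=t}$ produce elements $s$ with $s^r = t$ in $\OO_S$. Intrinsically, I use the balanced log structure: at each node the log structure of $\hat C$ is generated by $u, v$ with $uv = s$. Following Olsson's description of twisted curves via log structures, this defines a simple extension $M_S \to M'_S$ in which the image of $\ell_e$ becomes $r\cdot \ell'_e$. These data glue because the $\mu_r$-action is the stabilizer action and the balanced condition forces a single root per node.

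Second, I construct the inverse. Given $M_S \to M'_S$ with $\ell_e = r \ell'_e$, I build $\hat C$ \'etale locally as $[N_{uv=s}/\mu_r]$, where $s$ is the image of $\ell'_e$ and $\mu_r$ acts by $(\zeta u, \zeta^{-1}v)$. I then descend along the strict-\'etale cover of $C$. To see that the local constructions glue canonically up to unique 2-isomorphism, I invoke the known equivalence between balanced twisted curves with prescribed stabilizer orders and extensions of the base log structure \cite{Twistedcurves}; our restriction to uniform order $r$ and no marked stackiness matches precisely the uniform root of all $\ell_e$ and no roots of marked-point generators. The main obstacle is checking that both constructions are quasi-inverse as stacks, including automorphisms. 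My first attempt here is to compare automorphism groups, which are $\mu_r$ per node on both sides (ghost automorphisms on the twisted side, the $\mu_r$-torsor of roots on the root side), and then conclude via deformation-theoretic \'etaleness. Both sides are log \'etale over $\Ms$, and the map between them is a bijection on geometric points with isomorphic stabilizers, so it is an isomorphism.

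Finally, the gerbe claim follows from the local model. Over a geometric point $\bar x$ with $k$ nodes, the root stack is \'etale locally $[\Aff^{k}/\mu_r^k] \times \Aff^{m}$ over $\Aff^k \times \Aff^m$, with $s_e \mapsto s_e^r$. The fiber over the origin is then $[\{0\}/\mu_r^k] = B\mu_r^k$, a $\mu_r^k$-gerbe over $\bar x$. Inverting $r$ makes $\mu_r$ \'etale, so this is a genuine gerbe, and the map $b''$ coincides with taking coarse moduli spaces of $\hat C$.
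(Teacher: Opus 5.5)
Your argument is correct in substance, but it takes a different route from the paper. The real input is Olsson's theorem: balanced twisted curves over $S$ correspond to simple extensions $M_S \to M'_S$ of the base log structure. You specialize it to the component where each node generator $\ell_e$ becomes $r\ell'_e$ and the marking generators are untouched.

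To connect this with Definition~\ref{d:la}, add one sentence. Since $\Ms$ has locally free log structure, it is \'etale locally strict over $\Theta^k$. Hence the f.s.\ pullback along $[r]$ is the ordinary pullback, namely the simultaneous root $[\Spec \OO_U[s_1,\dots,s_k]/(s_1^r-t_1,\dots,s_k^r-t_k)\,/\,\mu_r^k]$ on a chart $U$. This is exactly the stack of such extensions. Note that the boundary is only \'etale locally snc, so ``root stack along an snc divisor'' is wrong globally, as the remark after the statement stresses.

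The paper stays inside its Artin-fan formalism instead. Lemma~\ref{lem:rootstacksAFpullback} exhibits a twisted curve as the pullback of its coarse curve along the $r$th power map on $\Theta_C$. Over a local base $\Aff^k$ this pullback $\hat C \to \Aff^k$ always exists but is not saturated, and its saturation is the log root stack of the base. So a twisted curve exists exactly when the base acquires $r$th roots of the smoothing parameters.

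Your route buys a rigorous treatment of isomorphisms and ghost automorphisms, delegated to Olsson, which the paper's sketch passes over. The paper's route buys a self-contained mechanism (saturation) that matches its own definition of log root stacks and reuses the Artin-fan lemma.

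Do not lean on your fallback argument: ``both sides are log \'etale over $\Ms$ and the map is bijective on geometric points with isomorphic stabilizers, hence an isomorphism.'' Log \'etale maps need not be \'etale. You would first have to make the comparison map strict (give $\Ms(r)$ the log structure $M'_S$) and representable. Even then, log \'etaleness of $\Ms(r)$ over $\Ms$ is itself the deformation-theoretic content of Olsson's theorem, so rely on the citation.

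Also, the scheme-theoretic fiber of your local model over $\bar x=\Spec\kappa$ is $[\Spec \kappa[s_1,\dots,s_k]/(s_1^r,\dots,s_k^r)\,/\,\mu_r^k]$. It is its reduction that is $B\mu_r^k$, and that is the sense in which both you and the paper say ``gerbe''.
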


This does not mean the root stack at the single nodal divisor $\sqrt[r]{\Ms, D}$, which only has a $\mu_r$-gerbe at the nodal divisor. It is closer to a simultaneous root stack at all the components of the nodal divisor $D \subseteq \Ms$, but differs at self-intersections of these components. 

We first prove a lemma. 

\begin{lemma}\label{lem:rootstacksAFpullback}
    If $\hat C$ is a twisted nodal curve of order $r$ and its coarse moduli space $C/S$ is a \emph{vertical} log curve, we can identify the Artin fans $\af{\hat C} \simeq \af{C}$. Under this identification, there is a pullback square 
    \[
    \begin{tikzcd}
        \hat C \ar[r] \ar[d] \lpbstrict     &\af{C} \ar[d]        \\
        C \ar[r]       &\af{C}
    \end{tikzcd}
    \]
    with right vertical arrow $\af{C} \to \af{C}$ the $r$th power map and horizontal arrows the Artin fan maps.  
\end{lemma}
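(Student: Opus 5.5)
The statement is local on $C$, and both sides are compatible with strict-\'etale base change on $C$. So I will check it strict-\'etale locally and then glue using the canonicity of the maps involved.

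\emph{Identification of Artin fans.} Away from the nodes, $\hat C \to C$ is an isomorphism of log stacks, because the log structure on $\hat C$ is pulled back from $C$ there. Near a node, $\hat C$ is \'etale locally $\bra{N_{uv=s}/\mu_r}$ with its log structure generated by $u, v$, and $C$ is $N_{xy=t}$ with log structure generated by $x, y$. The characteristic monoids of both are the same kind of monoid: the pushout $\NN^2 \oplus_{\NN} \bar M_S$ along the diagonal, with $s$ respectively $t$. The only difference is that $x = u^r$ and $y = v^r$ correspond to multiplication by $r$ on the generators. Artin fans depend only on the characteristic sheaf of monoids together with its stratification, in the sense of \cite{skeletonsfansabramchenmarcusulrischwise}. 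The map $\hat C \to C$ is a homeomorphism on underlying topological spaces, and it matches strata and the combinatorial type of the characteristic monoids. Hence the two Artin fans agree as stacks, $\af{\hat C} \simeq \af{C}$, and I will fix this identification.

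\emph{The square.} Under the identification above, the map $\bar M_C \to \bar M_{\hat C}$ corresponds to the $r$th power map $\af{C} \to \af{C}$. Locally this is the map $\bra{\Aff^2/\GG_m^2} \to \bra{\Aff^2/\GG_m^2}$ given by $(u, v) \mapsto (u^r, v^r)$, pulled back over the base cone of $S$; more precisely, it is the map induced on the local model $\af{}$-chart by multiplying the characteristic monoid by $r$. This gives a commutative square with the Artin fan maps.

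\emph{Cartesianness.} To prove the square is cartesian, I compute the fiber product $C \times_{\af{C}, r} \af{C}$ locally. By definition of the $r$th power map on $\af{}^2$, pulling it back along the strict map $N_{xy=t} \to \af{C}$ gives the stack of $r$th roots of the sections $x$ and $y$, each with its $\GG_m$-torsor. This is $\bra{\Spec \OO[u,v]/(u^rv^r - t)/\mu_r^2}$. The main obstacle is that this stack is not yet $\bra{N_{uv=s}/\mu_r}$. The relation $xy = t$ with $t$ pulled back from the base forces the product $uv$ to be an $r$th root of $t$ that is compatible with the base. I expect this to be handled by the fact that the Artin fan of a \emph{vertical} curve encodes the node relation through $\bar M_S$: the edge length $\ell$ satisfies $\ell = \alpha + \beta$ in $\af{C}$, and the $r$th power map also multiplies $\ell$ by $r$. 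Pulling back therefore also extracts the root $s$ of $t$, using the hypothesis that $\bar M_S$ agrees on both sides. Imposing $uv = s$ cuts the $\mu_r^2$ down to the antidiagonal $\mu_r$, which recovers $\bra{N_{uv=s}/\mu_r}$. This is exactly where verticality is used: there are no marked-point factors, which would otherwise acquire unwanted root structure. Finally, I check that the fiber product is strict over $\hat C$ by comparing log structures generated by $u, v$. Both maps to $\hat C$ are canonical, so the local identifications glue.
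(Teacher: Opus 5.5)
There is a genuine gap, and it sits in the two steps that carry the content.

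\textbf{The identification of Artin fans.} You regard $\hat C$ and $C$ as log curves over one and the same log base $S$, with node smoothing parameters $\bar s$ and $\bar t=r\bar s$ in $\bar M_S$. You then assert that $\bar M_S\oplus_{\NN,\bar s}\NN^2$ and $\bar M_S\oplus_{\NN,\bar t}\NN^2$ are ``the same kind of monoid.'' They are not, because the isomorphism type depends on the element you push out along. Take a log point with $\bar M_S=\NN\bar s$. The node of $\hat C$ then has the free monoid on $\bar u,\bar v$. The node of $C$ has the monoid generated by $\bar x,\bar y,\bar s$ subject to $\bar x+\bar y=r\bar s$, which for $r\geq 2$ has three irreducible elements and is not free. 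So in your setup $\af{\hat C}\not\simeq\af{C}$.

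\textbf{The map $\bar M_C\to\bar M_{\hat C}$ is not the $r$th power map.} It is the identity on $\bar M_S$ and multiplies only $\bar x,\bar y$ by $r$. No endomorphism of $\af{C}$ does this, since it would have to send $\bar t=\bar x+\bar y\in\bar M_S$ both to $\bar t$ and to $r\bar t$. The actual $[r]$ on $\af{C}$ extracts $r$th roots of every element of $\bar M_C$, base parameters included. Consequently $C\times_{\af{C},[r]}\af{C}\to C$ has nontrivial gerbes banded by powers of $\mu_r$ as fibers even at smooth points of $C$ lying over points where $\bar M_S\neq 0$. At those points $\hat C\to C$ is an isomorphism.

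\textbf{The cartesianness step.} It cannot be repaired by ``imposing $uv=s$.'' A fiber product is computed, not cut down. The pullback along $[r]$ has stabilizer $\mu_r^{\rk \bar M^{\rm gp}_{C,x}}$ at a node $x$, which is strictly bigger than the $\mu_r$ of $\hat C$ as soon as $\bar M_S\neq 0$. The antidiagonal $\mu_r$ can only appear as a relative stabilizer over a base to which the root $s$ has been adjoined.

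\textbf{What the paper does.} Its proof only localizes to an atomic neighbourhood of a node, using verticality to kill the relative characteristic elsewhere, and then appeals to the local model. Its use of the lemma in the proof of Proposition~\ref{prop:twistedcurvesrthlogrootstack} shows the intended reading. There $\hat C$ is the log root stack $C\times_{\af{C},[r]}\af{C}$ of the vertical curve $C/S$. This is \emph{not} a twisted curve over $S$, since the map to $S$ is not saturated, but it becomes one over the log root stack $S^{1/r}$.

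With that reading, the identification $\af{\hat C}\simeq\af{C}$ is the tautological one induced by the strict base-changed map $\hat C\to\af{C}$, not an abstract comparison of monoids. The local check takes place in the chart $S=\Aff^k$, where $\bar M_C$ is free of rank $k+1$ at the $i$th node. There the $\mu_r^{k+1}$ acting on the pullback maps onto the $\mu_r^k$ acting on $S^{1/r}$, via $(\zeta_j,\zeta_u,\zeta_v)\mapsto(\zeta_j,\zeta_u\zeta_v)$. The kernel is the antidiagonal $\mu_r$ acting on $(u,v)$. This recovers $[N_{uv=s}/\mu_r]$ with $s=uv$ the tautological root of $t_i$, and it is where the antidiagonal $\mu_r$ comes from.

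Your argument needs to be rebuilt around this change of base, rather than around an identification of $\bar M_{\hat C}$ with $\bar M_C$ over the same $S$.
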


\begin{proof}
    Implicit in the statement is that there \emph{is} an associated map $\af{\hat C} \to \af{C}$. We can localize in $S$ to assume $S$ is atomic, and \cite[Corollary 3.17]{loghochschild} lets us assume $\af{C} = \af{P} \times S$ and $C$ is atomic. As $C$ is vertical, the relative characteristic sheaf $\gp{\bar M}_{C/S}$ is concentrated at the nodes, so we have reduced to an atomic \'etale neighborhood of a node of $C$. The claim results from the local description in Definition \ref{def:twistednodalcurvesrthroot}. 
\end{proof}

\begin{proof}[{Proof of Proposition \ref{prop:twistedcurvesrthlogrootstack}}]
    The description at geometric points results from the first statement. 

    Stability and marked points are irrelevant, so it suffices to show the same claim for prestable curves $\Mp[g, 0](r) \to \Mp[g, 0]$ with no marked points. \'Etale localize to replace $\Mp[g, 0]$ by $S \coloneqq \Aff^k$, with coordinates $t_1, \cdots, t_k$ corresponding to smoothing parameters of the $k$ nodes of a \emph{vertical} log curve $C/S$. There is exactly one root stack $\hat C \to C$ described by the pullback in Lemma \ref{lem:rootstacksAFpullback}. 
    
    The problem is, $\hat C$ is not a log twisted prestable curve over $\Aff^k$ and its underlying space is not a twisted curve over $\Aff^k$. The map $\hat C \to \Aff^k$ is not saturated. Its universal saturation is plainly the $r$th log root stack of $\Aff^k$ obtained by adjoining roots of $t_1, \cdots, t_k$ as sections of $\Gamma(\Aff^k, \bar M_{\Aff^k})$. I.e., the construction of $\hat C$ results in a twisted nodal curve $\hat C/S$ if and only if the base $S$ contains roots $s_i$ of the smoothing parameters $t_i$.

\end{proof}

\begin{corollary}\label{cor:pfwdgerbe1/r}
    Recall that we work over $\Spec \ZZ[\frac{1}{r}]$ in this section. We have $b''_*[\OO_{\Ms(r)}]=[\OO_{\Ms}]$.
\end{corollary}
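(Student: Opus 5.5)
The claim is that $Rb''_*\OO_{\Ms(r)} \simeq \OO_{\Ms}$. Since $b''$ is proper and quasi-finite with finite diagonal (a coarse-space-type map over $\Ms$), the class $b''_*[\OO_{\Ms(r)}]$ equals $\sum_i (-1)^i [R^i b''_* \OO_{\Ms(r)}]$. So it suffices to prove two things: $b''_*\OO_{\Ms(r)} = \OO_{\Ms}$, and $R^i b''_*\OO_{\Ms(r)} = 0$ for $i>0$.

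Both statements are local on $\Ms$ in the étale topology and compatible with flat base change. We therefore follow the reduction in the proof of Proposition \ref{prop:twistedcurvesrthlogrootstack}. First we ignore markings and stability. Then we étale-localize to a chart $S=\Aff^k \times U$, where $t_1,\dots,t_k$ are the smoothing parameters of the nodes. On this chart, Proposition \ref{prop:twistedcurvesrthlogrootstack} identifies $\Ms(r)$ with the $r$th log root stack. This is the stack quotient
\[
\bra{\Spec \ZZ[\tfrac1r][s_1,\dots,s_k]\otimes \OO_U \,/\, \mu_r^k}, \qquad s_i^r=t_i,
\]
mapping to $\Aff^k\times U$.

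On this chart the pushforward of the structure sheaf is the ring of $\mu_r^k$-invariants of $\OO_U[s_1,\dots,s_k]$. The group $\mu_r^k$ acts diagonally on the monomials $s^{m}$, with weight $m \bmod r$. The invariants are exactly the monomials with all exponents divisible by $r$, which is the ring $\OO_U[t_1,\dots,t_k]$. This gives $b''_*\OO=\OO$.

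Higher direct images are the higher group cohomology of $\mu_r^k$ with coefficients in this module. Over $\ZZ[\frac1r]$, the group scheme $\mu_r^k$ is diagonalizable and hence linearly reductive. Taking invariants is then exact, being just projection onto the weight-zero graded piece, so all higher cohomology vanishes. This gives $R^ib''_*\OO=0$ for $i>0$, and we conclude $b''_*[\OO_{\Ms(r)}]=[\OO_{\Ms}]$.

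The main obstacle is justifying the local model. We must check that the étale-local identification of $\Ms(r)$ with the quotient stack above is compatible with $b''$ and holds over $\ZZ[\frac1r]$ rather than just over $\CC$. The same care is needed at self-intersections of the boundary, where, as noted after Proposition \ref{prop:twistedcurvesrthlogrootstack}, the log root stack differs from an iterated root stack of divisors. We handle this by working with the log root stack chart on the Artin fan $\af{}^k$, where the $r$th power map $\af{}^k \to \af{}^k$ is given globally by the monoid map $\NN^k \xrightarrow{r} \NN^k$. We then pull back strictly, using the same Artin fan identification as in Lemma \ref{lem:rootstacksAFpullback}. Tameness, meaning exactness of invariants, is exactly where the hypothesis that $r$ is invertible enters.
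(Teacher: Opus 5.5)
Your proposal is correct, but it argues differently from the paper.

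\textbf{The paper's route.} The paper's proof is one line. It observes that $b''$ is stratified, by number of nodes, into $\mu_r^k$-gerbes. It then cites the general fact (Proposition 1.9 of the Costello--Chou--Herr--Lee reference) that pushing forward the structure sheaf along a gerbe banded by a finite group of invertible order returns the structure sheaf. Finally, it glues the strata with the excision sequence.

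\textbf{Your route.} You compute $Rb''_*\OO_{\Ms(r)}$ directly in the local model $[\Aff^k/\mu_r^k]\to\Aff^k$. This model is obtained from $[r]\colon\af{}^k\to\af{}^k$ via the smooth cover $\Aff^k\to\af{}^k$. It is then transported to $\Ms$ by flat base change along the smooth strict maps to $\af{}^k$. That is the right way to phrase it, as in your last paragraph, rather than using a literal product chart $\Aff^k\times U$. You obtain the sheaf-level statement $Rb''_*\OO_{\Ms(r)}\simeq\OO_{\Ms}$, from which the $K$-theoretic identity is immediate.

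\textbf{What each buys.} Your route is self-contained and proves something stronger. It also avoids the bookkeeping needed to make the stratified argument precise. Excision by itself only shows that the difference $b''_*[\OO_{\Ms(r)}]-[\OO_{\Ms}]$ comes from the boundary. Moreover, the scheme-theoretic preimage of a closed stratum is the nilpotent thickening $s_1^r=\cdots=s_k^r=0$ of the gerbe, not the gerbe itself. The paper's route is shorter and reuses the gerbe lemma it needs anyway for $b'$ in Proposition \ref{prop:drclassgrothpoly}.

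\textbf{One small correction.} Exactness of $\mu_r^k$-invariants does not use that $r$ is invertible. The group $\mu_r^k$ is diagonalizable over $\ZZ$, hence linearly reductive over any base: invariants are the weight-zero piece of a $(\ZZ/r)^k$-grading. The hypothesis $r\in\ZZ[\frac1r]^\times$ enters elsewhere. It makes $\mu_r$ \'etale, so that $\Ms(r)$ is Deligne--Mumford and the twisted-curve picture of Proposition \ref{prop:twistedcurvesrthlogrootstack} holds. It is also needed for the paper's gerbe lemma.
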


\begin{proof}
    The map $b'' : \Ms(r) \to \Ms$ is stratified into gerbes for finite groups with invertible orders, so the excision exact sequence and \cite[Proposition 1.9]{highergenuskthycostellochouherrlee} complete the proof. 
\end{proof}

We can now consider $r$th roots of line bundles.  

\begin{definition}\label{def:rootjacquasistable}
    Let ${\cal J}^{1/r}$ be the log stack parameterizing 
    \begin{itemize}
        \item A family of stable twisted curves $\hat C/S$ in $\Ms(r)$, 
        \item A quasistable model $\hat C' \to \hat C$, 
        \item An admissible line bundle $\scr F$ of total degree zero on $\hat C'$ such that $(\hat C' \to \hat C/S, \scr F)$ lies in $\cal J$, and 
        \item A line bundle $\scr F^{1/r}$ with an isomorphism ${\scr F^{1/r}}^{\otimes r} \simeq \scr F$. 
    \end{itemize}
    Write $\pi' : {\scr C}' \to {\scr C} \to {\cal J}^{1/r}$ for the universal stacky quasistable model viewed as a prestable family of curves over ${\cal J}^{1/r}$. Let $e^{1/r} : E^{1/r} \subseteq {\cal J}^{1/r}$ be the locus where the $r$th root ${\scr F}^{1/r}$ is trivial. 
\end{definition}

\begin{remark}
    We need to allow stack structure at the nodes of our curves to work with roots of line bundles. The stack of $r$th roots of the trivial line bundle $\OO_C$ for example is equivalent to maps to $B\mu_r$ on $C$. As in \cite{Twistedcurves}, this space is not proper unless we place stack structure at the nodes of $C$.  
\end{remark}

\begin{figure}
    \centering
    \begin{tikzcd}
        E^{1/r} \ar[r, "{e^{1/r}}"] \ar[d, "{b'}"] \ar[dd, bend right=40, "b", swap]         &{\cal J}^{1/r} \ar[d, "{\epsilon'}"] \ar[dd, bend left=40, "\epsilon"]        \\
        \Ms(r) \ar[r, "{e'}", swap] \ar[d, "{b''}"] \lpbstrict      &{\cal J}_{\Ms(r)} \ar[d, "{\epsilon''}"] \ar[r] \lpbstrict        &\Ms(r) \ar[d, "{b''}"]         \\
        \Ms \ar[r, "e", swap]         &{\cal J} \ar[r]        &\Ms
    \end{tikzcd}
    \caption{A commutative, noncartesian diagram of the zero sections of ${\cal J}$ and ${\cal J}^{1/r}$.}
    \label{fig:gerbesandrootstacks}
\end{figure}

\begin{remark}\label{rmk:E1/rtrivialgerbe}

    The commutative diagram in Figure \ref{fig:gerbesandrootstacks} is not cartesian. The pullback instead parameterizes $r$th roots of the trivial line bundle \emph{on the curve}, i.e., stable maps to $B\mu_r$. 

    The map $b' : E^{1/r} \to \Ms(r)$ has a section. As $b'$ is a $\mu_r$-gerbe \cite[Lemma 4.5]{chiodoholmesDRroots}, it is in fact the trivial $\mu_r$-gerbe $E^{1/r} \simeq \Ms(r) \times B\mu_r$. 
\end{remark}

\subsubsection{Proof of the theorem for $g \neq 0$}

We want to understand the Gysin pullback along the zero section 
\[
    e^{1/r} : E^{1/r} \to {\cal J}^{1/r}.
\]

\begin{theorem}\label{thm:rthrootGrothendieckdegenlocusDR}
    Use notation as in Definition \ref{def:rootjacquasistable} and assume $g \neq 0$. The structure sheaf of the zero section $e^{1/r} : E^{1/r} \to {\cal J}^{1/r}$ is given by evaluating the operator $\tilde G$ of Definition \ref{def:stackygrothpoly} on the $K$-theoretic class of the complex $[R\pi'_* {\scr F^{1/r}}] = [\pi'_*{\scr F^{1/r}}] - [R^1 \pi'_* {\scr F^{1/r}}]$:
    \[
        [e^{1/r}] = e^{1/r}_* [\OO_{E^{1/r}}] = \tilde G([R\pi'_* {\scr F^{1/r}}]) \qquad \in K({\cal J}^{1/r}).
    \]
\end{theorem}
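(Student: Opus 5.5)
The plan is to realize the zero locus $E^{1/r}$ as the pullback of the universal degeneracy locus $W \subseteq \bar{\rm Mat}_{r_2 \times r_1}$ with $r_1 = 1$, and then invoke Corollary \ref{cor:thomporteousKthytildeGdegenlocusformula}.

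\textbf{Step 1: a two-term complex.} Since $\pi' : \scr C' \to {\cal J}^{1/r}$ is a flat proper family of (twisted, quasistable) curves, $R\pi'_*\scr F^{1/r}$ is perfect of amplitude $[0,1]$. Locally on ${\cal J}^{1/r}$, or globally after twisting by a relatively ample divisor supported on sections, we represent it by a two-term complex of vector bundles $\varphi : E \to F$. Following the standard Brill--Noether construction, I would choose an auxiliary effective divisor $D$ of large relative degree and take
\[
E = \pi'_*\scr F^{1/r}(D), \qquad F = \pi'_*\bigl(\scr F^{1/r}(D)|_D\bigr).
\]
Only the class $[E]-[F] = [R\pi'_*\scr F^{1/r}]$ enters $\tilde G$, and by Lemma \ref{lem:tildeGwelldefKtheory} the operator depends only on this class. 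Independence of the auxiliary choices, and gluing of local presentations, are therefore automatic at the level of the final formula.

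\textbf{Step 2: $E^{1/r}$ as a degeneracy locus.} Since $\scr F^{1/r}$ has total degree zero, $\chi = 1-g$, so $r_2 - r_1 = g-1$ and the expected codimension of a rank drop is $g$. This is the wrong target for the present statement. What we need is the locus where $\ker\varphi = H^0(\scr F^{1/r}) \neq 0$.

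By the last condition in Definition \ref{d:2.20}, transported to roots (if $\scr F^{1/r}$ has a nonzero section then so does $\scr F$, forcing $\scr F$ trivial, and then a section of an $r$th root of $\OO$ is nowhere vanishing), $H^0 \neq 0$ exactly when $\scr F^{1/r}$ is trivial. In that case $h^0 = 1$. So, after arranging $\operatorname{rk}E \le \operatorname{rk}F$, $E^{1/r}$ set-theoretically equals the locus where $\varphi$ fails to be injective on fibers. To land in the single-column case with $r_1 = 1$, I would instead dualize as in \eqref{eqn:dualvbisom} and reduce $\varphi$ locally so that $E$ has rank one. Lemma \ref{lem:tildeGwelldefKtheory} lets us cancel common summands without changing $\tilde G$.

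The scheme structures must also agree. Both the degeneracy locus (a Fitting-type ideal) and $E^{1/r}$ represent the functor of families where $\scr F^{1/r}$ admits a nowhere-zero section, i.e. is trivial, using cohomology and base change for $\pi'$. So they coincide as substacks: $E^{1/r} = Y \times_{\bar{\rm Mat}} W$.

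\textbf{Step 3: regularity and codimension.} Corollary \ref{cor:thomporteousKthytildeGdegenlocusformula} requires $e^{1/r}$ to be a regular immersion of codimension $r_2 - r_1 + 1 = g$, or at least that the ideal has grade $g$. I expect this to be the main obstacle.

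The plan is a dimension count. The space ${\cal J}^{1/r}$ is smooth over $\Ms(r)$ of relative dimension $g$, being étale over the smooth ${\cal J}$ after inverting $r$. By Remark \ref{rmk:E1/rtrivialgerbe}, $E^{1/r} \simeq \Ms(r)\times B\mu_r$ is smooth of relative dimension $0$. A closed immersion of smooth stacks is a regular immersion, of codimension $g$, which matches the expected codimension. Smoothness of ${\cal J}^{1/r}$ needs care at the stacky nodes and at exceptional components. I would check it via the deformation theory of the pair: deformations of the curve, which are log smooth over a smooth base, together with the unobstructed $H^1(\OO)$ deformations of the line bundle and of its root.

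\textbf{Step 4: conclusion.} Given Steps 2 and 3, the determinantal resolution pulls back to a resolution of $\OO_{E^{1/r}}$, by the acyclicity statement used in the proof of Corollary \ref{cor:thomporteousKthytildeGdegenlocusformula}. This gives $e^{1/r}_*[\OO_{E^{1/r}}] = \tilde G([E]-[F]) = \tilde G([R\pi'_*\scr F^{1/r}])$.

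Where $E \to F$ exists only locally, the identification of the two classes still holds globally. Both sides are globally defined, and $\tilde G$ depends only on the $K$-class of the perfect complex, which the Eagon--Northcott resolution respects.
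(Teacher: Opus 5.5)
Your skeleton is exactly the paper's proof: the global presentation $\varphi : \pi'_*\scr F^{1/r}(D) \to \pi'_*\scr F^{1/r}_D(D)$ from Step 1, the identification of $E^{1/r}$ with the pullback of $W$, the codimension count, Corollary \ref{cor:thomporteousKthytildeGdegenlocusformula}, and the identification of $[E]-[F]$ via \eqref{eqn:Ftwistlongexactseq}. The problem is the detour through $r_1 = 1$, which your stated plan actually relies on. That detour is unnecessary, and as written it fails.

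You call the rank-drop locus ``the wrong target''. But for $r_1 \le r_2$, the locus where $\rk\varphi < r_1$ \emph{is} the locus where $\ker\varphi \neq 0$. By Lemma \ref{lem:r1-r2genus} its expected codimension is $r_2 - r_1 + 1 = g$. So Corollary \ref{cor:thomporteousKthytildeGdegenlocusformula} applies directly to the global $\varphi$ of ranks $r_1 \le r_2 = r_1 + g - 1$, which is what the paper does.

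The splitting $\varphi \simeq {\rm id} \oplus (\text{rank one})$ exists only locally. It therefore gives no global map to $\bar{\rm Mat}_{g \times 1}$, and Lemma \ref{lem:tildeGwelldefKtheory} cannot cancel summands that do not split globally. Your closing claim, that local identities ``still hold globally'' because both sides are globally defined, is false: equality in $K$-theory cannot be checked on an open cover. For example, $[\OO(1)] - [\OO]$ on $\PP^1$ vanishes on each chart. The local reduction is legitimate only for local statements. These are the scheme-theoretic equality $E^{1/r} = {\cal J}^{1/r} \times_{\bar{\rm Mat}} W$ and the exactness of the pulled-back \emph{global} Eagon--Northcott complex, and that is all you should use it for.

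Step 3 rests on two false intermediate claims.
\begin{itemize}
\item ${\cal J}^{1/r}$ is not smooth over $\Ms(r)$. Compactified Jacobians have singular fibers over nodal curves; for $g = n = 1$ the fiber over the nodal cubic is itself a nodal curve.
\item ${\cal J}^{1/r}$ is not \'etale over ${\cal J}$. The map ${\cal J}^{1/r} \to {\cal J}$ factors through the pullback of the log root stack $\Ms(r) \to \Ms$, which is ramified along the boundary.
\end{itemize}
The conclusion survives because regularity and grade are local along $E^{1/r}$. The zero section lands in the open multidegree-zero locus $\pic^{[0]}$, which is a smooth (semiabelian) group over the base. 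Forgetting the root is \'etale there, since $r$ is invertible. So near $E^{1/r}$ the ambient stack is smooth over $\Ms(r)$ of relative dimension $g$. Your dimension count with $E^{1/r} \simeq \Ms(r) \times B\mu_r$ then shows that $e^{1/r}$ is a regular immersion of codimension $g$. The paper only asserts this codimension match, so the repaired Step 3 is a useful addition rather than a deviation.
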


We handle the case $g = 0$ in 
\S \ref{sss:g=0case} and explain why it is so different in Remark \ref{rmk:whyisg=0weird}. 
We construct a square
\begin{equation}\label{eqn:triviallbBrillNoetherlocus}
\begin{tikzcd}
    E^{1/r} \ar[r] \ar[d, "{e^{1/r}}", swap] \lpbstrict      &W \ar[d]      \\
    {\cal J}^{1/r} \ar[r]         &\bar {\rm Mat}_{r_2 \times r_1}.
\end{tikzcd}
\end{equation}
The closed substack $W \subseteq \bar {\rm Mat}_{r_2 \times r_1}$ is from Remark \ref{rmk:BrillNoetherdegenlocus}. We show the pullback is the locus where the universal $r$th root on ${\cal J}^{1/r}$ admits sections, which \cite[Lemma 4.4]{chiodoholmesDRroots} then identifies with the zero locus $e^{1/r} : E^{1/r} \to {\cal J}^{1/r}$ and so this square is cartesian in both the f.s.\ and ordinary categories. 

Recall $\pi' : {\scr C}' \to {\cal J}^{1/r}$
is the universal quasistable model and $\scr F^{1/r}$ is its universal $r$th root line bundle. Given a divisor $D$ on ${\scr C}'$, we have an exact sequence 
\[
    0 \to \OO_{\scr C'} \to \OO_{\scr C'}(D) \to \OO_D(D) \to 0
\]
which twists by $\scr F^{1/r}$ to become 
\begin{equation}\label{eqn:FtwistDses}
    0 \to \scr F^{1/r} \to {\scr F^{1/r}}(D) \to {\scr F}^{1/r}_D(D) \to 0.
\end{equation}
Fix a sufficiently ample divisor $D$ on ${\scr C}'$ such that 
\[
    R^p \pi'_* {\scr F^{1/r}}(D) = 0 \quad \text{and} \quad R^p \pi'_* {\scr F}^{1/r}_D(D) = 0 \qquad \text{for } p \neq 0. 
\]
Then we have an exact sequence
\begin{equation}\label{eqn:Ftwistlongexactseq}
    0 \to \pi'_* \scr F^{1/r} \to \pi'_* {\scr F^{1/r}}(D) \to \pi'_* {\scr F}^{1/r}_D(D) \to R^1 \pi'_* {\scr F}^{1/r} \to 0.
\end{equation}

\begin{lemma}
    The sheaves $\pi'_* {\scr F^{1/r}}(D), \pi'_* {\scr F}^{1/r}_D(D)$ are vector bundles on ${\cal J}^{1/r}$ of some ranks $r_1, r_2 \in \NN$. 
\end{lemma}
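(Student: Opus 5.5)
The plan is to use cohomology and base change for the proper flat family $\pi' : {\scr C}' \to {\cal J}^{1/r}$, together with the vanishing of higher cohomology built into the choice of $D$. First I would record that $\pi'$ is proper, flat, of relative dimension one, and tame: twisted curves over $\ZZ[\frac{1}{r}]$ have stabilizers $\mu_r$ of invertible order, so pushforward along the coarse moduli map is exact. The sheaves ${\scr F^{1/r}}(D)$ and ${\scr F}^{1/r}_D(D)$ are flat over ${\cal J}^{1/r}$. For the first this holds because it is a line bundle on a flat family. For the second, $D$ should be chosen as a relative effective Cartier divisor, flat over the base (for instance a sufficiently high multiple of a relatively ample divisor supported away from the nodes, or pulled back from the coarse curve). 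Then $\OO_D(D)$ is flat over ${\cal J}^{1/r}$, and $D \to {\cal J}^{1/r}$ is finite and flat.

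Next I would apply the standard base change argument (Grothendieck's complex, or \cite[III.12.11]{Hartshorne}-style reasoning, valid for tame proper stacks via the coarse space). The vanishing of $R^1$, imposed fiberwise by taking $D$ sufficiently ample, shows that $R^0\pi'_*$ commutes with arbitrary base change, and that its formation yields a locally free sheaf. Concretely, for $\pi'_*{\scr F}^{1/r}_D(D)$ this is simply the pushforward of a line bundle along the finite flat map $D \to {\cal J}^{1/r}$, which is locally free of rank equal to $r$ times $\deg D$, or more precisely the degree of $D$ over the base. For $\pi'_*{\scr F^{1/r}}(D)$ the rank comes from Riemann--Roch on fibers: $r_1 = \deg {\scr F^{1/r}}(D) + 1 - g$. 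This is constant because ${\scr F}$ has total degree zero and $D$ has constant relative degree. On a twisted curve with stack structure only at nodes, Riemann--Roch holds for line bundles of integral degree on the coarse space; if needed, I would instead compute the Euler characteristic using its local constancy in flat families. The ranks are constant on connected components, and I would shrink to components or take $D$ uniform so that the $r_i$ are constant.

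The main obstacle is ensuring that a single $D$ works uniformly. ${\cal J}^{1/r}$ is only quasicompact over $\Ms$, since it is proper over it. So I would use properness of ${\cal J}^{1/r}$ together with quasicompactness: the fiberwise vanishing conditions for $H^1$ are open conditions, and Serre vanishing on the coarse curve, relative to a relatively ample line bundle, gives a uniform twist. One subtlety is that on exceptional components of quasistable models ${\scr F}$ has degree $1$, so ${\scr F}^{1/r}$ may have fractional twisted degree. Relative ampleness of $D$ on every component of ${\scr C}'$, including exceptional ones, handles this. A fallback that avoids these issues is to choose $D$ as a multiple of the sum of marked-point-free sections locally and glue using the open covering, since only local freeness is claimed.
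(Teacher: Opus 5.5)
Your proposal is correct and follows the paper's argument. In both, flatness of the two sheaves over ${\cal J}^{1/r}$ plus the vanishing of $R^1\pi'_*$ arranged by the choice of $D$ forces the pushforwards to be locally free. The paper phrases this as perfectness of $R\pi'_*$ via \cite[07VK]{sta} with cohomology concentrated in degree zero; you phrase it as cohomology and base change.

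Your finite flat description of $\pi'_*{\scr F}^{1/r}_D(D)$ is a clean shortcut for the second sheaf, and it rightly uses that $D$ is a relative effective Cartier divisor. Its rank is just the relative degree of $D$, with no factor of $r$. Your discussion of how to choose $D$ goes beyond the lemma, which takes a global $D$ as given. Your fallback of gluing locally chosen divisors would not give globally defined sheaves, since $\pi'_*{\scr F}^{1/r}(D)$ depends on $D$.
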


\begin{proof}
    As ${\scr F^{1/r}}(D), {\scr F}^{1/r}_D(D)$ are line bundles on ${\scr C}'$, they are flat over ${\cal J}^{1/r}$. By \cite[07VK]{sta}, their derived pushforwards $R\pi'_* {\scr F^{1/r}}(D), R\pi'_* {\scr F}^{1/r}_D(D)$ form perfect complexes. Their cohomologies vanish in nonzero degrees by assumption on $D$, so they are perfect complexes in degree zero, or vector bundles. 
\end{proof}

\begin{lemma}\label{lem:r1-r2genus}
    The difference $r_1 - r_2$ of the ranks of the bundles $\pi'_* {\scr F^{1/r}}(D), \pi'_* {\scr F}^{1/r}_D(D)$ described above is 
    \[
        r_1 - r_2 = 1 - g,
    \]
    where $g$ is the genus of the curves in ${\cal J}^{1/r} \to \Ms(r)$. 
\end{lemma}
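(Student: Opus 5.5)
We compute $r_1 - r_2$ as the fiberwise Euler characteristic of $\scr F^{1/r}$ on the curves of $\scr C'$. Since $r_1, r_2$ are ranks of vector bundles on ${\cal J}^{1/r}$, they are locally constant, so it suffices to work at a single geometric point $\bar s \to {\cal J}^{1/r}$. Over $\bar s$ we have a twisted quasistable curve $\hat C'_{\bar s}$ with an $r$th root $L \coloneqq \scr F^{1/r}|_{\hat C'_{\bar s}}$. By the vanishing assumptions on $D$, cohomology and base change lets us identify the fibers of $\pi'_*\scr F^{1/r}(D)$ and $\pi'_*\scr F^{1/r}_D(D)$ with $H^0(L(D))$ and $H^0(L_D(D))$, and the higher cohomology of both vanishes.

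The short exact sequence \eqref{eqn:FtwistDses}, restricted to the fiber, then gives
\[
    r_1 - r_2 = \chi(L(D)) - \chi(L_D(D)) = \chi(L).
\]
Equivalently, taking alternating ranks in \eqref{eqn:Ftwistlongexactseq} shows that $r_1 - r_2$ is the virtual rank of $[R\pi'_*\scr F^{1/r}]$. So the task reduces to showing $\chi(L) = 1 - g$.

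We apply Riemann--Roch on the twisted curve $\hat C'_{\bar s}$, a DM curve whose coarse space $C'$ is nodal of arithmetic genus $g$. Exceptional components are rational bridges, and blowing up a node does not change arithmetic genus, so the genus of $C'$ equals that of $C$.

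The main obstacle is the contribution of the stacky nodes. The plan is to push forward along the coarse map $\rho\colon \hat C' \to C'$. Since $\rho_*$ is exact when $r$ is invertible, $\chi(L) = \chi(C', \rho_* L)$. There are two cases at a node:

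\begin{itemize}
\item Where $\mu_r$ acts trivially on $L$, the sheaf $\rho_* L$ is locally a line bundle.
\item Where the action is nontrivial, the local model $[N_{uv=s}/\mu_r]$ shows that $\rho_*L$ is the pushforward of a line bundle from the partial normalization at that node.
\end{itemize}

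The cleanest route avoids this case analysis by using a degeneration argument. Over the generic point of a component of ${\cal J}^{1/r}$ the curve is smooth, and $\chi$ is constant in flat proper families. Since $\deg L = \frac{1}{r}\deg \scr F = 0$ on a smooth curve, Riemann--Roch gives $\chi(L) = 1 - g$ there. Flatness of $\scr F^{1/r}$ over ${\cal J}^{1/r}$ then propagates this value to every fiber, provided each connected component of ${\cal J}^{1/r}$ meets the smooth locus.

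That last condition holds because $\pic^{[0]}$ is dense in $\cal J$, the map ${\cal J}^{1/r} \to \cal J$ is étale-locally finite flat over $\ZZ[\frac1r]$, and $\Ms(r) \to \Ms$ is an isomorphism over the smooth locus. Alternatively, one can avoid density entirely by citing Toën/Kawasaki Riemann--Roch for twisted curves. The twisted sector corrections at balanced $\mu_r$-nodes cancel between the two branches of the node, again giving $\chi = \deg + 1 - g = 1 - g$.

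Combining the reduction with this Euler characteristic computation gives $r_1 - r_2 = 1 - g$.
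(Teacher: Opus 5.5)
Your proposal is correct and follows essentially the paper's argument. Both identify $r_1 - r_2$ with the fiberwise Euler characteristic of $\scr F^{1/r}$ via \eqref{eqn:Ftwistlongexactseq}, then evaluate at a geometric point over a smooth curve in $\pic^{[0]}$, where Riemann--Roch for a degree-zero bundle gives $1-g$. Your openness/density argument (every connected component of ${\cal J}^{1/r}$ meets that locus) makes explicit a step the paper leaves implicit when it calls $r_1, r_2$ ``constant''; the twisted-node case analysis and the Kawasaki--To\"en alternative are not needed.
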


\begin{proof}
The calculation will result from Riemann-Roch. The numbers $r_1, r_2$ are constant on $\cal J^{1/r}$, so we can specialize to any geometric point $\bar s \to {\cal J}^{1/r}$ to compute them. Suppose $\bar s \to {\cal J}^{1/r}$ lies in the open locus $\pic^{[0]}$ and its image under the map ${\cal J}^{1/r} \to \Ms(r)$ lies in the locus of smooth curves. Then the curves agree $\scr C'_{\bar s} = \scr C_{\bar s}$ and are smooth. The exact sequence \eqref{eqn:Ftwistlongexactseq} specializes at $\bar s$ to give us 
\[
    r_1 - r_2 = h^0({\scr C}'_{\bar s}, {\scr F}^{1/r}_{\bar s}) - h^1({\scr C}'_{\bar s}, {\scr F}^{1/r}_{\bar s}). 
\]
Riemann-Roch equates this value with 
\[
    r_1 - r_2 = \deg(\scr F^{1/r}_{\bar s}) - g + 1,
\]
but $\bar s \to {\cal J}$ lands in the open locus $\pic^{[0]}$ where the multidegrees of $\scr F^{1/r}$ and $\scr F$ are zero, so $r_1 - r_2 = 1 - g$. 
\end{proof}

We are ready to prove the $g \neq 0$ case of the theorem. The $g = 0$ case is handled in the next section. 

\begin{proof}[{Proof of Theorem \ref{thm:rthrootGrothendieckdegenlocusDR}}] 
    By the analysis in \cite[\S 2.2]{Kthyvirtpbs}, the product with $[\OO_{E^{1/r}}]$ in the $K$-theory ring $K({\cal J}^{1/r})$ is the same as the Gysin pullback along $e^{1/r}$. 

    As $g \geq 1$, Lemma \ref{lem:r1-r2genus} shows $r_1 \leq r_2$. The codimension of the $(r_1-1)$-degeneracy locus 
    $w : W \subseteq \bar{\rm Mat}_{r_2 \times r_1}$ from Remark \ref{rmk:BrillNoetherdegenlocus} is 
    \[ 
        (r_1 - (r_1-1))(r_2 - (r_1-1)) = 1 \cdot (r_2 - r_1 + 1) = g.
    \]
    This matches the codimension of $e^{1/r} : E^{1/r} \to {\cal J}^{1/r}$, so 
    Corollary \ref{cor:thomporteousKthytildeGdegenlocusformula}
    (compare with \cite[Theorem 2.3]{andersbuchthomporteous}) 
    computes the structure sheaf as 
    \[
        e^{1/r}_* [\OO_{E^{1/r}}] = \tilde G([\pi'_* {\scr F}^{1/r}(D)] - [\pi'_* {\scr F}^{1/r}_D(D)]).
    \]
    The exact sequence \eqref{eqn:Ftwistlongexactseq} identifies the classes 
    \[
        [\pi'_* {\scr F}^{1/r}(D)] - [\pi'_* {\scr F}^{1/r}_D(D)] = [\pi'_*{\scr F}^{1/r}] - [R^1 \pi'_* {\scr F}^{1/r}]
    \]
    in $K$-theory and concludes our argument. 
\end{proof}

\subsubsection{The case $g = 0$}\label{sss:g=0case}

\begin{lemma}\label{lem:g=0istrivial}
    If $g = 0$ and $\vec a \in \ZZ^n_0$, then 
    \[
        \D[0, n, \vec a] = \lp[0, n] = {\cal J} = \Ms[0, n],
    \]
    where $\cal U \to \Ms[0, n]$ is the universal curve. The stack of $r$th roots is 
    \[
        {\cal J}^{1/r} = \Ms[0, n](r) \times B\mu_r = E^{1/r}.
    \]
\end{lemma}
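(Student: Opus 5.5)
Everything follows from genus-zero curves having trivial Picard data. I would first show that $\lp[0,n]=\Ms[0,n]$. By Example \ref{ex:tropicgenusg=0}, the tropical Picard functor is trivial in genus zero because the dual graph of every fiber is a tree. The sequence \eqref{eqn:logpicses} then identifies $\lp[0,n]$ with $\pic^{[0]}_{\cal U/\Ms[0,n]}$. For a family of genus-zero nodal curves, every component is rational and the dual graph is a tree, so a line bundle of multidegree zero is fiberwise trivial. Hence $\pic^{[0]}_{\cal U/\Ms[0,n]}\to\Ms[0,n]$ is an isomorphism of sheaves; the $\GG_m$-gerbe has already been quotiented out in the sheaf $\lp$.

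Next, ${\cal J}=\Ms[0,n]$. By Definition \ref{d:2.20}, $\cal J$ contains $\pic^{[0]}=\Ms[0,n]$ as a strict open substack and is proper over $\Ms[0,n]$. It remains to rule out points with nontrivial quasistable models. I would argue that ${\cal J}\to\lp$ is a log alteration, hence an isomorphism over a base with trivial tropical Picard, because ${\rm TroPic}$ vanishing means no subdivision is required. Alternatively, the stability condition leaves no admissible non-multidegree-zero bundle: when the dual graph is a tree, every total-degree-zero admissible bundle on a quasistable model is determined by an integral flow, and the nondegenerate small stability condition picks out only the multidegree-zero one. Then $\D[0,n,\vec a]$ is the fiber product of two sections of $\lp[0,n]=\Ms[0,n]\to\Ms[0,n]$. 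Both sections are the identity, so $\D[0,n,\vec a]=\Ms[0,n]$, with no log modification because every map in the fiber product is strict and an isomorphism.

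For roots, I would use Definition \ref{def:rootjacquasistable}. Here $\scr F$ is trivial on the stacky curve $\hat C$ of $\Ms[0,n](r)$ (with $\hat C'=\hat C$), so ${\cal J}^{1/r}$ parametrizes $r$th roots of $\OO_{\hat C}$, i.e.\ $\mu_r$-torsors on $\hat C$. In genus zero, with twisted nodes whose dual graph is a tree and no stacky markings, $H^1(\hat C,\mu_r)=0$ and $H^0(\hat C,\mu_r)=\mu_r$. Every root is therefore trivial, and its automorphisms form $\mu_r$. This gives ${\cal J}^{1/r}=\Ms[0,n](r)\times B\mu_r$, and since every root is trivial, $E^{1/r}$ is all of it, consistent with Remark \ref{rmk:E1/rtrivialgerbe}.

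The main obstacle is the vanishing $H^1(\hat C,\mu_r)=0$ for twisted tree-like curves. I would compute it by normalization: each component is a $\PP^1$ with possibly $B\mu_r$ points, and the local node charts $[N_{uv}/\mu_r]$ are simply connected gluings. A Mayer--Vietoris argument over the tree then shows that torsors glue uniquely. A secondary obstacle is justifying that ${\cal J}$ has no extra boundary points.
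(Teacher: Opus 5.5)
Your argument for $\lp[0,n]$ and $\D[0,n,\vec a]$ is the paper's own: trivial ${\rm TroPic}$ by Example~\ref{ex:tropicgenusg=0}, trivial $\pic^{[0]}$ on rational trees, then a fiber product of identity sections. The paper says nothing about $\cal J$ and calls the root case ``similar.'' Your additions therefore sit exactly in those two places, and in both the justification fails as written, even though the conclusions are true.

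For $\cal J$, option (a) is a non sequitur. Log alterations of $\Ms[0,n]$ (e.g.\ boundary log blowups) need not be isomorphisms, so ``no subdivision is required'' does not constrain $\cal J$. Option (b) presupposes that $\cal J$ comes from a small nondegenerate stability condition, which Definition~\ref{d:2.20} does not assume. Use the third axiom instead. At a geometric point $(C'\to C,\mathcal F)$ of $\cal J$, the quasistable model $C'$ is still a rational tree, so Riemann--Roch gives $\chi(\mathcal F)=1$ and hence $\Gamma(C',\mathcal F)\neq 0$. The axiom then forces $\mathcal F$ to be trivial, and admissibility (degree $1$ on exceptional curves) forbids exceptional components. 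Thus every point of $\cal J$ lies in the strict open $\pic^{[0]}=\Ms[0,n]$, and the two are equal.

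For the roots, your claim that the node charts are simply connected is false. The quotient map $N_{uv=0}\to[N_{uv=0}/\mu_r]$ is itself a connected, hence nontrivial, $\mu_r$-torsor. Likewise, an interior component with two twisted nodes is $\PP^1$ with two $\mu_r$-points, and $\OO(\tfrac1r q'-\tfrac1r q'')$ is a nontrivial $r$th root of $\OO$ on it. So a Mayer--Vietoris argument that treats the pieces as torsor-free breaks down.

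The vanishing of $H^1(\hat C,\mu_r)=\pic(\hat C)[r]$ (by Kummer) is genuinely global and comes from the leaves of the tree:
\begin{enumerate}
\item A leaf component has a single twisted point, so its Picard group is torsion-free (the degree identifies it with $\tfrac1r\ZZ$). Any $L$ with $L^{\otimes r}\simeq\OO$ is therefore trivial on it.
\item Hence the $\mu_r$-character of $L$ at that node is trivial, and it is the same character seen from the neighbouring component.
\item Pruning leaves inductively, every node character is trivial. So $L$ descends to the coarse rational tree with multidegree $0$, and is trivial.
\end{enumerate}
Since $\mathrm{Aut}(L, L^{\otimes r}\simeq\OO)=\mu_r$, this gives ${\cal J}^{1/r}=\Ms[0,n](r)\times B\mu_r=E^{1/r}$.
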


\begin{proof}
    The claim for $\D[0, n, \vec a]$ results from the others and Definition \ref{def:DAJkernel}. By the exact sequence 
    \[
        1 \to \pic^{[0]}_{\cal U/\Ms[0, n]} \to \lp[0, n] \to {\rm TroPic}_{\vert{\cal U}/\Ms[0, n]} \to 0
    \]
    of sheaves on the big strict \'etale site of abelian groups over $\Ms$ \cite[Theorem 4.14.6]{logpic}, 
    it suffices to show 
    \[
        {\rm TroPic}_{\vert{\cal U}/\Ms[0, n]} = \Ms \quad \text{and} \quad \pic^{[0]}_{\vert{\cal U}/\Ms[0, n]} = \Ms.
    \]
    The latter claim is immediate, as multidegree zero line bundles on genus zero curves are trivial. The former is Example \ref{ex:tropicgenusg=0}. 
    The version for $r$th roots is similar. 
\end{proof}

\begin{remark}\label{rmk:whyisg=0weird}
    The operations $e^{1/r, !}, e^{1/r}_*$ 
    are trivial in genus $g = 0$. But they no longer represent the class of $w : W \subseteq \bar {\rm Mat}_{r_2 \times r_1}$ as in the $g \neq 0$ case. By Lemma \ref{lem:r1-r2genus}, we have $r_1 = r_2 + 1$, so the middle map 
    \[
        \pi'_* {\scr F}^{1/r}(D) \to \pi'_* {\scr F}_D^{1/r}(D)
    \]
    of the exact sequence \eqref{eqn:Ftwistlongexactseq} is generically surjective. The locus $W$ detects where this map ceases to be surjective, not injective. So $W$ is the support of the cokernel $R^1\pi'_* {\scr F}^{1/r}$ instead of the kernel, and it is codimension two. 
\end{remark}

\subsubsection{Towards a $K$-theoretic Pixton formula}\label{sss:kthypixtonkamyarlastmorsel}

Chiodo and Holmes prove a precursor to Pixton's formula for the log DR class \cite[Main Theorem 2]{chiodoholmesDRroots}:
\[
    {\rm LogDR}({\cal L}) \cdot \eta^u = r^{u+1} \epsilon_* c_{g + u}(-R\pi_*{\cal L}^{1/r}).
\]
The LHS is independent of $r$, while the RHS has a factor depending on the exponent $u$. We prove the $K$ theoretic analogue in the case $u = 0$, in which no factor of $r$ appears on either side.

\begin{proposition}\label{prop:drclassgrothpoly}
    Use the notation of Figure \ref{fig:gerbesandrootstacks}. The structure sheaf of the $r$th root degeneracy locus $E^{1/r}$ pushes forward to that of the zero section in ${\cal J}$ in $K$-theory:
    \begin{equation}\label{eqn:pfwdalonggerbes}
        \epsilon_* e^{1/r}_* [\OO_{E^{1/r}}] = e_* b_* [\OO_{E^{1/r}}] = e_* [\OO_{\Ms}] \qquad \in K_\circ({\cal J}).
    \end{equation}

    The resulting Thom--Porteous formula for the log DR class is the same with or without the $r$th roots:
    \begin{align*}
        \lvir{\D} &= \aj_{\vec a}^! \tilde G(R\pi'_*{\scr F})         \\
                &= \aj_{\vec a}^! \epsilon_* \tilde G(R\pi'_*{\scr F}^{1/r}) \qquad \in K_\circ(\tilde \Ms)
    \end{align*}
\end{proposition}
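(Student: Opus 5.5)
The plan has two parts: first the pushforward identity \eqref{pfwdalonggerbes}, then a transport of the Thom--Porteous formula through it.

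\emph{The pushforward identity.} Commutativity of Figure \ref{fig:gerbesandrootstacks} gives $\epsilon \circ e^{1/r} = e \circ b$, so $\epsilon_* e^{1/r}_* = e_* b_*$ by functoriality of proper pushforward. It then suffices to show $b_*[\OO_{E^{1/r}}] = [\OO_{\Ms}]$, and we factor $b = b'' \circ b'$. By Remark \ref{rmk:E1/rtrivialgerbe}, $b'$ is the trivial $\mu_r$-gerbe $\Ms(r) \times B\mu_r \to \Ms(r)$. Pushforward of the structure sheaf along $B\mu_r \to \pt$ takes $\mu_r$-invariants of the trivial representation. Since $r$ is invertible over $\ZZ[\frac{1}{r}]$, invariants are exact and this returns $\OO$ with no higher cohomology, so $b'_*[\OO_{E^{1/r}}] = [\OO_{\Ms(r)}]$. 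This is the same input as \cite[Proposition 1.9]{highergenuskthycostellochouherrlee}, applied to a product. Corollary \ref{cor:pfwdgerbe1/r} then gives $b''_*[\OO_{\Ms(r)}] = [\OO_{\Ms}]$, which proves \eqref{eqn:pfwdalonggerbes}.

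\emph{The Thom--Porteous formula.} For $g \neq 0$, Theorem \ref{thm:rthrootGrothendieckdegenlocusDR} gives $e^{1/r}_*[\OO_{E^{1/r}}] = \tilde G(R\pi'_*\scr F^{1/r})$. Pushing forward along $\epsilon$ and using the identity above yields
\[
e_*[\OO_{\Ms}] = \epsilon_* \tilde G(R\pi'_*\scr F^{1/r}).
\]
The case $r=1$ of the same theorem gives $e_*[\OO_{\Ms}] = \tilde G(R\pi'_*\scr F)$, so the two expressions agree in $K_\circ(\cal J)$. For $g=0$, Lemma \ref{lem:g=0istrivial} makes everything trivial and both sides are the fundamental class, consistent with the convention in Theorem \ref{introthm:degeneracylocuszerolocus}.

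\emph{Applying the log Gysin map.} By Definition \ref{def:DAJkernel} and Remark \ref{rmk:drpbbaseB}, $\D$ is the f.s.\ pullback of $e$ along $\aj_{\vec a}$. On the log alteration $\tilde \Ms = \Ms \times^\ell_{\aj_{\vec a}, \lp} \cal J$, the map to $\cal J$ is strict. The log virtual class is therefore $\aj_{\vec a}^! e_*[\OO_{\Ms}]$, using that the obstruction theory of \S\ref{sss:AJkernellogPOT} coincides with the Gysin obstruction theory of the zero section; this identification is made via \cite[\S 2.2]{Kthyvirtpbs}, as in the proof of Theorem \ref{thm:rthrootGrothendieckdegenlocusDR}. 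Substituting the two expressions for $e_*[\OO_{\Ms}]$ gives both displayed equalities.

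The main obstacle is this last identification. The refined pullback $\aj_{\vec a}^!$ has to be defined on classes supported on all of $\cal J$, rather than only on $e_*[\OO_{\Ms}]$, and it must agree with the log virtual class built from $\mathbb E^\vee$. I would check this by noting that $e$ is a regular embedding with normal bundle $\mathbb E^\vee$, so the Gysin pullback along $e$ uses exactly the Hodge-bundle obstruction theory of \S\ref{ss:DRlogpothodgebundle}. The remaining ingredient is commutativity of refined Gysin maps \cite{Fultonintersectiontheory}, which gives $\aj_{\vec a}^! e_*[\OO] = e^!\aj_{\vec a *}[\OO_{\tilde\Ms}]$, identifying $\aj_{\vec a}^! e_*[\OO_{\Ms}]$ with $\lvir{\D}$.
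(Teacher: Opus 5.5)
Your first two steps match the paper's proof: $\epsilon_*e^{1/r}_*=e_*b_*$, the trivial $\mu_r$-gerbe $b'$ preserves the structure sheaf, Corollary~\ref{cor:pfwdgerbe1/r} handles $b''$, and Theorem~\ref{thm:rthrootGrothendieckdegenlocusDR} is applied at level $r$ and at $r=1$. For the last step the paper takes $\lvir{\D}=\aj_{\vec a}^!\,e_*[\OO_{\Ms}]$ as part of the construction: the obstruction theory of \S\ref{sss:AJkernellogPOT} is pulled back from $\aj_{\vec a}$, and $\tilde\aj_{\vec a}$ is given the pulled-back one.

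Your justification of that step has two errors.

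\emph{Strictness.} The map $\tilde\aj_{\vec a}\colon\tilde\Ms\to{\cal J}$ is not strict. In the example of \S\ref{ss:drnotclosed}, points of the DR locus over the two-node stratum lie over the diagonal ray and have characteristic monoid $\NN$. Their image lies on the zero section, where ${\cal J}$ has characteristic monoid $\NN^2$ pulled back from $\Ms$. The strict map is $e$, a section of the strict open $\pic^{[0]}\subseteq{\cal J}$. That is why the upper square of \eqref{eqn:DRhavetoblowup} is a strict pullback.

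\emph{Commutativity.} The right side $e^!\aj_{\vec a*}[\OO_{\tilde\Ms}]$ of your identity lands in $K_\circ(\Ms)$, not $K_\circ(\D)$, so the identity does not typecheck. The correct symmetric statement is $\tilde\aj_{\vec a}^![\OO_{\Ms}]=e^![\OO_{\tilde\Ms}]$ in $K_\circ(\D)$. Even that does not let you substitute a class defined on all of ${\cal J}$, which you yourself flagged as the real issue.

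Your observation that $e$ is a regular embedding with normal bundle $\mathbb E^\vee$ repairs both points without any commutativity. The map $e$ is a strict regular embedding with normal bundle $\mathbb E^\vee$, and $\D\to\tilde\Ms$ is its strict base change, so $\lvir{\D}=e^![\OO_{\tilde\Ms}]$. By the $\mathcal{T}or$ description of Gysin pullback along a regular embedding (the \cite[\S 2.2]{Kthyvirtpbs} input), the pushforward of this class to $\tilde\Ms$ is $\sum_i(-1)^i[L_i\tilde\aj_{\vec a}^*(e_*\OO_{\Ms})]$. You compute this by pulling back the global locally free resolution of $e_*\OO_{\Ms}$ from $\bar{\rm Mat}_{r_2\times r_1}$ used in Corollary~\ref{cor:thomporteousKthytildeGdegenlocusformula}. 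Hence it equals $\tilde\aj_{\vec a}^*$ applied to $e_*[\OO_{\Ms}]=\tilde G(R\pi'_*\scr F)\in K^\circ({\cal J})$. Your first step then rewrites $e_*[\OO_{\Ms}]$ as $\epsilon_*\tilde G(R\pi'_*\scr F^{1/r})$. This is what ``plugging in'' the Thom--Porteous formula means in the paper.
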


\begin{proof}
    The map $b' : E^{1/r} \to \Ms(r)$ is a gerbe by the proof of \cite[Lemma 4.5]{chiodoholmesDRroots}, so the structure sheaves agree under pushforward $b'_* [\OO_{E^{1/r}}] = [\OO_{\Ms(r)}]$ by \cite[Proposition 1.9]{highergenuskthycostellochouherrlee}. Combined with Corollary \ref{cor:pfwdgerbe1/r}, we get that $b_*[\OO_{E^{1/r}}] = [\OO_{\Ms}]$ and the equalities \eqref{eqn:pfwdalonggerbes} result. 

    Factor the square \eqref{eqn:AJkernel} via the log alteration ${\cal J} \to \lp$:
    \begin{equation}\label{eqn:DRhavetoblowup}
    \begin{tikzcd}
        \D \ar[r] \ar[d] \lpbstrict      &E \ar[d, "e"]        \\
        \tilde \Ms \ar[r, "{\tilde \aj_{\vec a}}"] \ar[r] \ar[d] \lpb      &{\cal J} \ar[d]    \\
        \Ms \ar[r, "{\aj_{\vec a}}"]      &\lp
    \end{tikzcd}
    \end{equation}
    Endow $\tilde \aj_{\vec a}$ with the pullback of the natural log perfect obstruction theory on $\aj_{\vec a}$ so that pulling back along either one is the same operation. As $\lvir{\D} = \aj_{\vec a}^! [\OO_{\Ms}]$ is the pullback of the zero section along the Abel-Jacobi map, we conclude by plugging in our Thom--Porteous formulas for the zero section from Theorem \ref{thm:rthrootGrothendieckdegenlocusDR}. 
\end{proof}

We conclude by generalizing the results of this section to an arbitrary base $B$. The proofs are the same. 

Let $B$ be a log smooth, quasicompact log algebraic stack with a family of log twisted prestable curves $\hat C/B$ with a log line bundle $P/\hat C$ and a vector $\vec a \in \ZZ^n_0$. Define the compactified Jacobian and zero section ${\cal J}_B, E^{1/r}_B$ over $B$ by f.s.\ pullback:
\[\begin{tikzcd}
    E^{1/r}_B \ar[r] \ar[d, "e^{1/r}_B", swap] \lpb       &E^{1/r} \ar[d]        \\
    {\cal J}_B \ar[r] \ar[d] \lpb      &\cal J \ar[d]         \\
    B \ar[r]       &\Ms(r)
\end{tikzcd}\]
Write $\pi'_B : {\scr C}'_B \to {\cal J}_B$ for the pullback of the quasistable model. Suppose the map $B \to \Ms(r)$ (or equivalently the map to $\Ms$) is eventually strict. 

\begin{proposition}\label{prop:arbitrarybaseB}
    In the above setting, suppose $e_B^{1/r} : E^{1/r}_B \to {\cal J}_B$ has the expected codimension $g$. The structure sheaf of the zero section is then given by the Grothendieck operator $\tilde G$:
    \[
    e^{1/r}_{B*} [\OO_{E^{1/r}_B}] = \tilde G(R\pi'_{B*} \scr F^{1/r}) \qquad \in K^\circ(\cal J_B).
    \]

    Pushing forward to $B$, we have an equality 
    \begin{align*}
        \lvir{\dr} &= \aj_{\vec a}^! \tilde G(R\pi'_*{\scr F})         \\
                &= \aj_{\vec a}^! \epsilon_* \tilde G(R\pi'_*{\scr F}^{1/r}) \qquad \in K_\circ(B)
    \end{align*}
\end{proposition}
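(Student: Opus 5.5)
The plan is to repeat the proofs of Theorem \ref{thm:rthrootGrothendieckdegenlocusDR} and Proposition \ref{prop:drclassgrothpoly}, with base change along $B \to \Ms(r)$ supplying the new ingredients. First I build the two-term presentation of $R\pi'_{B*}\scr F^{1/r}$ on ${\cal J}_B$. I would not choose a new divisor on ${\scr C}'_B$. Instead I pull back the ample divisor $D$ already fixed on the universal ${\scr C}'$, so that \eqref{eqn:FtwistDses} and \eqref{eqn:Ftwistlongexactseq} pull back. The vanishing $R^p\pi'_*\scr F^{1/r}(D)=R^p\pi'_*\scr F^{1/r}_D(D)=0$ for $p\neq 0$ is preserved by cohomology and base change, since these sheaves are flat over the base. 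The bundles $E=\pi'_{B*}\scr F^{1/r}(D)$ and $F=\pi'_{B*}\scr F^{1/r}_D(D)$ are then the pullbacks of the universal ones. Their ranks therefore still satisfy $r_1-r_2=1-g$ by Lemma \ref{lem:r1-r2genus}, and
\[
[E]-[F]=[R\pi'_{B*}\scr F^{1/r}] \qquad \in K^\circ({\cal J}_B).
\]

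Second, I identify $E^{1/r}_B$ with the pullback of the universal degeneracy locus $W\subseteq\bar{\rm Mat}_{r_2\times r_1}$ under the classifying map ${\cal J}_B\to\bar{\rm Mat}_{r_2\times r_1}$ of $E\to F$. The universal square \eqref{eqn:triviallbBrillNoetherlocus} is cartesian, both as an ordinary and as an f.s. square, via \cite[Lemma 4.4]{chiodoholmesDRroots}. Pasting it with the f.s. pullback square defining $E^{1/r}_B$ gives the result. The delicate point is that the defining squares are f.s. pullbacks, whereas Corollary \ref{cor:thomporteousKthytildeGdegenlocusformula} needs an ordinary (scheme-theoretic) pullback of $W$.

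Third, I address this main obstacle. I would show that $e^{1/r}:E^{1/r}\to{\cal J}^{1/r}$ is strict, because it is the closed locus where a line bundle is trivial. Then ${\cal J}_B\to{\cal J}$ is a base change in which the f.s. and ordinary fiber products of a strict closed immersion agree. This gives $E^{1/r}_B = {\cal J}_B\times_{\bar{\rm Mat}}W$ on underlying stacks. Eventual strictness of $B\to\Ms(r)$ should be what guarantees that ${\cal J}_B$ is an honest log algebraic stack, and I expect it is where this point has to be checked.

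With the expected-codimension hypothesis in place, $g=r_2-r_1+1$ equals the codimension of $W$. The grade of the ideal is therefore right, and the pulled-back Eagon--Northcott/Weyman resolution of Theorem \ref{thm:jerzyweymanresolution} stays exact by \cite[A2.10]{commutativealgebraEisenbud1995}. This is the version of Corollary \ref{cor:thomporteousKthytildeGdegenlocusformula} noted right after it, which needs only grade and not smoothness of the base. We conclude
\[
e^{1/r}_{B*}[\OO_{E^{1/r}_B}]=\tilde G(E-F)=\tilde G(R\pi'_{B*}\scr F^{1/r}),
\]
where the last equality uses Lemma \ref{lem:tildeGwelldefKtheory}.

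Finally, for the pushforward to $B$, I pull back the gerbe argument of Proposition \ref{prop:drclassgrothpoly}. The maps $E^{1/r}_B\to B(r)\to B$ are stratified into gerbes for finite groups whose orders are invertible over $\ZZ[\frac1r]$. By \cite[Proposition 1.9]{highergenuskthycostellochouherrlee} and excision, as in Corollary \ref{cor:pfwdgerbe1/r}, the pushforward of the structure sheaf is $e_{B*}[\OO_B]$ on ${\cal J}_B$. The same identity holds at $r=1$, which gives the version with $\scr F$. I then factor \eqref{eqn:AJkernel} over $B$ through ${\cal J}_B\to\lp$ as in \eqref{eqn:DRhavetoblowup}. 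I equip $\tilde\aj_{\vec a}$ with the pulled-back log obstruction theory, so that $\lvir{\dr}=\aj_{\vec a}^!e_*[\OO_B]$, using Corollary \ref{cor:pDRisD}. Substituting the two formulas above yields both displayed equalities.
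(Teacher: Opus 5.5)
Your proposal is correct and follows essentially the paper's route: the paper's proof only says that the arguments of Theorem~\ref{thm:rthrootGrothendieckdegenlocusDR} and Proposition~\ref{prop:drclassgrothpoly} go through over $B$ with the analogous factorization diagram, and you supply the base-change details it leaves implicit (pulling back the universal divisor $D$ and the universal cartesian square, and using strictness of $e^{1/r}$ to identify the f.s.\ and ordinary pullbacks of $W$). Two small points. First, passing from codimension $g$ to grade $g$ uses that ${\cal J}_B$ is log smooth over $\ZZ[\frac1r]$ and hence Cohen--Macaulay. Second, eventual strictness of $B \to \Ms(r)$ is not what makes ${\cal J}_B$ algebraic, since f.s.\ fiber products of log algebraic stacks always are; it is what makes the log virtual class and $\aj_{\vec a}^!$ in the second part well defined.
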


\begin{proof}
    The proof is the same as that of Theorem \ref{thm:rthrootGrothendieckdegenlocusDR} and Proposition \ref{prop:drclassgrothpoly}. The analogous diagram to \eqref{eqn:DRhavetoblowup} is 
    \[
    \begin{tikzcd}
        \dr \ar[r] \ar[d] \lpb         &E \ar[d, "e"]      \\
        \tilde B \ar[r, "{\tilde \aj_{\vec a}}"] \ar[d] \lpb        &{\cal J}_B \ar[d]    \\
        B \ar[r, "{\aj_{\vec a}}"]       &\logpic_{\hat C/B}.
    \end{tikzcd}
    \]
\end{proof}

\appendix

\section{Background on log structures}\label{Appendix B}

Assuming familiarity with the basics of log geometry of \cite{katooriginal, ogusloggeom}, we review some specific constructions. See also \cite[Appendix A]{firmlogrationalpointsherrpieropanthibaultsara}.

\subsection{Log structures}

\begin{definition}
    Let $X$ be a scheme or algebraic stack and view its structure sheaf $\OO_X$ as a sheaf of monoids under multiplication. A \emph{log structure} on $X$ is a sheaf of monoids $M_X$ and a monoid homomorphism $\alpha: M_X\to \mathcal{O}_X$ %, where the monoidal structure on $\mathcal{O}_X$ is given by multiplication of functions, 
    such that the restriction of $\alpha$ gives an isomorphism
    \[\alpha^{-1}(\mathcal{O}_X^*)\longsimeq \mathcal{O}_X^*.\]
    We identify $\alpha^{-1} \OO_X^*$ and $\OO_X^*$ via this isomorphism. The \emph{characteristic monoid} is the quotient sheaf of monoids $\bar M_X \coloneqq M_X/\OO_X^*$. 
\end{definition}

The main example comes from toric geometry. 

\begin{example}\label{ex:logstraffinetoricvar}
    Let $P$ be a sharp, f.s.\ monoid, engendering an affine toric variety $X = \Aff_P \coloneqq \Spec \ZZ[P]$. The inclusion $P \subseteq \ZZ[P]$ gives a natural map $\alpha : \underline P \to \OO_X$ from the constant sheaf. 

    This map is not yet a log structure, but we can fix it. Define $M_X$ as the pushout 
    \[
    \begin{tikzcd}
        \alpha^{-1} \OO_X^* \ar[r] \ar[d] \ar[dr, phantom, very near end, "\lrcorner"]       &\OO_X^* \ar[d]      \\
        \underline P \ar[r]        &M_X
    \end{tikzcd}
    \]
    in the category of sheaves of monoids and verify that $M_X$ is a log structure on $X$. The stalks of the characteristic monoid $\bar M_X$ are quotients of $P$ by faces. There is a locally closed stratification of $X$ into torus orbits on which $\bar M_X$ restricts to a constant sheaf. 
\end{example}

\begin{definition} 
    A log structure on $X$ is \emph{f.s.}\ (fine and saturated) if, strict \'etale locally on $X$, there is a strict map $X \to \Aff_P$ to the affine toric variety with log structure associated to a sharp, f.s.\ monoid $P$ as in Example \ref{ex:logstraffinetoricvar}. 
\end{definition}

For the purposes of this work, all schemes and algebraic stacks are locally of finite type, and all log structures are f.s.\ unless otherwise stated. An f.s.\ log structure is a sort of ``local toric structure.''

\begin{definition}
A morphism of log algebraic stacks $X\to Y$ is a morphism of stacks $f: X\to Y$ and a morphism of sheaves of monoids $f^{-1}M_Y\to M_X$ compatible with the structure maps $f^{-1}\alpha_Y$ and $\alpha_X$.  
Such a morphism is \emph{strict} if it induces an isomorphism $f^* M_Y \xrightarrow{\sim} M_X$, where $f^*M_Y$ is the log
structure associated to the pre-log structure $ f^{-1}M_Y \rightarrow f^{-1}\OO_Y \rightarrow \OO_X $. 
\end{definition}

\begin{remark}
    An f.s.\ log algebraic stack admits a maximal locally closed stratification $X = \bigsqcup X_\alpha$ such that 
    \begin{itemize}
        \item $X_\alpha \subseteq X$ are locally closed and connected, and 
        \item $\bar M_X|_{X_\alpha}$ is locally constant. 
    \end{itemize}
    These strata can be defined using a local map to a toric variety, pulling back its stratification, and taking their connected components, or by taking finite Boolean combinations of closed subschemes associated to the monoid ideals in $\bar M_X$ \cite[Section~2.2]{logpic}. 
\end{remark}

\begin{definition}
    A log algebraic stack $X$ is \emph{atomic} if 
    \begin{itemize}
        \item it has a unique closed stratum $X' \subseteq X$ which is connected and, 
        \item for any geometric point $\bar x \to X'$, the restriction map 
    \[\Gamma(X,\bar{M}_X) \to \bar{M}_{X,\bar x}\]
    is an isomorphism.
    \end{itemize}
\end{definition}

\begin{proposition}[{\cite[Proposition~2.2.2.5]{molchowiselogetaledescentnote}}]\label{prop:coverbyatomics}
    If $X$ is a locally noetherian log algebraic stack, there is a strict smooth cover $\{V_\alpha \to X\}$ with $V_\alpha$ atomic. 
\end{proposition}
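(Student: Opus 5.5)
The plan is to reduce to schemes, use neat charts at each point, and then shrink the chart neighbourhoods until they become atomic.

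First, the reduction. Choose a smooth cover $X_0 \to X$ by a locally noetherian scheme, and give $X_0$ the pullback log structure, so that $X_0 \to X$ is strict and smooth. If $\{V_\alpha \to X_0\}$ is a strict \'etale (hence strict smooth) cover by atomic log schemes, then the composites $V_\alpha \to X$ are strict and smooth and jointly surjective. So it suffices to treat a locally noetherian f.s.\ log scheme $X$. Moreover, it suffices to produce, for each geometric point $\bar x \to X$, a strict \'etale neighbourhood $V \to X$ of $\bar x$ which is atomic.

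Second, the neat chart. Fix $\bar x$ and set $P \coloneqq \bar M_{X, \bar x}$, which is sharp and f.s. By the existence of neat charts for f.s.\ log structures (Ogus), after passing to a strict \'etale neighbourhood $U$ of $\bar x$ we may assume:
\begin{itemize}
    \item there is a strict map $U \to \Aff_P$ whose induced map $P \to \bar M_{U,\bar x}$ is the identity;
    \item $U$ is affine and noetherian.
\end{itemize}
Let $Z \subseteq U$ be the preimage of the closed torus-orbit stratum $V(P^+) \subseteq \Aff_P$. This is exactly the closed locus where $\bar M_U$ is constant equal to $P$, and it contains $\bar x$. Replace $U$ by the complement of the connected components of $Z$ not containing $\bar x$; there are finitely many since $U$ is noetherian. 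After this, $Z$ is connected and is the unique closed stratum. The log stratification of $U$ is pulled back from the finitely many faces of $P$ and then split into connected components, so $U$ has finitely many strata. Remove the closures of those strata whose closure does not meet $Z$. This removes a closed set missing $\bar x$, and afterwards every point of $U$ specializes to a point of $Z$.

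Third, the atomicity condition, namely that $\Gamma(U, \bar M_U) \to \bar M_{U, \bar z} = P$ is an isomorphism for every geometric point $\bar z$ of $Z$.
\begin{itemize}
    \item Surjectivity: the chart gives $P \to \Gamma(U, \bar M_U)$, and its composite with the restriction map is the identity.
    \item Injectivity: since $\bar M_U$ is pulled back from $\Aff_P$, every stalk $\bar M_{U,\bar y}$ is a quotient $P/F$ by a face $F$, and the cospecialization maps are the compatible quotient maps. Because every point specializes into $Z$, a global section of $\bar M_U$ is determined by its restriction to $Z$. On the connected scheme $Z$ the sheaf $\bar M_U|_Z$ is the constant sheaf $\underline P$, whose global sections are $P$.
\end{itemize}
Hence the restriction map is an isomorphism and $U$ is atomic.

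I expect the main obstacle to be the shrinking step: choosing the neighbourhood so that $Z$ is simultaneously connected and ``attracts'' every point of $U$ under specialization, while remaining an open neighbourhood of $\bar x$. This is where local noetherianity and constructibility of the stratification (finitely many strata) are essential. If the direct argument becomes delicate, I would instead invoke Artin-fan atomicity as in \cite{molchowiselogetaledescentnote}: take the preimage in $U$ of the open star of the cone of $\bar x$ in the Artin fan $\af{U}$.
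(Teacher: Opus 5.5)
Your reduction to schemes, the neat chart, the choice of $Z$ and the surjectivity argument are fine. The gap is the shrinking step, and both the ``unique closed stratum'' claim and injectivity rest on it.

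First, the assertion that afterwards every point of $U$ specializes into $Z$ is false even in the best case. $\Aff^1$ with its toric log structure is atomic, but the closed point $1$ does not specialize to $0$. What injectivity actually needs is weaker: every \emph{stratum} contains some point specializing into $Z$. This is then combined with the fact that $\bar M_U$ is the constant sheaf $\underline{P/F}$ on each connected stratum lying over the torus orbit $O_F \subseteq \Aff_P$ (the locus where exactly the elements of $F$ are invertible).

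More seriously, deleting closures of bad strata can disconnect a good stratum and leave a piece that is closed and far from $Z$. Take $U = V(uw, vw) \subseteq \Aff^3$, the plane $w=0$ together with the $w$-axis. Use the chart $\NN^2 \to \OO_U$, $e_1 \mapsto 1-v$, $e_2 \mapsto uv+w$, which is neat at $\bar x = (0,1,0)$.
\begin{itemize}
\item Then $Z = \{\bar x\}$.
\item The stratum over the face $\langle e_1\rangle$ is the pair of lines $\{uv = w = 0\}$ minus $\bar x$, connected through the origin.
\item The only stratum whose closure misses $Z$ is the punctured $w$-axis.
\end{itemize}
This $U$ is already atomic, yet your recipe deletes the whole $w$-axis, origin included. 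In the result $U'$ (the plane minus the origin), the line $\{v = w = 0,\ u \neq 0\}$ is a second closed stratum. Moreover, the section of $\bar M_{U'}$ equal to the image of $e_2$ near that line and $0$ elsewhere is nonzero but vanishes at $\bar x$. So $U'$ is not atomic.

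The repair is to shrink along irreducible components rather than strata. Let $\pi : U \to \Aff_P$ be the chart and set $V_F = \pi^{-1}(\overline{O_F})$ for each face $F$ of $P$. Delete every irreducible component of every $V_F$ that does not meet $Z$. By noetherianity there are finitely many, and their union is closed and disjoint from $Z$. Now suppose $y$ survives and lies over $O_F$. Any component $Y$ of $V_F$ through $y$ meets $Z$. The set $Y \cap \pi^{-1}(O_F) \cap U'$ is a nonempty open subset of $Y$, hence irreducible. It is contained in the stratum of $y$, and it contains the generic point of $Y$, which specializes to a point of $Y \cap Z$. This gives that $Z$ is the unique closed stratum. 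Constancy of $\bar M$ on strata, together with the quotient cospecialization maps $P \to P/F$, then gives injectivity. In the example, this procedure deletes the $w$-axis and the line $v = w = 0$.

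Finally, your Artin-fan fallback is not available here. The paper constructs $\af{X}$ from a hypercover by atomics, which is exactly what this proposition supplies, so invoking it would be circular.
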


\subsection{The universal stack $\Log$}
An equivalent formulation of the theory of log schemes and stacks can be developed using Olsson's stack $\Log$, parameterizing log structures. We give a brief exposition of this formulation here.

\begin{definition}\cite{logstacks}
 Let $\Log$ be the log algebraic stack that parametrizes f.s.\ log structures with $T$ points:
\[
    \Log(T) :=\{ \text{f.s.\ log structures on }T \}.
\]
Given a log algebraic stack $X$, there is a unique morphism:
\[
f: X\to \Log \text{ such that } f^*M_{\Log}=M_X.
\]
\end{definition}

We assume all log structures are f.s., but emphasize this as part of the definition of $\Log$. 

\begin{definition}
For any log algebraic stack $Y$, define $\Log_Y$ as the algebraic stack parameterizing log structures with a log map to $Y$:
\[
    \Log_Y(T):=\{ \text{a log structure on }T \text{ and a log map }T \rightarrow Y \}.
\]
\end{definition}

The advantage of this perspective is it gives a unified way to define properties of log morphisms. We use the following meta-definition for all such properties:

\begin{definition}
If $P$ is a property of schemes or stacks,
a log scheme (or log stack) $X$ is defined to have property \emph{log $P$} if  the map $X\to \Log$ has property $P$. 

If $P$ is a property of morphisms of schemes or stacks, a morphism $f:X\to Y$ of log schemes (or log stacks) is defined to have property \emph{log $P$} if the map $X\to \Log_Y$ has property $P$.
    
\end{definition}

\subsection{Log curves}\label{ss:logcurves}

A \emph{log curve} over a log scheme $S$ is a log smooth, integral, saturated, proper morphism $\pi: C \rightarrow S$ of f.s.\ log schemes with connected geometric fibers of relative dimension one. It results that the geometric fibers of $\pi$ are reduced, prestable curves.

The relative characteristic  $\gp{\bar M}_{C/S} \coloneqq \gp {\bar M}_{C}/\gp {\bar M}_S|_C$ is supported on the special locus of the curve, i.e. on a geometric point $x$
\[
    \gp{\overline{M}}_{C/S, x} \simeq
    \begin{cases}
        \mathbb{Z} \quad \text{if }x\text{ is a node or marked point},
        \\
        0 \quad \text{otherwise}.
    \end{cases}
\]
The $n$ marked points are part of the data of the log morphism $C \to S$, the points $p \in C$ at which $\bar M_{C, p} \simeq \bar M_S \oplus \NN$. The moduli space of log curves is precisely $\Mp$ equipped with its normal crossings divisor of singular curves \cite{fkatomodulilogcurves}. 

We can equivalently equip $C$ with a different log structure $M_{\vert C}$ such that the relative characteristic $\gp {\bar M}_{\vert C/S}$ is supported only at the nodes. Then $M_C$ is the pushout 
\[
    M_C \simeq M_{C^{\rm vert}} \oplus_{\OO_C^*} \bigoplus M_{p_i}. 
\]

\begin{definition}\label{def:standardverticallogcurves}
We refer to $C/S$ equipped with the log structure $M_C$ as a \emph{standard} log curve and the curve $C^\circ/S$ equipped with $M_{\vert C}$ as a \emph{vertical} log curve, or the \emph{verticalization} $\vert C/S$ of $C/S$. 
\end{definition}

\subsection{Artin fans}
For any f.s.\ log scheme $X$, there exists a natural map to an algebraic stack $\Theta_X$, called the \emph{Artin fan}, which is constructed as the colimit of the Artin cones corresponding to the local characteristic monoids of $X$. The Artin fan is a combinatorial object; here we discuss the basics.

Let $(\mathbf{Mon}^\#)$ denote the category of sharp monoids. Here a monoid is an integral, saturated, finitely generated commutative, unital semigroup. A monoid is \emph{sharp} if its group of units is zero.

\begin{definition}
The category of cones is defined to be the opposite category of sharp monoids:
\[
    (\mathbf{Cones}) := (\mathbf{Mon}^\#)^{\text{op}}.
\]
We write $\operatorname{Cone}P$ for the cone corresponding to a monoid $P$, and $\overline{M}_\sigma$ for the monoid corresponding to a cone $\sigma$.
\end{definition}

\begin{remark}
The assignment $P \mapsto \Hom(P, \NN)$ yields a self-duality on the category of sharp monoids \cite[Theorem I.2.2.3]{ogusloggeom}:
    \[
    \Mons^\# \xrightarrow{\,\sim\,} \Mons^{\# \rm op}; \qquad P \longmapsto \Hom(P, \NN).
    \]
\end{remark}

\begin{definition}
A \emph{face} of a monoid $P$ is a submonoid $Q \subseteq P$ such that for all $a,b \in P$, if $a+b \in Q$, then $a,b \in Q$. A \emph{face localization} is a quotient $P \rightarrow P/Q$. 

In the category of cones, we define \emph{face inclusions} as the dual of face localization, denoted $\operatorname{Cone} P/Q \hookrightarrow \operatorname{Cone} P$. So the face (in the usual sense) of a polyhedral cone is associated to a quotient map of the associated monoids. 
\end{definition}

\begin{definition}
An \emph{Artin cone} is the quotient stack of the affine toric variety $\Aff_P$ by its dense torus $T_P$:
\[
    \Theta_P := [ \Spec \mathbb{Z}[P] / \Spec \mathbb{Z} [P^{\rm gp}] ],
\]
where $P$ is a sharp monoid. The canonical log structure of the toric variety $\Aff_P$ descends to $\Theta_P$. Write $\af{} = \af{\NN} = \bra{\Aff^1/\GG_m}$. 
The category of Artin cones, denoted $\mathbf{ArtinCones}$, is the full subcategory of log algebraic stacks whose objects are Artin cones.
\end{definition}

Artin cones and cones encode the same data:
\begin{theorem}\cite[Theorem~6.11]{tropicalcurvesmodulicones}
The functor
\[
    (\mathbf{Cones}) \xrightarrow\sim (\mathbf{ArtinCones}); \qquad \sigma \mapsto \Theta_{\overline{M}_\sigma}
\]
is an equivalence.
\end{theorem}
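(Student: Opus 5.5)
The statement is the equivalence of categories $\sigma \mapsto \Theta_{\overline{M}_\sigma}$ from cones to Artin cones. I would prove it in three steps: check that the functor is well defined, note that it is essentially surjective by definition, and then show that it is fully faithful. Full faithfulness is where the work lies.

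\emph{Well-definedness.} A cone morphism $\tau \to \sigma$ is a monoid map $u : P = \overline{M}_\sigma \to Q = \overline{M}_\tau$. It induces a torus-equivariant toric morphism $\Aff_Q \to \Aff_P$, equivariant for $T_Q \to T_P$, and hence a log morphism $\Theta_Q \to \Theta_P$. Functoriality is immediate.

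\emph{Essential surjectivity.} Every Artin cone is $\Theta_P$ with $P$ sharp, so this holds by definition.

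\emph{Full faithfulness.} I would use the functor-of-points description: for a log scheme $S$,
\[
\Hom_{\rm log}(S, \Theta_P) = \Hom(P, \Gamma(S, \bar M_S)).
\]
This holds because a log map $S \to \Aff_P$ is a monoid map $P \to \Gamma(S, M_S)$. Quotienting by $T_P$ exactly forgets the lifts along $M_S \to \bar M_S$, since $T_P$-torsors parametrize the choices of $\OO^*$-torsor lifts; this is Olsson's description of $\Log$-type stacks. Granting the formula, we get
\[
\Hom(\Theta_Q, \Theta_P) = \Hom(P, \Gamma(\Theta_Q, \bar M_{\Theta_Q})).
\]
It then remains to compute $\Gamma(\Theta_Q, \bar M_{\Theta_Q}) = Q$. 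A global section of the characteristic sheaf is determined by its stalk at the closed point $\bar M = Q$. Generization maps are the face quotients $Q \to Q/F$, and these are compatible, so restriction to the closed point is an isomorphism. In other words, $\Theta_Q$ is atomic in the sense of the definition above. This gives a bijection $\Hom(\Theta_Q, \Theta_P) \cong \Hom(P, Q)$, and I would check that it is the inverse of the functor.

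\emph{Main obstacle.} The delicate point is the 2-categorical issue. Morphisms of stacks form groupoids, so I must show that there are no nontrivial 2-automorphisms between log morphisms, i.e.\ that $\Hom$ is a set. A 2-morphism would be an element of the torus $T_P$ acting on $S$-points. Its compatibility with the log structures forces it to act trivially on the induced maps $P \to \bar M_S$. In the log category, such 2-isomorphisms are identified, because $\Theta_P$ represents the \emph{sheaf} $\Hom(P, \bar M_{-})$ on log schemes. So I would first prove that representability statement carefully, via strict étale descent from $\Aff_P$, and then deduce the rest formally.
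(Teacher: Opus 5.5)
The paper gives no proof of this statement. It quotes the result from \cite[Theorem~6.11]{tropicalcurvesmodulicones} as background, so there is no in-text argument to compare your route against.

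Your proof is the standard one and is correct. Log maps $S\to\Theta_P$ correspond to monoid maps $P\to\Gamma(S,\bar M_S)$. This extends from log schemes to $S=\Theta_Q$ by descent along the smooth cover $\Aff_Q\to\Theta_Q$, a step you use without stating it. Finally, $\Theta_Q$ is atomic with $\Gamma(\Theta_Q,\bar M_{\Theta_Q})=Q$.

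Two points deserve tightening.

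First, you need the sheaf of lifts of $P\to\bar M_S$ to monoid maps $P\to M_S$ to be locally nonempty, so that it really is a $T_P$-torsor. This holds because $\gp P$ is free, since $P$ is sharp and saturated. It also uses that $M_S$ is the preimage of $\bar M_S$ in $\gp M_S$ for integral $M_S$.

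Second, your ``main obstacle'' paragraph is circular as written. Saying that $\Theta_P$ represents a \emph{sheaf} is exactly the claim that the Hom-groupoids are discrete. Moreover, every section of $T_P$ acts trivially on $P\to\bar M_S$, so that sentence carries no information. The real reason is integrality. A 2-automorphism of a log map is a section $u$ of $T_P$ satisfying $\chi_p(u)\,\tilde\phi(p)=\tilde\phi(p)$ for every $p\in P$ and every local lift $\tilde\phi\colon P\to M_S$. Since $\OO_S^*$ acts freely on the integral log structure $M_S$, this forces $\chi_p(u)=1$ for all $p$. Then $u=1$ because $P$ generates $\gp P$.
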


Just as a cone complex is constructed by gluing individual cones along their common faces, we can globalize Artin cones into Artin fans.

\begin{definition}
An \emph{Artin fan} is a log algebraic stack $\scr B$ which admits a strict \'etale cover $\{\af{P_i} \to \scr B\}$ by Artin cones.
\end{definition}

The stack $\Log$ is an Artin fan by \cite[Corollary 5.25]{logstacks}. 
Maps between Artin fans are those of log algebraic stacks. If a log algebraic stack $\scr B$ is an Artin fan, the map $\scr B \to \Log$ is \'etale. Over a separably closed field, if $\scr B \to \Log$ is \'etale and representable by algebraic spaces, then it is an Artin fan \cite[Lemma 2.3.1]{birationalinvarianceabramovichwise}. This direction is false without the representability assumption \cite[Remark 2.3.10]{birationalinvarianceabramovichwise} or the separable closure assumption (consider $\scr B = \Spec \ZZ[1/3]$ with trivial log structure).

To construct \emph{the} Artin fan of a log scheme or log algebraic stack $X$, we apply Proposition~\ref{prop:coverbyatomics}, which guarantees the existence of a strict, smooth cover of $X$ by atomics. Consequently, we can find a strict, smooth hypercover $V_{\bullet}$ of $X$.

\begin{definition}[{\cite[Proposition~3.2.1]{wisebounded}}]
    Let $X$ be a log algebraic stack, \emph{the Artin fan} $\af{X}$ of $X$ is the colimit 
    \[
    \af{X} = \text{colim}_{V_\bullet} \af{V_\alpha}
    \]
    in the category of \'etale sheaves on $\Log$, viewed as a $\Log$-algebraic space via the \'espace \'etal\'e construction. 
\end{definition}

The reader can check this construction is independent of the choice of hypercover. 
We have a factorization 
\[
X \to \af{X} \to \Log.
\]
Over $\CC$, this is the initial factorization of the map $X \to \Log$ through a strict, \'etale, $\Log$-algebraic space \cite[Proposition~3.2.1]{wisebounded}. We prefer to take their construction as the definition in mixed characteristic, as opposed to this initial factorization property.

\subsection{Log alterations, modifications, and blowups}
Terminology regarding log alterations and modifications has subtle differences across the literature. To avoid ambiguity, we make a definition here.

\begin{definition} \label{d:la}
A morphism of Artin fans $\pi: \mathcal{A}_1 \rightarrow \mathcal{A}_2$ is a \emph{log alteration} if it is 
proper, birational, and has finite flat diagonal. Furthermore, a log alteration of Artin fans $\pi$ is called a:
\begin{itemize}
    \item \emph{Log modification} if $\pi$ is representable. 
    \item \emph{Log blowup} if $\pi$ is representable and projective. 
    \item \emph{Log root stack} if, for each Artin cone $\tau \to \cal A_2$, the pullback $\sigma \coloneqq \tau \times_{\cal A_2}^{\ell} \cal A_1$ is an Artin cone, and the induced morphism on characteristic monoids 
    \begin{equation}\label{eqn:rootstackdualconemorphism}
        \bar M_{\tau} \to \bar M_{\sigma}
    \end{equation}
    induces an isomorphism on the associated $\QQ$-vector spaces. 
    \item \emph{Log root stack of order $r$} 
    if it is a log root stack and each morphism \eqref{eqn:rootstackdualconemorphism} is the multiplication-by-$r$ map $[r] : \bar M_\tau \to \bar M_\tau$. 
\end{itemize}

A morphism of log algebraic stacks $f: X \rightarrow Y$ is a \emph{log alteration} if, strict \'etale locally on $Y$, it fits into an f.s.\ pullback square
    \[
    \begin{tikzcd}
        X \ar[r] \ar[d, "f"] \lpb       &\mathcal{A}_1 \ar[d, "\pi"]         \\
        Y \ar[r]       &\mathcal{A}_2
    \end{tikzcd}
    \]
where $\pi: \mathcal{A}_1 \to \mathcal{A}_2$ is a log alteration of Artin fans.

Analogously, the morphism $f: X \rightarrow Y$ is a \emph{log modification}, \emph{log blowup}, \emph{log root stack}, or \emph{log root stack of order $r$} if, strict \'etale locally, it is pulled back from a morphism of Artin fans $\pi: \mathcal{A}_1 \to \mathcal{A}_2$ satisfying the corresponding property defined above.
\end{definition}

\begin{remark}
If $\af{X}$ is the Artin fan of $X$, it has a map $[r]: \af{X}\to \af{X}$ given by multiplication by $r$. The log root stack $X^{1/r}$ of order $r$ fits into the following f.s.\ and strict pullback square:
\[
\begin{tikzcd}
X^{1/r} \arrow[d] \arrow[r] & \af{X} \arrow[d, "{[r]}"] \\
X \arrow[r] & \af{X}.
\end{tikzcd}
\]
\end{remark}

\begin{example}
The Artin fan of a log scheme $\Aff^1$ with its toric log structure is $\af{\Aff^1} = \af{} = \bra{\mathbb{A}^1/\GG_m}$. The root stack fits into the diagram:
\[
\begin{tikzcd}
(\Aff^1)^{1/r} \arrow[d] \arrow[r] & \bra{\Aff^1/\GG_m} \arrow[d, "\times r"] \\
\Aff^1 \arrow[r] & \bra{\Aff^1/\GG_m}
\end{tikzcd}
\]
The map $(\Aff^1)^{1/r} \to \Aff^1$ is a coarse moduli space which restricts to a $\mu_r$ gerbe over the origin and is the identity elsewhere. 

More generally, equip a scheme or algebraic stack $X$ with the divisorial log structure associated to a smooth divisor $D \subseteq X$. Then $X^{1/r} = \sqrt[r]{X, D}$ is the (ordinary) $r$-th root stack of $X$ at $D$ in the sense of C.~Cadman \cite{cadman}.
\end{example}

The log root stack and ordinary root stack usually do not coincide, for example if an s.n.c.\ divisor $D \subseteq X$ has multiple intersecting branches. The fiber of the log root stack $X^{1/r} \to X$ over a point $x \in X$ is a $B\mu_r^k$-gerbe, where $k$ is the number of branches of $D$ passing through $x \in X$. The ordinary root stack restricts to a $B\mu_r$-gerbe over the whole divisor $D$. 

\begin{example}
    Consider $X = \Aff^n$, with Artin fan $\bra{\Aff^n/\GG_m^n} = \af{}^n$. The log root stack is the pullback 
    \[
    \begin{tikzcd}
        (\Aff^n)^{1/r} \ar[r] \ar[d] \lpbstrict      &\af{}^n \ar[d, "{[r]}"]        \\
        \Aff^n \ar[r]      &\af{}^n.
    \end{tikzcd}
    \]
    The stacky points of $\af{}^n$ are of the form $B\GG_m^k$, corresponding to a linear subspace of $\Aff^n$ cut out by the vanishing of $k$ variables. Pulling back along the multiplication by $r$ map on $B\GG_m^k$ defines a $\mu_r^k$ gerbe over this locus. 
\end{example}

\section{Pushforward of the log virtual fundamental class in \texorpdfstring{$K$}{K}-theory}\label{Appendix A}

\begin{theorem}[{\cite[Theorem 3.10]{herrthesis}, \cite[Proposition 2.5]{kthylogprodfmla}}]\label{thm:kvfcsinlogkthy}
    Let $f : X \to Y$ be a morphism of log algebraic stacks with $X$ quasicompact and DM and $Y$ log smooth. Suppose $f$ is equipped with a log perfect obstruction theory $\Cl{f} \subseteq E$. If $\pi : \tilde X \to X$ is a log alteration, we equip $\tilde X$ with the induced log perfect obstruction theory as in \cite[Remark 2.14, \S 3]{herrthesis}:
    \[\Cl{\tilde X/Y} \subseteq \Cl{X/Y}|_{\tilde X} \subseteq E|_{\tilde X}.\]
Then the corresponding $K$-theoretic virtual structure sheaves satisfy the following pushforward identity
    \[
    \pi_* \OO^{\rm vir}_{\tilde X/Y} = \OO^{\rm vir}_{X/Y} \qquad \in K_\circ(X).
    \]
    Consequently, the collection of virtual structure sheaves ranging over all log alterations $\pra{\OO^{\rm vir}_{\tilde X_i/Y}}_{\tilde X_i \in {\rm Alt}(X)}$ defines a class in log $K$-theory $\Kl(X)$. 
\end{theorem}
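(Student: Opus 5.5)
Since $\pi : \tilde X \to X$ is a log alteration, strict \'etale locally on $X$ it is the f.s.\ pullback of a log alteration of Artin fans $\mathcal A_1 \to \mathcal A_2$. I would first replace $X$ by its Artin fan relative to $Y$: form $\Log_Y$ and the log normal cone $\Cl{X/Y} = C_{X/\Log_Y}$, and use that $\tilde X \to X$ is pulled back from the log alteration $\Log_{Y}$-side, so that $\Cl{\tilde X/Y} = C_{\tilde X/\Log_Y}$ (by log \'etaleness of $\tilde X \to X$) sits inside $\Cl{X/Y}|_{\tilde X}$. The virtual structure sheaves are then $\OO^{\rm vir}_{X/Y} = 0^!_E[\OO_{\Cl{X/Y}}]$ and $\OO^{\rm vir}_{\tilde X/Y} = 0^!_{E|_{\tilde X}}[\OO_{\Cl{\tilde X/Y}}]$, where $0^!$ is inverse to the flat pullback isomorphism of \cite[Remark 1.6]{kthylogprodfmla}. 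Since $0^!$ commutes with proper pushforward along $\pi$ (base change for flat pullback along $E \to X$ and proper pushforward), it suffices to prove
\[
(\pi_E)_*[\OO_{\Cl{\tilde X/Y}}] = [\OO_{\Cl{X/Y}}] \qquad \in K_\circ(E),
\]
with $\pi_E : E|_{\tilde X} \to E$ the base change.

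Next I would identify the map $\Cl{\tilde X/Y} \to \Cl{X/Y}$ as itself a log alteration, namely the f.s.\ pullback of the same Artin fan alteration $\mathcal A_1 \to \mathcal A_2$ along $\Cl{X/Y} \to X \to \mathcal A_2$ (with the deformation-to-the-normal-cone log structure). This is the approach of \cite[Theorem 3.10]{herrthesis}: run the log deformation to the normal cone for $X \to \Log_Y$ and for $\tilde X \to \Log_Y$, and note the latter is the f.s.\ pullback of the former along $\mathcal A_1 \to \mathcal A_2$, then restrict to the special fiber. With that, the problem reduces to a statement with no obstruction theory: \emph{for a log alteration $p : Z' \to Z$ pulled back from Artin fans, with $Z$ (here the cone) arbitrary, $p_*[\OO_{Z'}] = [\OO_Z]$ in $K_\circ$}. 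More precisely one needs $Rp_*\OO_{Z'} = \OO_Z$ and the f.s.\ pullback to be the ordinary pullback up to a nil/torsion discrepancy; I would handle the latter by showing the non-f.s.\ discrepancy is supported in higher codimension and contributes nothing to the fiber-product class after replacing by Tor-independence, using that $\mathcal A_1 \to \mathcal A_2$ is flat-like over the toric strata only after saturation.

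The key local input is: for a log alteration of Artin fans $\mathcal A_1 \to \mathcal A_2$, $R\pi_*\OO_{\mathcal A_1} = \OO_{\mathcal A_2}$. For log modifications this is the toric statement that toric (proper birational) subdivisions have $R\pi_*\OO = \OO$ (toric varieties have rational singularities, valid over $\ZZ$ by Kempf's argument on cone cohomology); for root-type pieces with finite flat diagonal it follows since $\pi$ is a coarse-moduli/gerbe map for $\mu_r$ or finite diagonalizable groups, which are linearly reductive over $\ZZ$, so pushforward of $\OO$ is $\OO$ with no higher cohomology. Factoring a general log alteration of Artin fans into a subdivision followed by a root-type map (refining if needed, using that statements descend along further alterations by functoriality of pushforward) gives the claim. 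Then I would pull back along the (flat, since Artin fan maps from the cone are strict and we may use Tor-independent base change for the stacky toric map $\mathcal A_1 \to \mathcal A_2$, which is flat over the target after passing to the integral, saturated locus) map $\Cl{X/Y} \to \mathcal A_2$.

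The main obstacle is this base change step: $\Cl{X/Y} \to \mathcal A_2$ is not flat, so $R\pi_*\OO = \OO$ on Artin fans does not formally transfer, and the f.s.\ fiber product differs from the ordinary one. I would try first to prove Tor-independence of $\Cl{X/Y}$ and $\mathcal A_1$ over $\mathcal A_2$ by reducing, via the log smoothness of $Y$, to the case where $\Cl{X/Y}$ is replaced by the log smooth (hence toroidal, Cohen--Macaulay) total space of the deformation to the normal cone, where the f.s.\ pullback of a toric alteration is again a toroidal alteration with $R\pi_*\OO = \OO$ by the same toric argument, and then specializing to the special fiber (a Cartier divisor, flat over $\Aff^1$ on both sides) to get the cone statement.
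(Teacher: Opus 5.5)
Your proposal has two genuine gaps, and the first is exactly what the paper's proof addresses.

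\textbf{The local-to-global step is missing.} You note correctly that $\pi$ is only strict-\'etale locally an f.s.\ pullback of a log alteration $\mathcal A_1\to\mathcal A_2$. But you then work with a \emph{global} map $\Cl{X/Y}\to X\to\mathcal A_2$. Identities in $K_\circ(X)$ or $K_\circ(E)$ cannot be checked on, or glued from, a strict-\'etale cover. Since Artin fans are not functorial, you also cannot just use a map $\af{\tilde X}\to\af{X}$.

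The paper takes the global case from \cite[Proposition 2.5]{kthylogprodfmla}: a log alteration $\tilde{\scr B}\to\scr B$ of Artin fans with $\tilde X=X\times_{\scr B}\tilde{\scr B}$ in both the f.s.\ and the ordinary sense. Its whole proof is the reduction to that case:
\begin{itemize}
\item refine by alterations pulled back from $\af{X}$, as in \cite[Lemma 3.5]{kthylogprodfmla};
\item use Corollary~\ref{cor:wiseblowupforstacks} to arrange a strict \'etale map $\af{X}\to\af{}^N$;
\item use Proposition~\ref{prop:AFfunctorialifYfree} to obtain $\af{\tilde X}\to\af{X}$;
\item form the common refinement $X'=\tilde X\times_{\af{\tilde X}}(\af{\tilde X}\times^\ell_{\af{X}}\tilde{\scr B})$, which is pulled back from the Artin fans of both $X$ and $\tilde X$.
\end{itemize}
The global case applied to $X'\to X$ and to $X'\to\tilde X$, together with $\pi_*q_*=(\pi q)_*$, then finishes. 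Nothing in your plan plays this role.

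\textbf{The cone-level reduction is false.} A sheaf-level statement $Rp_*\OO_{Z'}\simeq\OO_Z$ for the cones would be \'etale local and could have sidestepped the first gap. However, $\Cl{\tilde X/Y}$ is in general only a closed subcone of $\Cl{X/Y}\times_X\tilde X$, not a pullback of $\Cl{X/Y}$ along $\mathcal A_1\to\mathcal A_2$. Moreover, the claim that $p_*[\OO_{Z'}]=[\OO_Z]$ for an f.s.\ pullback $Z'\to Z$ of a log alteration of Artin fans, with $Z$ arbitrary, fails even when $Z\to\mathcal A_2$ is strict, so that f.s.\ and ordinary pullbacks agree.

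Counterexample over a field: take $Z=V(x^2,y^2)\subseteq\Aff^2$ with the restricted toric log structure, and pull back the blowup of $\af{}^2$ at its closed point. Let $D$ be the exceptional curve. Since $(x^2,y^2)\OO_{\mathrm{Bl}_0\Aff^2}=\OO(-2D)$, we get $Z'=2D$, and $\chi(\OO_{2D})=3\neq 4=\chi(\OO_Z)$.

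The theorem itself still holds in this example. Take $X=Z$ over a point with $E=\Cl{X/Y}$, which is a vector bundle stack because $X\subseteq\Aff^2$ is a regular embedding. Then $\Cl{\tilde X/Y}$ is the line subbundle $\OO(2D)|_{2D}$ of the trivial rank-two part of $E|_{\tilde X}$, and $\chi(\OO_{2D})-\chi(\OO_{2D}(2D))=3+1=4$. So your first reduction, to an identity in $K_\circ(E)$, is fine. What fails is the further reduction to a statement about arbitrary cones, and the Tor-independence you flag is not a technicality.

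Your proposed repair also fails. The total space of the deformation to the normal cone has the cone as a Cartier-divisor special fiber. So it can be log smooth (hence Cohen--Macaulay) only if the cone is Cohen--Macaulay, which normal cones typically are not.
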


We establish several technical lemmas about log alterations, from which the theorem follows by applying \cite[Proposition 2.5]{kthylogprodfmla}. For the convenience of the reader, we begin by recalling a key structural result regarding the subdivision of Artin fans.

\begin{theorem}[{\cite[Theorem 4.6.2]{wisebounded}}]\label{thm:wiseboundedsubdiv}
    Any quasicompact Artin fan $\scr B$ admits a projective subdivision $\tilde {\scr B} \to \scr B$ which admits a strict, \'etale, representable map $\tilde{\scr B} \to \af{}^N$ for some $N \in \NN$. 
\end{theorem}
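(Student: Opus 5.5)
The statement to prove is Theorem \ref{thm:wiseboundedsubdiv}: a quasicompact Artin fan $\scr B$ admits a projective subdivision with a strict, \'etale, representable map to some $\af{}^N$. I would pass to the combinatorial side. By the equivalence between cones and Artin cones, a quasicompact Artin fan is the same thing as a finite generalized cone complex $\Sigma$, glued from finitely many cones along face maps, possibly with self-gluings and monodromy. A strict \'etale representable map $\scr B \to \af{}^N$ then amounts to a map of cone complexes $\Sigma \to \RR_{\geq 0}^N$ that embeds each cone isomorphically onto a face of the orthant, meaning an integral isomorphism onto a coordinate subcone. So the goal is to subdivide $\Sigma$ projectively until it admits such a ``free'' chart.

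The construction has three steps.

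\emph{Step 1.} Choose finitely many piecewise linear functions on $\Sigma$ that separate cones and are integral on each cone. Concretely, for each of the finitely many cones $\sigma$, and each of the finitely many generators of $\bar M_\sigma$, extend the generator to a global piecewise linear function. A global function may be built by averaging over the finite automorphism and self-gluing data, which is available because $\Sigma$ is quasicompact. This gives a map $\Phi : \Sigma \to \RR^{M}$ that is injective on the interior of each cone.

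\emph{Step 2.} Take the common refinement of $\Sigma$ by the loci of linearity of these functions together with the coordinate hyperplane arrangement. Then apply a further projective subdivision that makes every cone unimodular, that is, smooth, as in toric resolution; this is achieved by iterated stellar subdivisions. Stellar subdivisions and refinements by piecewise linear functions are both projective, so the composite subdivision $\tilde\Sigma \to \Sigma$ is projective.

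\emph{Step 3.} Label the rays of $\tilde\Sigma$ by a coloring $c : \text{rays} \to \{1,\dots,N\}$ in which no two rays of a common cone receive the same color. Then send each ray to the corresponding basis vector and extend linearly on each smooth cone. This gives the desired map $\tilde\Sigma \to \RR_{\geq 0}^N$; it is an isomorphism onto a face on each cone because cones are unimodular and colors within a cone are distinct. Finitely many rays means we can simply take $N$ equal to the number of rays.

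Translating back, the map $\tilde{\scr B} \to \af{}^N$ is strict and \'etale because it is cone-wise a face inclusion. The main obstacle is \emph{representability}. The stacky self-gluings of $\Sigma$, for instance the $\ZZ/2$ in $[{\rm Sym}^2\af{}]$, must be killed; otherwise the induced map on automorphism groups is not injective. I would first try to handle this in Step 2 by subdividing enough (for example, by a barycentric subdivision) that no cone of $\tilde\Sigma$ is glued to itself and all monodromy acts trivially. After that, distinct-color labelling makes the map injective on stabilizers, so it is representable.
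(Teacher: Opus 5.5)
Your argument is correct once it is reduced to what Step~3 actually uses, and it takes a different route from the paper. The paper gives no proof of this statement. It quotes Wise's theorem, which only asserts a projective subdivision with a \emph{strict} map to $\af{}^N$, and remarks that the other properties follow. They are in fact automatic, as in the proof of Corollary~\ref{cor:wiseblowupforstacks}: a strict map $\tilde{\scr B}\to\af{}^N$ is a map over $\Log$ between stacks that are \'etale and representable over $\Log$.

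Your self-contained version runs as follows. Take a barycentric subdivision, then a projective unimodular resolution by star subdivisions (as in KKMS). Then map to $\af{}^N$, with $N$ the number of rays, using the tent functions of the rays. These are global sections of $\bar M$, and on a unimodular cone $\sigma$ with pairwise distinct rays the induced $\NN^N\to\bar M_\sigma$ is a face localization, so the map is strict. Your route gives an explicit construction; the citation gives brevity and shows that strictness is the only real issue.

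Several things you wrote should be removed or corrected.
\begin{itemize}
\item \emph{Step~1 and the refinement by loci of linearity.} These are never used in Step~3, and they are wrong as stated.
  \begin{itemize}
  \item Averaging over monodromy produces invariant linear functions, which do not separate distinct points of a self-glued cone. On $[{\rm Sym}^2\af{}]$ every global piecewise linear function is a multiple of $x+y$, which does not separate $(1,0)$ from $(\tfrac12,\tfrac12)$.
  \item Injectivity on cone interiors is impossible when $\sigma\to\Sigma$ is itself non-injective.
  \item Generators of $\bar M_\sigma$ need not extend to global piecewise linear functions when cones are non-simplicial or self-glued.
  \item Domains of linearity of a piecewise linear function need not form a projective subdivision. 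Here that refinement is vacuous anyway, since global piecewise linear functions are already linear on cones.
  \end{itemize}
\item \emph{The obstacle is strictness, not representability.} Monodromy or self-gluing forces global sections of $\bar M$ to restrict to invariants, which cannot generate $\bar M_\sigma$. So before subdividing there is no strict map to $\af{}^N$ at all. Once the map is strict, representability comes for free from $\tilde{\scr B}\to\Log$.
\item \emph{Do the barycentric subdivision first, and justify it.}
  \begin{itemize}
  \item It is canonical, so it descends through all gluing data. It is an iterated star subdivision at barycenters, so it is projective.
  \item A new cone corresponds to a flag of faces of an old cone. Its rays are barycenters of faces of distinct dimensions, hence distinct.
  \item Any gluing map carrying a face of a new cone to a face of the same new cone must preserve each face of the corresponding subflag, since a flag contains at most one face of each dimension. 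It therefore fixes those barycenters and acts as the identity there.
  \item Later star subdivisions keep this property, because new cones lie inside old ones.
  \end{itemize}
\end{itemize}
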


While the statement of theorem in loc.\ cit.\ only explicitly asserts that $\tilde {\scr B} \to \af{}^N$ is strict, the remaining properties follow directly from the construction.

\begin{corollary}\label{cor:wiseblowupforstacks}
    Let $X$ be a quasicompact log algebraic stack and let $\tilde {\scr B} \to \scr B = \af{X}$ be a projective subdivision of its Artin fan as in Theorem \ref{thm:wiseboundedsubdiv}. Write $\tilde X \coloneqq \tilde{\scr B} \times_{\scr B}^{\msout{\ell}} X$. Then the Artin fan of $\tilde X$ admits a strict, \'etale, representable map $\af{\tilde X} \to \af{}^N$ for some $N \in \NN$.
\end{corollary}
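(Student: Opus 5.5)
The plan is to compare the Artin fan of $\tilde X$ with the pullback $\tilde{\scr B}$ and transport the map to $\af{}^N$ from $\tilde{\scr B}$ to $\af{\tilde X}$. Since $\tilde X \to X$ is the strict pullback of the projective subdivision $\tilde{\scr B} \to \scr B$, the composite $\tilde X \to \tilde{\scr B}$ is strict. Being a base change of $X \to \af{X}$, it also inherits the surjectivity and connectedness-of-fibers properties of $X \to \af{X}$ over strata. I first want to prove that the natural map $\af{\tilde X} \to \tilde{\scr B}$, obtained from the universal property of the colimit construction of \cite[Proposition~3.2.1]{wisebounded}, is strict, \'etale and representable. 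Then the composite
\[
\af{\tilde X} \to \tilde{\scr B} \to \af{}^N
\]
is strict, \'etale and representable, because each of these properties is stable under composition.

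To build the map and verify its properties, I would work strict-smooth locally using Proposition~\ref{prop:coverbyatomics}. Choose an atomic strict smooth cover $V_\alpha \to X$. The pullbacks $\tilde V_\alpha = V_\alpha \times_{\scr B} \tilde{\scr B}$ are log modifications of atomics, and each is covered by the preimages of the Artin cones of the subdivision restricted to $\af{V_\alpha}$. After refining once more to an atomic hypercover $\tilde V_\bullet$ of $\tilde X$, each atomic piece $W$ maps strictly to an Artin cone $\af{P} \to \tilde{\scr B}$. By the definition of the Artin fan of an atomic space, $\af{W} = \af{P}$. The colimit over the hypercover therefore receives a compatible system of strict \'etale maps $\af{W} \to \tilde{\scr B}$, and these induce $\af{\tilde X} \to \tilde{\scr B}$. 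This map is strict and \'etale because it is so locally on a cover, as a colimit of \'etale sheaves over $\Log$ with strict \'etale transition maps.

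The main obstacle will be representability. The Artin fan is not functorial in general, and $\af{\tilde X} \to \tilde{\scr B}$ can fail to be a monomorphism on stabilizers when strata of $\tilde X$ over a single cone of $\tilde{\scr B}$ are disconnected or carry monodromy. To handle this, I would argue that representability of $\af{\tilde X} \to \af{}^N$ can be checked on automorphism groups of geometric points. Such a point of $\af{\tilde X}$ corresponds to a stratum with characteristic monoid $P$, and its automorphisms act on $P$. The map to $\af{}^N$ is strict, so $P$ is identified with the pullback of a face of $\NN^N$ with the induced basis. Any automorphism that acts trivially over $\af{}^N$ must therefore fix every generator, and hence acts trivially on $P$. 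Since the automorphism groups of points of $\af{\tilde X}$ act faithfully on the characteristic monoid, injectivity follows.

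The last point is the step I most expect to need care. If faithfulness fails for the hypercover colimit, my fallback is to replace $\af{\tilde X}$ by its rigidification, or to note that the paper only uses that $\af{\tilde X} \to \af{}^N$ is strict \'etale with representable fibers over each cone. Either way, Corollary~\ref{cor:wiseblowupforstacks} would follow.
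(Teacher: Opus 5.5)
\textbf{Verdict.} Your strategy is the paper's: build $\af{\tilde X} \to \tilde{\scr B}$, then compose with the strict, \'etale, representable map $\tilde{\scr B} \to \af{}^N$. The strictness and \'etaleness parts are fine, though more laborious than necessary. The representability step, however, contains an actual error.

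\textbf{The error.} You assert that automorphism groups of points of $\af{\tilde X}$ act faithfully on the characteristic monoid. This already fails for $\af{} = [\Aff^1/\GG_m]$: its closed point $B\GG_m$ has stabilizer $\GG_m$, which acts trivially on $\bar M = \NN$. In general, the torus $\Hom(\gp P, \GG_m)$ inside a stabilizer acts trivially on $P$. So checking that an automorphism fixes the generators of $P$ only controls its image in $\operatorname{Aut}(P)$, not its torus part, and injectivity on stabilizers does not follow. Your fallbacks would not prove the corollary as stated: rigidifying $\af{\tilde X}$ changes the object, and representability fibrewise over cones is a weaker conclusion.

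\textbf{The missing observation.} The paper's entire proof is that everything lives over $\Log$. By the definition used here, $\af{\tilde X}$ is the espace \'etal\'e of an \'etale sheaf on $\Log$. Hence $\af{\tilde X} \to \Log$ is strict, \'etale and representable, and $\tilde{\scr B}$ is likewise strict, \'etale and representable over $\Log$. A morphism between two such objects over $\Log$ automatically has all three properties:
\begin{itemize}
\item it is strict, because both log structures are pulled back from $\Log$;
\item it is \'etale, by two-out-of-three;
\item it is representable, because if the composite to $\Log$ is injective on stabilizers, so is the first map.
\end{itemize}
Thus $\af{\tilde X} \to \tilde{\scr B}$, and hence $\af{\tilde X} \to \af{}^N$, is strict, \'etale and representable with no stabilizer analysis.

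\textbf{How to repair your argument.} Replace $\bar M$ by the log structure $M$. Stabilizers act faithfully on $M$ precisely because $\af{\tilde X} \to \Log$ is representable, and strictness over $\af{}^N$ identifies $M$ with a pullback.

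\textbf{Two smaller points.} First, disconnected strata or monodromy in $\tilde X$ can only stop $\af{\tilde X} \to \tilde{\scr B}$ from being a monomorphism or an isomorphism; this is the paper's warning $\af{\tilde X} \neq \tilde{\scr B}$. They cannot stop it from being representable. Second, when you glue the local maps $\af{W} \to \tilde{\scr B}$ over your hypercover, you must justify that they agree on overlaps. One way is uniqueness of $\Log$-maps out of an Artin cone extending a given map from an atomic $W$, as in Lemma~\ref{lem:factorthroughpresheafmonoscheme}.
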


\begin{proof}
    The only subtlety is that the Artin fan of the subdivision need not be the same as the subdivision of the Artin fan $\af{\tilde X} \neq \tilde{\scr B}$. Nevertheless, there is a map $\af{\tilde X} \to \tilde{\scr B}$ which is strict \'etale and representable because they are both strict, \'etale, and representable over $\Log$. 
\end{proof}

We recall a lemma regarding the combinatorial nature of the characteristic monoid.

\begin{lemma}[{\cite[Lemma A.20]{firmlogrationalpointsherrpieropanthibaultsara}}]\label{lem:charmonPLfns}
    Let $X$ be a log algebraic stack and let $\af{X}$ be its associated Artin fan. Global sections of the characteristic monoid are identified with piecewise linear functions on $\af{X}$: 
    \[
        \Gamma(X, \bar M_X) = \Gamma(\af{X}, \bar M_{\af{X}}).
    \]
    This identification means that any morphism $X \to \af{}$ factors uniquely through $\af{X}$. The same goes for maps to $\af{}^N$ for $N \in \NN$.  
\end{lemma}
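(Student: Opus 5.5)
The statement to prove is Lemma~\ref{lem:charmonPLfns}: $\Gamma(X, \bar M_X) = \Gamma(\af{X}, \bar M_{\af{X}})$, and maps to $\af{}^N$ factor uniquely through $\af{X}$. The plan is to reduce to atomic pieces and glue.

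\emph{Atomic case.} By Proposition~\ref{prop:coverbyatomics}, choose a strict smooth hypercover $V_\bullet \to X$ by atomic log algebraic stacks; then $\af{X} = \colim \af{V_\alpha}$ as \'etale sheaves over $\Log$. For atomic $V$ with closed stratum point $\bar x$ and $P = \bar M_{V,\bar x}$, we have $\af{V} = \af{P}$ by construction. The definition of atomic gives $\Gamma(V, \bar M_V) = P$. On the other side, $\Gamma(\af{P}, \bar M_{\af{P}}) = P$: the Artin cone has a unique closed point with stalk $P$, and every other stalk is a face localization of $P$ compatible with restriction. So both sides equal $P$, naturally in $V$.

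\emph{Gluing.} Since $X \to \af{X}$ is strict, $\bar M_X$ is the pullback of $\bar M_{\af{X}}$, which gives a natural map $\Gamma(\af{X}, \bar M_{\af{X}}) \to \Gamma(X, \bar M_X)$. Both sides are sheaf equalizers along the hypercover: on the left because $\af{X}$ is a colimit of the $\af{V_\alpha}$ in the strict \'etale topology, so sections of $\bar M$ satisfy descent; on the right by smooth descent for $\bar M_X$. By the atomic case the two equalizer diagrams are termwise isomorphic. The main obstacle is that the double overlaps $V_\alpha \times_X V_\beta$ need not be atomic. To handle this, refine them by further atomic covers, which the hypercover formalism permits, and check naturality of the identification $\Gamma(V,\bar M_V) = \Gamma(\af{V}, \bar M_{\af{V}})$ under strict smooth maps $V' \to V$ of atomics. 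Such a map induces a face localization $P \to P'$, and the induced map $\af{P'} \to \af{P}$ is the corresponding open face inclusion, so the restriction maps match.

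\emph{Maps to $\af{}^N$.} A morphism $X \to \af{} = [\Aff^1/\GG_m]$ of log stacks is the same as a global section of $\bar M_X$: the characteristic of the pulled-back generator of $\bar M_{\af{}} = \NN$ is such a section, and the map determines and is determined by it, since $\af{}$ represents $\bar M$ on log stacks (\cite{tropicalcurvesmodulicones}). Applying this to both $X$ and $\af{X}$ and using the bijection just proved, every map $X \to \af{}$ is the composite of $X \to \af{X}$ with a unique map $\af{X} \to \af{}$. For $\af{}^N$, apply this coordinatewise using $N$-tuples of sections.
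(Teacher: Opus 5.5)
Your argument is correct. The paper does not prove this lemma itself; it cites \cite[Lemma A.20]{firmlogrationalpointsherrpieropanthibaultsara}. Your strategy is the same one the paper uses for the closely related Lemma~\ref{lem:factorthroughpresheafmonoscheme}: descend along a strict smooth hypercover by atomics, identify both sides with the closed-stratum stalk $P$ in the atomic case, and match the two equalizer diagrams via face localizations. Note that the left-hand equalizer needs no covering condition on $\af{V_1} \to \af{V_0} \times_{\af{X}} \af{V_0}$. This is because $\af{X}$ is by definition the coequalizer of $\af{V_1} \rightrightarrows \af{V_0}$ as sheaves on $\Log$, and $\Gamma(\af{X}, \bar M_{\af{X}}) = \Hom(\af{X}, \bar M_{\Log})$ turns this colimit directly into a limit.
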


We need a weak form of the ``universal property'' for later; over a separably closed field, one can simply use the universal property definition of the Artin fan. Consider the map of topoi $(\af{P})_{\et} \to \Spec P$ from the site of \'etale, representable maps to $\af{P}$ to the Zariski site (i.e., the opens) of $\Spec P$. 

\begin{lemma}\label{lem:presheafmonoscheme}
    Suppose a log algebraic stack $X$ admits a strict map $X \to \af{P}$ to an Artin cone. There is an induced \'etale and representable map $\af{X} \to \af{P}$ from the Artin fan. It represents a sheaf pulled back from $\Spec P$.  
\end{lemma}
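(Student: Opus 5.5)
The plan is to build the map $\af{X} \to \af{P}$ directly from the colimit definition of the Artin fan, and then identify the resulting \'etale sheaf using Lemma \ref{lem:charmonPLfns}. Choose a strict smooth hypercover $V_\bullet \to X$ by atomic log stacks (Proposition \ref{prop:coverbyatomics}), so that $\af{X} = \colim_{V_\bullet} \af{V_\alpha}$ in étale sheaves on $\Log$.

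For each atomic $V_\alpha$, the composite $V_\alpha \to X \to \af{P}$ is strict. Hence it is given by a map $P \to \Gamma(V_\alpha, \bar M_{V_\alpha}) = \bar M_{V_\alpha, \bar x}$ whose induced map on sharpenings is a face localization $P \to P/F_\alpha$ with $P/F_\alpha \simeq \bar M_{V_\alpha,\bar x}$. This identifies $\af{V_\alpha} = \af{\bar M_{V_\alpha,\bar x}}$ with the open Artin subcone $\af{P/F_\alpha} \subseteq \af{P}$, which corresponds to the Zariski open of $\Spec P$ attached to the face $F_\alpha$. We therefore get open immersions $\af{V_\alpha} \to \af{P}$, in particular étale and representable.

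Lemma \ref{lem:charmonPLfns} (or just functoriality of global sections of $\bar M$ along $V_\beta \to V_\alpha$) shows these maps are compatible with the face maps of the hypercover. They therefore descend to a map $\af{X} \to \af{P}$ of étale sheaves over $\Log$. Since each $\af{V_\alpha} \to \af{P}$ is étale and representable and étale representable objects over $\af{P}$ form a topos closed under colimits, $\af{X}$ is étale and representable over $\af{P}$.

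For the last claim, note that every $\af{V_\alpha}$ lies in the image of the Zariski site of $\Spec P$ under $(\af{P})_\et \to \Spec P$, namely the opens corresponding to faces. The hypercover gluing data are restrictions between such opens, and their fibered products are again of this form up to disjoint unions. So the colimit is a sheaf on $\Spec P$ (a colimit of representable opens in the Zariski site) pulled back to $(\af{P})_\et$.

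The main obstacle is this last step. Fiber products $\af{V_\alpha} \times_{\af{X}} \af{V_\beta}$ may have several components mapping to the same face, so $\af{X}$ is not a subobject of $\af{P}$ but an étale space over $\Spec P$ with nontrivial gluing. The point to check is that the descent data, which a priori live on $(\af{P})_\et$, come from sheaf data on the Zariski site of $\Spec P$. I would try first to verify this using the fact that pullback from $\Spec P$ is fully faithful on sheaves locally constant along the strata, since $\bar M$ is locally constant on strata.
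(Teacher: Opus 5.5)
This is essentially the paper's proof. Both arguments take a hypercover $V_1 \rightrightarrows V_0 \to X$ by disjoint unions of atomics, use strictness to make each summand of $\af{V_i}$ a face inclusion $\af{P/F} \subseteq \af{P}$ pulled back from the open $\Spec (P/F) \subseteq \Spec P$, and conclude that the coequalizer $\af{X}$ is pulled back from $\Spec P$ because the pullback functor along $(\af{P})_{\et} \to \Spec P$ is exact. The paper instead gets the map itself from Lemma~\ref{lem:charmonPLfns}, and gets \'etaleness and representability from source and target both being \'etale and representable over $\Log$; this is only a cosmetic difference from your route.

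The obstacle you flag is not real and needs no full-faithfulness statement. The arrows $\af{V_1} \rightrightarrows \af{V_0}$ are maps over $\af{P}$ between disjoint unions of connected open substacks of $\af{P}$, and open immersions are monomorphisms. So each arrow is the unique inclusion of a component into a component, i.e.\ literally the pullback of an inclusion of Zariski opens of $\Spec P$. Several components lying over the same face only mean the resulting sheaf on $\Spec P$ is not a subsheaf, which the statement does not require.
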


\begin{proof}
    Obtain the map $\af{X} \to \af{P}$ from Lemma \ref{lem:charmonPLfns} by presenting $P$ as a coequalizer of free monoids. It is necessarily \'etale and representable because its source and target are over $\Log$. 

    Take a hypercover $V_1 \rightrightarrows V_0 \to X$ where $V_i$ is a disjoint union of atomics. The Artin fan $\af{X}$ is the coequalizer on $\Log$ of 
    \[
        \af{V_1} \rightrightarrows \af{V_0}. 
    \]
    By strictness of the maps $V_i \to \af{P}$, each direct summand of $\af{V_i}$ corresponds to the inclusion of a face $\af{\bar P} \to \af{P}$. This is clearly pulled back from the sheaf represented by the open $\Spec \bar P \subseteq \Spec P$. The coequalizer of sheaves on $\Log$ or equivalently on $\af{P}$ is pulled back from that on $\Spec P$ because the pullback functor is exact. 
\end{proof}

\begin{lemma}\label{lem:factorthroughpresheafmonoscheme}
    Let $X$ be a log algebraic stack and consider a map $\af{X} \to \af{P}$ of log algebraic stacks which need not be strict. If $F$ is a sheaf on $\Spec P$ equipped with a section $s \in \Gamma(X, F|_X)$ of the pullback along the maps 
    \[
        X_{\et} \to (\af{X})_{\et} \to (\af{P})_{\et} \to \Spec P
    \]
    of \'etale-representable topoi, it is the restriction of a unique section $\tilde s \in \Gamma(\af{X}, F|_{\af{X}})$. 
\end{lemma}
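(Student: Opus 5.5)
The plan is to exploit the fact that the composite $X_{\et} \to (\af{X})_{\et}$ is induced by a strict map $X \to \af{X}$ with connected (indeed, locally on $\af{X}$, surjective with geometrically connected) fibers over the strata. Sections of sheaves pulled back from $\Spec P$ are therefore determined by combinatorial data on the strata. First reduce to the local case. Both sides of the claimed bijection $\Gamma(\af{X}, F|_{\af{X}}) \to \Gamma(X, F|_X)$ are sheaves for the strict \'etale/smooth topology, and $\af{X}$ is built as a colimit over a hypercover $V_1 \rightrightarrows V_0 \to X$ by disjoint unions of atomics (Proposition \ref{prop:coverbyatomics}). So it suffices to prove the statement for $X$ atomic, with $\af{X} = \af{Q}$ an Artin cone, $Q = \Gamma(X, \bar M_X)$. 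The general case then follows by comparing the two equalizer diagrams coming from $V_\bullet$ and $\af{V_\bullet}$, using uniqueness on overlaps to glue.

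In the atomic case, the map $\af{Q} \to \af{P}$ corresponds by Lemma \ref{lem:charmonPLfns} to a monoid map $P \to Q$, i.e.\ a map of monoschemes $\Spec Q \to \Spec P$. The composite of topoi $(\af{Q})_{\et} \to (\af{P})_{\et} \to \Spec P$ then factors through $\Spec Q$, the Zariski site of faces of $Q$. A section of $F|_{\af{Q}}$ is thus a compatible family of elements $s_{\bar Q} \in F_{x(\bar Q)}$ indexed by face localizations $Q \to \bar Q$, where $x(\bar Q)$ is the image point in $\Spec P$. Since $\Spec Q$ has a unique closed point, namely the maximal ideal, which is the generic stratum's complement, $\Gamma(\Spec Q, G)$ is simply the stalk at the closed point. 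The same is true for $\af{Q}$: its closed stratum is $B T_Q$, and every open neighbourhood of it is everything.

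On the $X$ side, the strata of $X$ map to the strata of $\af{Q}$, that is, to the faces of $Q$. Atomicity says the closed stratum $X'$ is connected and maps onto the closed point of $\af{Q}$. A section $s \in \Gamma(X, F|_X)$ restricts on $X'$ to a locally constant section of the constant sheaf $F_{x(Q)}$. By connectedness of $X'$ this is a single element $\tilde s$, and we take that as the section over $\af{Q}$. Next I would check that $\tilde s$ pulls back to $s$ on all of $X$. Any point $y \in X$ specializes to $X'$, and since $F|_X$ is pulled back from $\Spec P$ via a map sending every stratum's generization data compatibly, the value of $s$ at $y$ is forced to be the cospecialization image of its value on $X'$. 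Uniqueness is immediate, because sections on $\af{Q}$ are determined by the closed stalk.

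The main obstacle is this last step. One must justify that a section of the pullback sheaf on $X$ is determined by its value on the closed stratum. Equivalently, every strict étale neighbourhood in $X$ of a point of $X'$ maps onto an open of $\Spec P$ containing the relevant point. I would handle it by noting that $F|_X$ is the pullback of a sheaf on the finite space $\Spec P$, hence constructible with respect to the log stratification and ``constant along generization''. I would then use that in an atomic $X$ every point lies in every open containing $X'$, so the restriction $\Gamma(X, F|_X) \to \Gamma(X', F|_{X'})$ is injective. Care is needed over nonseparably closed bases in mixed characteristic, where strata may not be geometrically connected. This is why I would work with the \'etale-local atomic cover rather than with a universal property of $\af{X}$.
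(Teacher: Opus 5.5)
Your overall route is the paper's. You reduce via a hypercover by atomics to $X$ atomic, replace $\af{Q}\to\af{P}$ by $\Spec Q\to\Spec P$ (the paper's ``base change $F$''), and use that sections over $\Spec Q$ are the stalk at the closed point. You then conclude from the constancy of $F|_X$ on log strata and the connectedness of the closed stratum $X'$.

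The gap is in how you justify the step you yourself single out: that $s$ is determined by $s|_{X'}$. Both facts you invoke there are false. A point of an atomic $X$ need not specialize to $X'$, and an open containing $X'$ need not be all of $X$. Already for $X=\Aff^1$ with the log structure of the origin, $X'=\{0\}$, the closed point $1$ does not specialize to $0$, and $\Aff^1\setminus\{1\}$ is an open containing $X'$. So your claim that ``the value of $s$ at $y$ is forced to be the cospecialization image of its value on $X'$'' has no support at such $y$. The injectivity of $\Gamma(X,F|_X)\to\Gamma(X',F|_{X'})$ is therefore not yet proved.

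The repair uses only ingredients you already mention, organized by strata rather than by points.
\begin{enumerate}
\item Let $S_0$ be a connected component of a stratum, mapping to $\mathfrak p\in\Spec Q$. Then $F|_{S_0}$ is the constant sheaf with value $F_{\mathfrak p}$. Since sections of a constant \'etale sheaf are locally constant functions on the underlying space, $s|_{S_0}$ is a single element.
\item As $\overline{S_0}$ is a union of strata and $X'$ is the only closed stratum, $X'\subseteq\overline{S_0}$.
\item At a geometric point $\bar x$ of $X'$, the germ of $s$ is some $a\in F_{\mathfrak m}=\Gamma(\Spec Q,F)$, because the closed point $\mathfrak m$ has no open neighbourhood other than $\Spec Q$. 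Hence $s$ coincides with the pullback of $a$ on an \'etale neighbourhood of $\bar x$.
\item The image of that neighbourhood meets $S_0$, which forces $s|_{S_0}$ to be the image of $a$ in $F_{\mathfrak p}$.
\end{enumerate}
This is what the paper's sentence ``constant on each stratum, so global sections are given by the stalk at a geometric point in the deepest closed stratum'' encodes.

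The same remark about constant sheaves shows your concern about geometric connectedness is unnecessary. Only connectedness of the underlying space of $X'$ is used, as in the paper.
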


\begin{proof}
    Choose a hypercover of $X$ by atomics, yielding a hypercover of $\af{X}$. By descent, a section of $F$ on $\af{X}$ is equivalent to a section over this hypercover of $\af{X}$ by Artin cones. It suffices to assume $X$ is atomic, in which case one can base change the sheaf $F$ to assume $\af{X} = \af{P}$. A section of a presheaf on $\Spec P$ is equivalent to a section over the closed point, as the only open neighborhood of the closed point is all of $\Spec P$. The pullback $F|_X$ is constructible with respect to the same stratification as $\bar M_X$ and constant on each stratum. So global sections of $F|_X$ on $X$ are also given by its stalk at a geometric point in the deepest closed stratum. As this deepest closed stratum is connected, we can identify sections of $F$ over the closed point of $\Spec P$ with sections of $F|_X$ over a geometric point in the closed stratum of $X$. 
    
    % in which there is a deepest stratum which every other stratum specializes to. It results that global sections of $F|_X$ are identified with the stalk at a separably closed point of this deepest stratum. 
    
    % which is provided by the stalk of the global section $s \in \Gamma(X, F|_X)$. 
\end{proof}

\begin{proposition}\label{prop:AFfunctorialifYfree}
    Let $X \to Y$ be a map of log algebraic stacks and suppose the Artin fan of the target $Y$ admits a strict \'etale representable map $\af{Y} \to \af{}^N$ for some $N \in \NN$. There is a unique map of Artin fans fitting in a commutative square 
    \[
    \begin{tikzcd}
        X \ar[r] \ar[d]       &\af{X} \ar[d]         \\
        Y \ar[r]       &\af{Y}.
    \end{tikzcd}
    \]
\end{proposition}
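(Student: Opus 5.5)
Since $\af{}^N$ is itself an Artin cone (of the free monoid $\NN^N$), the plan is to build the map $X \to \af{}^N$, factor it through $\af{X}$ using Lemma~\ref{lem:charmonPLfns}, and then lift that factorization through the strict étale cover $\af{Y} \to \af{}^N$.

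\textbf{Step 1 (the map to $\af{}^N$).} Compose $X \to Y \to \af{Y} \to \af{}^N$. By Lemma~\ref{lem:charmonPLfns}, a map $X \to \af{}^N$ is the same as $N$ global sections of $\bar M_X$, and it factors uniquely through $X \to \af{X}$. This gives a map $\af{X} \to \af{}^N$, which need not be strict.

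\textbf{Step 2 (lifting through $\af{Y}$).} Since $\af{Y} \to \af{}^N$ is strict, étale and representable, Lemma~\ref{lem:presheafmonoscheme} (applied to $Y \to \af{}^N$) shows that $\af{Y}$, as an étale representable object over $\af{}^N$, represents a sheaf $F$ pulled back from the Zariski site of $\Spec \NN^N$. A map $\af{X} \to \af{Y}$ over $\af{}^N$ is then exactly a section of $F|_{\af{X}}$. Likewise, the given map $X \to Y \to \af{Y}$ over $\af{}^N$ is a section $s \in \Gamma(X, F|_X)$.

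Lemma~\ref{lem:factorthroughpresheafmonoscheme}, applied to $\af{X} \to \af{}^N$ and this $F$, extends $s$ uniquely to $\tilde s \in \Gamma(\af{X}, F|_{\af{X}})$. This is the desired map $\af{X} \to \af{Y}$. Commutativity of the square holds by construction, since $\tilde s$ restricts to $s$.

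\textbf{Step 3 (uniqueness).} Take any map $\af{X} \to \af{Y}$ making the square commute. Composing with $\af{Y} \to \af{}^N$ gives a map $\af{X} \to \af{}^N$ through which $X \to \af{}^N$ factors. By the uniqueness in Lemma~\ref{lem:charmonPLfns}, this must be the map from Step 1. So the map lies over $\af{}^N$, hence corresponds to a section of $F|_{\af{X}}$ restricting to $s$, and the uniqueness in Lemma~\ref{lem:factorthroughpresheafmonoscheme} identifies it with $\tilde s$.

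\textbf{Main obstacle.} The delicate step is the identification in Step 2: maps over $\af{}^N$ must be precisely sections of the pulled-back sheaf $F$, compatibly along the chain of topoi $X_{\et} \to (\af{X})_{\et} \to (\af{}^N)_{\et} \to \Spec \NN^N$. This uses representability and étaleness of $\af{Y} \to \af{}^N$ together with the espace étalé description of $\af{Y}$. I would check it first on atomic $X$, where $\af{X}$ is a single Artin cone, and then descend along a hypercover by atomics.
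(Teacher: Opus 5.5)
Your proposal is correct and follows the paper's proof. The paper likewise obtains $\af{X} \to \af{}^N$ from Lemma~\ref{lem:charmonPLfns} and phrases the lift as a section of the strict \'etale fiber product $\af{Y} \times_{\af{}^N} \af{X} \to \af{X}$, which is exactly your section of $F|_{\af{X}}$; over $\Spec \ZZ$ it produces that section via Lemmas~\ref{lem:presheafmonoscheme} and~\ref{lem:factorthroughpresheafmonoscheme}, and your Step 3 spells out the uniqueness that the paper leaves implicit.
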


\begin{proof}
    By Lemma \ref{lem:charmonPLfns}, we have the outer solid trapezoid
    \[
    \begin{tikzcd}
        X \ar[r] \ar[d]       &\af{X} \ar[d, dashed] \ar[dr]         \\
        Y \ar[r]       &\af{Y} \ar[r]      &\af{}^N.
    \end{tikzcd}
    \]
    Write $\scr B \coloneqq \af{Y} \times_{\af{}^N}^{\msout{\ell}} \af{X}$ for the strict \'etale Artin fan over $\af{X}$ through which $X$ factors:
    \[
        X \to \scr B \to \af{X}. 
    \]
    Over a separably closed field, the universal property of the Artin fan provides a unique section $\af{X} \dashrightarrow \scr B$, which is equivalent to the desired dashed map. Over $\Spec \ZZ$, one must combine Lemmas \ref{lem:presheafmonoscheme}, \ref{lem:factorthroughpresheafmonoscheme}. 
\end{proof}

\begin{proof}[{Proof of Theorem \ref{thm:kvfcsinlogkthy}}]
    By \cite[Proposition 2.5]{kthylogprodfmla}, the theorem holds if there exists a log alteration $\tilde{\scr B} \to \scr B$ of Artin fans and a pullback square 
    \[
    \begin{tikzcd}
        \tilde X \ar[r] \ar[d] \lpbstrict        &\tilde{\scr B} \ar[d]         \\
        X \ar[r]       &\scr B
    \end{tikzcd}
    \]
    in the f.s.\ and underlying stack categories. We reduce the theorem to this special case. These pullback squares always exist strict-\'etale locally in $X$ by definition of a log alteration. 

    Arguing as in \cite[Lemma 3.5]{kthylogprodfmla}, there is a log alteration pulled back from the Artin fan $\af{X}$ that refines any fixed log alteration of $X$. So we have a commutative square of log alterations of $X$ where the vertical maps are pulled back from log alterations of Artin fans:
    \[
    \begin{tikzcd}
        \tilde{\scr B}' \ar[d]         &\tilde X' \ar[r] \ar[d] \ar[l] \ar[dl, very near start, phantom, "{\msout{\ell} \,\urcorner}"]       &X' \ar[d] \ar[r] \lpbstrict        &\scr B' \ar[d]       \\
        \af{\tilde X}       &\tilde X \ar[r] \ar[l]       &X \ar[r]        &\af{X}.
    \end{tikzcd}
    \]
    Applying the above special case of the theorem to $\tilde X' \to \tilde X$ and $X' \to X$, we can assume $\tilde X' = \tilde X$ and $X' = X$. Use this reduction and Corollary \ref{cor:wiseblowupforstacks} to assume $\af{X}$ admits a strict, \'etale, representable map to $\af{}^N$ for some $N \in \NN$. 
    
    Proposition \ref{prop:AFfunctorialifYfree} provides a commutative square 
    \[
    \begin{tikzcd}
        \tilde X \ar[r] \ar[d]        &\af{\tilde X} \ar[d]      \\
        X \ar[r]       &\af{X}.
    \end{tikzcd}
    \]
    Again, there is a log alteration $\tilde{\scr B} \to \af{X}$ which pulls back to a refinement $X' \to X$ of $\tilde X \to X$. Express the f.s.\ pullback two ways: 
    \[X' = \tilde X \times_X^\ell X' %= \tilde X \times_{\af{X}}^\ell \tilde{\scr B} 
    = \tilde X \times^{\msout{\ell}}_{\af{\tilde X}} \pra{\af{\tilde X} \times_{\af{X}}^\ell \tilde{\scr B}}\] 
    to see that the log alterations $X' \to \tilde X$ and $X' \to X$ are each pulled back from the Artin fan of the target. We are done by the above special case. 
\end{proof}

\bibliographystyle{alpha}%Used BibTeX style is unsrt
\bibliography{zbib}

\end{document}